\numberwithin{equation}{section}
\newtheorem{theo}{Theorem} 
\newtheorem{lemma}{Lemma}[section]
\newtheorem{prop}[lemma]{Proposition}
\newtheorem{corol}[lemma]{Corollary}
\newtheorem{theoint}[lemma]{Theorem}
\newtheorem{claim}[lemma]{Claim}
\theoremstyle{remark}
\newtheorem{remark}[lemma]{Remark}
\newtheorem{step}{Step}
\newtheorem{cas}{Case}
\theoremstyle{definition}
\newtheorem{defi}[lemma]{Definition}
\newcommand{\hdot}{\dot{H}^1}
\newcommand{\NN}{\mathbb{N}}
\newcommand{\RR}{\mathbb{R}}
\newcommand{\lambdabf}{\boldsymbol{\lambda}}
\newcommand{\ldabf}{\boldsymbol{\lambda}}
\newcommand{\iotabf}{\boldsymbol{\iota}}
\newcommand{\nubf}{\boldsymbol{\nu}}
\newcommand{\betabf}{\boldsymbol{\beta}}
\newcommand{\alphabf}{\boldsymbol{\alpha}}
\newcommand{\mubf}{\boldsymbol{\mu}}
\newcommand{\lda}{\lambda}
\newcommand{\eps}{\varepsilon}
\newcommand{\tlh}{\tilde{h}}
\newcommand{\tj}{\widetilde{\jmath}}
\newcommand{\tu}{\tilde{u}}
\newcommand{\tU}{\widetilde{U}}
\newcommand{\indxt}{\indic_{\{|x|\geq |t|\}}}
\newcommand{\tgamma}{\tilde{\gamma}}
\newcommand{\tlambda}{\tilde{\lambda}}
\newcommand{\vv}{\vec{v}}
\newcommand{\HHH}{\mathcal{H}}
\newcommand{\NNN}{\mathcal{N}}
\newcommand{\OOO}{\mathcal{O}}
\newcommand{\PPP}{\mathcal{P}}
\newcommand{\Ssp}{\mathsf{S}}
\newcommand{\Wsp}{\mathsf{W}}
\newcommand{\ZZZZ}{\boldsymbol{\mathcal{Z}}}
\newcommand{\tJ}{\widetilde{J}}
\newcommand{\tK}{\widetilde{K}}
\newcommand{\tW}{\widetilde{W}}
\newcommand{\indic}{1\!\!1}
\newcommand{\ent}[1]{\left\lfloor #1 \right\rfloor} 
\DeclareMathOperator{\wlim}{w-lim}
\DeclareMathOperator{\loc}{loc}
\DeclareMathOperator{\vect}{span}
\DeclareMathOperator{\rad}{rad}
\title[Soliton resolution for radial critical wave equation]{Soliton resolution for the radial critical wave equation in all odd space dimensions}
\author[T.~Duyckaerts]{Thomas Duyckaerts$^1$}
\author[C.~Kenig]{Carlos Kenig$^2$}
\author[F.~Merle]{Frank Merle$^3$}
\thanks{$^1$LAGA (UMR 7539), Universit\'e Paris 13, Sorbonne Paris Cit\'e, and Institut Universitaire de France}
\thanks{$^2$University of Chicago. Partially supported by NSF Grants DMS-14363746 and DMS-1800082}
\thanks{$^3$AGM (UMR 8088), Universit\'e de Cergy-Pontoise, and Institut des Hautes \'Etudes Scientifiques}
\keywords{Focusing wave equation, dynamics, soliton resolution, global solutions, blow-up}
\begin{document}
\begin{abstract}
 Consider the energy-critical focusing wave equation in odd space dimension $N\geq 3$. The equation has a nonzero radial stationary solution $W$, which is unique up to scaling and sign change. In this paper we prove that any radial, bounded in the energy norm solution of the equation behaves asymptotically as a sum of modulated $W$s, decoupled by the scaling, and a radiation term. 
 
 The proof essentially boils down to the fact that the equation does not have purely nonradiative multisoliton solutions. The proof overcomes the fundamental obstruction for the extension of the 3D case (treated in \cite{DuKeMe13})  by reducing
 the study of a multisoliton solution to a finite dimensional system of ordinary differential equations on the modulation parameters.
 The key ingredient of the proof is to show that this system of equations creates some radiation, contradicting the existence of pure multisolitons. 
\end{abstract}

\maketitle
\tableofcontents
\section{Introduction}
Consider the wave equation on $\RR^N$, $N\geq 3$, with an energy-critical focusing nonlinearity:
\begin{equation}
 \label{NLW}
 \partial_t^2u-\Delta u=|u|^{\frac{4}{N-2}}u,
\end{equation} 
and initial data
\begin{equation}
 \label{ID}
 \vec{u}_{\restriction t=0}=(u_0,u_1)\in \HHH,
\end{equation} 
where $\vec{u}:=(u,\partial_t u)$, and $\HHH:=\hdot(\RR^N)\times L^2(\RR^N)$. The equation is locally well-posed in $\HHH$ (see e.g. \cite{LiSo95}, \cite{KeMe08}, \cite{BuCzLiPaZh13}): for any initial data $(u_0,u_1)\in \HHH$, there exists a unique maximal solution $\vec{u}\in C^0((T_-,T_+),\HHH)$. The energy:
$$E(\vec{u}(t))=\frac{1}{2}\int_{\RR^N} |\nabla_{t,x}u(t,x)|^2\,dx-\frac{N-2}{2N}\int_{\RR^N} |u(t,x)|^{\frac{2N}{N-2}}\,dx$$
of a solution is conserved, where 
$$\nabla u=(\partial_{x_j}u)_{1\leq j\leq N},\quad \nabla_{t,x}u=(\partial_tu,\nabla u).$$
The equation \eqref{NLW} has the following scaling invariance. For $f\in \hdot(\RR^N)$ and $\lambda>0$, we denote
$$f_{(\lambda)}(x)=\frac{1}{\lambda^{\frac N2-1}} f\left( \frac{x}{\lambda} \right).$$
If $u$ is a solution of \eqref{NLW}, then 
$$\frac{1}{\lambda^{\frac{N}{2}-1}}u\left( \frac{t}{\lambda},\frac{x}{\lambda} \right)=u_{(\lambda)}\left( \frac{t}{\lambda},x \right)$$
 is also a solution.

As many other nonlinear dispersive equations, equation (\ref{NLW}) admits solitary waves (or solitons) that are well-localized solutions traveling at a fixed speed. The soliton resolution conjecture predicts that any global solution of this type of equations decouples asymptotically as a sum of decoupled solitons, a radiative term (typically, a solution to a linear equation) and a term going to zero in the energy space. For finite time blow-up solutions, a similar decomposition should hold depending on the nature of the blow-up.

Our main result (Theorem \ref{T:resolution} below) is the soliton resolution for equation \eqref{NLW}, when $N$ is odd and $(u_0,u_1)$ is radial. To put this result into perspective, we start with a discussion on the soliton resolution conjecture for general nonlinear dispersive equations.

This conjecture arose from numerical simulations and the theory of integrable systems. It was observed in 1955 by Fermi, Pasta and Ulam \cite{FermiPastaUlam55} in one of the first numerical experiments that a discretization of a wave equation with a quadratic nonlinearity leads to localized, soliton-like solutions. In 1965, Zabusky and Kruskal \cite{ZabuskyKruskal65}  highlighted numerically  the emergence of solitons and multisolitons solutions of the completely integrable KdV. This explained the result in \cite{FermiPastaUlam55}, as Kruskal found that, as the spacial mesh in the discretization tends to $0$, the solutions of the Fermi-Pasta-Ulam problem converge to solutions of the KdV equation see \cite{Kruskal78}. We refer to \cite{IvancevicIvancevic10BO} for a survey on numerical work.

The first theoretical results in the direction of the soliton resolution were obtained for the completely integrable KdV, mKdV and 1-dimensional cubic NLS, using the method
of inverse scattering. Namely, for KdV, a solution with smooth initial data decaying sufficiently fast at infinity decomposes, for positive $x$, as a finite sum of solitons and a term going to $0$ at infinity (see \cite{EcSc83}, \cite{Eckhaus86}). Note that this is only a partial result, due to the restriction on the initial data and also to the fact that the dispersive component, localized in $\{x<0\}$ is not completely described. We refer to \cite{Schuur86BO} for mKdV, and  \cite{ZakharovShabat71},  \cite{SegurAblowitz76}, \cite{Segur76}, \cite{Novoksenov80}, \cite{BoJeMcL18} for cubic NLS in one space dimension. A characteristic feature of these integrable systems, already observed in \cite{ZabuskyKruskal65}, is that the collision between solitons is elastic: a solution behaving as a sum of solitons as $t\to+\infty$ also behaves as a sum of solitons, with the same parameters, as $t\to-\infty$.

Very few complete results are known for non-integrable models. A typical dispersive partial differential equation for which the soliton resolution is believed to hold unconditionally is the energy critical wave maps. For this equation, the first known related results were ``bubble'' theorems, stating that any solution developing a singularity in finite or infinite time converges locally in space, along a sequence of times, to a soliton (see \cite{ChTZ93,Struwe03b} for the equivariant case, \cite{StTa10b} for the general case). Using the ``channels of energy'' method coming from our previous works \cite{DuKeMe11a}, \cite{DuKeMe13}, that is closely related to the techniques that we will develop in this article, it was proved that the soliton resolution holds for wave maps, in an equivariant setting, with an additional assumption ruling out a multisoliton configuration \cite{CoKeLaSc15a, CoKeLaSc15b}, and that it holds for a sequence of times without this condition (see \cite{Cote15} and \cite{JiaKenig17}). The limiting case of a pure two-soliton is treated in \cite{JendrejLawrie19}, where it is shown in particular that the collision between the two solitons is inelastic.

For wave maps without symmetry assumption, but with the same $S^2$ target, a weak form of the soliton resolution was proved along a sequence of times (see \cite{Grinis17}), and the complete resolution is only known close to the ground-state \cite{DuJiKeMe17b}.

The proof of the soliton resolution conjecture seems out of reach for other non-integrable nonlinear dispersive equations, such as nonlinear Schr\"odinger and Klein-Gordon equations. Known results include scattering below a threshold given by the ground state of the equation (see e.g. \cite{KeMe06}, \cite{DuHoRo08}, \cite{IbMaNa11}, \cite{Dodson15}), local study close to the ground state solution (see \cite{NaSc11Bo}), and in some particular cases the existence of a global compact attractor (see \cite{Tao07DPDE}). We refer to the introduction of \cite{DuJiKeMe17} for a more complete discussion and more references on the subject.

Going back to equation \eqref{NLW}, it is known that if $\|(u_0,u_1)\|_{\HHH}$ is sufficiently small, then $T_+=\infty$ and the solution scatters to a linear solution. It is also well-known that in general finite energy solutions to equation \eqref{NLW} may blow up in finite time. Indeed, using the finite speed of propagation for equation \eqref{NLW} to localize ODE type blow up solutions, one can easily construct solutions $\vec{u}$ with $T_+<\infty$ and $\|\vec{u}(t)\|_{\HHH}\to\infty$, as $t\to T_+$. These solutions are called \emph{type I} blow-up solutions. It is expected that these solutions, after a self-similar change of variable, satisfy a decomposition similar to the soliton resolution. This type of result is only known in the 1-d setting (see \cite{MerleZaag12} and references therein) and very little is known in the energy-critical case (see \cite{Donninger17} for a local study).

To rule out the ODE type behavior, we will focus on solutions that are bounded in the energy space, i.e. such that
\begin{equation}
\label{bounded}
\sup_{t\in [0,T_+)}\,\|\vec{u}(t)\|_{\HHH}<\infty.
\end{equation}
The dynamics of these solutions is very rich. Apart from the scattering solutions mentioned above, equation (\ref{NLW}) admits also various types of finite energy steady states $Q\in\dot{H}^1$, i.e.
\begin{equation}
\label{Ell}
-\Delta Q=|Q|^{\frac{4}{N-2}}Q\,,\quad \mathrm{in}\,\,\RR^N,
\end{equation}
(see \cite{Ding86}, \cite{dPMPP11}, \cite{dPMPP13}).
Among them, a distinguished role is played by the {\it ground state} $W$.
\begin{equation}
\label{defW}
W:=\left(1+\frac{|x|^2}{N(N-2)}\right)^{1-\frac{N}{2}},
\end{equation} 
which is, as a consequence of \cite{Pohozaev65,GiNiNi81}, the 
 unique $\hdot$ radial solution of \eqref{Ell}
 on $\RR^N$, up to scaling and sign change, and the non-zero solution of \eqref{Ell} with least energy (see \cite{Talenti76}).
 
Stationary solutions are not the only global, non-scattering solution. It is indeed possible to construct solutions of the form 
\begin{equation*}
u(t,x)=W_{(\lambda(t)}\left(x\right)+v_L(t,x),
\end{equation*} 
where $v_L$ is a small solution of the free wave equation: see \cite{KrSc07} (for $\lambda(t)=1$) and  \cite{DoKr13} ($\lambda(t)=t^{\eta}$, $|\eta|$ small). There also exist, at least in high space dimensions, global solutions that are asymptotically of the form $W+W_{(\lambda(t))}$, where $\lambda(t)$ goes to $0$ as $t$ goes to infinity (see \cite{Jendrej19}).

There are also solutions blowing up in finite time that are bounded in the energy space. These solutions are called \emph{type II blow-up solutions}.
In \cite{KrScTa09}, \cite{HiRa12}, \cite{KrSc14} and \cite{Jendrej17}, type II blow-up solutions of the form of a rescaled ground state plus a small dispersive term  were constructed. More precisely the solution is given by 
\begin{equation*}
u(t,x)=W_{(\lambda(t)}\left(x\right)+\epsilon(t,x),
\end{equation*}
where $\frac{\lambda(t)}{T_+-t}\to 0+$ as $t\to T_+$, and $\vec{\epsilon}(t)=(\epsilon,\,\partial_t\epsilon)$ is small in the energy space. It is expected that multisoliton concentration is also possible for type II blow-up solutions, and it is an open problem to construct such a solution. 

In the radial setting,
$W$ is the unique steady state, and thus the only soliton up to sign change and scaling.  The soliton resolution conjecture predicts that any radial solution that does not blow up with a type I blow-up decomposes asymptotically as a sum of $\pm W$, decoupled by time-dependent scalings, a radiation term and a term going to zero in the energy space. The radiation term should be a solution to the linear wave equation in the global case, and a fixed element of $\HHH$ in the finite time blow-up case. We note that all the solutions mentioned above are in accordance with this conjecture. The resolution was proved in \cite{DuKeMe13} by the authors, for $N=3$. For other dimensions (still in the radial case), soliton resolution is only known along a sequence of times, see \cite{CoKeLaSc18}, \cite{Rodriguez16} and \cite{JiaKenig17}. For the non-radial settting, for a sequence of times, see \cite{DuJiKeMe17}.

With the method of proof used in \cite{CoKeLaSc18,Rodriguez16,JiaKenig17,DuJiKeMe17}, relying on monotonicity laws giving convergence only after averaging in time, we cannot hope for more than a decomposition for a particular sequence of times. The difficulty in obtaining the resolution for all times is illustrated by the harmonic map heat flow equation, for which the decomposition for a sequence of times is known, but the soliton resolution for all times does not hold in full generality because of an example of Topping \cite{Topping97}. 

The soliton resolution for radial solutions of \eqref{NLW} holds in full generality \cite{DuKeMe13} when $N=3$. The key fact in the proof is the following dispersive estimate for radial non-zero solutions $u$ of \eqref{NLW}, with $(u_0,u_1)\neq (\pm W_{(\lambda)},0)$, $N=3$. Assume (for simplicity) that $u$ exists globally in time. Then
\begin{equation}
 \label{plus}
\sum_{\pm} \lim_{t\to \pm\infty} \int_{r\geq |t|} |\nabla_{t,x}u(t,x)|^2\,dx>0.
\end{equation} 
The proof of \eqref{plus} relies fundamentally, among other things, on the following ``energy channel'' property of radial solution $v$ of the linear wave equation in space dimension $3$ (see \cite{DuKeMe11a,DuKeMe13}). Let $R>0$, $P(R)=\left\{ \left( \frac{a}{r},0 \right),\; a\in \RR\right\}\subset \HHH(R):=(\hdot\times L^2)(\{x\in \RR^3,\; |x|>R\})$. Then
\begin{equation}
 \label{linplus}
\sum_{\pm}\lim_{t\to\pm \infty} \int_{|x|>R+|t|}|\partial_{t,r}v(t)|^2\,dx\geq \frac 12 \left\|\Pi_{P(R)}^{\bot}(v_0,v_1)\right\|^2_{\HHH(R)},
\end{equation} 
where $\Pi^{\bot}_{P(R)}$ denotes the orthogonal projection onto the orthogonal complement of $P(R)$ in $\HHH(R)$. The analog of \eqref{linplus} for higher odd dimension was obtained in \cite{KeLaSc14},  \cite{KeLaLiSc15}, but the exceptional subspace $P(R)$ is replaced by a finite dimensional subspace of $\HHH(R)$ with dimension increasing to infinity with $N$. The fact that the dimension of $P(R)$ is strictly greater than one for $N\geq 5$ is responsible for the failure of this method, since we only have here the one parameter scaling invariance to deal with this failure, to start the proof. Let us mention however that it is possible, using \eqref{linplus}, to prove that in odd space dimensions $N\geq 5$, any radial solution of \eqref{NLW} that does not satisfy \eqref{plus} is asymptotically close, for large $r$, to one of the elements of $P(R)$ (see \cite{DuKeMe19Pc}).

The radial solution $u$ of \eqref{NLW} is said to be a \emph{pure multisoliton} (asymptotically as $t\to \pm \infty$) when there exist $J\geq 2$ scaling parameters $0<\lambda_J(t)\ll \ldots\ll \lambda_2(t)\ll \lambda_1(t)$ and signs $(\iota_j)_j\in \{\pm 1\}^{J}$ such that
$$\vec{u}(t,x)=\sum_{j=1}^J \left(\iota_j W_{(\lambda_j(t))},0\right)+o(1),\quad t\to\pm \infty,$$
where $o(1)$ goes to $0$ in $\HHH$ (see \cite{Jendrej19} for an example of pure multisoliton). If $u$ is both a pure multisoliton as $t\to+\infty$ and $t\to -\infty$, we say that the collision between the solitons is \emph{elastic}. In space dimension $3$, the fact that \eqref{plus} is valid for any nonstationary solution $u$ rules out elastic collisions, in stark contrast to the integrable case \cite{IvancevicIvancevic10BO}. One of the main results in this work, is a slightly weaker form of \eqref{plus}, namely that a radial solution $u$ of \eqref{NLW} that stays close to a sum of decoupled solitons for a sufficiently long time must satisfy
\begin{equation}
 \label{plusM}
\sum_{\pm} \lim_{t\to \pm\infty} \int_{r\geq |t|-M} |\nabla_{t,x}u(t,x)|^2\,dx>0,
\end{equation} 
for some large $M>0$ (see Propositions \ref{P:EDO} and \ref{P:rigidity}). As a consequence, there does not exist a radial solution of \eqref{NLW} that 
is a pure multisoliton both as $t\to\infty$ and $t\to-\infty$, i.e. the collision of solitons is inelastic for \eqref{NLW}, when $N$ is odd. The proof of this property depends heavily on the ``energy channels'' property for the linearized  wave equation $\left(\partial_t^2-\Delta-\frac{N+2}{N-2}W^{\frac{4}{N-2}}\right)u=0$, established in \cite{DuKeMe19Pa}. The $M$ in \eqref{plusM} is needed to eliminate the extra dimensions arising from $P$ in \eqref{linplus}, when $N\geq 5$, odd. 

The main result in this work, namely full resolution for \eqref{NLW}, $N$ odd, in the radial case (assuming bounded energy norm) combines the sequence of times result in \cite{Shen14} with a strengthened version of \eqref{plusM}. The result of \cite{Shen14} allows us to reduce ourselves to studying the dynamics close to a sum of solitons plus a dispersive term, and the strenghtened version of \eqref{plusM} allows us to take advantage of the fact that the collision of two or more solitons produce dispersion, which then gives the full decomposition. We view this as a ``road map'' to attack soliton resolution in non-integrable settings. 

We now turn to the main results of this paper. If $a$ and $b$ are integers with $a<b$, we denote $\llbracket a,b\rrbracket=[a,b]\cap\NN$.

\begin{theo}
\label{T:resolution}
Assume that $N\geq 5$ is odd. Let $u$ be a radial solution of \eqref{NLW}, with maximal time of existence $T_+$, such that 
\begin{equation}
 \label{u_bounded}
\sup_{0\leq t<T_+} \|\vec{u}(t)\|_{\HHH}<\infty.
 \end{equation} 
Then there exists $J\geq 0$, signs $(\iota_j)_j\in \{\pm 1\}^J$, scaling parameters $(\lambda_j)_j\in (0,\infty)^J$ such that 
$$\forall j\in \llbracket 1,J-1\rrbracket,\quad \lim_{t\to T_+}\frac{\lambda_j(t)}{\lambda_{j+1}(t)}=+\infty$$
and
\begin{itemize}
 \item (Type II blow-up case). If $T_+<\infty$, then $J\geq 1$ and
 there exists $(v_0,v_1)\in \HHH$ such that 
 $$\lim_{t\to T_+}\left\|\vec{u}(t)-(v_0,v_1)-\sum_{j=1}^J(\iota_jW_{(\lambda_j)},0)\right\|_{\HHH}=0.$$
 Furthermore:
$$\lim_{t\to T_+} \frac{\lambda_1(t)}{T_+-t}=0.$$
 \item (Global in time case). If $T_+=+\infty$, then  
 there exists a solution $v_L$ of the linear wave equation such that 
 $$\lim_{t\to +\infty}\left\|\vec{u}(t)-\vec{v}_L(t)-\sum_{j=1}^J(\iota_jW_{(\lambda_j)},0)\right\|_{\HHH}=0.$$
Furthermore, if $J\geq 1$, 
 $$\lim_{t\to +\infty} \frac{\lambda_1(t)}{t}=0.$$
 \end{itemize}
\end{theo}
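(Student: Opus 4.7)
The plan is to combine the sequence-of-times soliton resolution of Shen \cite{Shen14} with the strengthened inelasticity statement \eqref{plusM} (Propositions \ref{P:EDO} and \ref{P:rigidity}) to upgrade convergence along a sequence to convergence for all times. First, I would invoke \cite{Shen14} to extract a sequence $t_n\to T_+$ along which $\vec u(t_n)$ admits the desired decomposition into a sum of decoupled modulated $W$s plus a radiation term (a linear wave profile $\vec v_L$ if $T_+=\infty$, a fixed $(v_0,v_1)\in\HHH$ if $T_+<\infty$). This fixes the integer $J$, the signs $(\iota_j)$ and the radiation part once and for all. The content of Theorem \ref{T:resolution} is thus that the sum of solitons, with continuous modulation parameters $\lambda_j(t)$, tracks $\vec u(t)$ in $\HHH$ for the \emph{entire} approach to $T_+$.

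The argument proceeds by contradiction. Assume there exists $\eta>0$ and a sequence $s_n\to T_+$ at which $\vec u(s_n)$ is at distance $\geq \eta$ in $\HHH$ from every $J$-soliton configuration (after subtracting the radiation part). Combining this with the Shen sequence $t_n$, one obtains arbitrarily long intervals $[\sigma_n,\tau_n]\subset(0,T_+)$ on which $\vec u(t)$ stays in an $\eta$-tubular neighborhood of a $J$-soliton configuration, but at whose endpoints the modulation parameters differ macroscopically, or the configuration ceases to be valid. After rescaling to the natural scale of the solitons, one obtains a solution which is, for arbitrarily long time, $\HHH$-close to a pure sum of decoupled $W$s. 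Modulation analysis on these long intervals, in the spirit of \cite{JendrejLawrie19, CoKeLaSc15a} and the authors' earlier works, reduces the dynamics of the parameters to a finite-dimensional system of ODEs driven by the pairwise interactions between the ground states. This is the content of Proposition \ref{P:EDO}.

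The heart of the argument is then Proposition \ref{P:rigidity}: the ODE system forces the solution to radiate nontrivial energy into the exterior region $\{|x|\geq |t|-M\}$ in at least one time direction, which is incompatible with the near-conservation of exterior energy required to remain close to a pure multisoliton configuration for an arbitrarily long time. This contradicts the assumption and forces full convergence. The finite dimensional ODE analysis is coupled to the channel-of-energy estimate for the linearized operator $\partial_t^2-\Delta-\tfrac{N+2}{N-2}W^{4/(N-2)}$ established in \cite{DuKeMe19Pa}: one shows that the time-dependent profile generated by the ODE trajectories projects nontrivially onto the complement of the exceptional finite-dimensional subspace at an appropriate exterior scale, thereby triggering outgoing radiation. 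The constant $M$ in \eqref{plusM} reflects the scale needed to clear the finite-dimensional exceptional subspace inherent to the higher odd dimensions $N\geq 5$; this is the key point where the three-dimensional argument of \cite{DuKeMe13}, relying on the one-dimensional $P(R)$, must be replaced by a truly dynamical mechanism.

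Once inelasticity is secured, the theorem follows by standard arguments. The scaling separation $\lambda_j(t)/\lambda_{j+1}(t)\to\infty$ is a consequence of the decoupling already extracted by \cite{Shen14} together with continuity of the modulation. In the global case, the existence of the linear profile $\vec v_L$ follows from applying the scattering theory to the dispersive remainder, using the profile decomposition and the fact that all concentrating profiles have been collected in the soliton sum. In the finite-time case, $(v_0,v_1)$ arises as the strong $\HHH$ limit of $\vec u(t)-\sum_j(\iota_jW_{(\lambda_j(t))},0)$, obtained from the Cauchy criterion thanks to the rigidity. Finally, the constraints $\lambda_1(t)/(T_+-t)\to 0$ and $\lambda_1(t)/t\to 0$ are obtained from finite-speed-of-propagation arguments applied to the exterior energy carried by the top-scale soliton, which would otherwise escape outside of any light cone. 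The main obstacle, as indicated in the abstract, is the rigidity step: turning the purely algebraic multi-bubble ansatz into a dynamical ODE and extracting from that ODE a concrete, quantitative amount of exterior radiation, despite the fact that the static multisoliton profile lies in (or extremely close to) the large exceptional subspace of \eqref{linplus}.
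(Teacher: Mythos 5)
Your proposal correctly identifies the paper's overall strategy: upgrade the sequence-of-times resolution to all times by tracking modulation parameters on long intervals near the multisoliton manifold, reduce to a finite-dimensional ODE system, and derive a contradiction that is ultimately traced to radiation. However, the precise logical architecture is somewhat different from what you describe, and one essential structural fact is missing from your account. In the paper, Proposition \ref{P:rigidity} is a \emph{purely finite-dimensional} exit-time result for a system of differential inequalities; it says nothing directly about radiation. Its crucial hypothesis \eqref{EDO5} is a lower bound on the outermost scaling parameter, and that lower bound is what must be produced. The key structural input that produces it, and which your proposal does not identify, is Lemma \ref{L:expansion}: because $v_L$ already collects all the outgoing radiation of $u$, the nonlinear profiles appearing in the profile decomposition of $\vec{u}(s_n)-\vec{v}_L(s_n)$ along any sequence $s_n$ in the modulation interval are themselves \emph{non-radiative} solutions of \eqref{NLW}. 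Feeding this into the classification of radial non-radiative solutions from \cite{DuKeMe19Pc} (Theorems \ref{T:asymptotic_NR} and \ref{T:uniqueness_NR}) and the exterior-scale estimate of Proposition \ref{P:ext_scaling} gives a nonzero coefficient $\ell$ on the exceptional exterior profile, hence the lower bound \eqref{EDO5}. The channel-of-energy estimate of Corollary \ref{Cor:G10} enters in a more technical role than you suggest: it does not ``trigger outgoing radiation'' from the ODE trajectory, but rather supplies the sharp expansion (Proposition \ref{P:F18}) of a non-radiative solution near a multisoliton, which is needed both for the derivation of the ODE system and for controlling the higher-order errors. Without the non-radiativity of the profiles the ODE analysis alone would not close the contradiction, so this is the one point in your sketch where a genuine mechanism is missing rather than merely compressed.
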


As mentioned above, the proof of Theorem \ref{T:resolution} yields the fact that the collision between radial solitons in odd space dimension $N\geq 5$ is inelastic:
\begin{theo}
 \label{T:inelastic}
Assume that $N\geq 5$ is odd.
Let $u$ be a radial, global, solution of \eqref{NLW} such that 
$$ \sup_{t\in \RR}\left\|\vec{u}(t)\right\|_{\HHH}<\infty$$
and
\begin{equation}
\label{no_radiation}
\forall A>0, \quad \sum_{\pm \infty} \lim_{t\to\pm\infty} \int_{|x|>|t|-A} |\nabla_{t,x}u(t,x)|^2\,dx=0.
\end{equation} 
Then $(u_0,u_1)=(0,0)$ or there exists $\lambda>0$, $\iota\in \{\pm 1\}$ such that $(u_0,u_1)=\left( \iota W_{(\lambda)},0 \right)$.
\end{theo}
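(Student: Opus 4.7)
The plan is to combine the soliton resolution of Theorem \ref{T:resolution} with the inelasticity of the multisoliton dynamics provided by Propositions \ref{P:EDO} and \ref{P:rigidity}. Applying Theorem \ref{T:resolution} to $u$ as $t\to +\infty$ and, by time reversibility of \eqref{NLW}, to $u$ as $t\to -\infty$, we obtain integers $J_\pm\geq 0$, signs $(\iota_j^\pm)_j$, ordered scales $(\lambda_j^\pm(t))_j$ and radial free-wave solutions $v_L^\pm$ such that
\begin{equation*}
\lim_{t\to\pm\infty}\Bigl\|\vec{u}(t) - \vec{v}_L^\pm(t) - \sum_{j=1}^{J_\pm} \bigl(\iota_j^\pm W_{(\lambda_j^\pm(t))},\, 0\bigr)\Bigr\|_{\HHH} = 0,
\end{equation*}
with $\lambda_1^\pm(t) = o(|t|)$ when $J_\pm\geq 1$.

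The first step is to eliminate the radiation terms $v_L^\pm$ using hypothesis \eqref{no_radiation}. The decay $|\nabla W_{(\lambda)}(x)|\lesssim \lambda^{(N-2)/2}|x|^{-(N-1)}$ together with the bound $\lambda_j^\pm(t)=o(|t|)$ implies that $\int_{|x|>|t|-A}|\nabla W_{(\lambda_j^\pm(t))}|^2\,dx\to 0$ as $t\to\pm\infty$ for any fixed $A$. On the other hand, the strong Huygens principle in odd dimension gives $\int_{|x|>|t|-A}|\nabla_{t,x}v_L^\pm|^2\,dx\to \|\vec{v}_L^\pm\|_{\HHH}^2$ as $t\to\pm\infty$. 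Plugging the decomposition into \eqref{no_radiation} and controlling the cross-terms by Cauchy--Schwarz yields $v_L^\pm\equiv 0$.

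We now argue by case analysis on $(J_+,J_-)$. If $J_+=J_-=0$, then $\vec{u}(t)\to 0$ in $\HHH$ at both time infinities, and small-data scattering theory together with uniqueness for the Cauchy problem forces $u\equiv 0$. If $\max(J_+,J_-)\geq 2$, then on arbitrarily long intervals in at least one time direction $u$ stays close to a pure multisoliton configuration with at least two bubbles; the strengthened dispersive estimate \eqref{plusM} of Propositions \ref{P:EDO} and \ref{P:rigidity} then produces nontrivial energy in $\{|x|>|t|-M\}$ for some large $M$, directly contradicting \eqref{no_radiation}.

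The remaining case is $J_+=J_-=1$, where $\vec{u}(t)$ is asymptotic as $t\to\pm\infty$ to a single modulated soliton $(\iota^\pm W_{(\lambda^\pm(t))}, 0)$. Here I would invoke the $J=1$ instance of Proposition \ref{P:EDO}: a modulation analysis around $W_{(\lambda(t))}$ combined with the channels-of-energy estimate for the linearized operator $-\Delta-\tfrac{N+2}{N-2}W^{4/(N-2)}$ from \cite{DuKeMe19Pa} should show that \eqref{no_radiation} forces $\lambda^\pm$ to be constant and the modulation remainder to vanish, yielding $u\equiv \iota W_{(\lambda)}$. I expect this final rigidity step to be the main obstacle, since for $J=1$ the inelasticity cannot be extracted from soliton--soliton interaction as in the multisoliton case and must instead be traced back to the dispersive mechanism of the linearized flow alone.
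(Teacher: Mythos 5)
Your overall strategy---soliton resolution, then elimination of $v_L$ via \eqref{no_radiation}, then the modulation/rigidity machinery---is the correct one and matches the paper's. Two remarks, however.

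First, a minor inefficiency: the paper applies Theorem~\ref{T:resolution} only as $t\to+\infty$; the backward-in-time decomposition is not needed, since \eqref{no_radiation} and \eqref{R11} already force $v_L\equiv 0$, after which the forward decomposition suffices. Your bidirectional treatment works but doubles the work.

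Second, and more importantly, you have the difficulty exactly backwards. You identify $J=1$ as ``the main obstacle'' because ``the inelasticity cannot be extracted from soliton--soliton interaction.'' In the paper's framework the $J=1$ case is in fact the \emph{easiest}. The crucial intermediate step is Proposition~\ref{P:EDObis}: the modulation system of Proposition~\ref{P:EDO} is proved on intervals $[\tilde t_n,t_n]$ with $o_n(1)$ error terms, and under hypothesis \eqref{no_radiation} (which gives $v_L\equiv 0$, so $\sigma(h,v_L)\equiv 0$, and $E(\vec u(t))=JE(W,0)$ exactly rather than asymptotically) these $o_n(1)$ errors disappear, yielding a clean differential system valid for all $t\geq T$. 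Once this is in hand, the bound \eqref{bound_delta_gamma_bis} reads $\delta\lesssim\gamma^{(N-2)/4}$; for $J=1$ the quantity $\gamma$ is identically zero (there is no ratio $\lambda_{j+1}/\lambda_j$ to take), so $\delta\equiv 0$ for $t\geq T$, meaning $\partial_t u\equiv 0$ and $h\equiv 0$ there, and by uniqueness of the Cauchy problem $u\equiv\iota W_{(\lambda)}$. No additional ``dispersive mechanism of the linearized flow'' has to be traced at this stage beyond what is already packaged in Proposition~\ref{P:F18} / Corollary~\ref{Cor:G10}, and Proposition~\ref{P:rigidity} (with its hypothesis $J_0\geq 2$) is never invoked. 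The genuinely hard case is $J\geq 2$, which is where the exit-time analysis of Proposition~\ref{P:rigidity} and the lower bound from Proposition~\ref{P:ext_scaling} are needed; you do cite the right tools there, but the point is that the multisoliton interaction, far from making things harder, is precisely what creates the detectable radiation in \eqref{plusM}. The step you gloss over---upgrading the finite-interval modulation estimates with $o_n(1)$ to a global-in-time system by exploiting $v_L=0$ and the exact energy conservation---is the actual content of the paper's Section~\ref{S:inelastic}, and your write-up does not surface it.
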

Note that \eqref{no_radiation} exactly means that the linear component $v_L$ is identically $0$ both as $t\to +\infty$ and $t\to -\infty$, so that Theorem \ref{T:inelastic} rules out being asymptotically a multisoliton at both $t=+\infty$ and $t=-\infty$, in stark contrast to the completely integrable case \cite{IvancevicIvancevic10BO}. For results on inelastic soliton collisions for equation \eqref{NLW} without a radiality assumption, see \cite{MartelMerle18}.

The outline of the paper is as follows. 
The preliminary section \ref{S:preliminaries} is mainly devoted to the Cauchy theory for equation \eqref{NLW}. We recall well-posedness results from \cite{GiSoVe92}, \cite{GiVe95}, \cite{KeMe08}, and more particularly from \cite{BuCzLiPaZh13}, where the high-dimensional case is treated. Using finite speed of propagation, we also recall a local and global Cauchy theory for the equation \eqref{NLW} in the exterior of a wave cone $\{|x|>R+|t|\}$, $R\geq 0$, as developed in \cite{DuKeMe19Pc}. Section \ref{S:multisoliton} concerns the bound from below of the exterior energy for linear equations with a potential. After recalling the main result of \cite{DuKeMe19Pa}, we state and prove an exterior energy bound for the linearized operator close to a multisoliton. In Section \ref{S:exterior}, we consider solutions of the equation \eqref{NLW} such that 
$$ \lim_{t\to+\infty} \int_{|x|>|t|+R} |\nabla_{t,x}u|^2\,dx=0,$$
for some fixed $R>0$. 
We recall from \cite{DuKeMe19Pc} that the initial data of these solutions (that we call non-radiative solutions) have a prescribed asymptotic behaviour. We also consider the case of non-radiative solutions that are close to a multisoliton, proving a bound from below of the exterior scaling parameter $\lambda_1$. In Section \ref{S:reduction}, we reduce the proof of the soliton resolution to the study of a finite dimensional dynamical system on the scaling parameters $\lambda_j$ and some of the coefficients arising in the expansion of the solution. Finally in Section \ref{S:end_of_proof} we prove a blow-up/ejection result for this dynamical system and conclude the proof. Section \ref{S:inelastic} is dedicated to a short sketch of the proof of Theorem \ref{T:inelastic}, which is a byproduct of part of the proof of Theorem \ref{T:resolution}. A few computations are gathered in the appendix.
\section{Preliminaries}
\label{S:preliminaries}
\subsection{Notations}
\label{SS:notations}
We denote $\hdot=\hdot(\RR^N)$, $L^2=L^2(\RR^N)$, $\HHH=\hdot\times L^2$.
If $\lambda>0$, $f\in \dot{H}^1$ and $g\in L^2$, we let:
$$ f_{(\lambda)}(x)=\frac{1}{\lambda^{\frac{N}{2}-1}}f\left( \frac{x}{\lambda} \right),\quad g_{[\lambda]}(x)=\frac{1}{\lambda^{\frac{N}{2}}}g\left( \frac{x}{\lambda} \right),$$
so that 
$$\|f_{(\lambda)}\|_{\hdot}=\|f\|_{\hdot}\text{ and }\|g_{[\lambda]}\|_{L^2}=\|g\|_{L^2}.$$

If $A$ is a space of distributions on $\RR^N$, we will denote by $A_{\rad}$ the subspace of $A$ consisting of the elements of $A$ that are radial. We will, without making a distinction, consider a radial function as depending on the variable $x\in \RR^N$ or the variable $r=|x|$.

If $\Omega$ is an open subset of $\RR^{n}$, ($n=N$ or $n=N+1$), and $A=A(\RR^n)$ a Banach space of distributions on $\RR^n$, we recall that $A(\Omega)$ is the set of restrictions of elements of $A$ to $\Omega$, with the norm
$$\|u\|_{A(\Omega)}:=\inf_{\tilde{u}}\|\tilde{u}\|_{A(\RR^n)}.$$
where the infimum is taken over all $\tilde{u}\in A(\RR^n)$ such that $\tilde{u}_{\restriction \Omega}=u$. 
To lighten notation, if $R>0$ and $n=N$, we will denote by 
$$A(R):= A_{\rad}\left(\left\{x\in \RR^N\text{ s.t. }|x|>R\right\}\right).$$
We will mainly use this notation with $\HHH$, so that $\HHH(R)$ is the space of radial distributions $(u_0,u_1)$ defined for $r>R$ such that
$$ u_0\in L^{\frac{2N}{N-2}}((R,+\infty), r^{N-1}dr), \quad \int_R^{\infty} (\partial_ru_0)^2r^{N-1}\,dr<\infty$$
and 
$$ u_1\in L^2\left((R,+\infty),r^{N-1}dr\right).$$

We will often consider solutions of the wave equation in the exterior of wave cones. For $R>0$, we denote 
$$\Gamma_R(t_0,t_1)=\{|x|> R+|t|,\; t\in [t_0,t_1]\}.$$
To lighten notations, we will denote
$$\Gamma_R(T)=\Gamma_R(0,T),\quad \Gamma_R=\Gamma_{R}(0,\infty).$$
We denote by $S_L(t)$ the linear wave group: 
\begin{equation}
\label{SL}
S_L(t)(u_0,u_1)=\cos(t\sqrt{-\Delta})u_0+\frac{\sin(t\sqrt{-\Delta})}{\sqrt{-\Delta}}u_1,
\end{equation} 
so that the general solution (in the Duhamel sense) of 
\begin{equation}
 \label{LW_in}
\left\{
\begin{aligned}
(\partial_t^2-\Delta)u&=f\\
\vec{u}_{\restriction t=t_0}&=(u_0,u_1)\in \HHH,
\end{aligned}\right.
\end{equation} 
where $I$ is an interval and $t_0\in I$
is 
\begin{equation}
\label{Duhamel}
u(t)=S_L(t-t_0)(u_0,u_1)+\int_{t_0}^{t}S_L(t-s)(0,f(s))\,ds. 
\end{equation} 
We note that by finite speed of propagation, the restriction of $u$ to $\Gamma_R(T)$ depends only on the restriction of $f$ to $\Gamma_R(T)$ and the restriction of $(u_0,u_1)$ to $\{r>R\}$.

\subsection{Local and global Cauchy theory}
\label{SS:defi}
We will denote by $\dot{W}^{s,p}(\RR^N)$ the homogeneous Sobolev space defined as the closure of $C^{\infty}_0(\RR^N)$ with respect to the norm $\|\cdot\|_{\dot{W}^{s,p}}$ defined by
$$ \|f\|_{\dot{W}^{s,p}}:=\|D^s f\|_{L^p},$$
where $D^s$ is the Fourier multiplier of symbol $|\xi|^{s}$. We denote by $\dot{B}^{s}_{p,q}$ the standard homogeneous Besov space, which can be defined using Littlewood-Paley decomposition or the real interpolation method: $\dot{B}^{s}_{p,q}=\left[L^p,\dot{W}^{1,p}\right]_{s,q}$, $0<s<1$, $1\leq p,q\leq \infty.$

We define, following \cite{BuCzLiPaZh13}:
\begin{gather*}
\Ssp:= L^{\frac{2(N+1)}{N-2}}(\RR^{1+N}),\quad \Wsp:=L^{\frac{2(N+1)}{N-1}}\left( \RR,\dot{B}^{\frac{1}{2}}_{\frac{2(N+1)}{N-1},2} (\RR^N) \right)\\
\Wsp':= L^{\frac{2(N+1)}{N+3}}\left(\RR,\dot{B}^{\frac{1}{2}}_{\frac{2(N+1)}{N+3},2}(\RR^N) \right).
\end{gather*}
If $I$ is an interval, we will denote by $\Ssp(I)$, $\Wsp(I)$, $\Wsp'(I)$ the restriction of these spaces to $I\times \RR^N$.

We will need the following Strichartz estimates (see \cite{St77a} \cite{GiVe95}): if $t_0\in I$, $f\in \Wsp'(I)$,  $(u_0,u_1)\in \HHH$, then $u$ (defined by \eqref{Duhamel}) is in $\Ssp(I)\cap \Wsp(I)$ and
\begin{equation}
\label{Strichartz}
\sup_{t\in \RR} \|\vec{u}(t)\|_{\HHH}+\|u\|_{\Ssp(I)}+
\|u\|_{\Wsp(I)} 
\lesssim \|(u_0,u_1)\|_{\HHH(I)} +\|f\|_{\Wsp'(I)}.
\end{equation}
We denote $F(u)=|u|^{\frac{4}{N-2}}u$. 

\begin{defi}
\label{D:solution}
 Let $I$ be an interval with $t_0\in I$, $(u_0,u_1)\in \HHH$. If $N\geq 6$, we call solution of \eqref{NLW} on $I\times \RR^N$, with initial data
\begin{equation}
 \label{S0}
\vec{u}_{\restriction t=t_0}=(u_0,u_1)
\end{equation} 
a function $u\in C^0(I,\hdot)$ such that $\partial_t u\in C^0(I,L^2)$ and
\begin{equation}
 \label{S1}
\forall t\in I,\quad u(t)=S_L(t-t_0)(u_0,u_1)+\int_{t_0}^tS_L(s-t_0)F(u(s))\,ds.
\end{equation} 
If $N\in \{3,4,5\}$, a solution is defined in the same way, with the additional requirement that $u\in \Ssp(J\times \RR^N)$ for all compact intervals $J\subset I$.
\end{defi}
It is known (see \cite{GiVe95}, \cite{KeMe08} and \cite{BuCzLiPaZh13}), that for all initial data $(u_0,u_1)$, there is a unique maximal solution $u$ defined on a maximal interval $(T_-,T_+)$ and that satisfies the following blow-up criterion:
$$T_+<\infty\Longrightarrow \|u\|_{\Ssp([t_0,T_+))}=\infty.$$

We next recall from \cite{DuKeMe19Pc} the definition and some properties of solutions of \eqref{NLW} on the exterior $\Gamma_R(t_0,t_1)$ of wave cones. We will use the following continuity property of multiplication by characteristic functions on a Besov space (see \cite[Lemma 2.3]{DuKeMe19Pc}):
\begin{lemma}
 \label{L:char_Besov}
 Let $R\geq 0$.
 \begin{itemize}
  \item The multiplication by the characteristic function $\indic_{\{|x|>R\}}$ is 
a continuous function from $\dot{B}^{\frac 12}_{\frac{2(N+1)}{N+3},2}(\RR^N)$ into itself, and from
$\dot{W}^{\frac{2}{N},\frac{2(N+1)}{N+3}}(\RR^N)$ into itself. In both cases, the operator norm is independent of $R$.
\item Let $I$ be an interval. The multiplication by the characteristic function $\indic_{\{|x|>R+|t|\}}$ is continuous from $\Wsp'(I)$ into itself into itself. The operator norm is independent of $R$ and $I$.
\end{itemize}
 \end{lemma}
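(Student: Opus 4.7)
The plan is to reduce the problem to a multiplier estimate for the characteristic function of a half-space, using scaling and a boundary-straightening argument. First, the norms on both $\dot{B}^{1/2}_{p,2}$ and $\dot{W}^{2/N,p}$, with $p=\tfrac{2(N+1)}{N+3}$, are scale-covariant: $\|f(\cdot/\lambda)\|_{\dot{B}^{1/2}_{p,2}}=\lambda^{N/p-1/2}\|f\|_{\dot{B}^{1/2}_{p,2}}$, and the identity $\indic_{\{|x|>R\}}(x)=\indic_{\{|y|>1\}}(x/R)$ makes the operator norm independent of $R$, so it suffices to treat $R=1$. A smooth cut-off then isolates a tubular neighborhood of $\{|x|=1\}$ (away from the sphere the multiplier is locally constantly $0$ or $1$), and a finite partition of unity together with a $C^\infty$ diffeomorphism straightening the sphere locally to a hyperplane reduces matters to showing that multiplication by $\indic_+:=\indic_{\{x_N>0\}}$ is bounded on $\dot{B}^{1/2}_{p,2}(\RR^N)$ and on $\dot{W}^{2/N,p}(\RR^N)$ (multiplication by $C^\infty_0$ functions is bounded on both spaces).

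For the Besov case I would use the difference characterization
$$\|f\|_{\dot{B}^{1/2}_{p,2}}^2\approx\int_0^\infty\frac{\omega_p(f,h)^2}{h^2}\,dh,\qquad \omega_p(f,h):=\sup_{|y|\leq h}\|f(\cdot+y)-f\|_{L^p},$$
combined with the splitting
$$(\indic_+f)(\cdot+y)-\indic_+f=\indic_+\bigl(f(\cdot+y)-f\bigr)+\bigl(\indic_+(\cdot+y)-\indic_+\bigr)f(\cdot+y).$$
The first contribution is bounded by $\omega_p(f,|y|)$. The second is supported on a slab of thickness $|y_N|$ adjacent to $\{x_N=0\}$, with $L^p$-norm controlled by $\bigl(\int_0^{|y_N|}\|f(\cdot,t)\|_{L^p(\RR^{N-1})}^p\,dt\bigr)^{1/p}$. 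Substituting into the difference integral and applying Fubini in the $y$ variable, the first term reproduces $\|f\|_{\dot{B}^{1/2}_{p,2}}^2$, and a one-dimensional Hardy inequality applied to $t\mapsto\|f(\cdot,t)\|_{L^p(\RR^{N-1})}^p$ controls the second, the Hardy step being permitted because $1/2<1/p$, equivalently $p<2$, which holds since $\tfrac{2(N+1)}{N+3}<2$. No $R$-dependent constant enters.

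For $\dot{W}^{2/N,p}=\dot{F}^{2/N}_{p,2}$ the same strategy works via the Triebel--Lizorkin difference characterization, with the condition $2/N<1/p$, i.e.\ $N^2-N-4>0$, valid for $N\geq 3$. For the second bullet, $\Wsp'(I)=L^{\frac{2(N+1)}{N+3}}\bigl(I;\dot{B}^{1/2}_{\frac{2(N+1)}{N+3},2}(\RR^N)\bigr)$, and since $\indic_{\{|x|>R+|t|\}}(\cdot)$ is, at each fixed $t$, the spatial characteristic function of a ball exterior of radius $R+|t|$, the first bullet applies slicewise; integrating in time concludes, with operator norm independent of $R$ and $I$. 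I expect the main obstacle to be the half-space multiplier estimate at the sharp endpoint: one is using the Hardy inequality essentially without slack, so the difference identity and the slab-integral splitting have to be carried out carefully to keep the constants uniform. The remaining reductions are routine.
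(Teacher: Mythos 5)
The paper does not actually prove this lemma; it cites Lemma~2.3 of \cite{DuKeMe19Pc}, so there is no in-paper proof to compare against. Your route --- scale to $R=1$ (with $R=0$ trivial), localize near the sphere, flatten to a half-space, and prove the half-space multiplier bound by the difference characterization plus a Hardy-type estimate --- is a legitimate classical path to this result, and your exponent checks are correct: $s<1/p$ becomes $p<2$ for $s=\tfrac12$, and $N^2-N-4>0$ for $s=\tfrac2N$, both holding for all $N\geq3$. The slicewise argument for $\mathsf{W}'(I)$ is also sound given the $R$-uniformity.

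Two things you dismiss as routine are in fact the places where the argument could quietly break, and they deserve to be spelled out. First, on the \emph{homogeneous} spaces $\dot{B}^{1/2}_{p,2}$ and $\dot{W}^{2/N,p}$, boundedness of multiplication by $C^\infty_0$ cutoffs and invariance under a diffeomorphism equal to the identity outside a compact set are not automatic; the efficient justification here is to verify them on $L^p$ and on $\dot{W}^{1,p}$ (the commutator term $f\nabla\chi$ is handled by $\dot{W}^{1,p}\hookrightarrow L^{p^*}$, valid since $p=\tfrac{2(N+1)}{N+3}<N$) and then use precisely the paper's definition $\dot{B}^s_{p,q}=[L^p,\dot{W}^{1,p}]_{s,q}$ to interpolate. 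Second, the slab term is not killed by Hardy alone: after Fubini and Hardy, what remains is a weighted trace integral $\bigl(\int_0^\infty \|f(\cdot,h)\|_{L^p(\RR^{N-1})}^2\,h^{2(1/p-s)-1}\,dh\bigr)^{1/2}$, and you still need the trace-type inequality bounding this by $\|f\|_{\dot{B}^s_{p,2}}$; it is the conjunction of the two, not Hardy by itself, that uses $0<s<1/p$. Neither of these is a wrong step, but they are where the actual work lives, and a careful write-up should make them explicit rather than fold them into ``routine reductions.''
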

We also recall the following chain rule for fractional derivative outside wave cones:
\begin{equation}
 \label{fractional_cones}
 \|\indic_{\Gamma_R(T)}F(u)\|_{\Wsp'((0,T))}\lesssim \|u\|_{\Ssp(\Gamma_R(T))}^{\frac{4}{N-2}}\|u\|_{\Wsp((0,T))},
\end{equation} 
which is proved in \cite{DuKeMe19Pc} as a consequence of Lemma \ref{L:char_Besov}, H\"older's inequality and the usual chain rule for fractional derivative (\cite[Lemma 2.10]{BuCzLiPaZh13}).

\begin{defi}
\label{D:sol_cone}
 Let $t_0<t_1$, $R\geq 0$. Let $(u_0,u_1)\in \HHH(R)$. A solution $u$ of \eqref{NLW} on $\Gamma_R(t_0,t_1)$ with initial data $(u_0,u_1)$ is the restriction to $\Gamma_R(t_0,t_1)$ of a solution $\tilde{u}\in C^0([t_0,t_1],\hdot)$ with $\partial_t\tilde{u}\in C^0([t_0,t_1],L^2)$, to the equation:
 \begin{equation}
  \label{NLW_trunc}
\partial_t^2\tu-\Delta \tu=|\tu|^{\frac{4}{N-2}}\tu\indic_{\{|x|>R+|t|\}},
\end{equation} 
with an initial data
\begin{equation}
 \label{ID_trunc}
 \vec{\tu}_{\restriction t=t_0}=(\tilde{u}_0,\tilde{u}_1),
\end{equation}
where $(\tilde{u}_0,\tilde{u}_1)\in \HHH$ is an extension of $(u_0,u_1)$
\end{defi}
Note that by finite speed of propagation, the value of $u$ on $\Gamma_R(t_0,t_1)$ does not depend on the choice of $(\tilde{u}_0,\tilde{u}_1)$, provided $(\tilde{u}_0,\tilde{u}_1)$ and $(u_0,u_1)$ coincide for $r>R$.

Using
Lemma \ref{L:char_Besov} and finite speed of propagation, the Cauchy theory in \cite{BuCzLiPaZh13} (or \cite{KeMe08} for the case $N\in\{3,4,5\}$) adapts easily to the case of solutions outside wave cones.
We give some of the statements, and omit the proofs that are the same as in \cite{KeMe08}, \cite{BuCzLiPaZh13}. We refer to \cite[Section 2]{DuKeMe19Pc} for a more complete exposition. The space $\Ssp(\Gamma_R(T))$ in the following proposition is defined in Subsection \ref{SS:notations}.

\begin{prop}[Local well-posedness]
 \label{P:LWP_cone}
 Let $R\geq 0$, $(u_0,u_1)\in \HHH(R)$ and $T>0$. Assume   
 $$\|(u_0,u_1)\|_{\HHH(R)}\leq A.$$
 Then there exists $\eta=\eta(A)$ such that if 
 $$\|S_L(t)(u_0,u_1)\|_{\Ssp(\Gamma_R(T))}<\eta,$$
 then there exists a unique solution $u$ to \eqref{NLW} on $\Gamma_R(T)$. Furthermore for all $t\in [0,T]$,
$$\|\vec{u}(t)-\vec{S}_L(t)(u_0,u_1)\|_{\HHH(R+|t|)}\leq C\eta^{\theta_N}A^{1-\theta_N}$$
for some constant $\theta_N$ depending only on $N$.
\end{prop}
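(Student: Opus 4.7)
The plan is to run a standard Banach fixed--point contraction scheme for the truncated equation \eqref{NLW_trunc} on all of $\RR^N$, and then recover the solution on $\Gamma_R(T)$ by finite speed of propagation. First, fix an extension $(\tilde{u}_0,\tilde{u}_1)\in\HHH$ of $(u_0,u_1)$ with $\|(\tilde{u}_0,\tilde{u}_1)\|_{\HHH}\leq 2A$. By finite speed of propagation, $S_L(\cdot)(\tilde u_0,\tilde u_1)$ agrees with $S_L(\cdot)(u_0,u_1)$ on $\Gamma_R(T)$, so the hypothesis yields $\|S_L(\cdot)(\tilde{u}_0,\tilde{u}_1)\|_{\Ssp(\Gamma_R(T))}\leq \eta$, while the unconditional Strichartz estimate \eqref{Strichartz} gives $\|S_L(\cdot)(\tilde{u}_0,\tilde{u}_1)\|_{\Wsp((0,T))}\leq C_0 A$.

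Next, I would seek a fixed point of
$$
\Phi(v)(t):=S_L(t)(\tilde u_0,\tilde u_1)+\int_0^t S_L(t-s)\bigl(0,\,F(v(s))\,\indic_{\{|x|>R+|s|\}}\bigr)\,ds
$$
in the complete metric space
$$
B:=\bigl\{v\in \Ssp(\Gamma_R(T))\cap\Wsp((0,T))\,:\, \|v\|_{\Ssp(\Gamma_R(T))}\leq 2\eta,\ \|v\|_{\Wsp((0,T))}\leq 2C_0 A\bigr\}.
$$
Combining \eqref{Strichartz} with the cone--truncated fractional chain rule \eqref{fractional_cones}, I get, for $v\in B$,
$$
\|\Phi(v)-S_L(\cdot)(\tilde u_0,\tilde u_1)\|_{L^\infty\HHH\cap \Ssp(\Gamma_R(T))\cap \Wsp((0,T))}\lesssim \|v\|_{\Ssp(\Gamma_R(T))}^{\frac{4}{N-2}}\|v\|_{\Wsp((0,T))}\lesssim \eta^{\frac{4}{N-2}}A.
$$
Choosing $\eta=\eta(A)$ small enough that $C\eta^{4/(N-2)}A\leq \min(\eta,\,C_0 A)$ guarantees $\Phi(B)\subset B$, and the analogous bilinear estimate for $\Phi(v)-\Phi(\tilde v)$ (using $|F(v)-F(\tilde v)|\lesssim (|v|+|\tilde v|)^{4/(N-2)}|v-\tilde v|$ together with Lemma \ref{L:char_Besov}) yields contraction. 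The unique fixed point $\tilde u$ solves \eqref{NLW_trunc}; its restriction to $\Gamma_R(T)$ is, by finite speed of propagation, independent of the extension and provides the sought solution $u$.

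For the quantitative estimate, the contraction already delivers
$$
\|\vec u(t)-\vec S_L(t)(u_0,u_1)\|_{\HHH(R+|t|)}\leq \|\vec{\tilde u}(t)-\vec S_L(t)(\tilde u_0,\tilde u_1)\|_{\HHH}\leq C\eta^{\frac{4}{N-2}}A.
$$
Writing $\eta^{4/(N-2)}A=\eta^{\theta_N}\bigl(\eta^{4/(N-2)-\theta_N}A\bigr)$ and invoking the quantitative smallness $\eta\leq \eta(A)\lesssim A^{-(N-2)/4}$ that was needed for the contraction, one obtains $\eta^{4/(N-2)-\theta_N}A\lesssim A^{1-\theta_N}$ for a specific exponent $\theta_N\in(0,4/(N-2)]$ depending only on $N$ (for instance $\theta_N=4/(N+2)$ works), which is the stated form of the bound.

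The only analytic ingredients one needs are already extracted in Lemma \ref{L:char_Besov} and \eqref{fractional_cones}; consequently I do not expect any genuine obstacle, only careful bookkeeping. The most delicate point is verifying that the $\Wsp$--type Besov norm interacts well with the characteristic function of the cone exterior in the nonlinear term, but this is precisely what Lemma \ref{L:char_Besov} is designed to handle, so the argument reduces to a transcription of the standard Cauchy theory of \cite{BuCzLiPaZh13} into the exterior--cone setting.
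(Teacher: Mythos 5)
The paper itself does not prove this proposition; it simply invokes \cite[Theorem 3.3]{BuCzLiPaZh13} together with Lemma~\ref{L:char_Besov} and finite speed of propagation, referring to \cite{DuKeMe19Pc} for a fuller treatment. Your reconstruction is well organized, but it contains a genuine gap in the closure of the fixed-point scheme, precisely at the point where the high-dimensional theory of \cite{BuCzLiPaZh13} departs from the more familiar $N\le 5$ argument.

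The problem is that your only nonlinear estimate is
\[
\|\Phi(v)-S_L(\cdot)(\tilde u_0,\tilde u_1)\|_{\Ssp(\Gamma_R(T))\cap\Wsp((0,T))}\lesssim \|v\|_{\Ssp(\Gamma_R(T))}^{\frac{4}{N-2}}\|v\|_{\Wsp((0,T))}\lesssim \eta^{\frac{4}{N-2}}A,
\]
coming from \eqref{Strichartz} and \eqref{fractional_cones}, and you then require $C\eta^{4/(N-2)}A\le\eta$ in order to map $B$ into itself. This last inequality reads $\eta^{(6-N)/(N-2)}\le (CA)^{-1}$. For $N=5$ it does give a constraint of the form $\eta\le(CA)^{-3}$ (not $\eta\lesssim A^{-(N-2)/4}$, by the way), but for $N=6$ it is equivalent to $CA\le 1$, and for every odd $N\ge 7$ the exponent $(6-N)/(N-2)$ is negative, so the inequality forces $\eta$ to be \emph{large}, not small. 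Thus the ball $\{\|v\|_{\Ssp(\Gamma_R(T))}\le 2\eta\}$ does \emph{not} close under $\Phi$ in any dimension $N\ge 6$, and in particular not in the dimensions $N=7,9,\dots$ on which the entire paper is focused. The subsequent algebra identifying $\theta_N=4/(N+2)$ inherits this error. The missing ingredient is a \emph{derivative-free} Strichartz bound closing the $\Ssp$ component independently: since $\Ssp=L^{\frac{2(N+1)}{N-2}}_{t,x}$, one has $\|F(v)\|_{L^{\frac{2(N+1)}{N+2}}_{t,x}}=\|v\|_{\Ssp}^{\frac{N+2}{N-2}}$, and the (non-admissible, but valid) inhomogeneous Strichartz estimate $\|\int_0^t S_L(t-s)(0,f(s))\,ds\|_{\Ssp}\lesssim\|f\|_{L^{\frac{2(N+1)}{N+2}}_{t,x}}$ then yields $\|\Phi(v)\|_{\Ssp}\le\eta+C\eta^{\frac{N+2}{N-2}}\le 2\eta$ for $\eta$ small, since $\frac{N+2}{N-2}>1$ in every dimension. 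The $\Wsp$ component is then closed by the bound you wrote, and your estimate $\eta^{4/(N-2)}A$ is exactly what goes into the claimed bound $\|\vec u(t)-\vec S_L(t)(u_0,u_1)\|_{\HHH(R+|t|)}\lesssim\eta^{\theta_N}A^{1-\theta_N}$ once the algebra is redone correctly.

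A secondary, but real, issue: the phrase ``the analogous bilinear estimate for $\Phi(v)-\Phi(\tilde v)$ yields contraction'' glosses over the fact that for $N\ge 7$ the derivative $F'(z)=\frac{N+2}{N-2}|z|^{4/(N-2)}$ is only H\"older continuous, so a fractional chain rule for the difference $F(v)-F(\tilde v)$ in the $\Wsp'$ norm is not a straightforward variant of \eqref{fractional_cones}. In \cite{BuCzLiPaZh13} this is precisely the technical heart of the paper (their Lemma~2.10 and the surrounding arguments); the usual trick is to run the contraction in a weaker metric (e.g.\ $L^\infty_t L^2_x$ or a plain Lebesgue space), deduce a fixed point, and then upgrade regularity. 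I would either cite this explicitly or reproduce the weaker-metric contraction if you want a self-contained proof.
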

(See \cite[Theorem 3.3]{BuCzLiPaZh13}).
We have the following blow-up criterion, see \cite[Lemma 2.8]{DuKeMe19Pc}:
if $u\in S\left(\Gamma_{R}(T_R^+)\right)$, then $T^+_R=+\infty$. Furthermore, $u$ scatters to a linear solution for $\{|x|>R+|t|\}$: there exists a solution $v_L$ of the linear wave equation on $\RR\times \RR^N$ such that 
\begin{equation}
\label{exterior_scattering}
\lim_{t\to +\infty} \left\|\vec{u}(t)-\vec{v}_L(t)\right\|_{\HHH(R+|t|)}=0. 
\end{equation}

We also have the following long-time perturbation theory result (see \cite[Theorem 2.20]{KeMe08}, \cite[Theorem 3.6]{BuCzLiPaZh13}, \cite[Proposition A.1]{Rodriguez16}).
\begin{prop}
\label{P:LTPT}
Let $A>0$. There exists $\eta_0=\eta_0(A)$ with the following property.
 Let $R>0$, $T\in (0,\infty]$, $(u_0,u_1)\in \HHH(R)$ and $(v_0,v_1)\in \HHH(R)$. Assume that $v$ is a restriction to $\Gamma_R(0,T)$ of a function $V$ such that $\vec{V}\in C^0([0,T],\HHH)$ and 
 $$\partial_t^2V-\Delta V=\indic_{\{|x|>R+|t|\}}\left(F(V)+e_1+e_2\right),$$
 with 
 \begin{gather*}
\sup_{0\leq t\leq T}\|V(t)\|_{\HHH(R+|t|)}+\|V\|_{\Wsp(0,T)}\leq A\\
\|(u_0,u_1)-(v_0,v_1)\|_{\HHH(R)}+\|e_1\|_{\Wsp'(0,T)}+\|e_2\|_{L^1((0,T),L^2)}=\eta\leq \eta_0, 
\end{gather*}
Then the solution with initial data $(u_0,u_1)$ is defined on $\Gamma_R(T)$ and 
$$ \|v-u\|_{\Ssp(\Gamma_R(T))}\leq C\eta^{c_N},$$
for some constant $c_N\in (0,1]$ depending only on $N\geq 3$.
\end{prop}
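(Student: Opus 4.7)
The proposition is a standard long-time perturbation statement adapted to the exterior-cone setting, and the plan is to reduce it to the small-data local theory of Proposition \ref{P:LWP_cone} by iterating on a partition of $[0,T]$.

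First I would choose a small $\eta_1=\eta_1(A)$ (to be fixed later) and partition $[0,T]$ into finitely many consecutive intervals $I_k=[t_k,t_{k+1}]$, $k=0,\ldots,K-1$, with
\[
\|V\|_{\Wsp(I_k)}\leq \eta_1,\qquad K\leq K(A,\eta_1).
\]
Such a partition exists since $\|V\|_{\Wsp(0,T)}\leq A$. Writing $w=u-v$, the equation for $w$ on $\Gamma_R(T)$ reads
\[
\partial_t^2 w-\Delta w=\indic_{\{|x|>R+|t|\}}\bigl(F(v+w)-F(v)-e_1-e_2\bigr),
\]
so by Duhamel and finite speed of propagation, the restriction of $w$ to $\Gamma_R(t_k,t_{k+1})$ solves the corresponding integral equation with data $\vec{w}(t_k)$.

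On $I_0$, I would set up a contraction argument in the ball $\{w:\|w\|_{\Ssp(\Gamma_R(I_0))}+\|w\|_{\Wsp(I_0)}\leq \delta\}$ for some $\delta=\delta(A,\eta)$. Using the Strichartz inequality \eqref{Strichartz}, the cut-off Lemma \ref{L:char_Besov}, the fractional chain rule \eqref{fractional_cones} applied to $F(v+w)-F(v)$ (treated via the standard pointwise bound $|F(v+w)-F(v)|\lesssim (|v|^{4/(N-2)}+|w|^{4/(N-2)})|w|$ together with the Besov-based chain rule of \cite[Lemma 2.10]{BuCzLiPaZh13}), one obtains a nonlinear estimate of the form
\[
\|\indic_{\Gamma_R}(F(v+w)-F(v))\|_{\Wsp'(I_0)}
\lesssim \bigl(\|v\|_{\Wsp(I_0)}^{\frac{4}{N-2}}+\|w\|_{\Wsp(I_0)}^{\frac{4}{N-2}}\bigr)\bigl(\|w\|_{\Ssp(\Gamma_R(I_0))}+\|w\|_{\Wsp(I_0)}\bigr).
\]
Combined with the data and source bounds, this yields
\[
\|w\|_{\Ssp(\Gamma_R(I_0))}+\|w\|_{\Wsp(I_0)}+\sup_{t\in I_0}\|\vec{w}(t)\|_{\HHH(R+|t|)}
\leq C(\eta+\eta_1^{\frac{4}{N-2}}\delta+\delta^{1+\frac{4}{N-2}}),
\]
and by choosing $\eta_1$ small depending on $A$ and then $\delta=C\eta$ we close the fixed point on $I_0$ with $\|w\|_{\Ssp(\Gamma_R(I_0))}\lesssim \eta$.

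Next I would iterate: the control on $\vec{w}(t_{k+1})$ from step $k$ serves as the new ``data difference'' for step $k+1$. At each step the data perturbation and the source errors can grow by a multiplicative constant, so after $K$ iterations one still has $\|w\|_{\Ssp(\Gamma_R(I_k))}\leq C^K \eta$ for every $k$. Absorbing $C^K$ into $\eta^{c_N}$ by shrinking $\eta_0$ further (which is allowed because $K$ depends only on $A$) gives the claimed bound $\|v-u\|_{\Ssp(\Gamma_R(T))}\leq C\eta^{c_N}$. The blow-up criterion recalled after Proposition \ref{P:LWP_cone} then ensures $u$ is in fact defined on all of $\Gamma_R(T)$.

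The only genuine technical obstacle is the nonlinear estimate in the $\Wsp'$ norm on the exterior of the cone, since for $N\geq 6$ the exponent $\frac{4}{N-2}<1$ makes $F$ non-Lipschitz and one cannot just differentiate and use H\"older. This is precisely where Lemma \ref{L:char_Besov} and \eqref{fractional_cones} are crucial: they let one commute the characteristic function of the cone past the Besov/fractional derivative with a bound independent of $R$ and $T$, reducing matters to the usual chain rule on all of $\RR^{1+N}$. Once this ingredient is granted, the iteration is mechanical and essentially identical to the proofs in \cite{KeMe08,BuCzLiPaZh13}.
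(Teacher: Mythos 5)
Your strategy --- partition $[0,T]$ into finitely many intervals on which $\|V\|_{\Wsp}$ is below a threshold depending on $A$, close a contraction on each interval using Strichartz, the cutoff lemma and the fractional chain rule, then iterate --- is exactly the long-time perturbation scheme of \cite{KeMe08,BuCzLiPaZh13} that the paper invokes. The paper itself provides no proof: it simply observes that this scheme adapts to the exterior cone once Lemma~\ref{L:char_Besov} lets one commute $\indic_{\{|x|>R+|t|\}}$ through the $\dot B^{1/2}$-based norms, and that the additional source $e_2\in L^1L^2$ is harmless. So at the level of approach you have reproduced the intended argument.

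One point, however, is genuinely off. The displayed difference estimate
\[
\|\indic_{\Gamma_R}(F(v+w)-F(v))\|_{\Wsp'(I_0)}
\lesssim \bigl(\|v\|_{\Wsp}^{\frac{4}{N-2}}+\|w\|_{\Wsp}^{\frac{4}{N-2}}\bigr)\bigl(\|w\|_{\Ssp(\Gamma_R)}+\|w\|_{\Wsp}\bigr)
\]
is too strong when $\frac{4}{N-2}<1$, i.e.\ $N\geq 7$ (not $N\geq 6$ as you wrote; for $N=6$ the power is $1$ and $F'$ is Lipschitz). The correct fractional-difference estimate underlying \cite[Thm.~3.6]{BuCzLiPaZh13} has an additional term of the type $\|v\|_{\Wsp}\,\|w\|^{\frac{4}{N-2}}$, only H\"older in $w$, because $F'$ is merely H\"older continuous and one cannot prevent the half-derivative in $\Wsp'$ from landing on the $F'(v)$-type factor. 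Estimate \eqref{fractional_cones} and Lemma~\ref{L:char_Besov} handle the commutation with the cone cutoff and the chain rule for $F(u)$ alone; they do not by themselves give the difference estimate, which is a separate lemma in the cited references. This degenerate term is precisely where the exponent $c_N<1$ originates. It does not come from absorbing $C^K$ into $\eta^{c_N}$ --- a bound $\|w\|\leq C^K\eta$, if correct, would already yield $c_N=1$, and absorbing constants never forces $c_N<1$. Your iteration should therefore propagate the H\"older loss in $\eta$ across the $K$ intervals, as in \cite{BuCzLiPaZh13}, rather than a linear-in-$\eta$ bound.
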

\begin{remark}
 In \cite{KeMe08,BuCzLiPaZh13,Rodriguez16}, $e_2=0$, but the argument easily adapts to the setting of Proposition \ref{P:LTPT}.
\end{remark}

\subsection{Profile decomposition}
\label{SS:profile}
Let $\big\{(u_{0,n},u_{1,n})\big\}_n$ be a bounded sequence of radial functions in $\HHH$. We say that it admits a profile decomposition if for all $j\geq 1$, there exist a solution $U^j_F$ to the free wave equation with initial data in $\HHH$ and sequences of parameters $\{\lambda_{j,n}\}_n\in (0,\infty)^{\NN}$, $\{t_{j,n}\}_n\in \RR^{\NN}$ such that
\begin{equation}
 \label{psdo_orth}
 j\neq k \Longrightarrow \lim_{n\to\infty}\frac{\lambda_{j,n}}{\lambda_{k,n}}+\frac{\lambda_{k,n}}{\lambda_{j,n}}+\frac{|t_{j,n}-t_{k,n}|}{\lambda_{j,n}}=+\infty,
\end{equation} 
and, denoting 
\begin{gather}
 \label{rescaled_lin}
 U^j_{F,n}(t,r)=\frac{1}{\lambda_{j,n}^{\frac N2-1}}U^j_F\left( \frac{t-t_{j,n}}{\lambda_{j,n}},\frac{r}{\lambda_{j,n}} \right),\quad j\geq 1\\
 w_{n}^J(t)=S_L(t)(u_{0,n},u_{1,n})-\sum_{j=1}^J U^j_{F,n}(t),
\end{gather} 
one has 
\begin{equation}
 \label{wnJ_dispersive}
 \lim_{J\to\infty}\limsup_{n\to\infty}\|w_n^J\|_{\Ssp(\RR)}=0.
\end{equation} 
We recall (see \cite{BaGe99}, \cite{Bulut10}) that any bounded sequence in $\HHH$ has a subsequence that admits a profile decomposition. We recall also that the properties above imply that the following weak convergences hold:
\begin{equation}
 \label{wlim_w}
 j\leq J\Longrightarrow \left( \lambda_{j,n}^{\frac{N}{2}-1}w_n^J\left(t_{j,n},\lambda_{j,n}\cdot \right),\lambda_{j,n}^{\frac{N}{2}}\partial _tw_n^J\left(t_{j,n},\lambda_{j,n}\cdot \right)\right) \xrightharpoonup[n\to\infty]{} 0 \text{ in }\HHH.
\end{equation} 

If $\{(u_{0,n},u_{1,n})\}_n$ admits a profile decomposition, we can assume, extracting subsequences and time-translating the profiles if necessary, that the following limit exists:
$$\lim_{n\to\infty}\frac{-t_{j,n}}{\lambda_{j,n}}=\tau_j\in \{-\infty,0,\infty\}.$$
Using the existence of wave operator for the equation \eqref{NLW} if $\tau_j\in \{\pm\infty\}$ or the local well-posedness if $\tau_j=0$, we define the nonlinear profile $U^j$ associated to $\left(U^j_F,\{\lambda_{j,n}\}_n,\{t_{j,n}\}_n\right)$ as the unique solution to the nonlinear wave equation \eqref{NLW} such that 
$$\lim_{t\to\tau_j} \left\|\vec{U}^j(t)-\vec{U}^j_F(t)\right\|_{\HHH}=0.$$
We also denote by $U^j_n$ the rescaled nonlinear profile:
$$ U^j_n(t,r)=\frac{1}{\lambda_{j,n}^{\frac N2-1}}U^j\left( \frac{t-t_{j,n}}{\lambda_{j,n}},\frac{r}{\lambda_{j,n}} \right).$$
Then we have the following superposition principle outside the wave cone $\Gamma_0:=\left\{(t,x)\in \RR\times \RR^N\; : \;|x|>t>0\right\}.$
\begin{prop}
 \label{P:NL_profile}
 Let $\{(u_{0,n},u_{1,n})\}_n$ be a bounded sequence in $\HHH_{\rad}$. Assume that for all $j$ such that $\tau_j=0$, the nonlinear profile $U^j$ can be extended to a solution on $\Gamma_0$ (in the sense of Definition \ref{D:sol_cone}) such that $U^j\in \Ssp(\Gamma_0)$. Then for large $n$, there is a solution $u_n$ defined on $\Gamma_0$ with initial data $\{(u_{0,n},u_{1,n})\}_n$ at $t=0$. Furthermore, denoting, for $J\geq 1$, $(t,r)\in \Gamma_0$
 $$R_n^J(t,r)=u_n(t,r)-\sum_{j=1}^J U_n^j(t,r)-w_n^J(t,r),$$
 we have 
 $$\lim_{J\to\infty} \lim_{n\to\infty}\|R_n^J\|_{\Ssp(\Gamma_0)}+\sup_{t\geq 0}\left\|\vec{R}_n^J(t)\right\|_{\HHH(t)}=0.$$
\end{prop}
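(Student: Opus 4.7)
The plan is to construct the natural approximate solution
\begin{equation*}
v_n^J(t,r):=\sum_{j=1}^J U_n^j(t,r)+w_n^J(t,r)
\end{equation*}
on $\Gamma_0$ and invoke the long-time perturbation theorem (Proposition \ref{P:LTPT}). Since $\vec{U}^j(-t_{j,n}/\lambda_{j,n})-\vec{U}^j_F(-t_{j,n}/\lambda_{j,n})\to 0$ in $\HHH$ as $n\to\infty$ by definition of the nonlinear profile, one checks that $\vec{v}_n^J(0)=(u_{0,n},u_{1,n})+o_n(1)$ in $\HHH$. The task then reduces to two points: first, establishing uniform Strichartz-type bounds for $v_n^J$ on $\Gamma_0$; second, showing that the error
\begin{equation*}
e_n^J:=\partial_t^2 v_n^J-\Delta v_n^J-\indic_{\Gamma_0}F(v_n^J)
\end{equation*}
tends to zero in $\Wsp'((0,\infty))$ when $n\to\infty$ and then $J\to\infty$. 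Once these are in hand, Proposition \ref{P:LTPT} yields the solution $u_n$ on $\Gamma_0$ with the convergence of $R_n^J=u_n-v_n^J$ to zero in $\Ssp(\Gamma_0)$, and the $L^\infty_t\HHH(t)$ smallness then follows from \eqref{Strichartz} applied to $R_n^J$, which satisfies a linear wave equation with small source.

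For the first point, the pseudo-orthogonality \eqref{psdo_orth} of the parameters $(\lambda_{j,n},t_{j,n})$ yields a Pythagorean-type decomposition of both the $\Ssp$ and $\Wsp$ norms on $\Gamma_0$, giving
\begin{equation*}
\limsup_{n\to\infty}\|v_n^J\|_{\Wsp(0,\infty)}\lesssim \Bigl(\sum_{j=1}^J\|U^j\|_{\Wsp(\Gamma_0)}^{\frac{2(N+1)}{N-1}}\Bigr)^{\frac{N-1}{2(N+1)}}+\limsup_{n\to\infty}\|w_n^J\|_{\Wsp(\RR)},
\end{equation*}
and analogously for $\Ssp$; the hypothesis $U^j\in\Ssp(\Gamma_0)$ together with \eqref{Strichartz} controls each summand, and the boundedness of $\|(u_{0,n},u_{1,n})\|_\HHH$ forces the series in $j$ to converge (only finitely many profiles have non-small Strichartz norm). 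For the second point, each $U^j$ (extended per Definition \ref{D:sol_cone}) satisfies the truncated equation $\partial_t^2U^j-\Delta U^j=\indic_{\{|x|>|t|\}}F(U^j)$ on $\Gamma_0$ and $w_n^J$ solves the free equation, so
\begin{equation*}
e_n^J=\indic_{\Gamma_0}\Bigl(\sum_{j=1}^JF(U_n^j)-F\bigl(\sum_{j=1}^JU_n^j+w_n^J\bigr)\Bigr).
\end{equation*}
Expanding $F$ and isolating mixed terms, the interactions between two distinct rescaled profiles $U_n^j,U_n^k$ vanish in $\Wsp'$ as $n\to\infty$ thanks to \eqref{psdo_orth} (via change of variables and weak convergence \eqref{wlim_w}), while the terms containing $w_n^J$ become arbitrarily small by combining \eqref{wnJ_dispersive} with the fractional chain rule \eqref{fractional_cones}.

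The principal obstacle is that the estimates must be carried out on $\Gamma_0$ rather than on $\RR^{1+N}$: the profiles with $\tau_j=0$ may fail to extend to global solutions of \eqref{NLW}, so the standard Bahouri--G\'erard-type arguments, formulated with unrestricted space-time norms, are unavailable, and every nonlinear estimate must be performed with the cutoff $\indic_{\Gamma_0}$ inserted. This is exactly the framework set up in Section \ref{S:preliminaries}: Lemma \ref{L:char_Besov} ensures that the cutoff acts continuously on the Besov spaces underlying $\Wsp'$, and \eqref{fractional_cones} gives the chain-rule bound that drives the interaction estimates. A secondary, more bookkeeping-type, issue is verifying the asymptotic orthogonality of the Besov norm in $\Wsp$ for rescaled profiles satisfying \eqref{psdo_orth}, which is done by a standard Littlewood--Paley analysis combined with the change of variables $(t,x)\mapsto ((t-t_{j,n})/\lambda_{j,n},x/\lambda_{j,n})$.
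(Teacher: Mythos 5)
The paper omits the proof of Proposition~\ref{P:NL_profile}, stating only that it is analogous to \cite[Proposition 2.3]{Rodriguez16} with the Strichartz and chain-rule estimates replaced by their cone versions (Lemma~\ref{L:char_Besov}, \eqref{fractional_cones}); your sketch is exactly that adaptation and identifies the right ingredients and the right obstacle, so the approach matches. Two small bookkeeping points are worth making explicit. First, the formula $e_n^J=\indic_{\Gamma_0}\bigl(\sum_j F(U_n^j)-F(v_n^J)\bigr)$ is not literally $(\partial_t^2-\Delta)v_n^J-\indic_{\Gamma_0}F(v_n^J)$, since the profiles with $\tau_j\neq 0$ are genuine solutions of the unrestricted equation and so contribute $F(U_n^j)$, not $\indic_{\Gamma_0}F(U_n^j)$, to $\Box v_n^J$; to apply Proposition~\ref{P:LTPT} as stated one should first replace each such $U_n^j$ by the solution of the truncated equation $\Box\widetilde U=\indic_{\{|x|>|t|\}}F(\widetilde U)$ with the same data, which coincides with $U_n^j$ on $\Gamma_0$ by finite speed of propagation, so that $\Box v_n^J$ is genuinely supported in $\Gamma_0$. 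Second, the Pythagorean expansion for the $\Wsp$-norm (a Besov-valued mixed norm) under the pseudo-orthogonality \eqref{psdo_orth} is indeed the point where one cannot use the triangle inequality, because $\sum_j\|\vec U^j_F(0)\|_{\HHH}^2<\infty$ only gives square-summability of the individual $\Wsp$-norms; your appeal to a Littlewood--Paley almost-orthogonality argument is the correct route and is essential, not merely cosmetic.
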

We omit the proof, which is similar to the proof when the solution is not restricted to the exterior of a wave cone (see \cite[Proposition 2.3]{Rodriguez16}). Let us emphasize the fact that under the assumptions of Proposition \ref{P:NL_profile}, the profiles $U_n^j(t,r)$ are well-defined on $\Gamma_0$, so that the conclusion of the proposition makes sense. If $\tau_j=0$, this follows from the assumption that $U^j$ is defined on $\Gamma_0$ and if $\tau_j=+\infty$, from the fact that $U^j$ is globally defined in the future. Finally, if $\tau_j=-\infty$, it follows from the fact that $U^j$ is globally defined in the past, and also, using small data theory, defined on a cone $\Gamma_R(T,+\infty)$ where $T$ is fixed in the interval of existence of $U^j$ and $R$ is large (see also the discussion after Proposition 2.11 in \cite{DuKeMe19Pc}). 

\subsection{Wave equation with a potential outside a wave cone}
\begin{lemma}
 \label{L:linear_approx}
 Let $N\geq 3$, and $M\in (0,\infty)$.
 There exists $C_M>0$ such that for all $V\in L^{\frac{2(N+1)}{N+4}}_{\loc}\left( \RR, L^{\frac{2(N+1)}{3}}\left(\RR^N\right) \right)$ with
 \begin{equation}
  \label{G84}
  \left\|\indxt V\right\|_{L^{\frac{2(N+1)}{N+4}} \left( \RR, L^{\frac{2(N+1)}{3}} \left(\RR^N\right) \right)} \leq M,
 \end{equation} 
 for all solution $u$ of 
 \begin{equation}
  \label{G83}
  \partial_t^2u-\Delta u+Vu=f_1+f_2,\quad \vec{u}_{\restriction t=0}=(u_0,u_1)\in \HHH,
 \end{equation} 
 where $f_1\in L^1\left(\RR,L^2(\RR^N)\right)$, $f_2\in \Wsp'$, one has:
 \begin{multline}
  \label{G90}
  \left\|u\indxt\right\|_{L^{\frac{2(N+1)}{N-2}}(\RR\times \RR^N)}+\sup_{t\in \RR}\left\|\indxt \nabla_{t,x}u(t)\right\|_{L^2}\\ 
  \leq C_M\left( \|(u_0,u_1)\|_{\HHH}+\left\| \indxt f_1\right\|_{L^1(\RR,L^2)}+\left\| \indxt f_2\right\|_{\Wsp'} \right).
 \end{multline} 
 If $N\in \{3,4,5\}$, one also has
 \begin{multline}
  \label{G91}
  \left\|u\indxt\right\|_{L^{\frac{N+2}{N-2}}\left( \RR,L^{\frac{2(N+2)}{N-2}} \right)}
  \\
  \leq C_M\left( \|(u_0,u_1)\|_{\HHH}+\left\| \indxt f_1\right\|_{L^1(\RR,L^2)}+\left\| \indxt f_2\right\|_{\Wsp'}\right).
 \end{multline}
 Finally there exists $g\in L^2([0,+\infty))$ such that 
\begin{equation}
\label{radiation}
 \lim_{t\to \infty} \int_{t}^{+\infty} \left| r^{\frac{N-1}{2}} \partial_r u(t,r)-g(r-t)\right|^2\,dr=0.
\end{equation} 
 \end{lemma}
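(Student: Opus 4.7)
The strategy is to treat $Vu$ as a forcing term and apply the standard Strichartz estimates, exploiting the exterior bound \eqref{G84}. Since only $V\indxt$ is controlled quantitatively, I first reduce to a global problem via finite speed of propagation. Let $\tilde{u}$ solve
\begin{equation*}
\partial_t^2\tilde{u}-\Delta\tilde{u}=\indxt(f_1+f_2-Vu),\qquad \vec{\tilde{u}}_{|t=0}=(u_0,u_1).
\end{equation*}
Then $u-\tilde{u}$ satisfies the free wave equation with zero initial data and source supported in $\{|x|<|t|\}$. Tracing the backward light cone from any $(t_0,x_0)$ with $|x_0|\geq |t_0|$ shows that this cone lies in $\{|y|>|s|\}$, where the source vanishes; hence $u=\tilde{u}$ on $\{|x|\geq |t|\}$, and it suffices to estimate $\tilde{u}$ globally since $\indxt u=\indxt\tilde{u}$.

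Applying Strichartz \eqref{Strichartz} together with the $L^1L^2$-input endpoint for $f_1$ and Lemma \ref{L:char_Besov} to treat $f_2\indxt$ in $\Wsp'$, one obtains
\begin{equation*}
\|\tilde{u}\|_\Ssp+\sup_t\|\vec{\tilde{u}}(t)\|_\HHH\lesssim \|(u_0,u_1)\|_\HHH+\|\indxt f_1\|_{L^1L^2}+\|\indxt f_2\|_{\Wsp'}+\|\indxt Vu\|_{L^1L^2}.
\end{equation*}
The last term is handled by Hölder: the identities
\begin{equation*}
\frac{N+4}{2(N+1)}+\frac{N-2}{2(N+1)}=1\;\text{(time)},\qquad \frac{3}{2(N+1)}+\frac{N-2}{2(N+1)}=\frac{1}{2}\;\text{(space)}
\end{equation*}
pair $V\in L^{\frac{2(N+1)}{N+4}}_tL^{\frac{2(N+1)}{3}}_x$ with $u\in\Ssp$ to land in $L^1_tL^2_x$, giving $\|\indxt Vu\|_{L^1L^2}\leq \|\indxt V\|_{L^{\frac{2(N+1)}{N+4}}L^{\frac{2(N+1)}{3}}}\|\indxt u\|_\Ssp\leq M\|\tilde{u}\|_\Ssp$. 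If $M$ is small enough this contribution is absorbed on the left; for general $M$, I partition $\RR$ into $K=K(M)$ consecutive intervals $I_\ell$ on each of which $\|\indxt V\|_{L^{\frac{2(N+1)}{N+4}}(I_\ell,L^{\frac{2(N+1)}{3}})}$ is below the absorption threshold, and iterate the estimate interval by interval using $\vec{\tilde{u}}(t_\ell)$ as new Cauchy data. The a priori finiteness of $\|\tilde{u}\|_{\Ssp(I_\ell)}$ needed to start the bootstrap on each $I_\ell$ is produced by a standard continuity argument on shorter subintervals. This yields \eqref{G90} with a constant $C_M$ depending only on $K(M)$.

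Estimate \eqref{G91} follows by the same argument applied to the additional Strichartz pair $L^{\frac{N+2}{N-2}}_tL^{\frac{2(N+2)}{N-2}}_x$, which is admissible for the free wave equation when $N\in\{3,4,5\}$; the $Vu$ forcing is now already controlled thanks to \eqref{G90}. For \eqref{radiation}, the bound \eqref{G90} shows that $\indxt(f_1+f_2-Vu)$ has finite norm in $L^1L^2+\Wsp'$, so $\tilde{u}$ scatters in $\HHH$ as $t\to+\infty$ to some radial solution $v_L$ of the free wave equation. The classical Friedlander radiation field for radial free waves in odd dimension then furnishes $g\in L^2(\RR)$ with $\int_0^\infty|r^{(N-1)/2}\partial_rv_L(t,r)-g(r-t)|^2\,dr\to 0$, and restricting to $r\geq t$ combined with $u=\tilde{u}$ there gives \eqref{radiation}. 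The only genuine obstacle is the iteration-over-intervals argument producing the $M$-dependence of $C_M$; once this is in place, the exponent arithmetic and the continuity of multiplication by $\indxt$ on Besov spaces (Lemma \ref{L:char_Besov}) make the remainder routine.
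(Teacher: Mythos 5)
Your proposal is correct, and the overall skeleton — reduce to an exterior forcing via finite speed of propagation, run Strichartz, pair $\Ssp$ with the $L^{\frac{2(N+1)}{N+4}}_tL^{\frac{2(N+1)}{3}}_x$ bound on $V$ by H\"older, then close the estimate, and finish by scattering to a free wave and invoking the radiation field — matches the paper's. The one place where you take a genuinely different route is in closing the estimate once the perturbative term $\|\indxt Vu\|_{L^1L^2}$ has been isolated: the paper applies H\"older in the space variable only, obtaining a linear integral inequality in time for $t\mapsto\|\indxt u(t)\|_{L^{\frac{2(N+1)}{N-2}}_x}$, and then invokes a Gr\"onwall-type lemma (Lemma 8.1 of \cite{FaXiCa11}) adapted to $L^p$-in-time quantities, which yields \eqref{G90} in one shot with the $M$-dependence of $C_M$ coming directly from the Gr\"onwall exponential. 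You instead apply H\"older in space-time and use the classical absorption argument: split $\RR$ into $K(M)$ time intervals on which $\|\indxt V\|$ is below the small-data threshold, absorb, and iterate, picking up the $M$-dependence through $K(M)$. Both are standard; the Gr\"onwall route is tighter and avoids the auxiliary continuity/bootstrap argument you need to initiate the iteration on each interval, while your interval decomposition is more elementary and self-contained. One further small point: the paper works directly with $\indxt u$, leaving the finite-speed reduction to $\tilde u$ implicit, whereas you spell it out — a welcome clarification. Your handling of \eqref{G91} and \eqref{radiation} is in substance the same as the paper's.
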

\begin{proof}
 By Strichartz inequality, for all $T>0$,
 \begin{multline*}
  \left\|\indxt u\right\|_{L^{\frac{2(N+1)}{N-2}}\left( [0,T]\times \RR^N \right)}
  \lesssim \|(u_0,u_1)\|_{\HHH}\\+\left\| \indxt f_1\right\|_{L^1(\RR,L^2)}+\left\| \indxt f_2\right\|_{\Wsp'}+\left\| \indxt Vu\right\|_{L^1\left((0,T),L^2\right)}.
 \end{multline*}
Using H\"older inequality in the space variable, we deduce
\begin{multline*}
  \left\|\indxt u\right\|_{L^{\frac{2(N+1)}{N-2}}\left( [0,T]\times \RR^N \right)}\\
  \lesssim \|(u_0,u_1)\|_{\HHH}+\left\| \indxt f_1\right\|_{L^1(\RR,L^2)}+\left\| \indxt f_2\right\|_{\Wsp'}\\
  +\int_0^{T}\left\|\indxt V\right\|_{L^{\frac{2(N+1)}{3}}}\left\| \indxt u\right\|_{L^{\frac{2(N+1)}{N-2}}}\,dt,
\end{multline*}
and thus, using a Gr\"onwall type lemma (Lemma 8.1 of \cite{FaXiCa11}) we obtain
\begin{multline*}
 \left\|\indxt u\right\|_{L^{\frac{2(N+1)}{N-2}}\left( [0,T]\times \RR^N \right)}\\
 \leq C_M\left( \|(u_0,u_1)\|_{\HHH}+ \left\| \indxt f_1\right\|_{L^1(\RR,L^2)}+\left\| \indxt f_2\right\|_{\Wsp'}\right).
\end{multline*}
Using Strichartz and H\"older's inequalities again we deduce the rest of \eqref{G90} and \eqref{G91}.

By an argument similar to the one in the proof of \cite[Lemma 2.8]{DuKeMe19Pc}, one can prove that there exists a solution $u_F$ of the free wave equation such that 
$$\lim_{t\to +\infty}\int_{|x|>|t|} \left|\nabla_{t,x}(u-u_F)\right|^2\,dx=0.$$
Since there exists $g\in L^2(\RR)$ such that
\begin{equation*}
 \lim_{t\to +\infty} \int_{0}^{+\infty} \left| r^{\frac{N-1}{2}} \partial_r u_F(t,r)-g(r-t)\right|^2\,dr=0,
\end{equation*} 
(see e.g. the appendix of \cite{DuKeMe19}),
the property \eqref{radiation} follows.
\end{proof}

\section{Channels of energy for the linearized operator close to a multisoliton}
\label{S:multisoliton}
This section is devoted to the proof of an exterior energy bound, stated in \S \ref{SS:exterior_multi}, for the equation \eqref{NLW} linearized around a multisoliton. We start (see Subsection \ref{SS:exterior_free}) by recalling previous results obtained in \cite{DuKeMe12b} and \cite{DuKeMe19Pa}, on exterior energy bounds for the free wave equation and the linearized equation around a single soliton. 
\subsection{Channels of energy for the free and the linearized wave equations}
\label{SS:exterior_free}
In \cite{DuKeMe12b}, we have obtained the following exterior energy lower bound for solutions of the free wave equation:
\begin{theoint}
\label{T:equirepartition}
 Assume $N$ is odd. Let $u_F$ be a solution of the free wave equation:
 \begin{equation}
 \label{FW}
 \partial_t^2u_F-\Delta u_F=0
\end{equation} 
 with initial data in $(u_0,u_1)\in \HHH$. Then
 $$
 \sum_{\pm}\lim_{t\to \pm \infty}\int_{|x|>|t|}|\nabla_{t,x}u_F|^2\,dx=\|(u_0,u_1)\|^2_{\HHH}.$$
\end{theoint}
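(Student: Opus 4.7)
The plan is to reduce the identity to a one-dimensional problem via separation of angular variables, then exploit the strong Huygens principle specific to odd dimensions through an explicit representation formula.

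First, I would decompose the initial data in spherical harmonics, writing $u_F = \sum_{k, \ell} v^{k, \ell}(t, r)\, Y_{k, \ell}(\omega)$. By orthogonality in $L^{2}(S^{N-1})$, both the exterior energies $\int_{|x|>|t|}|\nabla_{t,x}u_F|^{2}$ and $\|(u_{0}, u_{1})\|_{\HHH}^{2}$ split as sums over $(k, \ell)$. After the change of variable $w^{k, \ell}(t, r) = r^{(N-1)/2} v^{k, \ell}(t, r)$, each component solves the one-dimensional Bessel-type equation
$$\partial_{t}^{2} w - \partial_{r}^{2} w + \frac{m(m+1)}{r^{2}}\, w = 0, \qquad r > 0,$$
with $m = k + (N-3)/2$. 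The odd parity of $N$ ensures that $m$ is a non-negative integer for every $k \geq 0$. It therefore suffices to prove the analogous one-dimensional identity for solutions of this Bessel wave equation, for each integer $m \geq 0$.

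Second, for integer $m$, I would use the ladder structure: introducing the first-order operators $L_{j} = \partial_{r} + j/r$, one has the intertwining relation $B_{m} L_{m} = L_{m} B_{m-1}$ where $B_{m} = -\partial_{r}^{2} + m(m+1)/r^{2}$. Iterating, any solution $w$ with suitable regularity can be written as $w = L_{m} L_{m-1} \cdots L_{1}\, \varphi$, where $\varphi$ solves the pure 1D wave equation $\partial_{t}^{2} \varphi - \partial_{r}^{2} \varphi = 0$ on $r > 0$ with a boundary condition at $r = 0$ dictated by smoothness of the original radial solution. The d'Alembert formula yields $\varphi(t, r) = F_{+}(r - t) + F_{-}(r + t)$; the boundary condition at $r = 0$ relates $F_{+}$ and $F_{-}$ on the negative half-line so that, after reflection, one obtains the 1D energy identity $\|F_{+}'\|_{L^{2}(\RR)}^{2} + \|F_{-}'\|_{L^{2}(\RR)}^{2} = \|(w_{0}, w_{1})\|^{2}$ up to a universal multiplicative constant.

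Third, I would substitute the explicit representation back into the exterior energy and analyze the limits as $t \to \pm\infty$. The contributions involving $F_{+}(r - t)$ are supported near $r \approx t$, and after the change of variable $s = r - t$ yield, in the limit $t \to +\infty$, the full $L^{2}$ mass of $F_{+}'$ on a half-line; the $F_{-}(r + t)$ contributions translate out to $+\infty$ and vanish on $\{r > t\}$. Exchanging the roles at $t \to -\infty$ captures $\|F_{-}'\|^{2}$, and summing the two contributions recovers the full energy.

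The main technical obstacle is that the ladder descent $w = L_{m} \cdots L_{1} \varphi$ produces, by the Leibniz rule, lower-order terms of the form $r^{-j} \partial_{r}^{m - j} \varphi$ in the expression for $w$ and for $\partial_{t, r} w$. One must verify that all such lower-order terms vanish in the exterior energy limit (intuitively because of the prefactor $r^{-j}$ on the expanding cone $r \sim |t|$), which requires careful bookkeeping and, after a density argument reducing to smooth compactly supported radiation profiles, rigorous asymptotic analysis of each cross term as $|t| \to \infty$.
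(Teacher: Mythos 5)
Theorem~\ref{T:equirepartition} is recalled here from \cite{DuKeMe12b} and not reproved in the present paper, so there is no ``paper's own proof'' to compare against directly. The reference establishes the identity in Fourier variables: one writes the solution via the (spherical-harmonics-resolved) Hankel transform, and the oddness of $N$ enters through the fact that $J_{k+(N-2)/2}$ has half-integer order, hence is an elementary function with explicit large-argument asymptotics, which allows one to compute the two limits exactly. Your proposal is a genuine physical-space alternative: spherical harmonics, then the substitution $w=r^{(N-1)/2}v$ giving a radial Schr\"odinger-type wave equation with integer angular momentum $m=k+(N-3)/2$, then a ladder descent to d'Alembert. This is essentially the Lax--Phillips translation representation / Friedlander radiation-field route, and it is correct in spirit; the oddness of $N$ plays the same arithmetic role in both proofs (integer $m$ versus half-integer Bessel order).

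That said, a few of your intermediate claims need repair before the argument closes. The intertwining $B_m L_m=L_m B_{m-1}$ holds with $L_j=\partial_r-\frac{j}{r}$, not $\partial_r+\frac{j}{r}$ as you wrote; with your sign the ladder fails already for $N=5$, $k=0$ (one checks directly that $w=\varphi_r-\varphi/r$, not $\varphi_r+\varphi/r$, solves $w_{tt}=w_{rr}-\frac{2}{r^2}w$). Second, passing from $\int_{|x|>|t|}|\nabla_{t,x}u_F|^2\,dx$ to $\int_{|t|}^\infty\bigl(w_t^2+w_r^2+\frac{m(m+1)}{r^2}w^2\bigr)\,dr$ is not an identity: integrating by parts the cross term $-\frac{N-1}{r}(\partial_r w)w$ on $[|t|,\infty)$ produces a boundary contribution $\frac{N-1}{2}\frac{w^2(|t|)}{|t|}$ which must be shown to vanish as $|t|\to\infty$; you should state this and control it (it does vanish, via the radiation-field decay, but it is not free). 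Third, the reflection at $r=0$ is more delicate than you indicate once $m\ge 1$: regularity of $v$ at the origin forces $w=O(r^{m+1})$, which imposes $m+1$ vanishing conditions on $\varphi$ near $0$, not just the odd reflection you get at $m=0$; this must be built into the definition of the inverse ladder and tracked through the d'Alembert representation. Finally, you correctly identify the control of the cross terms $r^{-j}\partial_r^{m-j}\varphi$ as the core technical step, but the density argument needs to be accompanied by continuity of the map $(u_0,u_1)\mapsto\lim_{t\to\pm\infty}\int_{|x|>|t|}|\nabla_{t,x}u_F|^2$ on $\HHH$, which is exactly what the Strichartz/radiation-field estimates of Lemma~\ref{L:linear_approx} supply. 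None of these is fatal, but as written the proposal is an outline with real gaps at each of these points.
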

Let $W$ be the ground-state stationary solution of \eqref{NLW},  given by \eqref{defW}. Consider the linearized equation:
\begin{equation}
 \label{LW}
 \partial_t^2u+L_Wu=0,
\end{equation} 
where $L_W$ is the linearized operator:
\begin{equation}
 \label{def_LW}
 L_W=-\Delta-\frac{N+2}{N-2}W^{\frac{4}{N-2}}.
\end{equation} 
The existence and uniqueness of solutions of \eqref{LW} with initial data in $\HHH$ can be easily proved by standard semi-group theory.
In \cite{DuKeMe19Pa}, we have proved an analog of Theorem \ref{T:equirepartition} for solutions of \eqref{LW} that we will now describe. To lighten notations, we will restrict to radial functions in space dimension $N\geq 5$. Let 
$$\Lambda W:=x\cdot \nabla W+\left( \frac{N}{2}-1 \right)W.$$
Then 
$$\vect\big\{\Lambda W\big\}=\left\{Z\in \hdot_{\rad}\;:\; L_WZ=0\right\}.$$
Indeed the inclusion $\subset$ is due to the fact that \eqref{NLW} is invariant by scaling. The other inclusion is a well-known nondegeneracy property of $W$ (see e.g. \cite{Rey90}). Note that $\Lambda W\in L^2$ since $N\geq 5$.
Let 
$$\ZZZZ:=\vect\big\{\Lambda W\big\}\times \vect\big\{\Lambda W\big\}.$$
If $(u_0,u_1)\in \ZZZZ$, then  the solution $u$ of \eqref{LW} with initial data $(u_0,u_1)$ is given by $u(t,x)=u_0(x)+tu_1(x)$ and in particular
$$\sum_{\pm} \lim_{t\to \pm\infty} \int_{|x|\geq |t|}|\nabla_{t,x}u(t,x)|^2=0.$$
If $V$ is a closed subspace of $\HHH$, we denote by $V^{\bot}$ its orthogonal in $\HHH$, and $\pi_{V}$ the orthogonal projection on $V$. Theorem 1 of \cite{DuKeMe19Pa} states that the solutions with initial data in $\ZZZZ$ are the only solutions that do not satisfy an exterior energy lower bound:
\begin{theoint}
\label{T:LW}
 Assume $N\geq 5$ is odd. Then there exists a constant $C>0$ such that for all $(u_0,u_1)\in \HHH_{\rad}$,
 \begin{equation}
  \label{A2'}
 \left\|\pi_{\ZZZZ^{\bot}}(u_0,u_1)\right\|^2 \leq C \sum_{\pm} \lim_{t\to \pm\infty} \int_{|x|\geq |t|}|\nabla_{t,x}u(t,x)|^2,
 \end{equation} 
 where $u$ is the solution of \eqref{LW} with initial data $(u_0,u_1)$.
\end{theoint}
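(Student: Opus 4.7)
The plan is to deduce Theorem B from Theorem A via a perturbation-plus-rigidity argument. Write any solution $u$ of \eqref{LW} as $u=u_F+v$, where $u_F=S_L(t)(u_0,u_1)$ is the free wave with the same initial data and $v$ is the Duhamel correction driven by $\frac{N+2}{N-2}W^{\frac{4}{N-2}}u$ with vanishing initial data. The point is that $W^{\frac{4}{N-2}}(x)\sim |x|^{-4}$ at infinity, so $\indxt \,W^{\frac{4}{N-2}}$ lies in the potential class of Lemma \ref{L:linear_approx}, giving global control of $u$ in suitable Strichartz norms by $\|(u_0,u_1)\|_{\HHH}$ and producing, by the last assertion of that lemma, a well-defined radial radiation field $g_\pm\in L^2$ with
\begin{equation*}
\int_{|x|>|t|}\bigl|r^{\frac{N-1}{2}}\partial_r u(t,r)-g_\pm(r\mp t)\bigr|^2\,dr\longrightarrow 0,\quad t\to\pm\infty.
\end{equation*}

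I would then argue by contradiction. If \eqref{A2'} fails, there is a sequence $(u_{0,n},u_{1,n})\in\HHH_{\rad}\cap \ZZZZ^{\bot}$ with $\|(u_{0,n},u_{1,n})\|_{\HHH}=1$ whose solutions $u_n$ have exterior energies at $\pm\infty$ tending to $0$. Extract a weak limit $(u_0,u_1)\in \HHH_{\rad}\cap \ZZZZ^{\bot}$. The Strichartz control and the local compactness of the potential term let me pass to the limit in the Duhamel formula, so the limiting solution $u$ of \eqref{LW} satisfies
\begin{equation*}
\sum_\pm\lim_{t\to\pm\infty}\int_{|x|>|t|}|\nabla_{t,x}u|^2\,dx=0,
\end{equation*}
equivalently $g_+\equiv g_-\equiv 0$. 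The task reduces to the rigidity statement: any such $u$ is stationary of the form $\alpha \Lambda W+t\beta\Lambda W$, hence $(u_0,u_1)\in\ZZZZ$, and therefore $(u_0,u_1)=0$. A concentration-compactness step (using local compactness in $\hdot$ for radial functions away from the origin, and the scaling invariance together with small-data theory near $0$) then rules out the possibility that $(u_{0,n},u_{1,n})$ concentrates without weak-limit mass, giving the contradiction.

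The rigidity step is the core difficulty. The radiation field of the free part $u_F$ is, by Theorem A combined with the explicit radial representation in odd dimension, an isometric image of $(u_0,u_1)$; the radiation field of $v$ is of the form $T_\pm[(u_0,u_1)]$ for an integral operator $T_\pm$ built from $W^{\frac{4}{N-2}}$ acting on the global flow. Setting $g_\pm=0$ yields the two integral identities
\begin{equation*}
\mathrm{(rad.\ field\ of\ }u_F\mathrm{)}=-T_\pm[(u_0,u_1)]
\end{equation*}
whose joint kernel must be shown to equal $\ZZZZ$. I would attack this by combining (i) a stationary ODE analysis at spatial infinity, i.e. the fact that $-\Delta f=\frac{N+2}{N-2}W^{\frac{4}{N-2}}f$ restricted to radial $\hdot$ functions has kernel spanned by $\Lambda W$ (the non-degeneracy recalled before Theorem B), with two linearly independent radial solutions whose asymptotic behaviour at $r=\infty$ is explicit; and (ii) matching, along outgoing and incoming characteristics, the tail expansion of $u$ as $r\to\infty$ given by the radiation field equations. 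The matching of the two one-sided conditions, together with the fact that $N\geq 5$ is odd (which makes the free radial propagator strongly Huygens and $\Lambda W\in L^2$), is what cuts the admissible kernel down to the two explicit dimensions of $\ZZZZ$.

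The main obstacle is this last ODE/Fredholm analysis: the free-wave half of the problem gives the strong exterior-energy identity of Theorem A, but the perturbation by $W^{\frac{4}{N-2}}$ introduces a genuinely global coupling between past and future radiation fields. Showing that the resulting constraint is Fredholm with precisely the expected two-dimensional kernel, rather than some larger or time-dependent kernel, is where the hypothesis that $N$ is odd (so that no logarithmic tails appear in the asymptotic expansion at infinity) and $N\geq 5$ (so that $\Lambda W\in L^2$ and the relevant projections are well defined) both enter in an essential way.
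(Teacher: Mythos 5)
The paper does not actually prove Theorem~\ref{T:LW}; it is cited verbatim from the companion work \cite{DuKeMe19Pa} (``Theorem 1 of \cite{DuKeMe19Pa} states\dots We refer to \cite{DuKeMe19Pa} for the details''). So there is no internal proof against which to compare your proposal, and what you are attempting is to rederive a result whose proof the authors deliberately outsourced to a separate and substantial paper. Your compactness reduction is the right general shape of argument, and the observation that the correction $v$ is driven by a potential in the Strichartz class of Lemma~\ref{L:linear_approx} (so that $T_\pm$ is compact and radiation fields pass to weak limits) is sound in principle. But two things need flagging.

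First, the concentration step is mis-stated. The equation $\partial_t^2 u + L_W u = 0$ is linear, so there is no ``small-data theory'' to invoke, and $L_W$ is \emph{not} scale invariant -- the fixed potential $W^{4/(N-2)}$ breaks scaling, which is precisely why the exceptional space $\ZZZZ$ is finite-dimensional. What you actually need is: if $(u_{0,n},u_{1,n})\rightharpoonup 0$, then the Duhamel piece $v_n\to 0$ strongly (by compactness of multiplication by the decaying potential in the Strichartz framework), so the exterior energy of $u_n$ is asymptotically that of the free part $u_{F,n}$, which by Theorem~\ref{T:equirepartition} is $\|(u_{0,n},u_{1,n})\|^2_{\HHH}=1$, a contradiction. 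To rule out profiles escaping to scales $\lambda_n\to 0$ or $\infty$ one would rescale and argue that the rescaled potential $\lambda_n^2 W^{4/(N-2)}(\lambda_n\cdot)$ tends to zero in the relevant local sense, again reducing to the free case; none of this involves small-data nonlinear theory.

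Second and more importantly, the rigidity step is the entire content of the theorem, and your plan for it is not a proof. Paragraph~(i) of your rigidity argument analyzes the \emph{stationary} ODE $L_W f=0$, but a non-radiative solution of $\partial_t^2 u+L_W u=0$ is a genuinely time-dependent object; the statement that $\ker L_W|_{\hdot_{\rad}}=\vect\{\Lambda W\}$ gives the inclusion $\ZZZZ\subset(\text{non-radiative data})$, not the reverse. The hard direction is to show that a time-dependent solution with vanishing radiation in \emph{both} time directions must be a polynomial-in-$t$ combination of $\Lambda W$, and ``matching along outgoing and incoming characteristics'' is a heuristic, not an argument: one has to control the full interior behaviour of $u$, not merely its tail expansion at $r\to\infty$, precisely because the potential term couples the exterior and interior regions. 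The reference \cite{DuKeMe19Pa} devotes a paper to this; as written, your rigidity step has a genuine gap where the main work should be.
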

Theorem 1 of \cite{DuKeMe19Pa} is indeed more general: it holds without the assumption that $(u_0,u_1)$ is radial, and also in space dimension $N=3$, with a suitable definition of $\ZZZZ$. We refer to \cite{DuKeMe19Pa} for the details.

\subsection{Bound from below of the exterior energy close to a multisoliton}
\label{SS:exterior_multi}
As a corollary of Theorem \ref{T:LW}, we will prove an exterior energy lower bound for the linearized operator close to an approximate radial multisoliton solution. We will consider only radial solutions, and fix $J\geq 2$.

We denote by $G_J$ the subset of $(0,\infty)^J$:
\begin{equation}
\label{defGJ}
G_J=\left\{\lambdabf=(\lambda_j)_{1\leq j\leq J},\quad 0<\lambda_J<\lambda_{J-1}<\ldots <\lambda_1\right\}. 
\end{equation} 
If $\lambdabf\in G_J$, we denote 
\begin{gather}
 \label{G10}
 \gamma(\lambdabf)=\max_{2\leq j\leq J} \frac{\lambda_j}{\lambda_{j-1}} \in (0,1)\\
\label{G11}
 L_{\lambdabf}=-\Delta -\sum_{j=1}^J \frac{N+2}{N-2} W_{(\lambda_j)}^{\frac{4}{N-2}}\\
 \label{G12}
Z_{\lambdabf}=\vect\Big\{\left((\Lambda W)_{(\lambda_j)},0\right),\left(0,(\Lambda W)_{[\lambda_j]}\right)\Big)
\end{gather}
(see Subsection \ref{SS:notations} for the notations $(\Lambda W)_{(\lambda_j)}$ and $(\Lambda W)_{[\lambda_j]}$.)
Then:
\begin{corol}
 \label{Cor:G10}
 Assume $N\geq 5$ is odd.
For any $J\geq 2$, there exists $\gamma_{*}>0$ and $C>0$ with the following property. For any $\lambdabf$ with $\gamma(\lambdabf)\leq \gamma_*$, for any solution $u$ of 
\begin{equation}
 \label{G13}
 \partial_t^2u+L_{\lambdabf}u=0,\quad \vec{u}_{\restriction t=0}=(u_0,u_1)\in \HHH,
\end{equation} 
one has
\begin{multline}
 \label{G14}
 \left\|\pi_{Z_{\lambdabf}^{\bot}}(u_0,u_1)\right\|^2_{\HHH} \\
 \leq C\left(\sum_{\pm} \lim_{t\to \pm\infty} \int_{|x|\geq |t|} |\nabla_{t,x}u(t,x)|^2\,dx+\gamma(\lambdabf)^{2\theta_N}\left\|\pi_{Z_{\lambdabf}}(u_0,u_1)\right\|^2_{\HHH}\right),
\end{multline} 
where $\theta_5=\frac 12$, $\theta_7=\frac 32$ and $\theta_N=2$ if $N\geq 9$.
 \end{corol}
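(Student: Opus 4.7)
The plan is to decompose the initial data as $(u_0,u_1)=P+Q$ with $P:=\pi_{Z_\lambdabf}(u_0,u_1)$ and $Q:=\pi_{Z_\lambdabf^\perp}(u_0,u_1)$, and let $u_P,u_Q$ be the corresponding solutions of \eqref{G13}, so that $u=u_P+u_Q$. Writing $E(v):=\sum_{\pm}\lim_{t\to\pm\infty}\int_{|x|>|t|}|\nabla_{t,x}v(t,x)|^2\,dx$, I would establish
\[
(\text{a})\ E(u_P)\leq C\gamma^{2\theta_N}\|P\|_\HHH^2,\qquad (\text{b})\ \|Q\|_\HHH^2\leq C\,E(u_Q),
\]
which together with $E(u_Q)\leq 2E(u)+2E(u_P)$ immediately give \eqref{G14}.

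For (a), write $P=\sum_{j=1}^J\bigl(\alpha_j(\Lambda W)_{(\lambda_j)},\beta_j(\Lambda W)_{[\lambda_j]}\bigr)$; the generating family of $Z_\lambdabf$ is almost orthonormal for $\gamma$ small (by direct computation of cross inner products, using $\Lambda W\in L^2\cap\hdot$ for $N\geq 5$), so $\sum_j(\alpha_j^2+\beta_j^2)\lesssim \|P\|^2$. Introduce the explicit ansatz
\[
v_P(t,x):=\sum_{j=1}^J\Bigl(\alpha_j+\beta_j\frac{t}{\lambda_j}\Bigr)(\Lambda W)_{(\lambda_j)}(x).
\]
Since $\partial_t^2 v_P=0$ and $\bigl(-\Delta-\tfrac{N+2}{N-2}W_{(\lambda_j)}^{4/(N-2)}\bigr)(\Lambda W)_{(\lambda_j)}=0$ at each single scale, $v_P$ satisfies \eqref{G13} up to interactions between distinct scales:
\[
\partial_t^2 v_P+L_\lambdabf v_P=-\tfrac{N+2}{N-2}\sum_{j\neq k}\Bigl(\alpha_j+\beta_j\tfrac{t}{\lambda_j}\Bigr)W_{(\lambda_k)}^{4/(N-2)}(\Lambda W)_{(\lambda_j)}=:G.
\]
Splitting the $t$-integration into the regimes $|t|\leq\min(\lambda_j,\lambda_k)$, intermediate, and $|t|\geq\max(\lambda_j,\lambda_k)$, and using the asymptotics $W^{4/(N-2)}(y)\sim|y|^{-4}$ and $\Lambda W(y)\sim|y|^{-(N-2)}$ at infinity, a direct computation yields $\|\indic_{|x|>|t|}G\|_{L^1_tL^2_x}\leq C\gamma^{\theta_N}\|P\|$: the $t/\lambda_j$ factor produces the sharp exponent, while the cap at $2$ reflects an integrability threshold in the intermediate regime. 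A scaling argument combined with $\Lambda W\in L^2(\RR^N)$ also gives $E(v_P)=0$. Applying Lemma \ref{L:linear_approx} to $u_P-v_P$ (with zero data and source $-G$), noting that $\|\indic_{|x|>|t|}\bigl(\tfrac{N+2}{N-2}\sum_jW_{(\lambda_j)}^{4/(N-2)}\bigr)\|_{L^{2(N+1)/(N+4)}_tL^{2(N+1)/3}_x}$ is bounded uniformly in $\lambdabf$ by scale invariance of this mixed norm, yields $E(u_P-v_P)\leq C\gamma^{2\theta_N}\|P\|^2$ and hence (a).

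For (b), let $\tilde u$ solve $\partial_t^2\tilde u-\Delta\tilde u-\tfrac{N+2}{N-2}W_{(\lambda_1)}^{4/(N-2)}\tilde u=0$ with data $Q$. Since $Q\in Z_\lambdabf^\perp$ is orthogonal in particular to the two scale-$\lambda_1$ basis vectors of $Z_\lambdabf$, Theorem \ref{T:LW} (rescaled by $\lambda_1$) gives $\|Q\|_\HHH^2\leq C\,E(\tilde u)$. The difference $\tilde u-u_Q$ solves the same linear equation with source $\tfrac{N+2}{N-2}\sum_{k=2}^JW_{(\lambda_k)}^{4/(N-2)}u_Q$ and zero initial data. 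I would bound this source by $\gamma^{\theta_N}\|Q\|$ in $L^1_tL^2_x(\{|x|>|t|\})$ by decomposing $u_Q$ near each inner scale $\lambda_k$ as a scale-$\lambda_k$ soliton mode plus an orthogonal remainder: the soliton-mode coefficient at $t=0$ vanishes because $Q\perp Z_\lambdabf$, and the remainder's pairing against $W_{(\lambda_k)}^{4/(N-2)}$ is controlled by the analog of the cross-term estimate from (a). Lemma \ref{L:linear_approx} then yields $E(\tilde u-u_Q)\leq C\gamma^{2\theta_N}\|Q\|^2$, which for $\gamma_*$ small absorbs into $\|Q\|^2\leq C\,E(\tilde u)\leq 2C\,E(u_Q)+C'\gamma^{2\theta_N}\|Q\|^2$ to give (b).

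The hard part is this last source estimate: a naive H\"older/Sobolev bound produces only $\|\sum_{k\geq 2}W_{(\lambda_k)}^{4/(N-2)}u_Q\|_{L^1_tL^2_x(\{|x|>|t|\})}\lesssim\|Q\|$, because the $\lambda_k^{-1}$ factor in $\|W_{(\lambda_k)}^{4/(N-2)}\|_{L^N}$ is exactly compensated by the effective time cutoff $|t|\lesssim\lambda_k$ imposed by the exterior cone. Extracting the $\gamma^{\theta_N}$ improvement genuinely requires using $Q\in Z_\lambdabf^\perp$, either through the modulation-type decomposition of $u_Q$ sketched above, or through an iterated application of Theorem \ref{T:LW} at each inner scale combined with careful bookkeeping of the $O(\gamma^{\theta_N})$ cross-interactions.
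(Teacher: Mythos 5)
Your part (a) is essentially the paper's Step 1: the explicit ansatz $v_P$ is the same as the paper's $w_n$, the cross-scale interaction bound is Claim~\ref{C:G30}, the uniform bound on the potential in the dual Strichartz space is \eqref{G10A}--\eqref{G100}, and the reduction to Lemma~\ref{L:linear_approx} is identical. The computation of the exponents $\theta_N$ is also the same, and your observation that $E(v_P)=0$ (using $\Lambda W\in L^2\cap\hdot$ for $N\geq 5$) is correct.

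Part (b), however, takes a genuinely different route from the paper and contains a gap that does not appear repairable as sketched. You propose to compare $u_Q$ with the solution $\tilde u$ of the \emph{single} $\lambda_1$-soliton linearized equation with the same data, and the whole argument hinges on the claim
\begin{equation*}
\Big\|\indic_{\{|x|>|t|\}}\sum_{k=2}^J W_{(\lambda_k)}^{\frac{4}{N-2}}u_Q\Big\|_{L^1_tL^2_x}\lesssim \gamma^{\theta_N}\|Q\|_\HHH.
\end{equation*}
This estimate is false, even granting $Q\in Z_\lambdabf^\perp$. Take $J=2$, $\lambda_1=1$, $\lambda_2=\gamma$, and $Q=(\psi_{(\gamma)},0)$ where $\psi\in\hdot_{\rad}$ is fixed and orthogonal to $\Lambda W$ in $\hdot$ (one can subtract an $O(\gamma^{\,c})$ correction to enforce exact orthogonality to the scale-$1$ modes). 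Set $U(t,x)=\gamma^{\frac N2-1}u_Q(\gamma t,\gamma x)$; it solves $\bigl(\partial_t^2-\Delta-\frac{N+2}{N-2}(W^{\frac 4{N-2}}+W_{(1/\gamma)}^{\frac 4{N-2}})\bigr)U=0$ with data $(\psi,0)$, and as $\gamma\to 0$ the second potential vanishes, so $U\to U_\infty$ solving $(\partial_t^2+L_W)U_\infty=0$. By the scale invariance of the $L^1_tL^2_x$ norm of $W_{(\lambda)}^{4/(N-2)}v$, the left-hand side equals $\|\indic_{\{|x|>|t|\}}W^{\frac 4{N-2}}U\|_{L^1L^2}\to\|\indic_{\{|x|>|t|\}}W^{\frac 4{N-2}}U_\infty\|_{L^1L^2}$, which is a fixed $O(\|\psi\|)$ quantity, not $O(\gamma^{\theta_N})$. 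The underlying reason is that $Q\perp Z_\lambdabf$ gives you orthogonality against $\Lambda W$ at each scale, but the source involves multiplication by $W^{4/(N-2)}$ at that same scale, which is \emph{not} a scalar multiple of $\Lambda W$; there is no orthogonality gain to be had there, and the time evolution moreover re-generates a $\Lambda W$-component even if it vanishes at $t=0$.

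The paper avoids exactly this trap. Rather than a perturbation off the $\lambda_1$-scale problem, it runs a contradiction/compactness argument: Step 2 takes a profile decomposition of $v_n=\pi_{Z_{\lambdabf_n}^\perp}(u_{0,n},u_{1,n})$ along the scales $\lambda_{j,n}$, with the first $J$ nonlinear profiles $U^j$ solving the \emph{single}-soliton linearized equation $(\partial_t^2+L_W)U^j=0$ at their own scale; the orthogonality conditions pass to the weak limit and give $\vec U^j(0)\in\ZZZZ^\perp$, so Theorem~\ref{T:LW} applies to each $U^j$ separately. Step 3 then uses the $1$d radiation representation \eqref{G130}--\eqref{G134} and the pseudo-orthogonality \eqref{psdo_orth} to show that the exterior energies of the profiles decouple as $n\to\infty$, i.e., $E(v_n)\to\sum_j E(U^j)+\sum_K E(w^K)$, contradicting $E(v_n)\to 0$ with $\|v_n(0)\|=1$. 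This is the step that replaces your (b): the lower bound on $\|Q\|^2$ comes from applying Theorem~\ref{T:LW} at \emph{each} inner scale and summing the resulting exterior energies, not from a source estimate comparing to the outermost scale. If you want a non-compactness proof of (b), you would essentially have to rebuild this scale-by-scale bookkeeping with explicit modulation estimates; comparing only to the $\lambda_1$-scale problem cannot see the data sitting at the inner scales.
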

Corollary \ref{Cor:G10} also has a version for $N=3$, that we will not need here. We skip it for the sake of simplicity.
We prove Corollary \ref{Cor:G10} in the next subsection. In \S \ref{SS:close_multi}, we will apply this corollary to a solution of \eqref{NLW} close to a multisoliton manifold.
\subsection{Proof of the exterior energy lower bound for the linearized equation}
We prove Corollary \ref{Cor:G10} by contradiction. For this, we assume that there exists a sequence $\left\{\lambdabf_n\right\}_n$ with 
\begin{equation}
 \label{G15}
 \lim_{n\to\infty} \gamma(\lambdabf_n)=0,
\end{equation} 
and a sequence $\{(u_{0,n},u_{1,n})\}_n$ in $\HHH$ such that, denoting by $u_n$ the solution of 
\begin{equation}
 \label{G20}
 \partial_t^2u_n+L_{\lambdabf_n}u_n=0,\quad \vec{u}_{n\restriction t=0}=(u_{0,n},u_{1,n}),
\end{equation} 
one has
\begin{multline}
 \label{G21}
 \lim_{n\to \infty} \sum_{\pm}\lim_{t\to \pm\infty}\int_{|x|\geq |t|} |\nabla_{t,x}u_n(t,x)|^2\,dx\\
 +\gamma(\lambdabf_n)^{2\theta_N} \left\|\pi_{Z_{\lambdabf_n}} (u_{0,n},u_{1,n})\right\|_{\HHH}^2=0
\end{multline}
and 
\begin{equation}
 \label{G22}
 \left\|\pi_{Z_{\lambdabf_n}^{\bot}}(u_{0,n},u_{1,n})\right\|_{\HHH}=1.
\end{equation}
\setcounter{step}{0}
\begin{step}[Projection on the orthogonal of the singular directions]
\label{S:Gprojection}
 Let $v_n$ be the solution of 
 \begin{equation}
  \label{G23}
  \partial_t^2v_n+L_{\lambdabf_n}v_n=0,\quad \vec{v}_{n\restriction t=0}=\pi_{Z_{\lambdabf_n}^{\bot}}(u_{0,n},u_{1,n}).
 \end{equation} 
 We claim
 \begin{equation}
  \label{G24}
  \lim_{n\to \infty}\sum_{\pm}\lim_{t\to \pm\infty}\int_{|x|\geq |t|} |\nabla_{t,x}v_n(t,x)|^2\,dx=0.
 \end{equation} 
 In view of \eqref{G21}, it is sufficient to prove
 \begin{equation}
  \label{G25}
  \lim_{n\to \infty}\sum_{\pm}\lim_{t\to \pm\infty}\int_{|x|\geq |t|} |\nabla_{t,x}(u_n-v_n)(t,x)|^2\,dx=0.
 \end{equation} 
 By definition of $Z_{\lambdabf_n}$, we can write:
 \begin{equation}
  \label{G26}
  \pi_{Z_{\lambdabf_n}}(u_{0,n},u_{1,n})=\sum_{j=1}^{J}\left( \alpha_{j,n}(\Lambda W)_{(\lambda_{j,n})},\beta_{j,n}(\Lambda W)_{[\lda_{j,n}]} \right),
 \end{equation} 
 where by \eqref{G21}, 
 \begin{equation}
  \label{G27}
  \forall j\in \llbracket 1,J\rrbracket,\quad \lim_{n\to\infty} \left( |\alpha_{j,n}|+|\beta_{j,n}| \right)\gamma(\ldabf_n)^{\theta_N}=0.
 \end{equation} 
 We consider 
 \begin{equation}
  \label{G28}
  w_n=\sum_{j=1}^{J}\left( \alpha_{j,n}\left( \Lambda W \right)_{(\lda_{j,n})}+t\beta_{j,n}\left( \Lambda W \right)_{[\lda_{j,n}]} \right)
 \end{equation} 
 and prove that $w_n$ is, outside the wave cone, an approximate solution of the li\-nearized equation around the multisoliton in the following sense:
 \begin{equation}
  \label{G30}
  \lim_{n\to\infty} \left\| \indic_{\{|x|\geq |t|\}} \left( \partial_t^2+L_{\lambdabf_n} \right)w_n\right\|_{L^1(\RR,L^2)}=0.
 \end{equation} 
 Indeed 
 \begin{multline*}
  \left\| \indic_{\{|x|\geq |t|\}} \left( \partial_t^2+L_{\lambdabf_n} \right)w_n\right\|_{L^1(\RR,L^2)}\\
  \lesssim 
  \sum_{\substack{1\leq j\leq J\\ k\neq j}}|\alpha_{j,n}|\left\| \indic_{\{|x|\geq |t|\}} W_{(\lambda_{k,n})}^{\frac{4}{N-2}}\left( \Lambda W \right)_{(\lambda_{j,n})}\right\|_{L^1(\RR,L^2)}\\
  + \sum_{\substack{1\leq j\leq J\\ k\neq j}}|\beta_{j,n}|\left\| t\indic_{\{|x|\geq |t|\}} W_{(\lambda_{k,n})}^{\frac{4}{N-2}}\left( \Lambda W \right)_{[\lambda_{j,n}]}\right\|_{L^1(\RR,L^2)}.
 \end{multline*}
By Claim \ref{C:G30} in the appendix, we have
\begin{multline}
\label{Gbound}
 \left\| \indic_{\{|x|\geq |t|\}} W_{(\lambda_{k,n})}^{\frac{4}{N-2}}\left( \Lambda W \right)_{(\lambda_{j,n})}\right\|_{L^1(\RR,L^2)}\\
 +\left\| t\indic_{\{|x|\geq |t|\}} W_{(\lambda_{k,n})}^{\frac{4}{N-2}}\left( \Lambda W \right)_{[\lambda_{j,n}]}\right\|_{L^1(\RR,L^2)}
\lesssim \left(\gamma(\lambdabf_n)\right)^{\theta_N},
\end{multline} 
which yields \eqref{G30} in view of \eqref{G27}. To conclude Step \ref{S:Gprojection}, we see that \eqref{G30} implies
\begin{equation*}
  \lim_{n\to\infty} \left\| \indic_{\{|x|\geq |t|\}} \left( \partial_t^2+L_{\lambdabf_n} \right)(u_n-v_n-w_n)\right\|_{L^1(\RR,L^2)}=0,
 \end{equation*} 
 and since $(\vec{u}_n-\vec{v}_n-\vec{w}_n)_{\restriction t=0}=0$, \eqref{G25} follows from Lemma \ref{L:linear_approx}, the fact that $w_n$ satisfies
  \begin{equation*}
  \lim_{n\to \infty}\sum_{\pm}\lim_{t\to \pm\infty}\int_{|x|\geq |t|} |\nabla_{t,x}w_n(t,x)|^2\,dx=0,
 \end{equation*} 
 and the following bounds
 \begin{gather}
  \label{G10A}
  \indic_{|x|\geq |t|} W^{\frac{4}{N-2}}\in L^{\frac{2(N+1)}{N+4}}\left(\RR,L^{\frac{2(N+1)}{3}}\right)\\
  \label{G100}
  \left\|\indic_{|x|\geq |t|} \sum_{j=1}^J W_{(\lambda_{j,n})}^{\frac{4}{N-2}}\right\|_{L^{\frac{2(N+1)}{N+4}}\left(\RR,L^{\frac{2(N+1)}{3}}\right)}\leq J \left\|\indic_{|x|\geq |t|} W^{\frac{4}{N-2}}\right\|_{L^{\frac{2(N+1)}{N+4}}\left(\RR,L^{\frac{2(N+1)}{3}}\right)}.
 \end{gather} 
 The bound \eqref{G100} follows from \eqref{G10A} and scaling invariance. To prove \eqref{G10A}, we use the bound
 $$|W(x)|^{\frac{4}{N-2}}\lesssim \min\left( 1,\frac{1}{|x|^4} \right).$$
 This proves that  $W^{\frac{4}{N-2}}\in L^{\frac{2(N+1)}{3}}(\RR^N)$ and 
 \begin{equation*}
  \left\|\indxt W^{\frac{4}{N-2}}\right\|_{L^{\frac{2(N+1)}{3}}_x}^{\frac{2(N+1)}{3}}\lesssim \int_t^{\infty} \frac{1}{r^{\frac{8(N+1)}{3}}}r^{N-1}\,dr\lesssim \frac{1}{t^{\frac{5N+8}{3}}}.
 \end{equation*}
Hence
 \begin{equation*}
  \left\|\indxt W^{\frac{4}{N-2}}\right\|^{\frac{2(N+1)}{N+4}}_{L^{\frac{2(N+1)}{3}}_x}\lesssim \frac{1}{t^{\frac{5N+8}{N+4}}},
 \end{equation*}
 which yields \eqref{G10A} and concludes this step.
 \end{step}
\begin{step}[Profile decomposition]
\label{St:Profile}
As it is recalled in Subsection \ref{SS:profile}, extracting subsequences, we can assume that the sequence $\big\{(v_{0,n},v_{1,n}\big\}$ has a profile decomposition with profiles $\{U^j_F\}_{j\geq 1}$, and parameters $\{\lambda_{j,n}\}_n\in (0,\infty)^{\NN}$ and $\{t_{j,n}\}_n\in \RR^{\NN}$.
We denote by $U_{F,n}^j$ the rescaled linear profiles, defined in \eqref{rescaled_lin}, and by 
$$ w_n^K=S_L(t)(v_{0,n},v_{1,n})-\sum_{j=1}^K U_{F,n}^j,$$
the remainder, so that 
\begin{equation}
 \label{G112}
\lim_{K\to\infty} \limsup_{n\to\infty} \left\|w_n^K\right\|_{\Ssp(\RR)}=0.
 \end{equation} 
Reordering the profiles, we can assume 
$$1\leq j\leq J\Longrightarrow t_{j,n}=0,$$
(where $J$ is the number of solitons),
and that for $1\leq j\leq J$, the parameters $\lambda_{j,n}$ are the same $\lambda_{j,n}$ as in the beginning of the proof. Indeed, 
one can define the $J$ first profiles by
\begin{equation*}
 \vec{U}^j_F(0)=
\underset{n\to \infty}{\wlim} 
 \left( \lambda_{j,n}^{\frac{N}{2}-1}v_{0,n}\left(\lambda_{j,n}\cdot \right),\lambda_{j,n}^{\frac{N}{2}}v_{1,n}^J\left(\lambda_{j,n}\cdot \right)\right) \text{ in }\HHH,
\end{equation*} 
then carry on with the profile decomposition to extract all the other profiles.
Of course in doing so we do not exclude the fact that some of the profiles $U^j_F$, $1\leq j\leq J$, might be identically $0$. 

We will approximate $v_n$ as follows. If $1\leq j\leq J$, we let $U^j$ be the solution of 
\begin{equation}
 \label{G113} \left( \partial_t^2+L_W \right)U^j=0,\quad \vec{U}^j(0)=\vec{U}_F^j(0).
\end{equation} 
If $j\geq J+1$, we let $U^j=U^j_F$. We define
\begin{equation}
 \label{G12A}
 U^j_n(t,x)=\frac{1}{\lambda_{j,n}^{\frac N2-1}}U^j\left( \frac{t-t_{j,n}}{\lambda_{j,n}},\frac{x}{\lambda_{j,n}} \right),\quad
 v_n^K(t,x)=\sum_{j=1}^K U^j_n+w_n^K(t,x).
\end{equation} 
In this step we prove:
\begin{equation}
 \label{G120}
 \lim_{K\to \infty}\limsup_{n\to\infty}\left( \sup_{t\in \RR} \left\|\indxt \nabla_{t,x}\left( v_n(t,x)-v_n^K(t,x) \right)\right\|_{L^2} \right)=0.
\end{equation} 
Denote by $r_n^K=v_n-v_n^K$. Then
$$\partial_t^2r_n^K+L_{\lambdabf_n} r_n^K=-\sum_{j=1}^K (\partial_t^2+L_{\lambdabf_n})U_n^j-(\partial_t^2+L_{\lambdabf_n})w_n^K,\quad \vec{r}^K_{n\restriction t=0}=(0,0).$$
If $1\leq j\leq J$ we have by \eqref{G113},
\begin{equation*}
(\partial_t^2+L_{\lambdabf_n})U_n^j=-\frac{N+2}{N-2}\sum_{\substack{1\leq k\leq J\\ k\neq j}} W_{(\lambda_{k,n})}^{\frac{4}{N-2}}U_n^j.
\end{equation*} 
If $j\geq J+1$, then
\begin{equation*}
(\partial_t^2+L_{\lambdabf_n})U_n^j=-\frac{N+2}{N-2}\sum_{1\leq k\leq J} W_{(\lambda_{k,n})}^{\frac{4}{N-2}}U_n^j.
\end{equation*} 
Finally, for all $K\geq 1$,
\begin{equation*}
 (\partial_t^2+L_{\lambdabf_n})w_n^K=-\frac{N+2}{N-2} \sum_{1\leq k \leq J} W_{(\lambda_{k,n})}^{\frac{4}{N-2}} w_n^K.
\end{equation*} 
Using the pseudo-orthogonality \eqref{psdo_orth} of the parameters and the property \eqref{G112} of $w_n^K$, we obtain 
$$ \lim_{K\to\infty} \lim_{n\to\infty} \left\| (\partial_t^2+L_{\lambdabf_n})r_n^K\indxt\right\|_{L^1(\RR,L^2)}=0.$$
By \eqref{G100} and the approximation Lemma \ref{L:linear_approx}, we deduce \eqref{G120}.
\end{step}
\begin{step}[End of the proof]
 Using the profile decomposition of the preceding step, we prove the corollary. 
We claim
\begin{gather}
 \label{G122}
 \forall j\geq 1,\quad \left\|\vec{U}^j_n(0)\right\|_{\HHH}^2\lesssim \sum_{\pm}\lim_{t\to\pm\infty} \int_{|x|> |t|} |\nabla_{t,x}U^j_n(t,x)|^2\,dx\\
 \label{G123}
 \forall K\geq 1,\quad  \left\|w_n^K(0)\right\|_{\HHH}^2\lesssim \sum_{\pm}\lim_{t\to\pm\infty}\int_{|x|\geq |t|}\left|\nabla_{t,x}w_n^K(t,x)\right|^2\,dx,
 \end{gather}
 (where the implicit constants are independent of $j$, $K$, and $n$), and 
 \begin{gather}
\label{G124}
 \forall j\geq 1,\quad \lim_{n\to\infty}\lim_{t\to\pm\infty} \int_{|x|> |t|} |\nabla_{t,x}U^j_n(t,x)|^2\,dx=0\\
 \label{G125}
 \forall K\geq 1,\quad \sum_{\pm}\lim_{t\to\pm\infty}\int_{|x|\geq |t|}\left|\nabla_{t,x}w_n^K(t,x)\right|^2\,dx=0.
 \end{gather} 
 Of course, combining \eqref{G122}, \eqref{G123}, \eqref{G124}, \eqref{G125} and Step \ref{St:Profile} we would obtain
 $$\lim_{n\to\infty}\|\vec{v}_n(0)\|_{\HHH}=0,$$
 a contradiction with \eqref{G22}. It remains to prove these four assertions.
 
 Recall that $U^j$ (for $j\geq J+1$) and $w_n^K$ (for any $K\geq 1$) are solutions of the free wave equation. The inequalities \eqref{G122} for $j\geq J+1$, and \eqref{G123} thus follow from the exterior energy bound in odd dimension proved in \cite{DuKeMe12} (recalled in Theorem \ref{T:equirepartition} above). We next prove \eqref{G122} when $j$ satisfies $1\leq j\leq J$. According to the channels of energy for the linearized equation at $W$ (Theorem \ref{T:LW}), it is sufficient to prove
 \begin{equation}
  \label{G121}
  \int \nabla U^{j}(0,x)\cdot \nabla \Lambda W(x)\,dx=\int \partial_tU^j(0,x)\Lambda W(x)=0.
 \end{equation}
To prove \eqref{G121}, notice that by weak convergence:
\begin{align*}
 \int \nabla U^{j}(0,x)\cdot \nabla \Lambda W(x)\,dx&=\lim_{n\to\infty} \int \lambda_{j,n}^{\frac N2} \nabla v_{0,n}(\lambda_{j,n}x)\Lambda W(x)\,dx\\
&=\lim_{n\to\infty} \int \nabla v_{0,n} \nabla (\Lambda W)_{(\lambda_{j,n})}=0,
\end{align*}
since $(v_{0,n},v_{1,n})\in Z_{\lambdabf_n}^{\bot}$. By the same proof, $\int \partial_tU^j(0,x)\Lambda W(x)\,dx=0$, concluding the proof of \eqref{G121} and thus of \eqref{G122}.

We next prove \eqref{G124} and \eqref{G125}. We will use the pseudo-orthogonality of the parameters \eqref{psdo_orth}. We focus on the limits as $t\to +\infty$, the proof for the limits as $t\to -\infty$ is the same. Using the radiation term for the free wave equation (see appendix of \cite{DuKeMe19}) if $j\geq J+1$, or for the linearized wave equation (see \eqref{radiation}) if $1\leq j\leq J$, we obtain that for all $j\geq 1$, there exists $g^j\in L^2(\RR)$ such that
\begin{gather}
\label{G130}
 \lim_{t\to \infty} \int_{t}^{+\infty} \left| r^{\frac{N-1}{2}} \partial_r U^j(t,r)-g^j(r-t)\right|^2\,dr=0\\
\label{G131}
 \lim_{t\to \infty} \int_{t}^{+\infty} \left| r^{\frac{N-1}{2}} \partial_t U^j(t,r)+g^j(r-t)\right|^2\,dr=0\\
 \lim_{t\to \infty} \int_{t}^{+\infty} \frac 1{r^2}\left|U^j(t,r)\right|^2r^{N-1}\,dr=0.
\end{gather}
If $j\geq J+1$, the preceding limits hold true with $\int_t^{\infty}$ replaced by $\int_0^{\infty}$. Also, for all $K\geq 1$ and $n$, there exist $G_n^K\in L^2(\RR)$ such that
\begin{align}
 \label{G133}
\lim_{t\to\infty} \int_0^{+\infty} \left| r^{\frac{N-1}{2}} \partial_rw_n^K(t,r)-G_n^K(r-t)\right|^2\,dr&=0\\
\label{G134}
\lim_{t\to\infty} \int_0^{+\infty} \left| r^{\frac{N-1}{2}} \partial_tw_n^K(t,r)+G_n^K(r-t)\right|^2\,dr&=0
\end{align}
 Fix $j \geq 1$ and $\eps>0$. Then there exists $K\gg 1$ such that $K>j$ and (by \eqref{G120} and \eqref{G24})
\begin{equation}
\label{G135}
 \lim_{t\to+\infty} \int_{|x|>t} \left|\partial_{t,x}v_n^K(t,x)\right|^2\,dx \leq \eps.
\end{equation}
Using the definition \eqref{G12A} of $v_n^K$, we obtain
\begin{multline*}
 \int_{|x|>t}\nabla_{t,x}v_n^K(t)\cdot\nabla_{t,x} U_n^j(t)=\int_{|x|>t} |\nabla_{t,x}U_n^j(t)|^2\\
+ \sum_{\substack{1\leq k\leq K\\ j\neq k}} \int_{|x|>|t|} \nabla_{t,x}U_n^j(t)\cdot \nabla_{t,x}U_n^k(t)+\int_{|x|>|t|} \nabla_{t,x}U_n^j(t)\nabla_{t,x}w_n^K(t),
\end{multline*}
 where $U_n^j$ is as usual the modulated profile (see \eqref{G12A}).

If $j\neq k$ we have, in view of \eqref{G130}, \eqref{G131},
\begin{multline*}
\lim_{t\to +\infty}\int_{|x|>t} \nabla_{t,x}U_n^j(t)\cdot\nabla_{t,x}U_n^k(t)\\
=2\lim_{t\to\infty} \int_{t}^{+\infty} \frac{1}{\lambda_{j,n}^{1/2}} g^j\left( \frac{r-|t-t_{j,n}|}{\lambda_{j,n}}\right)\frac{1}{\lambda_{k,n}^{1/2}} g^k\left( \frac{r-|t-t_{k,n}|}{\lambda_{k,n}}\right)\,dr\\
=2\int_0^{+\infty} \frac{1}{\lambda_{j,n}^{1/2}}g^j\left( \frac{r+t_{j,n}}{\lambda_{j,n}} \right)\frac{1}{\lambda_{k,n}^{1/2}}g^k\left( \frac{r+t_{k,n}}{\lambda_{k,n}} \right)\,dr.
\end{multline*}
In view of the pseudo-orthogonality \eqref{psdo_orth} of the parameters, we deduce
\begin{equation} 
 \label{G150}
\lim_{n\to\infty}
\lim_{t\to +\infty}\int_{|x|>t} \nabla_{t,x}U_n^j(t)\cdot\nabla_{t,x}U_n^k(t)=0.
\end{equation} 
Next we consider
\begin{multline*}
\lim_{t\to +\infty}\int_{|x|>t} \nabla_{t,x}U_n^j(t)\cdot\nabla_{t,x}w_n^K(t)\\
=2\lim_{t\to\infty} \int_{t}^{+\infty} \frac{1}{\lambda_{j,n}^{1/2}} g^j\left( \frac{r-|t-t_{j,n}|}{\lambda_{j,n}}\right)G^K_{n}\left( r-t\right)\,dr\\
=2\int_0^{+\infty} \frac{1}{\lambda_{j,n}^{1/2}}g^j\left( \frac{r+t_{j,n}}{\lambda_{j,n}} \right)G_n^K(r)\,dr.
\end{multline*}
If $t_{j,n}=0$ for all $n$ we obtain
$$\lim_{t\to +\infty}\int_{|x|>t} \nabla_{t,x}U_n^j(t)\cdot\nabla_{t,x}w_n^K(t)=2\int_0^{+\infty} \frac{1}{\lambda_{j,n}^{1/2}} g^j\left( \frac{r}{\lambda_{j,n}} \right)G_n^K(r)\,dr,$$
and the right-hand side goes to $0$ as $n$ goes to infinity since the condition 
$$ \underset{n\to \infty}{\wlim} \left(\lambda_{j,n}^{\frac{N}{2}-1}w_n^K(0,\lambda_{j,n}\cdot),\lambda_{j,n}^{\frac{N}{2}}\partial_t w_n^K(0,\lambda_{j,n}\cdot)\right)=0$$
(see \eqref{wlim_w})
implies
\begin{equation}
 \label{G160}
\forall g\in L^2(\RR), \quad\lim_{n\to\infty}\int_{\RR}\frac{1}{\lambda_{j,n}^{1/2}} g\left( \frac{r}{\lambda_{j,n}} \right)G_n^K(r)\,dr=0.
\end{equation} 
If $\lim_{n}t_{j,n}/\lambda_{j,n}=+\infty$, we have
\begin{equation*}
 \left|\int_0^{+\infty} \frac{1}{\lambda_{j,n}^{1/2}}g^j\left( \frac{r+t_{j,n}}{\lambda_{j,n}} \right)G_n^K(r)\,dr\right|
\lesssim \|g^j\|_{L^2(r\geq t_{j,n}/\lambda_{j,n})}\underset{n\to \infty}{\longrightarrow} 0
\end{equation*}
Finally, if $\lim_{n\to\infty} t_{j,n}/\lambda_{j,n}=-\infty$, we have
$$ \lim_{n\to\infty} \int_{-\infty}^0 \left|\frac{1}{\lambda_{j,n}^{1/2}}g^j\left( \frac{t+t_{j,n}}{\lambda_{j,n}} \right)\right|^2\,dr=0,$$
and thus 
\begin{multline*}
\lim_{n\to\infty} \int_0^{+\infty} \frac{1}{\lambda_{j,n}^{1/2}}g^j\left( \frac{r+t_{j,n}}{\lambda_{j,n}} \right)G_n^K(r)\,dr
\\
=\lim_{n\to\infty}\int_{\RR}\frac{1}{\lambda_{j,n}^{1/2}}g^j\left( \frac{r+t_{j,n}}{\lambda_{j,n}} \right)G_n^K(r)\,dr=0, 
\end{multline*}
where we have used that by the weak limit property \eqref{wlim_w} of $w_n^K$,
$$ \lim_{n\to \infty} \int \nabla_{t,x}U_n^j(0,x)\cdot\nabla_{t,x}w_n^K(0,x)\,dx=0.$$
combining the properties above, we obtain
$$\lim_{n\to\infty}\lim_{t\to +\infty} \int_{|x|\geq |t|} |\nabla_{t,x}U^j_n(t,x)|^2\,dx=\lim_{n\to\infty} \lim_{|x|>|t|} \int \nabla_{t,x}v_n^K\cdot \nabla_{t,x}U^j_n=0.$$
This yields \eqref{G124}. By a similar proof, we obtain \eqref{G125}, concluding this step.
\end{step}
\section{Non-radiative solutions close to a multisoliton}
\label{S:exterior}
\subsection{Preliminaries}
\begin{defi}
\label{D:non-radiative}
Let $t_0\in \RR$, and $u$ be a solution of the nonlinear wave equation \eqref{NLW} (or another wave equation considered in this paper). We say that $u$ is \emph{non-radiative} at $t=t_0$ if $u$ is defined on $\{|x|>|t-t_0|\}$ and
$$\sum_{\pm} \lim_{t\to\pm \infty} \int_{|x|\geq |t-t_0|}|\nabla_{t,x}u(t,x)|^2\,dx=0.$$
We say that $u$ is \emph{weakly non-radiative} if for large $R>0$, $u$ is defined on $\{|x|>|t|+R\}$ and
$$\sum_{\pm} \lim_{t\to\pm \infty} \int_{|x|\geq |t|+R}|\nabla_{t,x}u(t,x)|^2\,dx=0.$$
\end{defi}
If $N\geq 3$ is odd, according to the equirepartition property recalled in Theorem \ref{T:equirepartition}, the only non-radiative solution of \eqref{FW} is zero. This fact persists if $N\geq 4$ is even (see \cite[Proposition 1]{DuKeMe19Pc}), as a consequence of the asymptotic formula in \cite{CoKeSc14}. 

In odd space dimension $N\geq 5$, the non-radiative solutions for the linearized wave equation around the stationary solutions $W$ are also known, as a consequence of the main result of \cite{DuKeMe19Pa} (recalled in Theorem \ref{T:LW} above in the radial case).

Radial \emph{weakly non-radiative} solutions of the free wave equation were explicited in \cite{KeLaLiSc15}. Let
$$\PPP=\left\{\left(\frac{1}{r^{N-2k_1}},0\right),\left(0, \frac{1}{r^{N-2k_2}}\right),\quad 1\leq k_1\leq \ent{\frac{N+2}{4}},\; 1\leq k_2\leq \ent{\frac{N}{4}}\right\},$$
and, for $R>0$, $P(R)$ the subspace of $\HHH(R)$ spanned by $\PPP$. According to \cite{KeLaLiSc15}, if $N\geq 3$ is odd, $v$ is a radial solution $\partial_t^2v-\Delta v=0$, then
$$\sum_{\pm}\lim_{t\to\pm \infty} \int_{R+|t|}|\nabla_{t,x}u(t,x)|^2\,dx=0,$$
if and only if $\vec{v}(0)\in P(R)$.

From \cite{DuKeMe13}, if $N=3$, the only radial, non-radiative solutions of \eqref{NLW} are the stationary solutions. The proof is specific to dimension $3$, and the results available in higher dimension are less precise. We next recall from \cite{DuKeMe19Pc} some of these results, that will be needed in the sequel.
Let $m=\frac{N-1}{2}$ be the number of elements of $\PPP$.  
As in \cite{DuKeMe19Pc}, we denote by 
$\PPP=\{\Xi_{k}\}_{k\in \llbracket 1,m\rrbracket}$, choosing $\Xi_k$ so that
\begin{equation}
 \label{normXi}
 \left\|\Xi_k\right\|_{\HHH(R)}=\frac{c_k}{R^{k-\frac{1}{2}}},
\end{equation} 
for some constant $c_k\neq 0$. In particular, we choose $\Xi_m(r)=\left(r^{2-N},0\right)$.
By scaling, one can check that if $U\in P(R)$ and $\left( \theta_k(R) \right)_{k\in \llbracket 1,m\rrbracket}$ are its coordinates in $(\Xi_1,\ldots,\Xi_m)$, then
 $$\left\|U\right\|_{\HHH(R)}\approx \sum_{k=1}^m\frac{\left|\theta_{k}(R)\right|} {R^{k-1/2}},$$
 where the implicit constant is independent of $R>0$ (see \cite[Claim 3.2]{DuKeMe19Pc}). Then:
\begin{theoint}
 \label{T:asymptotic_NR}
Assume $N\geq 5$ is odd.
There exists $\eps_0>0$ with the following property. Let $u$ be a radial weakly non-radiative solution of \eqref{NLW}. Then there exist $k_0\in \llbracket 1,m\rrbracket$, $\ell \in \RR$ (with $\ell \neq 0$ if $k_0<m$) such that, for all $t_0\in \RR$ and $R_0>0$, if $
 u$ is defined on $\{(t,r),\quad r>|t-t_0|+R_0\}$, $\left\|\vec{u}(t_0)\right\|_{\HHH(R_0)}<\eps_0$ and
$$\sum_{\pm}\lim_{t\to \pm\infty} \int_{|x|>|t-t_0|+R_0} \left|\nabla_{t,x}u(t,r)\right|^2\,dx=0,$$
then
\begin{equation*}
 \forall R>R_0,\quad \left\|\vec{u}(t_0)-\ell \Xi_{k_0}\right\|_{\HHH(R)}\leq C\max\left\{\left(\frac{R_{0}}{R}\right)^{(k_0-\frac{1}2)\frac{N+2}{N-2}},\left(\frac{R_{0}}{R}\right)^{k_0+\frac 12}\right\}.
\end{equation*}
\end{theoint}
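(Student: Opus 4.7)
The strategy is to use the classification of radial non-radiative linear waves from \cite{KeLaLiSc15} to identify the leading asymptotic profile of $\vec u(t_0)$, and to bound the remainder via the nonlinear Duhamel correction outside the cone. By translating time I may assume $t_0=0$. The smallness hypothesis $\|\vec u(0)\|_{\HHH(R_0)}<\eps_0$ together with Proposition~\ref{P:LWP_cone} gives a unique solution $u$ on $\Gamma_{R_0}(0,\infty)$ that, by the exterior scattering statement \eqref{exterior_scattering}, admits a radial free wave $v_L$ with
\begin{equation*}
\lim_{t\to+\infty}\|\vec u(t)-\vec v_L(t)\|_{\HHH(R_0+|t|)}=0.
\end{equation*}
The weak non-radiation of $u$ transfers to $v_L$, and by monotonicity of the exterior integral in $R$ it holds at every scale $R\geq R_0$. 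The classification recalled above therefore forces $\vec v_L(0)_{\restriction r>R}\in P(R)$ for each such $R$, and since the family $\{\Xi_k|_{r>R}\}$ is linearly independent at each scale (their $\HHH(R)$-norms decay at distinct rates $R^{1/2-k}$), there exist scale-independent coefficients $\theta_k\in\RR$ with $\vec v_L(0)=\sum_{k=1}^m\theta_k\Xi_k$ on $\{r>R_0\}$, and $|\theta_k|\,R_0^{1/2-k}\lesssim\eps_0$ by \eqref{normXi}.

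Let $k_0\in\llbracket 1,m\rrbracket$ be the smallest index with $\theta_{k_0}\neq 0$, or $k_0=m$ if all $\theta_k$ vanish, and set $\ell:=\theta_{k_0}$; in particular $\ell\neq 0$ whenever $k_0<m$. Then
\begin{equation*}
\vec u(0)-\ell\Xi_{k_0}=\sum_{k>k_0}\theta_k\Xi_k+\bigl(\vec u(0)-\vec v_L(0)\bigr),
\end{equation*}
and the linear tail is bounded in $\HHH(R)$ by $C\sum_{k>k_0}|\theta_k|R^{1/2-k}\lesssim\eps_0(R_0/R)^{k_0+1/2}$, supplying the second exponent in the stated maximum. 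For the nonlinear correction, Duhamel applied on $[0,T]$ together with the scattering and finite speed of propagation yields, in $\HHH(R)$ for each $R>R_0$,
\begin{equation*}
\vec u(0)-\vec v_L(0)=-\int_0^{+\infty}\vec S_L(-s)\bigl(0,F(u(s))\bigr)\,ds.
\end{equation*}
The Strichartz inequality \eqref{Strichartz} combined with Lemma~\ref{L:char_Besov} and the fractional chain rule \eqref{fractional_cones} bounds this by $C\|u\|_{\Ssp(\Gamma_R)}^{4/(N-2)}\|u\|_{\Wsp(\Gamma_R)}$. Since on $\Gamma_R$ the solution is approximated by the linear evolution of $\ell\Xi_{k_0}\sim r^{2k_0-N}$, a scaling computation gives Strichartz norms on $\Gamma_R$ of order $(R_0/R)^{k_0-1/2}$, hence a bound $C(R_0/R)^{(k_0-1/2)(N+2)/(N-2)}$ for the correction. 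Combining the two contributions yields the asserted maximum.

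The main technical obstacle is promoting the crude a priori Strichartz bound $\|u\|_{\Ssp(\Gamma_R)}\lesssim 1$ into the sharp decay $(R_0/R)^{k_0-1/2}$ that closes the nonlinear estimate, since the latter is a necessary input to obtain the claimed exponent. This is handled by a bootstrap: one first derives the decomposition with a weaker exponent, using Theorem~\ref{T:LW} on the linearized evolution together with the long-time perturbation result Proposition~\ref{P:LTPT} to linearize around the approximate profile $\ell\Xi_{k_0}$, then reinjects the improved pointwise information into the Strichartz bound for the nonlinear source to sharpen the exponent. The argument as $t\to-\infty$ is identical by time reversal, and it must produce the same pair $(k_0,\ell)$ since this data is intrinsic to $\vec u(0)$.
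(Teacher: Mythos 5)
The paper does not prove this statement; it is quoted verbatim from \cite{DuKeMe19Pc} (``See Theorem 2 and Remark 3.4''). So I will assess your sketch on its own terms rather than against a proof in the text.

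There is a genuine gap at the central step, where you claim that the weak non-radiation of $u$ ``transfers to $v_L$'' and that the classification of \cite{KeLaLiSc15} therefore ``forces $\vec v_L(0)_{\restriction r>R}\in P(R)$.'' This does not follow, for two compounding reasons. First, your $v_L$ is only the \emph{forward} scattering state: the identity $\lim_{t\to+\infty}\|\vec u(t)-\vec v_L(t)\|_{\HHH(R_0+|t|)}=0$ says nothing about what $v_L$ does as $t\to-\infty$. The two-sided vanishing of the channel limits of $u$ would have to be transported through the \emph{backward} scattering state $v_L^-$, and $\vec v_L(0)-\vec v_L^-(0)=-\int_{-\infty}^{+\infty}\vec S_L(-s)\bigl(0,F(u)\indic_{\{|x|>R_0+|s|\}}\bigr)\,ds$, which is not zero but only $O(\eps_0^{\frac{N+2}{N-2}})$ in $\HHH(R_0)$. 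Second, even after accounting for this, the \cite{KeLaLiSc15} lower bound controls only $\|\pi_{P(R)^{\bot}}\vec v_L(0)\|_{\HHH(R)}$ by the channel limits, so what you obtain is that $\vec v_L(0)$ is \emph{close} to $P(R_0)$ with error $\lesssim\eps_0^{\frac{N+2}{N-2}}$, not that it lies in $P(R_0)$. Consequently the coefficients $\theta_k$ are not well-defined scalars attached to $\vec u(0)$, and the pair $(k_0,\ell)$ cannot be read off a single decomposition as you suggest; it must be \emph{extracted} through the iterative passage across scales that you describe only as a bootstrap to be filled in later. That iteration is not a technical afterthought — it is the proof: it is what turns the $O(\eps_0^{\frac{N+2}{N-2}})$ proximity at the initial scale into the stated geometric decay of $\|\vec u(t_0)-\ell\Xi_{k_0}\|_{\HHH(R)}$, and simultaneously identifies $k_0$ and $\ell$ as intrinsic to $u$ (independent of the admissible $(t_0,R_0)$), a uniformity your sketch asserts without argument. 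Your high-level intuition — small-data exterior Cauchy theory, the $P(R)$ classification of radial weakly non-radiative free waves, and a bootstrap in $R$ — is the correct skeleton, but the exact-membership claim breaks the chain of implications, and the gap is precisely at the place where the real work of \cite{DuKeMe19Pc} lives.
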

See Theorem 2 and Remark 3.4 in \cite{DuKeMe19Pc}. If $k_0=m$, the theorem implies that $u$ is close, for large $r$, to one of the stationary solutions $0$ (if $\ell=0$) or $\pm W_{(\lambda)}$ for some $\lambda$ depending on $\ell$ (if $\ell\neq 0$). Under the stronger assumption that $u$ is non-radiative, Theorem 3 of \cite{DuKeMe19Pc} gives a uniqueness result in this case. 
\begin{theoint}
 \label{T:uniqueness_NR}
Assume $N\geq 5$ is odd. Let $u$ be a radial non-radiative solution of \eqref{NLW}. Let $k_0$ be as in Theorem \ref{T:asymptotic_NR}. Assume that $k_0=m$. Then $u$ is a stationary solution.
\end{theoint}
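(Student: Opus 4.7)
The plan is to leverage Theorem \ref{T:asymptotic_NR} to pinpoint the leading asymptotic of $\vec{u}(t_0)$, match it with a stationary solution $\iota W_{(\lambda)}$, and then apply the linearized exterior energy bound of Theorem \ref{T:LW} to the difference $v := u - \iota W_{(\lambda)}$.

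Since $k_0 = m$, Theorem \ref{T:asymptotic_NR} furnishes $\ell \in \RR$ with
$$\left\| \vec{u}(t_0) - \ell\, \Xi_m \right\|_{\HHH(R)} = O\!\left( R^{-N/2} \right), \quad R \to \infty,$$
where $\Xi_m = (r^{2-N},0)$. The expansion $W(r) = c_N r^{2-N} + O(r^{-N})$ as $r \to \infty$ singles out a unique pair $(\iota,\lambda) \in \{-1,0,+1\} \times (0,\infty)$, with $\iota = 0$ exactly when $\ell = 0$, such that $(\iota W_{(\lambda)},0)$ carries the same $r^{2-N}$ leading coefficient as $\vec{u}(t_0)$. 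Set $v := u - \iota W_{(\lambda)}$. Since $W \in \hdot$, the stationary solution $\iota W_{(\lambda)}$ is itself non-radiative; hence $v$ is non-radiative. Matching of leading terms together with the expansion of $W$ above yields the improved exterior decay
$$\| \vec{v}(t_0) \|_{\HHH(R)} = O\!\left( R^{-N/2} \right),$$
which is strictly better, as $R \to \infty$, than the $R^{-(m-1/2)} = R^{-(N-2)/2}$ decay produced by any nonzero element of $\ZZZZ_{(\lambda)} := \vect\{(\Lambda W)_{(\lambda)}\} \times \vect\{(\Lambda W)_{(\lambda)}\}$.

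The second step is to feed $v$ into the linearized exterior energy bound. The function $v$ solves
$$\partial_t^2 v + L_{W_{(\lambda)}} v = N(v), \quad N(v) := F(u) - F(\iota W_{(\lambda)}) - F'(\iota W_{(\lambda)}) v,$$
with $N(v)$ superlinear in $v$. Using the fractional chain rule outside wave cones \eqref{fractional_cones} and Lemma \ref{L:linear_approx}, one controls $N(v)$ in $\Wsp'(\Gamma_{R_0})$ by a positive power of $\| v \|_{\Ssp(\Gamma_{R_0})}$, which is small once $R_0$ is large. Adapting Theorem \ref{T:LW} (with parameter $\lambda$) to this perturbed setting via a Duhamel argument then gives
$$\left\| \pi_{\ZZZZ_{(\lambda)}^{\bot}} \vec{v}(t_0) \right\|_{\HHH(R_0)} \lesssim \sum_{\pm}\lim_{t\to\pm\infty} \int_{|x|>|t-t_0|+R_0} |\nabla_{t,x} v|^2\,dx + \text{nonlinear error} = o(1).$$
Combined with the improved decay above, which rules out any nonzero component of $\vec{v}(t_0)$ in $\ZZZZ_{(\lambda)}$ (whose elements have $\HHH(R)$-norm of order $R^{-(N-2)/2}$ or $R^{-(N-4)/2}$, both too large), this forces $\vec{v}(t_0) \equiv 0$ on $\{r > R_0\}$ for $R_0$ sufficiently large. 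Repeating the analysis centered at other times and radii, or invoking unique continuation for \eqref{NLW}, propagates this to all of $\RR\times\RR^N$, whence $u = \iota W_{(\lambda)}$.

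The main obstacle is to close the bootstrap step: the naive linear estimate bounds $\pi_{\ZZZZ_{(\lambda)}^{\bot}} \vec{v}(t_0)$ by a norm of $N(v)$ which is itself superlinear in $\| \vec{v}(t_0) \|_{\HHH(R_0)}$, so a direct application is circular. One must iterate, improving the exterior decay exponent of $v$ at each stage, until the surviving asymptotic profile is forced into $\ZZZZ_{(\lambda)}$ and then annihilated by the decay constraint. Handling the Strichartz norms of the linearized flow in the transition region $r \sim \lambda$, where the potential $W_{(\lambda)}^{4/(N-2)}$ is concentrated, is the technical crux of the argument, carried out in Theorem~3 of \cite{DuKeMe19Pc}.
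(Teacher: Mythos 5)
The paper does not actually prove Theorem~\ref{T:uniqueness_NR}: it is stated as a restatement of Theorem~3 of \cite{DuKeMe19Pc} and no argument is given here, so there is no internal proof to compare against. Your starting point—use Theorem~\ref{T:asymptotic_NR} with $k_0=m$ to read off the leading $r^{2-N}$ coefficient, match it with $\iota W_{(\lambda)}$ via the asymptotic $W(r)=c_Nr^{2-N}+O(r^{-N})$, and study the nonradiative difference $v=u-\iota W_{(\lambda)}$—is in the right spirit. However, the rigidity step does not close as written, and two gaps are genuine.

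First, Theorem~\ref{T:LW} controls the \emph{global} norm $\|\pi_{\ZZZZ^{\bot}}\vec v(t_0)\|_{\HHH}$ by the exterior energy flux over $\{|x|>|t-t_0|\}$, i.e.\ at $R=0$. To run the Duhamel perturbation you need the forcing $\indic_{\{|x|>|t-t_0|\}}N(v)$ small (in $\Wsp'$ or $L^1L^2$); but the only smallness you possess is the exterior decay $\|\vec v(t_0)\|_{\HHH(R)}=O(R^{-N/2})$, which says nothing about $v$ for $r$ of order $\lambda$ or smaller, precisely where the potential $W_{(\lambda)}^{4/(N-2)}$ and the nonlinearity are concentrated. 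Conversely, if you pass to $\Gamma_{R_0}$ with $R_0$ large so that $v$ is small, you leave the scope of Theorem~\ref{T:LW}: for $R_0>0$ the space of nonradiative data for the free and linearized flow is strictly larger than $\ZZZZ_{(\lambda)}$ (the space $P(R_0)$ has dimension $m=(N-1)/2\geq 2$, which is exactly the obstruction the introduction emphasizes for odd $N\geq 5$), and no exterior-cone analogue of Theorem~\ref{T:LW} with parameter $R_0>0$ is available in this paper. Your proposed ``iterate, improving the exterior decay exponent'' does not describe a well-posed scheme: the decay $R^{-N/2}$ from Theorem~\ref{T:asymptotic_NR} is already the terminal rate when $k_0=m$, not the start of an improving sequence.

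Second, even granting $\vec v(t_0)\equiv 0$ for $r>R_0$, the conclusion $u\equiv\iota W_{(\lambda)}$ on all of $\RR\times\RR^N$ does not follow from finite speed of propagation or a vague appeal to ``unique continuation'': two distinct solutions of \eqref{NLW} with the same data for $r>R_0$ agree on $\{|x|>R_0+|t-t_0|\}$ but may differ on its complement. The actual argument must propagate the vanishing inward, exploiting nonradiativity at every scale $R>0$ together with the structure of the exterior exceptional spaces and the nondegeneracy of $W$; this is the substantive content of Theorem~3 of \cite{DuKeMe19Pc}, not something that follows by the perturbation you describe.
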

In the remainder of this section, we will assume $N\geq 5$ is odd and consider a nonradiative solution close to a multisoliton. In Subsection \ref{SS:close_multi}, we will use the exterior energy bound for the linearized equation proved in Section \ref{S:multisoliton} to give a first order expansion of the solution. In Subsection \ref{SS:lwrbnd} we will use Theorems \ref{T:asymptotic_NR} and \ref{T:uniqueness_NR} to give a lower bound of the exterior scaling parameter of the multisoliton. These properties will be crucial in the proofs of the soliton resolution in Sections \ref{S:reduction} and \ref{S:end_of_proof}.
\subsection{Estimates on the coefficients}
\label{SS:close_multi}
In this subsection, we assume as before that $N\geq 5$ is odd. We fix $J\geq 1$, $(\iota_j)\in \{\pm 1\}^J$, and consider a radial solution $u$ of \eqref{NLW}, defined on $\left\{(t,x)\in \RR^N\;:\;|x|>t\right\}$, which is \emph{non-radiative} at $t=0$ (see Definition \ref{D:non-radiative}). We assume that there exists $\lambdabf=(\lambda_j)^J\in G_J$ such that:
\begin{gather}
\label{F160}
 \left\|\vec{u}(0)-(M,0)\right\|_{\HHH}=:\delta\leq \eps_J \ll 1\text{ where }M=\sum_{j=1}^J \iota_j W_{(\lambda_j)}\\
\label{F161'}
\gamma\leq \eps_J\ll 1,
 \end{gather}
where as before $\gamma:=\gamma(\ldabf)=\max_{1\leq j\leq J-1} \lambda_{j+1}/\lambda_j$. 
Denote 
$$h_0=u_0-M.$$
By the implicit function theorem (see Lemma \ref{L:ortho_scaling}), we can change the scaling parameters $(\lambda_j)_j$ so that the following orthogonality relations hold:
\begin{equation}
 \label{F162}
 \forall j\in \llbracket 1,J\rrbracket,\quad \int \nabla_x h_0\nabla_x (\Lambda W)_{(\lambda_j)}=0.
\end{equation} 
We expand $u_1=\partial_t u(0)$ as follows:
\begin{equation}
 \label{F164}
 u_1=\sum_{j=1}^J\alpha_j\left(\Lambda W\right)_{[\lambda_j]}+g_1
\end{equation} 
where
\begin{equation}
 \label{F165}
 \forall j\in \llbracket 1,J\rrbracket,\quad \int g_1 (\Lambda W)_{[\lambda_j]} =0.
\end{equation}
We will prove:
\begin{prop}
\label{P:F18}
\begin{gather}
\label{F180}
    \left\|(h_0,g_1)\right\|_{\HHH}\lesssim\gamma^{\frac N4}+\delta^{\frac{N}{N-2}}  \\
 \label{F181}
  \left| \delta^2-\sum_{j=1}^J \alpha_j^2\left\|\Lambda W\right\|^2_{L^2}\right|\lesssim \gamma^{\frac{N-1}{2}}+\delta^{\frac{2(N-1)}{N-2}}
\end{gather} 
  \end{prop}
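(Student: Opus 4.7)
The plan is to treat $h := u - M$ as an approximate solution of the wave equation linearized around the multisoliton, and apply the exterior-energy bound of Corollary~\ref{Cor:G10}. Using $\partial_t^2 u - \Delta u = F(u)$, the stationarity of $M$, and $\Delta M = -\sum_j F(\iota_j W_{(\lambda_j)})$, I write
\[\partial_t^2 h + L_\lambdabf h = R_1 + R_2 + R_3,\]
where $R_2 := F(M) + \Delta M$ is the multisoliton-interaction error (independent of $h$), $R_3 := \tfrac{N+2}{N-2}(|M|^{4/(N-2)} - \sum_j W_{(\lambda_j)}^{4/(N-2)})h$ is the potential mismatch (linear in $h$), and $R_1 := F(M+h)-F(M)-\tfrac{N+2}{N-2}|M|^{4/(N-2)}h$ is the Taylor remainder of order $|h|^{N/(N-2)}$. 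Crucially, $h$ is itself non-radiative at $t=0$: since $u$ is non-radiative and $M\in\hdot$ forces $\int_{|x|>|t|}|\nabla M|^2\,dx\to 0$ as $|t|\to\infty$, the bound $(a+b)^2\leq 2a^2+2b^2$ yields the same for $h$.

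Let $v$ solve $\partial_t^2 v+L_\lambdabf v=0$ with $\vec{v}(0)=(h_0,u_1)$. Applying Lemma~\ref{L:linear_approx} with $V=-V_\lambdabf$ (whose relevant norm outside the cone is controlled by \eqref{G10A}--\eqref{G100}) to $v-h$, which has zero initial data and forcing $-(R_1+R_2+R_3)$, gives
\[\sup_t \|\indxt \nabla_{t,x}(v-h)\|_{L^2} \lesssim \|\indxt R_2\|_{L^1_t L^2_x} + \|\indxt (R_1+R_3)\|_{\Wsp'}.\]
The pointwise bound $|R_2|\lesssim\sum_{k\neq j}W_{(\lambda_k)}^{4/(N-2)}W_{(\lambda_j)}$, together with an estimate analogous to Claim~\ref{C:G30}, gives $\|\indxt R_2\|_{L^1 L^2}\lesssim\gamma^{N/4}$; the mismatch $R_3$ contributes $\lesssim\gamma^{N/4}\delta$ via H\"older and \eqref{G10A}; and the Taylor remainder $R_1$ contributes $\lesssim\delta^{N/(N-2)}$ via the fractional chain rule \eqref{fractional_cones}. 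Since $h$ is non-radiative, squaring yields
\[\sum_\pm\lim_{t\to\pm\infty}\int_{|x|>|t|}|\nabla_{t,x}v|^2\,dx \lesssim \gamma^{N/2}+\delta^{2N/(N-2)}.\]

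Corollary~\ref{Cor:G10} applied to $v$ then reads $\|\pi_{Z_\lambdabf^\perp}(h_0,u_1)\|^2_\HHH\lesssim\gamma^{N/2}+\delta^{2N/(N-2)}+\gamma^{2\theta_N}\|\pi_{Z_\lambdabf}(h_0,u_1)\|^2_\HHH$. Since $Z_\lambdabf$ respects the splitting $\HHH=\hdot\times L^2$, the orthogonality conditions \eqref{F162}, \eqref{F165} give exactly $\pi_{Z_\lambdabf^\perp}(h_0,u_1)=(h_0,g_1)$ and $\pi_{Z_\lambdabf}(h_0,u_1)=(0,\sum_j\alpha_j(\Lambda W)_{[\lambda_j]})$. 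Using $\|\pi_{Z_\lambdabf}(h_0,u_1)\|_\HHH\leq\delta$ and $\theta_N\geq 1/2$ to absorb $\gamma^{2\theta_N}\delta^2$ into $\gamma^{N/2}+\delta^{2N/(N-2)}$ (by case analysis on whether $\delta^2\lessgtr\gamma^{N/2-2\theta_N}$) proves \eqref{F180}. For \eqref{F181}, expanding $\|u_1\|_{L^2}^2$ via \eqref{F165} yields
\[\delta^2 - \sum_j\alpha_j^2\|\Lambda W\|^2_{L^2} = \|(h_0,g_1)\|^2_\HHH + \sum_{j\neq k}\alpha_j\alpha_k\langle(\Lambda W)_{[\lambda_j]},(\Lambda W)_{[\lambda_k]}\rangle_{L^2}.\]
The first term is bounded by \eqref{F180}. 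By scaling and the decay $\Lambda W\sim|x|^{2-N}$ one has $|\langle(\Lambda W)_{[\lambda_j]},(\Lambda W)_{[\lambda_k]}\rangle_{L^2}|\lesssim(\lambda_k/\lambda_j)^{N/2-2}$ for $j<k$, so the cross sum is $\lesssim\delta^2\gamma^{N/2-2}$, which is absorbed into $\gamma^{(N-1)/2}+\delta^{2(N-1)/(N-2)}$ by another case analysis.

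The main technical obstacle is maintaining the sharp powers of $\gamma$ and $\delta$ throughout: the multisoliton-interaction bound $\|R_2\|_{L^1 L^2}\lesssim\gamma^{N/4}$, the cross inner product bound $\lesssim\gamma^{N/2-2}$, and the Young-type case analyses that absorb the lossier exponent $\gamma^{2\theta_N}$ from Corollary~\ref{Cor:G10} all need to cooperate, and this cooperation depends on the precise algebraic interplay between the exponents $N/2$, $N/4$, $N/(N-2)$, and $\theta_N$ arising in the excluded subspace.
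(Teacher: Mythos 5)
Your strategy is the same as the paper's: write $h=u-M$ as an approximate solution of $\partial_t^2h+L_\lambdabf h=(\text{source})$, use the approximation Lemma \ref{L:linear_approx} to compare $h$ with the solution $v$ of the homogeneous linearized equation, show $v$ has small exterior energy because $u$ (hence $h$) is non-radiative, apply Corollary \ref{Cor:G10}, and translate the projection bound into \eqref{F180}--\eqref{F181} via the orthogonality conditions. The identification $\pi_{Z_\lambdabf^\perp}(h_0,u_1)=(h_0,g_1)$, the cross-term bound $\langle(\Lambda W)_{[\lambda_j]},(\Lambda W)_{[\lambda_k]}\rangle\lesssim\gamma^{N/2-2}$, and the Young-type case analyses all check out.

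However, there is a genuine gap in the estimate of the interaction error $R_2=F(M)-\sum_jF(\iota_jW_{(\lambda_j)})$. You use the pointwise bound $|R_2|\lesssim\sum_{k\neq j}W_{(\lambda_k)}^{4/(N-2)}W_{(\lambda_j)}$ (without the $\min$) together with ``an estimate analogous to Claim \ref{C:G30}.'' Claim \ref{C:G30} applied to this product bounds $\|\indxt W_{(\lambda_k)}^{4/(N-2)}W_{(\lambda_j)}\|_{L^1L^2}$ only by $\gamma^{3/2}$ (if $N=5$) or $\gamma^{2}$ (if $N\geq 7$). For $N\geq 9$, one has $N/4>2$, so $\gamma^2\not\lesssim\gamma^{N/4}$ and your claimed estimate $\|\indxt R_2\|_{L^1L^2}\lesssim\gamma^{N/4}$ does not follow. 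The paper avoids this by observing from Claim \ref{Cl:pointwise2} (with $h=0$) that the interaction error enjoys the sharper bound $|R_2|\lesssim\sum_{j<k}\min\{W_{(\lambda_j)}^{4/(N-2)}W_{(\lambda_k)},W_{(\lambda_k)}^{4/(N-2)}W_{(\lambda_j)}\}$, and then invoking Claim \ref{C:G30'} (the $\min$ version), which yields $\gamma^{(N+2)/4}$. Since $(N+2)/4>N/4$, this \emph{is} $\lesssim\gamma^{N/4}$. You need the $\min$ structure; without it the exponent is too weak in high dimensions. A related, smaller concern is your $R_3$ term: the paper does not isolate the potential-mismatch error as a separate linear-in-$h$ source; it treats the whole nonlinear source through the single pointwise inequality of Claim \ref{Cl:pointwise2}, in which the linear-in-$h$ defect is already absorbed into the $\min$-terms and the $|h|^{(N+1)/(N-2)}$-term. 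Your claim $\|\indxt R_3\|_{\Wsp'}\lesssim\gamma^{N/4}\delta$ would need an independent verification that the mismatch $|M|^{4/(N-2)}-\sum_jW_{(\lambda_j)}^{4/(N-2)}$ is small in the admissible potential norm, which is not obvious from \eqref{G10A} alone.
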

  We start by proving the following lemma:
  \begin{lemma}
\label{L:radial_multisoliton_NL}
Let $u$ be as above. Then
$$ \left\| \pi_{Z_{\lambdabf}^{\bot}}\Big((u_0,u_1)-(M,0)\Big)\right\|_{\HHH}\lesssim \gamma^{\frac{N}{4}}+\delta^{\frac{N}{N-2}}.
$$
\end{lemma}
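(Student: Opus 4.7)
The plan is to linearize \eqref{NLW} around the multisoliton $M$ and invoke Corollary \ref{Cor:G10}. Setting $h := u - M$, with $\vec{h}(0) = (h_0,u_1)$ of norm $\delta$, and using that $F$ is odd so $-\Delta M = \sum_j \iota_j F(W_{(\lambda_j)})$, the function $h$ solves
\begin{equation*}
\partial_t^2 h + L_\lambdabf h = R_A + R_B + R_C,
\end{equation*}
where $R_A := F(M) - \sum_j \iota_j F(W_{(\lambda_j)})$ is the pure soliton interaction, $R_B := F(M+h) - F(M) - F'(M)h$ is the nonlinear remainder in $h$, and $R_C := \bigl[F'(M) - \tfrac{N+2}{N-2}\sum_j W_{(\lambda_j)}^{4/(N-2)}\bigr]h$ is a cross term. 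Since $u$ is non-radiative and $M\in \hdot$ is stationary (so $\int_{|x|>|t|}|\nabla M|^2\to 0$), $h$ is itself non-radiative.

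Let $V$ solve $\partial_t^2 V + L_\lambdabf V = 0$ with $\vec{V}(0) = (h_0,u_1)$. Corollary \ref{Cor:G10} yields
\begin{equation*}
\bigl\|\pi_{Z_\lambdabf^{\bot}}(h_0,u_1)\bigr\|_\HHH^2 \leq C\bigl(\mathrm{ch}(V) + \gamma^{2\theta_N}\bigl\|\pi_{Z_\lambdabf}(h_0,u_1)\bigr\|_\HHH^2\bigr),
\end{equation*}
where $\mathrm{ch}(V)$ denotes the sum of the outer-cone energy limits. The difference $\Psi := h - V$ has zero initial data and forcing $R_A + R_B + R_C$; the potential of $L_\lambdabf$ satisfies \eqref{G84} by \eqref{G10A}--\eqref{G100}, so Lemma \ref{L:linear_approx} gives $\sup_t \|\indxt \nabla_{t,x}\Psi\|_{L^2} \lesssim \|\indxt(R_A+R_C)\|_{L^1 L^2} + \|\indxt R_B\|_{\Wsp'}$. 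Combined with the non-radiativity of $h$, this controls $\mathrm{ch}(V)$ by the same quantity squared.

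Next I would estimate the three forcing terms. For the geometric terms $R_A$ and $R_C$, a scale-by-scale pointwise analysis of products of rescaled $W$'s, in the spirit of Claim \ref{C:G30}, gives $\|\indxt R_A\|_{L^1 L^2} \lesssim \gamma^{N/4}$ and $\|\indxt R_C\|_{L^1 L^2 + \Wsp'} \lesssim \gamma^{\theta_N}\delta$. For $R_B$, I would first establish $\|h\|_{\Ssp(\Gamma_0)} + \|h\|_{\Wsp(\Gamma_0)} \lesssim \delta$ by applying the long-time perturbation Proposition \ref{P:LTPT} with $M$ as approximate solution on $\Gamma_0$ and $R_A$ absorbed into the error terms; then Taylor expansion of $F$ and the chain rule \eqref{fractional_cones} give $\|\indxt R_B\|_{\Wsp'} \lesssim \delta^{(N+2)/(N-2)} \leq \delta^{N/(N-2)}$ since $\delta \leq 1$. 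Finally, by the orthogonality conditions \eqref{F162}--\eqref{F165}, the components $(h_0,0)$ and $(0,g_1)$ are orthogonal to every generator of $Z_\lambdabf$, so $\pi_{Z_\lambdabf}(h_0,u_1)$ reduces to $\sum_j \alpha_j(0,(\Lambda W)_{[\lambda_j]})$, of $\HHH$-norm $\lesssim \delta$ (the $\alpha_j^2$ are controlled by $\|u_1\|_{L^2}^2 \leq \delta^2$, modulo $O(\gamma^a)$ cross terms).

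Combining, $\|\pi_{Z_\lambdabf^{\bot}}(h_0,u_1)\|_\HHH^2 \lesssim \gamma^{N/2} + \delta^{2N/(N-2)} + \gamma^{2\theta_N}\delta^2$. Since $\theta_N \geq 1/2$ and $\gamma \leq 1$, the last term is at most $\gamma\delta^2$, and Young's inequality with conjugate exponents $p = N/2$, $q = N/(N-2)$ bounds $\gamma\delta^2$ by $\gamma^{N/2}/p + \delta^{2N/(N-2)}/q$; taking square roots gives the claim. The main obstacle is the bound on $\|\indxt R_B\|_{\Wsp'}$: the stationary multisoliton $M$ has infinite $\Wsp$-norm on $\RR$, so the perturbation argument needed to obtain Strichartz control of $h$ on the full cone $\Gamma_0$ must carefully exploit the spatial localization $\indxt$ and the scale structure of $M$.
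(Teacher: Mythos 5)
Your overall strategy — set $h=u-M$, compare to the $L_\lambdabf$-evolution $h_L$ of the same initial data, use the non-radiativity of $u$ together with Lemma \ref{L:linear_approx} to control the exterior energy of $h_L$, and feed that into Corollary \ref{Cor:G10} — matches the paper's. The difference, and the source of a genuine gap, is how you split the forcing on the right-hand side of the $h$-equation.

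You split it as $R_A+R_B+R_C$ with $R_B=F(M+h)-F(M)-F'(M)h$ and claim $\|\indxt R_B\|_{\Wsp'}\lesssim\delta^{(N+2)/(N-2)}$ via "Taylor expansion plus the chain rule \eqref{fractional_cones}." This step does not go through. The chain rule \eqref{fractional_cones} applies to $F(u)$ for a single argument $u$; applying it separately to $F(M+h)$, $F(M)$ and $F'(M)h$ gives nothing that sees the cancellation in the Taylor remainder, which at the pointwise level behaves like $h^2|M|^{(6-N)/(N-2)}$ where $|h|\leq|M|$ and like $|h|^{(N+2)/(N-2)}$ otherwise (Claim \ref{Cl:pointwise3}). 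No tool in the paper bounds the mixed term $h^2|M|^{(6-N)/(N-2)}$ in the Besov-based space $\Wsp'$ at the rate you need. You identify the main obstacle as the infinite $\Wsp$-norm of $M$ on $\RR\times\RR^N$; this is a red herring — the paper shows $\|W\|_{\Wsp(\{|x|>|t|\})}<\infty$, so by scaling $M$ has finite $\Wsp$-norm on the exterior cone. The actual obstruction is the $\Wsp'$ estimate of the mixed $M$-$h$ Taylor term.

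The paper's split is designed to avoid exactly this: it writes the forcing as $F(h)+\NNN(h)$ with $\NNN(h)=F(M+h)-\sum_jF(\iota_jW_{(\lambda_j)})-F(h)-\frac{N+2}{N-2}\sum_jW^{4/(N-2)}_{(\lambda_j)}h$. The only piece that goes into $\Wsp'$ is the pure power $F(h)$, which \eqref{fractional_cones} handles cleanly. Everything else — your $R_A$, your $R_C$, \emph{and} the mixed Taylor terms — is bounded pointwise by Claim \ref{Cl:pointwise2} (by $\sum_{j\neq k}\min\{\ldots\}$ plus $\sum_jW^{1/(N-2)}_{(\lambda_j)}|h|^{(N+1)/(N-2)}$) and then placed in $L^1_tL^2_x$, using Claim \ref{C:G30'} for the cross-soliton pieces and H\"older with $W^{1/(N-2)}\indxt\in L^2_tL^\infty_x$ for the mixed pieces. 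Since Lemma \ref{L:linear_approx} accepts forcing in $L^1L^2+\Wsp'$, this is precisely the split the approximation lemma wants. Secondarily: you propose to get $\|h\|_{\Ssp(\Gamma_0)\cap\Wsp}\lesssim\delta$ from Proposition \ref{P:LTPT}, but that statement only yields $\lesssim\eta^{c_N}$ with $c_N\in(0,1]$; the paper instead bootstraps Lemma \ref{L:linear_approx} applied to $\tlh-h_L$, which closes directly to the linear-in-$\delta$ bound $\|\tlh\|_{\Wsp\cap\Ssp}\lesssim\delta+\gamma^{(N+2)/4}$ needed for the $F(h)$ estimate.
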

\begin{proof}
 We let $h(t)=u(t)-M$. Then $\|\vec{h}(0)\|_{\HHH}=\delta$ and
 \begin{equation}
  \label{MNL1}
  \partial_t^2h+L_{\ldabf} h=F(h)+\NNN(h),
 \end{equation} 
 where 
 $$\NNN(h)=F\left( M+h \right)-\sum_{j=1}^JF(\iota_j W_{(\lambda_j)})-F(h)-\frac{N+2}{N-2}\sum_{j=1}^J W^{\frac{4}{N-2}}_{(\lambda_j)}h.$$
 By finite speed of propagation, $h$ coincide, for $|x|>|t|$, with the solution $\tlh$ of
\begin{equation}
  \label{MNL1'}
  \partial_t^2\tlh+L_{\ldabf} \tlh=(F(h)+\NNN(h))\indxt,
 \end{equation} 
 Let $T>0$ and denote by $\Gamma(T)=\Big\{(t,x),\; |t|\leq \min \{|x|,T\} \Big\}$
 By the fractional chain rule \eqref{fractional_cones},
 \begin{equation}
  \label{MNL2}
  \left\|F(h)\indxt\right\|_{\Wsp'((0,T))}=\left\|F(\tlh)\indxt\right\|_{\Wsp'((0,T))}\lesssim \|\tlh\|_{\Wsp((0,T))}\|\tlh\|_{\Ssp(\Gamma(T))}^{\frac{4}{N-2}}.
 \end{equation} 
We first assume $N\geq 7$. We use the inequality
\begin{multline}
 \label{esti_linearisation}
\left|F\left( \sum_{j=1}^J y_j+h \right)-\sum_{j=1}^JF(y_j)-F(h)-\frac{N+2}{N-2}\sum_{j=1}^J |y_j|^{\frac{4}{N-2}}h\right|\\\
\lesssim
\sum_{j\neq k} \min\left(|y_j|^{\frac{4}{N-2}}|y_k|,|y_k|^{\frac{4}{N-2}}|y_j|\right)+\sum_{j=1}^J |y_j|\,|h|^{\frac{N+1}{N-2}},
\end{multline}
proved in the appendix (see Claim \ref{Cl:pointwise2}). We obtain
$$|\NNN(h)|\lesssim \sum_{j\neq k} \min\left(W_{(\lambda_j)}^{\frac{4}{N-2}}W_{(\lambda_k)}\right)+\sum_{j=1}^J W^{\frac{1}{N-2}}_{(\lambda_j)}|h|^{\frac{N+1}{N-2}}.$$
If $j\neq k$, we have, by Claim \ref{C:G30'} in the appendix,
$$ \left\|\indxt\min\left\{W_{(\lambda_j)}^{\frac{4}{N-2}}W_{(\lambda_k)},W_{(\lambda_j)}^{\frac{4}{N-2}}W_{(\lambda_k)}\right\}\right\|_{L^1_tL^2_x}\lesssim \gamma^{\frac{N+2}{4}}.$$
Furthermore,
\begin{multline*}
\left\|\indic_{\Gamma(T)} W^{\frac{1}{N-2}}_{(\lambda_j)}|h|^{\frac{N+1}{N-2}}\right\|_{L^1_tL^2}\\
\leq \left\|\indic_{\Gamma(T)} W^{\frac{1}{N-2}}_{(\lambda_j)}\right\|_{L^{2}_tL^{\infty}_x}\left\|\indic_{\Gamma(T)} |h|^{\frac{N+1}{N-2}}\right\|_{L^2_{t,x}}\lesssim \|\tlh\|^{\frac{N+1}{N-2}}_{\Ssp(\Gamma(T)}, 
\end{multline*}
where we have used that since
$$W^{\frac{1}{N-2}}\lesssim \frac{1}{1+|x|},$$
we have $W^{\frac{1}{N-2}}\indic_{\{|x|\geq t\}}\in L^{2}_t\left(\RR,L^{\infty}_x(\RR^N)\right)$.
We let $h_{L}(t)$ be the solution of 
\begin{equation*}
 \partial_t^2h_L+L_{\ldabf}h_L=0,\quad \vec{h}_{L\restriction t=0}=(u_0,u_1)-(M,0).
\end{equation*} 
In view of the estimate \eqref{G100} on the potential $\sum W_{(\lambda_j)}^{\frac{N+2}{N-2}}$, we can use the approximation Lemma (Lemma \ref{L:linear_approx}). Thanks to the estimates above, we obtain
\begin{equation*}
\left\|\tlh-h_L\right\|_{\Ssp(\Gamma_T)}\lesssim \gamma^{\frac{N+2}{4}}+\|\tlh\|_{\Ssp(\Gamma_T)}^{\frac{N+1}{N-2}}+\|\tlh\|_{\Ssp(\Gamma_T)}^{\frac{4}{N-2}}\|\tlh\|_{\Wsp((0,T)}.
\end{equation*} 
Using again Strichartz estimates, we deduce
\begin{multline}
\label{MNL3}
\sup_{-T\leq t\leq T}\|\vec{\tlh}(t)-\vec{h}_L(t)\|_{\HHH}+
\left\|\tlh-h_L\right\|_{\Wsp((0,T))\cap \Ssp((0,T))}\\
\lesssim \gamma^{\frac{N+2}{4}}+\|\tlh\|_{\Ssp(\Gamma_T)}^{\frac{N+1}{N-2}}+\|\tlh\|_{\Ssp(\Gamma_T)}^{\frac{4}{N-2}}\|\tlh\|_{\Wsp((0,T)}.
\end{multline} 
and thus, since $\|h_L\|_{\Wsp((0,T))\cap \Ssp((0,T))}\lesssim \delta$,
$$\|\tlh\|_{\Wsp((0,T))\cap \Ssp((0,T))}\lesssim \gamma^{\frac{N+2}{4}}+\delta.$$
Going back to \eqref{MNL3} we obtain
\begin{equation*}
\sup_{-T\leq t\leq T} \|\vec{\tlh}(t)-\vec{h}_L(t)\|_{\HHH}\lesssim \gamma^{\frac{N+2}{4}}+\delta^{\frac{N+1}{N-2}}
\end{equation*} 
This estimate is uniform in $T$. Hence
\begin{equation*}
\sup_{t\in \RR} \|\vec{\tlh}(t)-\vec{h}_L(t)\|_{\HHH}\lesssim \gamma^{\frac{N+2}{4}}+\delta^{\frac{N+1}{N-2}}.
\end{equation*} 
Using that $u$ is non-radiative, we deduce
$$\sum_{\pm}\left(\lim_{t\to\pm\infty} \int_{\{|x|>|t|\}} |\nabla_{t,x}h_L(t,x)|^2\,dx\right)^{1/2}\lesssim \gamma^{\frac{N+2}{4}}+\delta^{\frac{N+1}{N-2}}.$$
By Corollary \ref{Cor:G10}, 
$$\left\|\pi_{Z_{\lambdabf}^{\bot}}(h_0,h_1)\right\|_{\HHH} \lesssim \gamma^{\theta_N}\delta+\gamma^{\frac{N+2}{4}}+\delta^{\frac{N+1}{N-2}},$$
where $\theta_7=3/2$ and $\theta_N=2$ if $N\geq 9$. The conclusion of the proposition follows, noting that $\gamma^{\theta_N}\delta\lesssim \gamma^{\frac{N+2}{4}}+\delta^{\frac{N+1}{N-2}}$ if $N\geq 7$. Note that in this case the bound is slightly stronger, but we will not need this in the sequel.

The proof is almost the same when $N=5$, but we must replace the inequality \eqref{esti_linearisation} by 
\begin{equation*}
\left|F\left( \sum_{j=1}^J y_j+h \right)-\sum_{j=1}^JF(y_j)-\frac{7}{3}\sum_{j=1}^J |y_j|^{\frac{4}{3}}h\right|\
\lesssim
\sum_{j\neq k} |y_j|^{\frac{4}{3}}|y_k|+\sum_{j=1}^J |y_j|^{\frac 13} |h|^2+F(h),
\end{equation*}
and use that by Claim \ref{C:G30}, if $j\neq k$,
$$
\left\|\indxt W_{(\lambda_j)}^{\frac{4}{3}}W_{(\lambda_k)} \right\|_{L^1(\RR,L^2)} \lesssim 
\gamma^{\frac 32}.$$
We omit the details.
\end{proof}

\begin{proof}[Proof of the proposition]
   According to Lemma \ref{L:radial_multisoliton_NL},
 \begin{equation}
 \label{Ngq7_1}
 \left\|\pi_{Z_{\lambdabf}^{\bot}}(h_0,u_1)\right\|_{\HHH}\lesssim \delta^{\frac{N}{N-2}}+\gamma^{\frac{N}{4}}.
 \end{equation} 
 In view of the orthogonality condition \eqref{F162} and the expansion \eqref{F164} of $u_1$, we deduce \eqref{F180}.
 Since 
 $$\delta^{2}=\|h_0\|^2_{L^2}+\|g_1\|^2_{L^2} +\left\| \sum_{j=1}^J \alpha_j(\Lambda W)_{[\lambda_j]}\right\|^2_{L^2},$$
 and, by Claim \ref{Cl:estimates1},
$
  \int \left|(\Lambda W)_{[\lambda_j]}(\Lambda W)_{[\lambda_k]}\right|\lesssim \gamma^{\frac{N}{2}-2},
 $
 we obtain
 \begin{equation}
 \label{Ngq7_2}
 \left| \delta^2-\sum_{j=1}^J \alpha_j^2\left\|\Lambda W\right\|^2_{L^2}\right|\lesssim \gamma^{\frac{N}{2}-2}\sum_{j=1}^J \alpha_j^2+\delta^{\frac{2N}{N-2}}+\gamma^{\frac{N}{2}}.
 \end{equation} 
 Noting that the previous inequality implies easily $\sum_{j=1}^J \alpha_j^2\lesssim \delta^2+\gamma^{\frac{N}{2}}$,
 and thus
 $$\gamma^{\frac{N}{2}-2}\sum_{j=1}^J \alpha_j^2\lesssim \gamma^{N-2}+\delta^2\gamma^{\frac N2-2}\lesssim \gamma^{N-2}+\delta^{\frac{2(N-1)}{N-2}}+\gamma^{\frac{(N-4)(N-1)}{2}},$$
 we deduce \eqref{F181}. 
\end{proof}
\subsection{Lower bound for the exterior scaling parameter}
\label{SS:lwrbnd}
Let $u$ be as in Subsection \ref{SS:close_multi}, and denote by $\ell$ and $k_0$ the parameters defined by Theorem \ref{T:asymptotic_NR}. Assume that $u$ is nonstationary so that by Theorem \ref{T:uniqueness_NR}, $k_0\leq m-1$.
\begin{prop}
\label{P:ext_scaling}
There exists a constant $C>0$ such that, if $u$ is as above we have:
$$ |\ell|\leq C \delta^{\frac{2}{N}}\lambda_1^{k_0-\frac 12}.$$
\end{prop}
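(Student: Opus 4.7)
My approach is to invoke Theorem \ref{T:asymptotic_NR}, which compares $\vec u(0)$ to its asymptotic tail $\ell \Xi_{k_0}$ in $\HHH(R)$ for large $R$, combine it with a triangle inequality against $(M,0)$ to express $|\ell|$ in terms of decaying and growing functions of $R$, and then optimize the radius.

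To apply Theorem \ref{T:asymptotic_NR}, I would take the outer radius to be $R_0 = A \lambda_1$ for a large constant $A$ depending on $\eps_0$. Using the asymptotic behavior $W_{(\lambda)}(r) \sim c_N \lambda^{(N-2)/2}/r^{N-2}$ at infinity, one checks $\|M\|_{\dot H^1(|x|>R_0)} \lesssim (\lambda_1/R_0)^{(N-2)/2} = A^{-(N-2)/2}$. Together with $\delta \leq \eps_J$ small this yields $\|\vec u(0)\|_{\HHH(R_0)} < \eps_0$. Since $u$ is non-radiative at $t=0$ and $k_0 \leq m-1$ by hypothesis, Theorem \ref{T:asymptotic_NR} gives, for every $R > R_0$,
\begin{equation*}
\|\vec u(0) - \ell \Xi_{k_0}\|_{\HHH(R)} \lesssim \left(\frac{\lambda_1}{R}\right)^{\nu}, \qquad \nu := \min\!\left\{\tfrac{(2k_0-1)(N+2)}{2(N-2)},\; k_0 + \tfrac 12\right\}.
\end{equation*}

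Next, using \eqref{normXi} and the triangle inequality,
\begin{equation*}
\frac{c_{k_0}|\ell|}{R^{k_0 - 1/2}} = \|\ell \Xi_{k_0}\|_{\HHH(R)} \leq \|\vec u(0) - \ell\Xi_{k_0}\|_{\HHH(R)} + \|M\|_{\dot H^1(|x|>R)} + \delta.
\end{equation*}
Writing $\rho := R/\lambda_1 \geq A$, this gives
\begin{equation*}
|\ell| \lesssim \lambda_1^{k_0 - 1/2}\bigl[\rho^{k_0 - 1/2 - \nu} + \rho^{k_0 - (N-1)/2} + \rho^{k_0 - 1/2}\,\delta\bigr].
\end{equation*}
Since $\nu > k_0 - 1/2$ and $k_0 \leq (N-3)/2$, the first two exponents are negative and the third is positive. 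Balancing the first and third via $\rho \sim \delta^{-1/\nu}$, which exceeds $A$ for $\delta$ small, yields
\begin{equation*}
|\ell| \lesssim \lambda_1^{k_0 - 1/2}\, \delta^{(\nu - k_0 + 1/2)/\nu}.
\end{equation*}
At this $\rho$ the middle term is controlled by the balanced value because $\nu \leq k_0 + \tfrac 12 \leq (N-2)/2$.

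It remains to check $(\nu - k_0 + 1/2)/\nu \geq 2/N$, equivalently $\nu \geq N(k_0 - 1/2)/(N-2)$. In the first branch defining $\nu$ this is immediate because $N+2 > N$; in the second, it reduces to $N \geq 2k_0 + 1$, which follows from $k_0 \leq (N-3)/2$. Combined with $\delta \leq 1$, we conclude $|\ell| \lesssim \delta^{2/N}\lambda_1^{k_0 - 1/2}$. The main obstacle is essentially arithmetic: locating the correct outer scale $R_0 \sim \lambda_1$ at which Theorem \ref{T:asymptotic_NR} applies, and verifying that both branches of the $\min$ defining $\nu$ satisfy the required lower bound. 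The soliton's contribution $(\lambda_1/R)^{(N-2)/2}$ is harmless because $\Xi_{k_0}$ (for $k_0 < m$) decays more slowly at infinity than $M$, so the multisoliton never dominates the $\Xi_{k_0}$ tail at the optimal radius.
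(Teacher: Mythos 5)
Your argument is correct and is essentially the paper's own proof: both estimate $\|\vec u(0)\|_{\HHH(R)}$ for $R\gtrsim\lambda_1$ by $\delta+(\lambda_1/R)^{(N-2)/2}$, invoke Theorem~\ref{T:asymptotic_NR} with inner radius $R_0\sim\lambda_1$, compare with $\|\ell\Xi_{k_0}\|_{\HHH(R)}=c_{k_0}|\ell|R^{1/2-k_0}$ via the triangle inequality, optimize at $R\sim\lambda_1\delta^{-1/\nu}$ (your $\nu$ is the paper's $a_{k_0}$), and then check the resulting exponent is at least $2/N$ for $k_0\le m-1$. The only cosmetic difference is that you track the multisoliton tail $(\lambda_1/R)^{(N-2)/2}$ as a separate term, whereas the paper absorbs it into $(\lambda_1/R)^{a_{k_0}}$ using $a_{k_0}\le m-1/2$.
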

\begin{proof}
\setcounter{step}{0}
\begin{step}
\label{St:ext1}
 We note that for $R\geq \lambda_1$, we have
 $$\|\vec{u}(0)\|_{\HHH(R)}\lesssim \delta+\left( \frac{\lambda_1}{R} \right)^{m-\frac 12}.$$
 Indeed, $\|\vec{u}(0)-(M,0)\|_{\HHH(R)}\lesssim \delta$, and
 $$\|W_{(\lambda_j)}\|_{\hdot(R)}=\|W\|_{\hdot\left( \frac{R}{\lambda_j} \right)}\lesssim \left( \frac{\lambda_j}{R} \right)^{m-\frac 12},$$
 which yields the announced estimate.
\end{step}
\begin{step}
 Let $\eps_0$ be as in Theorem \ref{T:asymptotic_NR}. Fixing $B>0$ large enough, and using the smallness assumption \eqref{F160} on $\delta$, we see that 
 $$\|\vec{u}(0)\|_{\HHH(B\lambda_1)}\leq \eps_0.$$
 In view of Theorem \ref{T:asymptotic_NR}, we see that for all $R\geq B\lambda_1$,
 $$\left|\left\|\vec{u}(0)\right\|_{\HHH(R)}-\frac{c_{k_0}\ell}{R^{k_0-\frac 12}}\right|\lesssim \max\left\{\left( \frac{B\lambda_1}{R} \right)^{k_0+\frac 12},\left( \frac{B\lambda_1}{R} \right)^{\left( k_0-\frac 12 \right)\frac{N+2}{N-2}} \right\}.$$
 Combining with the estimate of Step \ref{St:ext1}, and using that $k_0\leq m-1$, we deduce that for all $R\geq B\lambda_1$,
 \begin{equation*}
  \frac{|\ell|}{R^{k_0-\frac 12}}\lesssim \delta+\left( \frac{\lambda_1}{R} \right)^{a_{k_0}},\quad a_{k_0}:=\min\left\{ k_0+\frac 12,\left(k_0-\frac 12\right)\frac{N+2}{N-2} \right\}.
 \end{equation*}
 Choosing $R$ such that $\left( \frac{\lambda_1}{R} \right)^{a_{k_0}}=\delta$, that is $R=\lambda_1\delta^{-\frac{1}{a_{k_0}}}$, we obtain 
 $$|\ell|\lesssim \lambda_1^{k_0-\frac 12}\delta^{1-\frac{k_0-\frac 12}{a_{k_0}}},$$
 which yields the conclusion of the proposition, since 
 $$ \min_{1\leq k_0\leq m-1}{1-\frac{k_0-\frac 12}{a_{k_0}}}=\frac{2}{N}.$$
\end{step}
\end{proof}
\section{Reduction to a system of differential inequalities}
\label{S:reduction}
The proof of Theorem \ref{T:resolution} is by contradiction. Consider a global solution that does not satisfy the soliton resolution conjecture. Then by the work of C. Rodriguez \cite{Rodriguez16} it is close, for a sequence  of times $\{t_n\}_n$ going to infinity, to a sum of rescaled solitary waves.

Using the study on non-radiative solutions carried out in Section \ref{S:exterior} and the equation \eqref{NLW}, linearized around a sum of solitary waves, we will obtain an approximate differential system satisfied around $t_n$ by the scaling parameters modulating the stationary solutions. We then deduce a contradiction from this system and differential inequality arguments. We divide the proof into two sections. In this section we will set up the contradiction argument and obtain some differential inequalities. In the next section we will restrict the time interval to prove a crucial lower bound (consequence of Proposition \ref{P:ext_scaling} above) on one of the scaling parameters, and prove that the differential system, together with this lower bound leads to a contradiction.

This section and Section \ref{S:end_of_proof} concern the case of global solution. We omit the very close proof for finite time blow-up solutions (see e.g. Section 4 of \cite{DuKeMe13}).

\subsection{Setting of the proof of the soliton resolution}
\label{SS:setting}
Let $u$ be a solution of \eqref{NLW} such that $T_+(u)=+\infty$ and
\begin{equation}
 \label{R10}
 \limsup_{t\to+\infty} \|\vec{u}(t)\|_{\HHH}<\infty.
\end{equation} 
Let $v_L$ be the unique solution of the free wave equation \eqref{FW} such that 
\begin{equation}
 \label{R11}
 \forall A\in \RR,\quad \lim_{t\to +\infty} \int_{|x|\geq A+|t|} |\nabla_{t,x}(u-v_L)(t,x)|^2\,dx=0
\end{equation}
(see \cite[Proposition 4.1]{Rodriguez16}).
For $J\geq 1$, $\iotabf \in\{\pm 1\}^J$, $(f,g)\in \HHH$, we denote
\begin{equation}
\label{R12}
d_{J,\iotabf}(f,g)
=\inf_{\lambdabf\in G_J} \left\{\Big\|(f,g)-\sum_{j=1}^J \iota_j(W_{(\lambda_j)},0)\Big\|_{\HHH}+\gamma(\lambdabf)\right\},
\end{equation} 
where as before 
\begin{equation*}
G_J=\Big\{(\lambda_j)_{j}\in (0,\infty)^J,\;:\; 0<\lambda_J<\ldots<\lambda_2<\lambda_1\Big\},\quad
\gamma(\lambdabf)=\max_{2\leq j\leq J} \frac{\lambda_j}{\lambda_{j-1}} \in (0,1).
\end{equation*}
Assume that $u$ does not scatter forward in time.
By \cite{Rodriguez16}, we know that there exists $J\geq 1$, $\iotabf \in \{\pm 1\}^J$ and a sequence $\{t_n\}_n\to+\infty$ such that
\begin{equation}
 \label{R13}
 \lim_{n\to\infty} d_{J,\iotabf}(\vec{u}(t_n)-\vec{v}_L(t_n))=0.
\end{equation} 
We will prove by contradiction that $\lim_{t\to\infty}d_{J,\iotabf}(\vec{u}(t)-\vec{v}_L(t))=0$. We thus assume that there exists a small $\eps_0>0$ and a sequence $\{\tilde{t}_n\}_n\to+\infty$ such that 
\begin{gather}
 \label{R14}
 \forall n,\quad \tilde{t}_n<t_n\\
 \label{R15}
 \forall n,\quad \forall t\in (\tilde{t}_n,t_n],\quad d_{J,\iotabf}(\vec{u}(t)-\vec{v}_L(t))<\eps_0\\
 \label{R16}
 d_{J,\iotabf}(\vec{u}(\tilde{t}_n)-\vec{v}_L(\tilde{t}_n))=\eps_0.
\end{gather}
We will denote $U=u-v_L$ and use notations that are analogs to the ones of Subsection \ref{SS:close_multi}, although the setting is a bit different.

The implicit function theorem (see Lemma \ref{L:ortho_scaling} in the appendix) implies that for all $t\in [\tilde{t}_n,t_n]$, we can choose $\lambdabf(t)=(\lambda_1(t),\ldots,\lambda_J(t))\in G_J$ such that 
\begin{equation}
 \label{R17}
 \forall j\in \llbracket 1,J\rrbracket, \quad \int \nabla (u(t)-v_{L}(t)-M(t))\cdot\nabla (\Lambda W)_{(\lambda_j(t))}=0,
 \end{equation}
 where $M(t)=\sum_{j=1}^J \iota_j W_{(\lambda_j(t))}$ and, in view of Remark \ref{R:ortho_scaling}
 \begin{equation}
 \label{R18}
 \Big\| \vec{u}(t)-\vec{v}_L(t)-\left(M(t),0\right)\Big\|_{\HHH}+\gamma(\lambdabf) \approx d_{J,\iotabf}(\vec{u}(t)-v_L(t)).
\end{equation} 
In the sequel, we will denote $h(t)=u(t)-v_L(t)-M(t)=U(t)-M(t)$, 
$$ \gamma(t)=\gamma(\lambdabf(t)), \quad \delta(t)=\sqrt{\|h(t)\|^2_{\hdot}+\|\partial_t(u-v_L)(t)\|^2_{L^2}}.$$

We will expand $\partial_tU=\partial_tu-\partial_tv_L$ as follows:
\begin{equation}
 \label{exp_dt}
 \partial_tU(t)=\sum_{j=1}^J \alpha_j(t)\iota_j\Lambda W_{[\lambda_j(t)]}+g_1(t),
\end{equation}
where
\begin{equation}
 \label{exp_dt_ortho}
 \forall j\in \llbracket 1,J\rrbracket,\quad \int g_1(t)\Lambda W_{[\lambda_j(t)]}=0.
\end{equation} 
We also define:
\begin{equation}
 \label{R155}
 \beta_j(t)=-\iota_j\int (\Lambda W)_{[\lambda_j(t)]}\partial_tU(t)\,dx.
\end{equation} 
In this section we prove:
\begin{prop}
\label{P:EDO}
For all large $n$, for all $t\in [\tilde{t}_n,t_n]$,
\begin{align}
\label{bound_delta_gamma}
\delta&\lesssim \gamma^{\frac{N-2}{4}}+o_n(1)\\
\label{EDO2a}
 \forall j\in \llbracket 1,J\rrbracket,\quad
 \left|\beta_j-\|\Lambda W\|^2_{L^2}\lambda'_j\right|&\leq C \gamma^{\frac N4}+o_n(1)\\
 \label{EDO3a}
 \left|\frac{1}{2}\sum_{j=1}^{J}\beta_j^2-\kappa_1\sum_{1\leq j\leq {J}-1} \iota_j\iota_{j+1}\left( \frac{\lambda_{j+1}}{\lambda_j} \right)^{\frac{N-2}{2}}\right|&\leq C\gamma^{\frac{N-1}{2}}+o_n(1)\\
 \label{EDO4a}
 \forall j\in \llbracket 1,J\rrbracket,\quad \Bigg|\lambda_j\beta'_j+\kappa_0\Bigg( \iota_j\iota_{j+1} \bigg( \frac{\lambda_{j+1}}{\lambda_j}\bigg)^{\frac{N-2}{2}}-\iota_j&\iota_{j-1}\bigg( \frac{\lambda_j}{\lambda_{j-1}} \bigg)^{\frac{N-2}{2}} \Bigg) \Bigg|&\\
 \notag
 &\leq C\gamma^{\frac{N-1}{2}}+o_n(1),
 \end{align}
where $o_n(1)$ goes to $0$ as $n\to\infty$, uniformly with respect to $n$ and $t\in [\tilde{t}_n,t_n]$, and 
\begin{align*}
 \kappa_0&=\frac{N^{\frac N2-1}(N-2)^{\frac{N}{2}}}{2}\int \frac{1}{|x|^{N-2}}W^{\frac{N+2}{N-2}}dx,\\
\kappa_1&=\|\Lambda W\|^2_{L^2}\int \frac{\left(N(N-2)\right)^{\frac N2-1}}{|x|^{N-2}}W^{\frac{N+2}{N-2}}\,dx.
\end{align*}
\end{prop}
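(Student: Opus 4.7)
The plan is to establish the four bounds as a coupled system by combining three ingredients: the nonradiative theory of Section~\ref{S:exterior} applied to $U := u - v_L$, which is forward-nonradiative by the definition of $v_L$ via (\ref{R11}); modulation identities obtained by differentiating the orthogonality relations (\ref{R17}) in $t$; and energy/virial computations that exploit the interaction estimates between different scales of $W$ gathered in the appendix. The recurring technical device is that at each time $t \in [\tilde{t}_n, t_n]$, after rescaling to set $\lambda_1(t) = 1$, the solution $U$ is approximately nonradiative both in the future and in the past modulo an $o_n(1)$ error (the backward direction uses $\tilde{t}_n \to +\infty$ and finite speed of propagation), and the radiation $v_L$ contributes asymptotically negligible terms on the soliton scales thanks to its dispersive decay.

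We derive (\ref{EDO2a}) by differentiating the orthogonality condition (\ref{R17}) in $t$: using $\partial_t W_{(\lambda_j(t))} = -\lambda_j'(t)(\Lambda W)_{[\lambda_j(t)]}$, the expansion $\partial_t h = \partial_t U + \sum_k \iota_k \lambda_k' (\Lambda W)_{[\lambda_k]}$, and integration by parts with the eigenvalue relation $L_W(\Lambda W) = 0$, the diagonal contribution reduces to $-\iota_j \|\Lambda W\|_{L^2}^2 \lambda_j' + \iota_j \beta_j$, while off-diagonal cross-scale terms are controlled by Claim~\ref{Cl:estimates1} with a gain of $\gamma^{N/4}$, and the $v_L$-contribution is $o_n(1)$ by dispersion. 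For (\ref{EDO3a}) we invoke conservation of energy of $u$: writing $u = v_L + M + h$ and expanding the $L^{2N/(N-2)}$ norm via Claim~\ref{Cl:pointwise2}, the dominant kinetic part becomes $\tfrac12 \sum \beta_j^2 / \|\Lambda W\|_{L^2}^2$ after substituting (\ref{exp_dt})--(\ref{exp_dt_ortho}) and using the near-orthogonality of the $(\Lambda W)_{[\lambda_j]}$, while the leading potential cross terms $\int W_{(\lambda_j)}^{(N+2)/(N-2)} W_{(\lambda_{j+1})}\,dx$ produce, via the asymptotic $W(x) \sim (N(N-2))^{(N-2)/2} |x|^{2-N}$, the combination $\kappa_1 \sum \iota_j \iota_{j+1} (\lambda_{j+1}/\lambda_j)^{(N-2)/2}$, with remaining interactions bounded by $\gamma^{(N-1)/2} + o_n(1)$. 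With (\ref{EDO3a}) in hand, (\ref{bound_delta_gamma}) follows at once: the bound $(\lambda_{j+1}/\lambda_j)^{(N-2)/2} \leq \gamma^{(N-2)/2}$ forces $\sum \beta_j^2 \lesssim \gamma^{(N-2)/2} + o_n(1)$, hence $\sum \alpha_j^2 \lesssim \gamma^{(N-2)/2} + o_n(1)$ by (\ref{EDO2a}); combined with Proposition~\ref{P:F18} applied to the rescaled $U$, this yields $\delta^2 \lesssim \gamma^{(N-2)/2} + o_n(1)$. Finally, (\ref{EDO4a}) is obtained by differentiating $\beta_j(t) = -\iota_j \int (\Lambda W)_{[\lambda_j(t)]} \partial_t U\,dx$ in $t$, substituting $\partial_t^2 U = \Delta U + F(M + h + v_L)$, integrating by parts, and linearizing $F$ about $M$ via Claim~\ref{Cl:pointwise2}; the dominant interactions with the neighbors $j \pm 1$, again computed from the $|x|^{2-N}$ tail of $W$, furnish the $\kappa_0$ terms, while non-neighboring interactions contribute $O(\gamma^{(N-1)/2})$ by Claim~\ref{Cl:estimates1}.

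The main obstacle will be justifying the $o_n(1)$ errors uniformly in $t \in [\tilde{t}_n, t_n]$. The strategy is: at each such time $t$, rescale by $\lambda_1(t)$ so that the leading soliton has unit scale; the forward-nonradiative condition on $u - v_L$ then holds as in Definition~\ref{D:non-radiative} in the future, while the backward condition holds up to the outgoing energy of $U$ emitted between $\tilde{t}_n$ and $t$, which in rescaled coordinates lives in a region $\{|x| > |s| + C\}$ with $|s|$ tending to infinity as $n \to \infty$. This permits the application of Proposition~\ref{P:F18} and of the underlying channels-of-energy bound Corollary~\ref{Cor:G10} modulo $o_n(1)$. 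A parallel analysis controls the contribution of $v_L$: since $v_L$ is a finite-energy linear solution that scatters, its $L^{2N/(N-2)}$ mass and the pairings $\int \nabla v_L \cdot \nabla (\Lambda W)_{(\lambda_j(t))}\,dx$ on the rescaled soliton region vanish as $n \to \infty$. The existence of a continuous choice of modulation parameters $\lambda_j(t)$ along the interval, together with the equivalence (\ref{R18}), is provided by Lemma~\ref{L:ortho_scaling}.
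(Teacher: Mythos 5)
Your outline correctly identifies the three ingredients (nonradiative theory, modulation identities from differentiating the orthogonality conditions, and energy expansion using the cross-scale interaction estimates), and the derivations you sketch for \eqref{EDO2a}, \eqref{EDO3a}, \eqref{EDO4a} are the right computations. However, there is a structural gap in how you propose to obtain the $o_n(1)$ errors, which is precisely where the paper's argument has its real content.

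You write that after rescaling at a given time $t\in[\tilde t_n,t_n]$, ``the solution $U$ is approximately nonradiative\ldots modulo an $o_n(1)$ error,'' and that ``this permits the application of Proposition~\ref{P:F18} and\ldots Corollary~\ref{Cor:G10} modulo $o_n(1)$.'' This does not work as stated, because $U=u-v_L$ is \emph{not} a solution of \eqref{NLW} (the nonlinearity does not split additively), and both Proposition~\ref{P:F18} and Corollary~\ref{Cor:G10} are estimates about genuine nonradiative solutions of \eqref{NLW} close to a multisoliton, not about a difference of a nonlinear and a linear solution. The paper's way around this is Lemma~\ref{L:expansion}: along any sequence $\{s_n\}\subset[\tilde t_n,t_n]$ one takes a profile decomposition of $\vec U(s_n)$, constructs the associated \emph{nonlinear} profiles $V^k$, and proves (via the radiation decoupling argument in \eqref{R63}--\eqref{R80} together with the equirepartition Theorem~\ref{T:equirepartition} to kill the extra profiles) that each $V^k$ is a bona fide nonradiative solution of \eqref{NLW}. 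Proposition~\ref{P:F18} is then applied to each $V^k$ separately, and the resulting estimates are summed. Without passing to these weak limits, $v_L$ does not ``go away''; it is only in the limit that one lands on an object to which the nonradiative theory applies.

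A closely related consequence is that the uniformity of the $o_n(1)$ in $t\in[\tilde t_n,t_n]$ cannot be obtained by a direct argument at each fixed $t$, because the profile decomposition involves subsequence extraction. The paper obtains it by contradiction (Lemma~\ref{L:R26}): if one of the estimates fails with a definite gap $\eps_1$ along some $s_n\in[\tilde t_n,t_n]$, apply Lemma~\ref{L:expansion} along $\{s_n\}$ and derive a contradiction from the nonradiative estimates on the profiles. Your proposal would need to be recast in this contradiction form. Also note that, in your reading of \eqref{EDO2a}, differentiating \eqref{R17} produces a relation between $\lambda_j'$ and $\alpha_j$ (this is \eqref{F200}), while the passage from $\alpha_j$ to $\beta_j$ is a separate identity \eqref{F182} coming from the near-orthogonality of the $(\Lambda W)_{[\lambda_j]}$; the paper combines the two rather than getting $\beta_j$ directly from the time derivative of the orthogonality condition.
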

Let us mention that the constants $\kappa_0$ and $\kappa_1$ can be computed explicitely. However we will not need their exact values in the sequel.

The proof of Proposition \ref{P:EDO} is based on the results of Subsection \ref{SS:close_multi}  on non-radiative solutions. We will first consider, in Subsection \ref{SS:expansion},  a
profile decomposition for a sequence of $\vec{U}(s_n)=\vec{u}(s_n)-\vec{v}_L(s_n)$, 
(where $s_n\to\infty$) 
observing that any nonlinear profile in this decomposition is non-radiative. In Subsection \ref{SS:lambda_beta}, we will use this observation and an expansion of the energy to deduce estimates on $\lambda_j$, $\beta_j$, $\gamma$ and $\delta$.
In Subsection \ref{SS:derivatives}, we will obtain estimates on $\lambda_j'$ and $\beta_j'$ using the equation \eqref{NLW} and the orthogonality conditions, and conclude the proof of \eqref{bound_delta_gamma},\ldots, \eqref{EDO4a}.

We refer to \cite[Proposition 3.8]{JendrejLawrie19} for modulation equations similar to the ones of Proposition \ref{P:EDO}, in the context of equivariant wave maps, when $J=2$, at the threshold energy (so that in this work the analog of the radiation term $v_L$ is $0$). One important novelty here compared to Proposition 3.8 in \cite{JendrejLawrie19} and its proof, is the proof that the nonlinear profiles associated to a sequence $\{\vec{U}(s_n)\}$ are nonradiative solutions of \eqref{NLW} (see Lemma \ref{L:expansion} below), yielding a crucial additional information.
\subsection{Expansion along a sequence of times and renormalization}
\label{SS:expansion}
Consider a sequence of times $\{s_n\}_n$ with $s_n\in [\tilde{t}_n,t_n]$ for all $n$. Extracting subsequences, we define a
partition of the interval $\llbracket 1,J\rrbracket$ as follows. We let $1=j_1<j_2<\ldots<j_{K+1}=J+1$, so that $\llbracket 1,J\rrbracket=\cup_{k=1}^K \llbracket j_k,j_{k+1}-1\rrbracket$, with
\begin{equation}
\label{RR3}
\forall k\in \llbracket 1,K-1\rrbracket,\quad
\lim_{n\to\infty} \frac{\lambda_{j_{k+1}}(s_n)}{\lambda_{j_k}(s_n)}=0.
\end{equation} 
and, 
\begin{equation}
 \label{defnuj}
 \forall k\in \llbracket 1,K\rrbracket, \; \forall j\in \llbracket j_k,j_{k+1}-1\rrbracket,\quad 
 \nu_j=\lim_{n\to\infty} \frac{\lambda_j(s_n)}{\lambda_{j_k}(s_n)}>0.
\end{equation} 
We note that $\nu_{j_k}=1$. 
In this subsection, we prove
\begin{lemma}
\label{L:expansion}
Under the above assumptions, for all $k\in \llbracket 1, K\rrbracket$, there exists $(V_0^k,V_1^k)$ in $\HHH$ such that, denoting by $V^k$ the solution of \eqref{NLW} with initial data $(V_0^k,V_1^k)$, then $V^k$ is defined on $\{|x|>|t|\}$ and is non-radiative. Furthermore, letting $J^k=j_{k+1}-j_k$, $\iotabf^k=(\iota_{j_k},\ldots,\iota_{j_{k+1}-1})$ and
$$ V_n^k(t,x)=\frac{1}{\lambda_{j_k}^{\frac{N-2}2}(s_n)} V^k\left( \frac{t}{\lambda_{j_k}(s_n)},\frac{x}{\lambda_{j_k}(s_n)} \right),$$
we have (extracting subsequences if necessary),
\begin{equation}
 \label{LRR1}
 \lim_{n\to\infty} \left\|\vec{u}(s_n)-\vv_L(s_n)-\sum_{k=1}^{K} \vec{V}_n^k(0)\right\|_{\HHH}=0
\end{equation} 
and
\begin{equation}
 \label{LRR2}
 d_{J^k,\iotabf^k}\left( V_0^k,V_1^k \right)\leq C\eps_0.
\end{equation} 
More precisely, after extraction,
\begin{equation}
 \label{LRR3}
 \left\{
\begin{aligned}
 V^k_0&=\sum_{j=j_k}^{j_{k+1}-1} \iota_jW_{(\nu_j)} +\check{h}_0^k\\
 V^k_1&=\sum_{j=j_k}^{j_{k+1}-1} \iota_j\check{\alpha}_j(\Lambda W)_{[\nu_j]} +\check{g}_1^k,
\end{aligned}
 \right.
\end{equation}
where 
\begin{align}
\label{LRR4}
 \check{h}_0^k&=\underset{n\to\infty}{\wlim}\,\lambda_{j_k}^{\frac{N-2}{2}}(s_n)h\left(s_n,
 \lambda_{j_k}(s_n) \cdot\right)\\
 \label{LRR5}
 \check{\alpha}_j&=\lim_{n\to\infty}\alpha_j(s_n)\\
\label{LRR6}
 \check{g}_1^k&=\underset{n\to\infty}{\wlim}\, \lambda_{j_k}^{N/2}(s_n)g_1\left(s_n,\lambda_{j_k}(s_n) \cdot\right).
\end{align}
Furthermore, we have 
\begin{equation}
 \label{LRR7}
 JE(W,0)=\sum_{k=1}^K E\left( \vec{V}^k(0) \right)
\end{equation} 
\end{lemma}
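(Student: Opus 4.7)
My approach would be to combine the modulation set-up of \S\ref{SS:setting} around $\{s_n\}$ with a scale-separated profile decomposition of $\vec U(s_n):=\vec u(s_n)-\vec v_L(s_n)$, identifying $V^k$ with the nonlinear content of $\vec U(s_n)$ at the scale $\lambda_{j_k}(s_n)$. The key inputs are the uniform smallness \eqref{R18}, the pseudo-orthogonality $\lambda_{j_{k+1}}(s_n)/\lambda_{j_k}(s_n)\to 0$ between distinct blocks, and the radiation-free property \eqref{R11} of $u-v_L$, exploited through the exterior superposition principle of Proposition \ref{P:NL_profile}. From \eqref{R18}, $\delta(s_n)\leq C\eps_0$, $|\alpha_j(s_n)|+\|g_1(s_n)\|_{L^2}\leq C\eps_0$, and $\gamma(s_n)\leq C\eps_0$ uniformly in $n$. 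Passing to subsequences I extract $\check\alpha_j=\lim\alpha_j(s_n)$ and the weak limits $\check h_0^k\in\hdot$, $\check g_1^k\in L^2$ from \eqref{LRR4}--\eqref{LRR6}; the orthogonality conditions \eqref{R17} and \eqref{exp_dt_ortho} pass to the weak limit and determine $(V_0^k,V_1^k)$ via \eqref{LRR3}. The bound \eqref{LRR2} then follows by using $(\nu_{j_k},\dots,\nu_{j_{k+1}-1})$ as trial parameters in the infimum defining $d_{J^k,\iotabf^k}$, together with weak lower semicontinuity of norms and boundedness of the ratios $\nu_j$.

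\emph{Strong reconstruction \eqref{LRR1}.} I would apply the radial Bahouri-G\'erard decomposition to $\{\vec U(s_n)\}$: the leading $J$ profiles with $t_{j,n}=0$ at scales $\lambda_j(s_n)$ produce the solitons $\iota_jW$, and after grouping by block $k$ and combining with $\check h_0^k$, $\check g_1^k$ they recover $(V_0^k,V_1^k)$. Any additional profile would either live at a scale asymptotically distinct from every $\lambda_{j_k}(s_n)$, which is excluded by the modulation identity \eqref{R17}, or have the same scale but $|t_{j,n}/\lambda_{j,n}|\to\infty$, in which case its nonlinear profile is asymptotic to a free wave; Proposition \ref{P:NL_profile} would then deposit nonzero radiation on wave cones of $u-v_L$, contradicting \eqref{R11}. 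With no extra profiles, the Pythagorean identity in $\HHH$ upgrades the weak convergence to \eqref{LRR1}.

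\emph{Non-radiativity of $V^k$ --- the main obstacle.} The extension of $V^k$ to $\{|x|>|t|\}$ follows by rescaling the global solution $u$ by $\lambda_{j_k}(s_n)$ and passing to the limit via Proposition \ref{P:NL_profile}. For non-radiativity, fix $A>0$: by \eqref{R11}, for each $n$ the exterior energy of $u-v_L$ in the cone $\{|x|>A+|t-s_n|\}$ vanishes as $t\to\pm\infty$. Rescaling this cone by $\lambda_{j_k}(s_n)$ and invoking the exterior profile superposition, the limiting outgoing energy decomposes, up to $o_n(1)$, as a sum over blocks $k'$ of the outgoing energies of the $V^{k'}$, with all cross terms killed by the pseudo-orthogonality of the scales $\lambda_{j_{k'}}(s_n)$. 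Since each summand is non-negative and the total vanishes, each $V^{k'}$ must individually be non-radiative. The technical crux is the disentanglement of profiles at asymptotically different scales inside a single radiation integral, which is precisely what forces us to use Proposition \ref{P:NL_profile} rather than a more elementary argument.

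\emph{Energy identity \eqref{LRR7}.} The strong convergence \eqref{LRR1}, combined with the pseudo-orthogonality of the scales of the $\vec V_n^k$ and the asymptotic orthogonality of $\vec v_L(s_n)$ to the localized sum, yields
\[
E(\vec u(s_n))-E(\vec v_L(s_n))=\sum_{k=1}^K E(\vec V^k(0))+o(1).
\]
Energy conservation and linearity pin the left-hand side to the constant $E(\vec u(0))-\tfrac 12\|\vec v_L(0)\|_{\HHH}^2$, independent of the sequence $\{s_n\}$. Applying the same argument to Rodriguez's distinguished sequence $\{t_n\}$ from \eqref{R13}, where $d_{J,\iotabf}(\vec U(t_n))\to 0$ forces full scale separation ($K=J$, $V^k=\iota_{j_k}W$, $E(\vec V^k(0))=E(W,0)$), fixes this constant as $JE(W,0)$ and yields \eqref{LRR7} for the general sequence.
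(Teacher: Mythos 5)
Your overall architecture matches the paper's proof (extraction of weak limits at each scale block, profile decomposition, use of Proposition \ref{P:NL_profile} outside the cone, and the equirepartition of the free exterior energy), but two of the load-bearing steps are wrong or incomplete.

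\textbf{1. Exclusion of extra profiles.} You claim that a profile at a scale asymptotically distinct from all $\lambda_{j_k}(s_n)$ is ``excluded by the modulation identity \eqref{R17}.'' This is false: \eqref{R17} is a finite set of orthogonality conditions that fix the parameters $\lambda_j(t)$ and says nothing against the presence of additional small profiles at unrelated scales; indeed a small profile concentrating at a new scale is automatically nearly orthogonal to all the $(\Lambda W)_{(\lambda_j)}$. The paper's mechanism is different and cannot be bypassed: first, the Pythagorean expansion of the profile decomposition together with \eqref{R18} gives
$\sum_{k\geq K+1}\|\vec V^k_F(0)\|^2_{\HHH}=o_{\eps_0}(1)$ (this is \eqref{R50}); next one proves that \emph{all} rescaled profiles and the remainder are non-radiative in the limit (\eqref{R62}, \eqref{R62'}); finally, a small non-radiative solution of the free wave equation must vanish, by Theorem \ref{T:equirepartition} combined with small-data perturbation. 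Only this chain shows the extra profiles are zero. Your one-sentence dismissal of the $|t_{j,n}/\lambda_{j,n}|\to\infty$ case (``deposits nonzero radiation, contradicting \eqref{R11}'') is a sketch of the non-radiativity observation, but it does not by itself force the profile to be zero without the smallness $+$ equirepartition input.

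\textbf{2. Non-radiativity as $\tau\to-\infty$.} You invoke \eqref{R11} for both time directions, but \eqref{R11} is a $t\to+\infty$ statement only. The backward direction is genuinely delicate: in the paper the exterior energy of $u(s_n+\tau)-v_L(s_n+\tau)$ as $\tau\to-\infty$ on $\{|x|\geq|\tau|\}$ is identified, via small-data scattering at $t\to-\infty$ for $u$ outside a large fixed cone, with $\int_{\eta\geq s_n}|g(\eta)|^2\,d\eta$ for a fixed $g\in L^2$, and this tends to $0$ only because $s_n\to\infty$ (see \eqref{R80}). Without this argument the sum over $\pm$ in the definition of non-radiativity is not controlled, and the equirepartition step above does not apply. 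As written, your proof of non-radiativity only handles $\tau\to+\infty$.

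Your argument for the energy identity \eqref{LRR7} (energy conservation, linear energy conservation for $v_L$, asymptotic vanishing of cross terms, and calibration of the constant along Rodriguez's sequence $\{t_n\}$) is sound and is a reasonable way to pin the value $JE(W,0)$.
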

Note that the limits \eqref{defnuj}, \eqref{LRR4}, \eqref{LRR5} and \eqref{LRR6} imply the orthogonality conditions
\begin{align}
 \label{RR4}
 \forall j\in \llbracket j_k,j_{k+1}-1\rrbracket
 \quad \int \nabla \check{h}_0^{k}\cdot \nabla (\Lambda W)_{(\nu_j)}=
 \int \check{g}_1^{k}\cdot \nabla (\Lambda W)_{[\nu_j]}=
 0
\end{align}
\begin{proof}[Proof of Lemma \ref{L:expansion}]
\setcounter{step}{0}
In all the proof we will denote 
$$\mu_{k,n}=\lambda_{j_k}(s_n).$$
By \eqref{defnuj}, 
$$\lim_{n\to\infty} \frac{\lambda_j(s_n)}{\mu_{k,n}}=\nu_j,\quad j\in \llbracket j_k,j_{k+1}-1\rrbracket.$$
\begin{step}
 Denoting by 
 $$M(t)=\sum_{j=1}^J \iota_jW_{(\lambda_j(t))},$$
 we see that for $k\in \llbracket 1,K\rrbracket$, 
 $$\mu_{k,n}^{\frac{N}{2}-1}M(s_n,\mu_{k,n}\cdot)\xrightharpoonup[n\to\infty]{} \sum_{j=j_k}^{j_{k+1}-1} \iota_jW_{(\nu_j)}.$$
 Extracting subsequences, so that the limits \eqref{LRR4}, \eqref{LRR5}, \eqref{LRR6} exist, we obtain 
 \begin{equation}
  \label{R40}
  \left( \mu_{k,n}^{\frac N2-1} \left( (u-v_L)(s_n,\mu_{k,n}\cdot),\mu_{k,n}^{\frac N2}\partial_t(u-v_L)(s_n,\mu_{k,n}\cdot ) \right)\right)\xrightharpoonup[n\to\infty]{} (V_0^k,V_1^k),
 \end{equation}
where $(V_0^k,V_1^k)$ is defined by \eqref{LRR3}. Note that by \eqref{R15},
\begin{equation}
 \label{R41}
 \left\| (V_0^k,V_1^k)-\sum_{j=j_k}^{j_{k+1}-1} \iota_j(W_{(\nu_j)},0)\right\|_{\HHH}\lesssim \eps_0,
\end{equation} 
and that for $j\in \llbracket j_k,j_{k+1}-2\rrbracket$, 
\begin{equation}
 \label{R42}
 \frac{\nu_{j+1}}{\nu_j} \leq \lim_{n\to\infty}\gamma(\lambda(s_n))\leq \eps_0.
\end{equation}
In particular, \eqref{LRR2} is satisfied. The bounds \eqref{R41} and \eqref{R42} also imply, denoting by $\nubf^k=(\nu_{j_k},\ldots,\nu_{j_{k+1}-1})$
\begin{equation}
 \label{R43}
 \left|\left\|(V_0^k,V_1^k)\right\|^2_{\HHH} -J_k \|\nabla W\|^2_{L^2}\right|
 \lesssim \eps_0 +\gamma(\nubf^k)^{\frac N2-2}=o_{\eps_0}(1),
\end{equation} 
where $o_{\eps_0}(1)$ goes to $0$ as $\eps_0$ goes to $0$. We have used the bound
\begin{equation*}
 \left|\int \nabla W_{(\nu_j)}\cdot\nabla W_{(\nu_{\ell})}\right|\lesssim \gamma(\nubf^k)^{\frac N2-2}.
\end{equation*} 
(see Claim \ref{Cl:estimates1} in the Appendix).
Notice also
\begin{equation}
 \label{R44}
 \left| \|M(s_n)\|^2_{\hdot} -J\|\nabla W\|^2_{L^2}\right|=o_{\eps_0}(1).
\end{equation} 
As a consequence of the weak limit \eqref{R40}, we see that the sequence $\{\vec{u}(s_n)-\vec{v}_L(s_n)\}_n$ has (after extraction of subsequences) a profile
decomposition with profiles $(V^k_F)_{k\geq 1}$ and parameters $\{\mu_{k,n},s_{k,n}\}_n$, $k\geq 1$, where for $k\in \llbracket 1,K\rrbracket$, $\mu_{k,n}=\lambda_{j_k}(s_n)$, $s_{k,n}=0$ and $V^k_F$ is the solution of the free linear wave equation with initial data $(V^k_0,V^k_1)$ (defined by \eqref{LRR3}). Combining \eqref{R40}, \eqref{R41}, \eqref{R43}, \eqref{R44} and the Pythagorean expansion of the profile decomposition, we obtain
\begin{equation}
 \label{R50}
 \sum_{k\geq K+1} \left\|\vec{V}^k_F(0)\right\|_{\HHH}^2=o_{\eps_0}(1).
\end{equation} 
As usual we will denote by $V^k$ the nonlinear profile associated to $V^k_F$, $\{s_{k,n}\}_n$ and $\{\mu_{k,n}\}_n$.
\end{step}
\begin{step}[Approximation for $\{|x|\geq |t|\}$ and lack of radiation]
 We claim that for all $k\in \llbracket 1,K\rrbracket$, the nonlinear profile $V^k$ is defined on $\{|x|\geq |t|\}$ and 
\begin{equation}
\label{Vk_sympa}
 V^k\in \Ssp(\{|x|>|t|\}). 
\end{equation} 
This follows from
\eqref{R41} and long-time perturbation theory. By Minkowski's inequality and scaling arguments,
$$\left\| \sum_{j=j_k}^{j_{k+1}-1} \iota_jW_{(\nu_j)}   \right\|_{\Wsp(\{|x|\geq |t|\})}\leq J\|W\|_{\Wsp(\{|x|\geq |t|\})}.$$
One can check easily that the right-hand side of the preceding inequality is finite. This can be done directly. One can also use that $W$ coincides for $|x|>|t|$ with the solution $\tW$ of 
$$(\partial_t^2-\Delta)\tW=W^{\frac{N+2}{N-2}}\indxt,$$
with 
$W^{\frac{N+2}{N-2}}\indic_{\{|x|>|t|\}}\in L^1L^2,$
so that, by the Strichartz inequality \eqref{Strichartz}, $\widetilde{W}\in \Wsp(\{|x|>|t|\})$.

By \eqref{R42} and the Claim \ref{Cl:pointwise2} (with $h=0$) in the appendix, denoting 
$$r_k=(\partial_t^2-\Delta)\left(\sum_{j=j_k}^{j_{k+1}-1} \iota_jW_{(\nu_j)}\right)-F\left(  \sum_{j=j_k}^{j_{k+1}-1} \iota_jW_{(\nu_j)}\right),$$
where $F(z)=|z|^{\frac{4}{N-2}}z$, we have
$$\|\indic_{\{|x|\geq |t|\}} r_k\|_{L^1L^2}=o_{\eps_0}(1).$$
Taking $\eps_0>0$ small enough, we deduce by long time perturbation theory (Proposition \ref{P:LTPT}) that $V^k$ is defined on $\{|x|\geq |t|\}$ and satisfies \eqref{Vk_sympa}.

In view of \eqref{R50} all the nonlinear profiles $V^k$, $k\geq K+1$ are globally defined and scatter. From the profile approximation property (Proposition \ref{P:NL_profile}), we obtain that for large $n$, the solution $(\tau,x)\mapsto u(s_n+\tau,x)$ is defined on $\{|x|\geq |\tau|\}$, and that for all $\ell\gg 1$
\begin{equation}
 \label{R60}
u(s_n+\tau)=v_L(s_n+\tau)+\sum_{k=1}^{\ell} V_n^k(\tau)+w_n^{\ell}(\tau)+r_n^{\ell}(\tau),
\end{equation} 
where $w_n^{\ell}$ is the solution of the free wave equation \eqref{FW} with initial data $\vec{u}(s_n)-\vec{v}_L(s_n)-\sum_{k=1}^{\ell} \vec{V}_n^k(0)$, and $r_n^{\ell}$ satisfies
\begin{equation}
 \label{R61}
\lim_{\ell\to\infty} \limsup_{n\to\infty}\, \sup_{\tau\in \RR} \left\| \indic_{\{|x|\geq |\tau|\}} \nabla_{\tau,x}r_n^{\ell}(\tau)\right\|_{L^2}=0.
 \end{equation} 
We next prove that for all $k\geq 1$,
\begin{equation}
 \label{R62}
\lim_{n\to\infty} \sum_{\pm}\lim_{\tau\to\pm\infty}  \int_{|x|\geq |\tau|} |\nabla_{\tau,x}V_n^k(\tau,x)|^2\,dx=0.
\end{equation} 
and that for all $\ell\geq 1$,
\begin{equation}
 \label{R62'}
\lim_{n\to\infty} \sum_{\pm}\lim_{\tau\to\pm\infty}  \int_{|x|\geq |\tau|} |\nabla_{\tau,x}w_n^{\ell}(\tau,x)|^2\,dx=0.
\end{equation}
The proof is similar to that of the analoguous properties \eqref{G124}, \eqref{G125} in the proof of the exterior energy bound for the linearized equation close to a soliton. Using \eqref{R60} and fixing $1\leq k<\ell$, we see that for all $\tau$,
\begin{multline}
 \label{R63}
\int_{|x|\geq |\tau|} \nabla_{\tau,x}\left(u(s_n+\tau,x)-v_L(s_n+\tau,x)\right)\cdot \nabla_{\tau,y} V^k_n(\tau,x)\,dx\\
= \int_{|x|\geq |\tau|} |\nabla_{\tau,x}V_n^k(\tau,x)|^2\,dx+\sum_{\substack{0\leq j\leq \ell\\ j\neq k}} \int_{|x|\geq |\tau|} \nabla_{\tau,x} V_n^j(\tau,x)\cdot \nabla_{\tau,x}V_n^k(\tau,x)\,dx\\
+\int_{|x|\geq |\tau|} \nabla_{\tau,x} w_n^{\ell}(\tau,x)\cdot \nabla_{\tau,x} V_n^k(\tau,x)\,dx + \int_{|x|\geq |\tau|} \nabla_{\tau,x}^{\ell} r_n^{\ell}(\tau,x)\nabla_{\tau,x}V_n^k(\tau,x)\,dx.
\end{multline}
Since for all $j$, $V_n^j$ scatters in both time directions in $\{|x|\geq |\tau|\}$ (in the sense that it satisfies \eqref{exterior_scattering}), we have, using the pseudo orthogonality of the parameters as in the proof mentioned above :
\begin{equation}
 \label{R70}
\forall j\neq k,\quad \lim_{n\to\infty} \sum_{\pm}\lim_{\tau \to \pm\infty}\left|\int_{|x|\geq |\tau|} \nabla_{\tau,x} V_n^j(\tau,x)\cdot \nabla_{\tau,x}V_n^k(\tau,x)\,dx\right|=0
\end{equation} 
and also
\begin{equation*}
k\leq \ell\Longrightarrow
\lim_{n\to\infty} \sum_{\pm}\lim_{\tau \to \pm\infty}
\left|\int_{|x|\geq |\tau|} \nabla_{\tau,x} w_n^{\ell}(\tau,x)\cdot \nabla_{\tau,x} V_n^k(\tau,x)\,dx\right|=0.
\end{equation*}
Furthermore by \eqref{R61},
\begin{equation*}
\lim_{\ell\to\infty}\limsup_{n\to\infty} \sum_{\pm}\lim_{\tau \to \pm\infty}
\left|\int_{|x|\geq |\tau|} \nabla_{\tau,x} r_n^{\ell}(\tau,x)\cdot \nabla_{\tau,x} V_n^k(\tau,x)\,dx\right|=0.
\end{equation*}
By the definition of $v_L$, we have that for all $n$,
\begin{multline}
 \label{R73}
\lim_{\tau \to +\infty}
\int_{|x|\geq \tau} \left|\nabla_{t,x}(u(s_n+\tau,x)-v_L(s_n+\tau,x))\right|^2\,dx\\
=
\lim_{\sigma\to +\infty}
\int_{|x|\geq \sigma-s_n} \left|\nabla_{t,x}(u(\sigma,x)-v_L(\sigma,x))\right|^2\,dx=0.
\end{multline} 
On the other hand,
\begin{multline*}
 \lim_{\tau \to -\infty}
\int_{|x|\geq |\tau|} \left|\nabla_{t,x}(u(s_n+\tau,x)-v_L(s_n+\tau,x))\right|^2\,dx\\
=
\lim_{\sigma\to -\infty}
\int_{|x|\geq s_n-\sigma} \left|\nabla_{t,x}(u(\sigma,x)-v_L(\sigma,x))\right|^2\,dx.
\end{multline*} 
By the small data theory, there exists a solution $u_F$ of the free linear equation such that if $A\gg 1$,
$$ \lim_{t\to-\infty} \int_{|x|>A-t} |\nabla_{t,x}(u-u_F)(t,x)|^2\,dx=0.$$
Combining with the large time asymptotics for linear wave equation, we deduce that there exists $g\in L^2([A,+\infty)$ (for a fixed $A\gg 1$), such that 
\begin{equation}
 \label{R80}
\lim_{\tau\to -\infty} \int_{|x|\geq |\tau|}\left|\nabla_{t,x}(u(s_n+\tau,x)-v_L(s_n+\tau,x))\right|^2\,dx=\int_{\eta\geq s_n} |g(\eta)|^2\,d\eta.
\end{equation} 
Note that the right-hand side of \eqref{R80} goes to $0$ as $n$ goes to infinity.
Combining \eqref{R63},\ldots,\eqref{R80} we obtain the desired estimate \eqref{R62}. A similar proof yields \eqref{R62'}.
\end{step}
\begin{step}[Consequence of the equirepartition of the energy]
 Let $k\geq K+1$. Then by \eqref{R50},
$$\sup_n \|V_n^k(0)\|^2_{\HHH}=o_{\eps_0}(1).$$
Using the small data theory and the equirepartition of the energy outside the wave cone for the free wave equation proved in \cite{DuKeMe12} (see Theorem \ref{T:equirepartition} above), we deduce, if $\eps_0$ is small enough,
$$ \sum_{\pm} \lim_{t\to \pm\infty} \int_{|x|\geq |t|} |\nabla_{t,x}V_n^k(t,x)|^2\,dx\geq \frac{1}{2}\int |\nabla_{t,x}V^k_n(0,x)|^2\,dx\geq \frac{1}{4}\|V^k(0)\|^2_{\HHH}.$$
From \eqref{R62}, we deduce that $V^k\equiv 0$ for $k\geq K+1$. As a consequence $w_n^{\ell}$ does not depend on $\ell$ if $\ell\geq K$. Denoting by $w_n=w_n^{\ell}$, we have by \eqref{R62'}
$$\lim_{n\to\infty} \sum_{\pm}\lim_{\tau\to\pm \infty} \int_{|x|\geq |\tau|} |\nabla_{\tau,x}w_n(\tau,x)|^2\,dx=0.$$
Since $w_n$ is a solution of the free wave equation, we deduce (using Theorem \ref{T:equirepartition} again), that 
$$\lim_{n\to\infty} \int |\nabla_{t,x}w_n(0,x)|^2\,dx=0,$$
and hence \eqref{LRR1}.
It remains to observe that if $1\leq k\leq K$, the property \eqref{R62} implies, since the time parameter $s_{k,n}$ is identically $0$, that 
$$\sum_{\pm}\lim_{t\to \pm\infty}\int_{|x|\geq |t|} |\nabla_{t,x}V^k(t,x)|^2\,dx=0,$$
i.e. that $V^k$ is non-radiative at $t=0$.
\end{step}
\end{proof}
For further use, We state the following important consequence of the proof of Lemma \ref{L:expansion} (see \eqref{R60})
\begin{claim}[Exterior expansion for all time]
\label{Cl:expansion}
We have, for $|x|\geq |\tau|$
\begin{equation}
\label{R60bis}
u(s_n+\tau)=v_L(s_n+\tau)+\sum_{k=1}^K V_n^k(\tau)+r_n(\tau),
\end{equation} 
where
\begin{equation*}
 \lim_{n\to\infty} \sup_{\tau}\int_{|x|\geq |\tau|} |\nabla_{\tau,x}r_n|^2\,dx=0.
\end{equation*} 
\end{claim}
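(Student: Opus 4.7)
The plan is to read off the claim directly from the identity \eqref{R60} established in the third step of the proof of Lemma \ref{L:expansion}. Recall that this identity asserts, for every $\ell\geq 1$ and $(\tau,x)$ with $|x|\geq|\tau|$,
$$u(s_n+\tau,x)=v_L(s_n+\tau,x)+\sum_{k=1}^{\ell} V_n^k(\tau,x)+w_n^{\ell}(\tau,x)+r_n^{\ell}(\tau,x),$$
where $w_n^{\ell}$ is the free wave with initial data $\vec{u}(s_n)-\vec{v}_L(s_n)-\sum_{k=1}^{\ell}\vec{V}_n^k(0)$, and where $r_n^{\ell}$ satisfies the iterated smallness estimate \eqref{R61}.

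The decisive input, already obtained in the third step of the proof of Lemma \ref{L:expansion}, is that $V^k\equiv 0$ for every $k\geq K+1$. Consequently, for all $\ell\geq K$, the truncated sum $\sum_{k=1}^{\ell}V_n^k$ collapses to $\sum_{k=1}^{K}V_n^k$, the initial data defining $w_n^{\ell}$ no longer depends on $\ell$, and both $w_n^{\ell}=:w_n$ and $r_n^{\ell}$ stabilize with $\ell$. I would therefore fix some $\ell\geq K$ and set $r_n(\tau):=w_n(\tau)+r_n^{\ell}(\tau)$, which yields the decomposition \eqref{R60bis}.

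It remains to establish the uniform-in-$\tau$ vanishing of the exterior energy of $r_n$, for which I would treat the two summands separately. For the free-wave piece $w_n$, the standard energy bound restricted to the exterior of the wave cone gives
$$\sup_{\tau\in\RR}\int_{|x|\geq|\tau|}\left|\nabla_{\tau,x}w_n(\tau,x)\right|^2\,dx\;\leq\;\left\|\vec{w}_n(0)\right\|_{\HHH}^{2},$$
and the right-hand side tends to $0$ by \eqref{LRR1}. For the piece $r_n^{\ell}$, estimate \eqref{R61} supplies the iterated limit $\lim_{\ell\to\infty}\limsup_{n\to\infty}\sup_{\tau}\|\nabla_{\tau,x}r_n^{\ell}\|_{L^2(|x|\geq|\tau|)}=0$. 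The only subtle point — and in truth the sole step worth flagging — is that this iterated limit is a priori weaker than the single limit in $n$ claimed here; the vanishing of the higher profiles removes this obstruction, since it forces $r_n^{\ell}$ to be independent of $\ell\geq K$, so that the outer $\ell$-limit becomes vacuous and smallness in $n$ alone is obtained. Combining the two vanishing bounds yields the claim.
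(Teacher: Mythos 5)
Your argument is correct and is exactly the paper's implied reasoning: the paper simply flags Claim \ref{Cl:expansion} as a consequence of \eqref{R60} without writing out the details, and what you supply is precisely the missing bookkeeping — the vanishing of $V^k$ for $k\geq K+1$ collapses the sum, stabilizes $w_n^\ell$ and $r_n^\ell$ for $\ell\geq K$ so that the iterated limit in \eqref{R61} degenerates to a single limit in $n$, and energy conservation for the free wave $w_n$ together with \eqref{LRR1} disposes of the remaining term.
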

\subsection{Estimates on $\lambda_j$ and $\beta_j$}
\label{SS:lambda_beta}
Recall from the introduction of this section the definitions of $h(t)$, $g_1(t)$, $\alpha_j(t)$, $\beta_j(t)$, $\gamma(t)$, $\delta(t)$.
\begin{lemma}
\label{L:R26}
There exists a constant $C>0$, depending only on $J$ and $N$, such that under the preceding assumptions, 
\begin{gather}
\label{R2601} 
\forall t\in [\tilde{t}_n,t_n], \quad \left\|(h,g_1)\right\|_{\HHH}\leq o_n(1)+C\big(\gamma^{\frac{N}{4}}+\delta^{\frac{N}{N-2}}\big)\\
 \label{R260}
\forall t\in [\tilde{t}_n,t_n], \quad \left|\delta^2-\sum_{j=1}^J \alpha_j^2\|\Lambda W\|^2_{L^2}\right| \leq 
o_n(1)+C\big(\gamma^{\frac{N-1}{2}}+\delta^{\frac{2(N-1)}{N-2}}\big)
\\
  \label{F182} \forall t\in [\tilde{t}_n,t_n], \quad \left|\beta_j+\alpha_j\|\Lambda W\|_{L^2}^2\right|\leq o_n(1)+C\big(\gamma^{\frac{N}{4}}+\delta^{\frac{N}{N-2}}\big)
\end{gather} 
where in all inequalities, $o_n(1)$ goes to $0$ as $n$ goes to infinity \emph{uniformly with respect to $t\in [\tilde{t}_n,t_n]$}.
\end{lemma}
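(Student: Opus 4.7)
The plan is to reduce the lemma to Proposition~\ref{P:F18}, which provides the analogous estimates for a single non-radiative solution close to a multisoliton, via the profile decomposition of Lemma~\ref{L:expansion}. The three estimates are proved simultaneously by contradiction: if any of them fails (uniformly in $t\in[\tilde t_n,t_n]$ with a residue $o_n(1)$), we can select a sequence $s_n\in[\tilde t_n,t_n]$ on which the left-hand side exceeds the stated right-hand side by a non-vanishing amount. We then derive a contradiction by analyzing this sequence.

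Apply Lemma~\ref{L:expansion} to $\{s_n\}$. After extraction, we obtain a partition of $\llbracket 1,J\rrbracket$ into blocks $\llbracket j_k,j_{k+1}-1\rrbracket$, separated scales $\mu_{k,n}=\lambda_{j_k}(s_n)$ with $\mu_{k+1,n}/\mu_{k,n}\to 0$, coefficients $\check{\alpha}_j$, $(\check h_0^k,\check g_1^k)$ obtained as weak limits via \eqref{LRR4}--\eqref{LRR6}, and non-radiative solutions $V^k$ of \eqref{NLW}, defined on $\{|x|>|t|\}$, close to the sub-multisoliton $\sum_{j\in\text{block }k}\iota_j W_{(\nu_j)}$, and satisfying the orthogonality relations \eqref{RR4}. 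Setting $\tilde\delta_k:=\big\|\vec{V}^k(0)-(\sum_{j\in\text{block }k}\iota_jW_{(\nu_j)},0)\big\|_\HHH$, each $V^k$ falls exactly into the setting of \S\ref{SS:close_multi}, so Proposition~\ref{P:F18} applies and yields
\begin{gather*}
\|(\check h_0^k,\check g_1^k)\|_\HHH\lesssim\gamma(\nubf^k)^{N/4}+\tilde\delta_k^{N/(N-2)},\\
\Big|\tilde\delta_k^2-\sum_{j\in\text{block }k}\check\alpha_j^{\,2}\|\Lambda W\|_{L^2}^2\Big|\lesssim\gamma(\nubf^k)^{(N-1)/2}+\tilde\delta_k^{2(N-1)/(N-2)}.
\end{gather*}

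To transfer these block estimates back to $s_n$, combine the strong asymptotic decomposition \eqref{LRR1} with the expansion \eqref{LRR3} and the identification of parameters $\lambda_j(s_n)/\mu_{k,n}\to\nu_j$. This gives $h(s_n)\approx\sum_k(\check h_0^k)_{(\mu_{k,n})}$ and, after comparing with the ansatz \eqref{exp_dt} and using \eqref{LRR5}, $g_1(s_n)\approx\sum_k(\check g_1^k)_{[\mu_{k,n}]}$ in $\HHH$. The pseudo-orthogonality of the scales $\mu_{k,n}$ produces the Pythagorean identities $\|(h,g_1)(s_n)\|_\HHH^2=\sum_k\|(\check h_0^k,\check g_1^k)\|_\HHH^2+o_n(1)$, $\delta(s_n)^2=\sum_k\tilde\delta_k^{\,2}+o_n(1)$, and $\sum_j\alpha_j(s_n)^2=\sum_k\sum_{j\in\text{block }k}\check\alpha_j^{\,2}+o_n(1)$. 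Since the block-wise ratios satisfy $\gamma(\nubf^k)\leq\gamma(\lambdabf(s_n))+o_n(1)$ and $\tilde\delta_k\leq\delta(s_n)+o_n(1)$, summing the block estimates gives \eqref{R2601} and \eqref{R260} along $\{s_n\}$, contradicting the failure hypothesis.

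The third estimate \eqref{F182} is algebraic: pairing \eqref{exp_dt} against $(\Lambda W)_{[\lambda_j(t)]}$ and using the orthogonality \eqref{exp_dt_ortho} of $g_1$ yields
\[
\beta_j(t)+\alpha_j(t)\|\Lambda W\|_{L^2}^2=-\iota_j\sum_{k\neq j}\iota_k\alpha_k(t)\int(\Lambda W)_{[\lambda_j(t)]}(\Lambda W)_{[\lambda_k(t)]}\,dx.
\]
The cross-integrals are bounded by $\gamma(t)^{(N-4)/2}$ via Claim~\ref{Cl:estimates1}, while $|\alpha_k(t)|\lesssim\delta(t)+\gamma(t)^{N/4}$ is an elementary consequence of \eqref{R260} just established. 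A standard AM--GM step absorbs the resulting mixed bound $\gamma^{(N-4)/2}(\delta+\gamma^{N/4})$ into the right-hand side of \eqref{F182}.

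The main obstacle is the careful bookkeeping in the transfer step: verifying that the weak-limit identifications \eqref{LRR4}--\eqref{LRR6} combine with the scale separation to deliver genuine Pythagorean identities (as opposed to mere lower-semicontinuity bounds) for both $\|(h,g_1)\|_\HHH^2$ and $\delta^2$, and that the cross-terms between the $(\Lambda W)_{[\lambda_j]}$ living at widely separated scales contribute only $o_n(1)$. These are routine profile-decomposition manipulations once the block structure from Lemma~\ref{L:expansion} is in place, but they must be executed without losing track of the $o_n(1)$ residues.
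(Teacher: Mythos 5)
Your proposal follows essentially the same route as the paper: a contradiction argument that passes to a sequence $s_n\in[\tilde{t}_n,t_n]$, invokes Lemma~\ref{L:expansion} to decompose into non-radiative blocks $V^k$, applies Proposition~\ref{P:F18} to each block, and transfers the estimates back via the Pythagorean/limit identities furnished by the profile decomposition; \eqref{F182} is then handled by the same direct algebraic pairing against $(\Lambda W)_{[\lambda_j]}$ together with Claim~\ref{Cl:estimates1}. One small slip: from \eqref{R260} one obtains $|\alpha_k|\lesssim\delta+\gamma^{(N-1)/4}+o_n(1)$, not $\delta+\gamma^{N/4}$ as you wrote (the latter is a stronger claim since $\gamma<1$); however, inserting the correct, slightly weaker bound still absorbs into $\gamma^{N/4}+\delta^{N/(N-2)}$ for $N\geq 5$, so the conclusion is unaffected.
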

\begin{proof}
Note that \eqref{R2601} and \eqref{R260} are time-dependent version of the estimates \eqref{F180} and \eqref{F181} for non-radiative solution. We will prove \eqref{R260} as a consequence of \eqref{F181}. The proof of \eqref{R2601} using \eqref{F180} is very similar and we omit it. We argue by contradiction. If \eqref{R260} does not hold, there exists, after extraction, a sequence of times $\{s_n\}_n$ with $s_n\in [\tilde{t}_n,t_n]$, and $\eps_1>0$ such that for all $n$,
\begin{equation}
 \label{R262}
\left|\delta^2(s_n)-\sum_{j=1}^J \alpha_j^2(s_n)\|\Lambda W\|^2_{L^2}\right|\geq C\left(\delta^{\frac{2(N-1)}{N-2}}(s_n)+\gamma^{\frac{N-1}{2}}(s_n)\right)+\eps_1.
\end{equation} 
Using Lemma \ref{L:expansion}, we have
\begin{equation}
 \label{R270}
\lim_{n\to\infty}\left\|\vec{u}(s_n)-\vec{v}_L(s_n)-\sum_{k=1}^K \vec{V}_n^k(0)\right\|_{\HHH}=0,
\end{equation} 
where the rescaled profiles $V_n^k$ are defined as in Lemma \ref{L:expansion}:
$$ V_n^k(t,x)=\frac{1}{\lambda_{j_k}^{\frac N2-1}(s_n)} V^k\left( \frac{t}{\lambda_{j_k(s_n)}},\frac{x}{\lambda_{j_k(s_n)}} \right),$$
and $V^k$ is a \emph{non-radiative} solution to the nonlinear wave equation with initial data $(V_0^k,V_1^k)$, and
\begin{equation*}
 \left\{
\begin{aligned}
 V^k_0&=\sum_{j=j_k}^{j_{k+1}-1} \iota_jW_{(\nu_j)} +\check{h}_0^k\\
 V^k_1&=\sum_{j=j_k}^{j_{k+1}-1} \iota_j \check{\alpha}_j\Lambda W_{[\nu_j]} +\check{g}_1^k,
\end{aligned}
 \right.
\end{equation*}
where $\check{h}_0^k$, $\check{g}_1^k$ are defined as weak limits of $h(s_n)$ and $g_1(s_n)$ after an appropriate rescaling (see \eqref{LRR4}, \eqref{LRR6}) and
$$\check{\alpha}_j=\lim_{n\to\infty}\alpha_j(s_n).$$
Since for all $k\in \llbracket 1,K\rrbracket$, $V^k$ is non-radiative, we can use the estimate \eqref{F181}, which writes
\begin{equation}
\label{R280}
\left|\delta_k^2-\sum_{j=j_k}^{j_{k+1}-1}\check{\alpha}_j^2\|\Lambda W\|^2_{L^2}\right|\leq 
C\left(\delta_k^{\frac{2(N-1)}{N-2}}+\gamma_k^{\frac{N-1}{2}}\right),
\end{equation} 
where 
$$ \delta_k^2=\|\partial_tV^k(0)\|^2_{L^2}+\|h^k_0\|_{\hdot}^2,$$
and 
$$\gamma_k=\max_{j_k\leq j\leq j_{k+1}-2} \frac{\nu_j}{\nu_{j+1}}$$
(as usual, if $j_{k+1}=1+j_k$, we let $\gamma_k=0$).

Observe that 
$$\lim_{n\to\infty} \gamma(s_n)=\max_{1\leq k\leq K} \gamma_k,$$
and, by the expansion \eqref{R270},
$$\lim_{n\to\infty}\delta^2(s_n)=\sum_{1\leq k\leq K} \delta_k^2.$$
Summing up \eqref{R280}, we deduce
\begin{equation*}
\left|\delta^2(s_n)-\sum_{j=1}^J \alpha^2_j(s_n)\|\Lambda W\|^2_{L^2}\right|\leq C_J
\left(\delta^{\frac{2(N-1)}{N-2}}(s_n)+\gamma^{\frac{N-1}{2}}(s_n)+o_n(1) \right),
\end{equation*}
where the constant $C_J$ depends only on $J$. This contradicts \eqref{R262} for large $n$. The proof is complete.

We next compare $\alpha_j$ and $\beta_j$ and prove \eqref{F182}. We have, expanding $\partial_t U$ by \eqref{exp_dt},
 \begin{multline*}
  \beta_j(t)=-\iota_j\int (\Lambda W)_{[\lambda_j]}\partial_tU\\
  =-\iota_j \underbrace{\int (\Lambda W)_{[\lambda_j]} g_1}_{0}-\alpha_j \|\Lambda W\|^2_{L^2}-\iota_j \iota_k\sum_{k\neq j}\alpha_k \int (\Lambda W)_{[\lambda_j]}(\Lambda W)_{[\lambda_k]}.
 \end{multline*}
By \eqref{R260} and Claim \ref{Cl:estimates1} in the appendix,
\begin{multline*}
\left|\alpha_k \int (\Lambda W)_{[\lambda_j]}(\Lambda W)_{[\lambda_k]}\right|\lesssim \left( \delta+\gamma^{\frac{N-1}{2}} +o_n(1)\right)\gamma^{\frac N2-2}\\ \lesssim \delta^{\frac{N}{N-2}}+\gamma^{\frac{N(N-4)}{4}}+\gamma^{N-\frac{5}{2}}+o_n(1),
\end{multline*}
which yields \eqref{F182}.
\end{proof}
We next prove:
\begin{lemma}[Expansion of the energy]
\label{St:energy}
\begin{gather}
 \label{energy_bound}
 \left|\frac{1}{2}\delta^2-\kappa_1'\sum_{1\leq j\leq J-1} \iota_j\iota_{j+1}\left( \frac{\lambda_{j+1}}{\lambda_j} \right)^{\frac{N-2}{2}}\right|\lesssim 
o_n(1)+\gamma^{\frac{N-1}{2}}\\ 
\label{bound_delta}
\delta\lesssim \gamma^{\frac{N-2}{4}}+o_n(1)\\
\label{bound_g1_h}
  \left\|(h(t),g_1(t))\right\|_{\HHH}\lesssim o_n(1)+ \gamma^{\frac N4},
 \end{gather} 
 where $o_n(1)\to 0$ as $n\to\infty$, uniformly with respect to $t\in [\tilde{t}_n,t_n]$, and
 $$\kappa_1'=\int \frac{\left(N(N-2)\right)^{\frac N2-1}}{|x|^{N-2}}W^{\frac{N+2}{N-2}}\,dx=\frac{1}{\|\Lambda W\|^2_{L^2}}\kappa_1.$$
 \end{lemma}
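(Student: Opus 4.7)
The plan is to compute the energy $E(\vec U(t))$ of $U=u-v_L$ in two independent ways and match them. The first way is an expansion using the modulation decomposition $U=M+h$ and $\partial_tU=\sum_j\iota_j\alpha_j(\Lambda W)_{[\lambda_j]}+g_1$; the second way is to identify its numerical value as $JE(W,0)$, by invoking Lemma \ref{L:expansion} and the identity \eqref{LRR7}. Matching the two yields \eqref{energy_bound}, and then \eqref{bound_delta} and \eqref{bound_g1_h} follow by plugging into the bounds \eqref{R2601} and \eqref{R260} of Lemma \ref{L:R26}.

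First I expand the energy. The kinetic part contributes $\frac12\sum_j\alpha_j^2\|\Lambda W\|^2_{L^2}+\frac12\|g_1\|^2_{L^2}$, plus crossed terms bounded by $O\bigl(\gamma^{N/2-2}\sum_j\alpha_j^2\bigr)$ thanks to the orthogonality \eqref{exp_dt_ortho} of $g_1$ against $(\Lambda W)_{[\lambda_j]}$ and the pairing estimate of Claim \ref{Cl:estimates1}. The potential part $\frac12\|\nabla U\|^2-\frac{N-2}{2N}\int|U|^{2N/(N-2)}$ expands around $M$ as $E(M,0)+\int h\bigl[\sum_j\iota_jW_{(\lambda_j)}^{(N+2)/(N-2)}-F(M)\bigr]+\frac12\|\nabla h\|^2+O(\|h\|_{\hdot}^2+\|h\|_{\hdot}^{2N/(N-2)})$ after integration by parts. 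Using the integration-by-parts identity $\int\nabla W_{(\lambda_j)}\cdot\nabla W_{(\lambda_k)}=\int W_{(\lambda_j)}W_{(\lambda_k)}^{(N+2)/(N-2)}$, together with the asymptotics $W(0)=1$ and $W(x)\sim (N(N-2))^{(N-2)/2}|x|^{2-N}$ at infinity, and the analogous Taylor expansion of $\int|M|^{2N/(N-2)}$, the cross terms between adjacent solitons yield
\[
 E(M,0)=JE(W,0)-\kappa_1'\sum_{j=1}^{J-1}\iota_j\iota_{j+1}\Bigl(\frac{\lambda_{j+1}}{\lambda_j}\Bigr)^{(N-2)/2}+O(\gamma^{(N-1)/2}),
\]
non-adjacent cross terms being $O(\gamma^{N-2})$ and thus absorbed. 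The linear-in-$h$ piece is controlled via the pointwise bound of Claim \ref{Cl:pointwise2} and Cauchy--Schwarz, and combined with the bounds on $\|h\|_{\hdot}$ from \eqref{R2601} and on $\sum_j\alpha_j^2$ from \eqref{R260}, all error contributions are absorbed into $o_n(1)+\gamma^{(N-1)/2}$ (using a short bootstrap in $\delta$ for the $\gamma^{N/2-2}\delta^2$ piece).

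Second, I identify $E(\vec U(t))=JE(W,0)+o_n(1)$ uniformly on $[\tilde t_n,t_n]$ by a contradiction argument. If this failed, there would exist $\eps_1>0$ and a sequence $s_n\in[\tilde t_n,t_n]$ on which $|E(\vec U(s_n))-JE(W,0)|\geq\eps_1$. Apply Lemma \ref{L:expansion} to this sequence (extracting further subsequences). Evaluating Claim \ref{Cl:expansion} at $\tau=0$ gives $\vec U(s_n)=\sum_{k=1}^K\vec V_n^k(0)+o_n(1)$ in $\HHH$, and the scale separation $\lambda_{j_{k+1}}(s_n)/\lambda_{j_k}(s_n)\to 0$ provides Pythagorean orthogonality of the $\HHH$-norms of the $\vec V_n^k(0)$ and scale-orthogonality of their $L^{2N/(N-2)}$-norms. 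Hence $E(\vec U(s_n))=\sum_k E(\vec V^k(0))+o_n(1)$, which by \eqref{LRR7} equals $JE(W,0)+o_n(1)$, a contradiction.

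Matching the two expressions yields $\frac12\sum_j\alpha_j^2\|\Lambda W\|^2_{L^2}-\kappa_1'\sum_j\iota_j\iota_{j+1}(\lambda_{j+1}/\lambda_j)^{(N-2)/2}=o_n(1)+O(\gamma^{(N-1)/2})$, and replacing $\sum\alpha_j^2\|\Lambda W\|^2_{L^2}$ by $\delta^2$ using \eqref{R260} gives \eqref{energy_bound}. Then \eqref{bound_delta} is immediate from the trivial bound $|\sum\iota_j\iota_{j+1}(\lambda_{j+1}/\lambda_j)^{(N-2)/2}|\leq J\gamma^{(N-2)/2}$, and \eqref{bound_g1_h} follows by substituting \eqref{bound_delta} into \eqref{R2601}. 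The main obstacle is the identification $E(\vec U(t))=JE(W,0)+o_n(1)$: since $E(\vec U(t))$ is not itself conserved (only $E(\vec u(t))$ and the linear energy $\|\vec v_L(t)\|^2_{\HHH}$ are), it cannot be obtained by a direct time-derivative argument and must be routed through the profile/Pythagorean machinery of Lemma \ref{L:expansion} and Claim \ref{Cl:expansion}, which is why the argument proceeds by contradiction along an arbitrary subsequence rather than pointwise in $t$.
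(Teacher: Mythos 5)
Your proof follows essentially the same strategy as the paper: expand the energy $E(\vec U(t))$ in the modulation coordinates, match it against $JE(W,0)$, isolate the leading $\kappa_1'\sum\iota_j\iota_{j+1}(\lambda_{j+1}/\lambda_j)^{(N-2)/2}$ interaction via the identity $\int\nabla W_{(\lambda_j)}\cdot\nabla W_{(\lambda_k)}=\int W_{(\lambda_j)}^{(N+2)/(N-2)}W_{(\lambda_k)}$ and the asymptotics of $W$, and close with a short bootstrap in $\delta$ together with Lemma~\ref{L:R26}. The sign of the $\kappa_1'$ term, the $O(\gamma^{N-2})$ bound on non-adjacent pairs, and the absorption of $\gamma^{N/2-2}\delta^2$ and $\delta^{2N/(N-2)}$ into $\gamma^{(N-1)/2}$ after the bootstrap are all correct and are the same computations the paper performs.

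The one place you diverge is the identification $E(\vec U(t))=JE(W,0)+o_n(1)$. The paper simply recalls $\lim_{t\to\infty}E(\vec u(t)-\vec v_L(t))=JE(W,0)$, which is the standard consequence of conservation of $E(\vec u)$ and of the linear energy of $v_L$, the dispersive decay $\|v_L(t)\|_{L^{2N/(N-2)}}\to0$ that kills the $u$--$v_L$ interaction terms, and \eqref{R13} which pins down the value of the limit along $\{t_n\}$. You instead route this through a contradiction argument via Lemma~\ref{L:expansion} and \eqref{LRR7}. That is logically sound, but \eqref{LRR7} itself (stated in Lemma~\ref{L:expansion} but not separately proved there) rests on exactly the same conservation/dispersion reasoning, so your detour repackages rather than replaces it; and your remark that the identification ``cannot be obtained by a direct time-derivative argument'' is a bit of a red herring, since no time differentiation is needed --- only conservation, dispersion, and the sequential decomposition. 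None of this is a gap; the shorter path is to take the limit as given, as the paper does.
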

 \begin{proof}
 Note that \eqref{bound_g1_h} follows from \eqref{bound_delta} and \eqref{R2601}.
 
 We are thus left with proving \eqref{bound_delta} and \eqref{energy_bound}. Recall that 
 $$\lim_{t\to\infty} E(\vec{u}(t)-\vec{v}_L(t))=JE(W,0).$$
 Expanding the energy
 $$E(\vec{u}-\vec{v}_L)=E\left(\sum_{j=1}^J \iota_j W_{(\lambda_j)}+h,\sum_{j=1}^J\alpha_j(\Lambda W)_{[\lambda_j]}+g_1\right),$$
 we obtain, in view of the inequality,
 \begin{multline*}
 \left|  
 \frac{N-2}{2N}\left|\sum_{j=1}^J y_j+h\right|^{\frac{2N}{N-2}}-\frac{N-2}{2N}\sum_{j=1}^J |y_j|^{\frac{2N}{N-2}} -\sum_{j=1}^J |y_j|^{\frac{4}{N-2}} y_jh
-\sum_{\substack{1\leq j,k\leq J\\ j\neq k}} |y_j|^{\frac{4}{N-2}} y_jy_k
 \right| \\
 \lesssim |h|^{\frac{2N}{N-2}}+\sum_{j=1}^J |y_j|^{\frac{4}{N-2}} |h|^2\\
 +\sum_{1\leq j<k\leq J}\left( \min\left\{ |y_j|^{\frac{4}{N-2}} y_k^2,|y_k|^{\frac{4}{N-2}} y_j^2\right\}+\min\left\{ |y_j|^{\frac{N+2}{N-2}} |y_k|,|y_k|^{\frac{N+2}{N-2}} |y_j|\right\} \right),
\end{multline*}
proved in appendix \ref{A:pointwise},
 and the estimate $\int \left|\Lambda W_{[\lambda_j]}\Lambda W_{[\lambda_k]}\right|\lesssim \gamma^{\frac{N}{2}-2}$ (see Claim \ref{Cl:estimates1} in the appendix),
 \begin{multline*}
  \bigg|\frac{J}{2}\|\nabla W\|^2_{L^2}+\sum_{1\leq j<k\leq J} \iota_j\iota_k\int \nabla W_{(\lambda_j)}\cdot\nabla W_{(\lambda_k)}
+\sum_{1\leq j\leq J} \iota_j\int \nabla W_{(\lambda_j)}\cdot\nabla h\\
-\frac{N-2}{2N}J\int W^{\frac{2N}{N-2}}
-\sum_{j=1}^J \iota_j\int W_{(\lambda_j)}^{\frac{N+2}{N-2}} h\\
-\sum_{\substack{1\leq j,k\leq J\\k\neq j}} \iota_j\iota_k\int W_{(\lambda_j)}^{\frac{N+2}{N-2}}W_{(\lambda_k)}
  +\frac{1}{2}\sum_{j=1}^J\alpha_j^2\|\Lambda W\|^2_{L^2}-JE(W,0)\bigg|
 \\ \lesssim \sum_{1\leq j<k\leq J} \int \min\left(W_{(\lambda_j)}^{\frac{4}{N-2}}W_{(\lambda_k)}^2,W_{(\lambda_k)}^{\frac{4}{N-2}}W_{(\lambda_j)}^2\right)
 +\min\left(W_{(\lambda_j)}^{\frac{N+2}{N-2}}W_{(\lambda_k)}, W_{(\lambda_k)}^{\frac{N+2}{N-2}}W_{(\lambda_j)}\right)\\
 +\|g_1\|^2_{L^2}
 +\|\nabla h\|^2_{L^2}+\gamma^{\frac{N}{2}-2}\delta^2+o_n(1).
 \end{multline*}
We note that for all $j,k$,
$$ \int \nabla W_{(\lambda_j)}\cdot\nabla W_{(\lambda_k)}=\int W_{(\lambda_j)}^{\frac{N+2}{N-2}}W_{(\lambda_k)},\quad \int \nabla W_{(\lambda_j)}\cdot\nabla h=\int W_{(\lambda_j)}^{\frac{N+2}{N-2}}h$$
Furthermore we have, 
\begin{multline*}
 \int \min\left\{W_{(\lambda_j)}^{\frac{N+2}{N-2}}W_{(\lambda_k)},W_{(\lambda_k)}^{\frac{N+2}{N-2}}W_{(\lambda_j)}\right\} +\int \min\left\{W_{(\lambda_j)}^{\frac{4}{N-2}}W_{(\lambda_k)}^2,W_{(\lambda_k)}^{\frac{4}{N-2}}W_{(\lambda_j)}^2\right\}\\
 \lesssim 
 \int  W_{(\lambda_j)}^{\frac{N}{N-2}}W_{(\lambda_k)}^{\frac{N}{N-2}}\lesssim \gamma^{\frac N2},
\end{multline*}
by Claim \ref{Cl:estimates1}.

As a consequence, using also the estimate \eqref{R2601} on $h$ and $g_1$, we have, 
\begin{multline}
\label{prsq_fn}
\left|\frac{1}{2}\sum_{j=1}^J\alpha_j^2\|\Lambda W\|^2_{L^2}-\sum_{1\leq j<k\leq J} \iota_j\iota_k\int W_{(\lambda_j)}^{\frac{N+2}{N-2}}W_{(\lambda_k)}\right|\\ \lesssim  \gamma^{\frac{N}{2}}+\gamma^{\frac{N}{2}-2}\delta^2+\delta^{\frac{2N}{N-2}}+o_n(1).
\end{multline}
We next estimate, for $j<k$,
\begin{multline*}
\int W_{(\lambda_j)}^{\frac{N+2}{N-2}}W_{(\lambda_k)}
=\left( \frac{\lambda_j}{\lambda_k} \right)^{ \frac{N-2}{2}} \int W^{\frac {N+2}{N-2}}(x) W\left( \frac{\lambda_j x}{\lambda_k} \right)\,dx\\
=\left(\frac{\lambda_k}{\lambda_j}\right)^{\frac{N-2}{2}}\int \frac{\left(N(N-2)\right)^{\frac{N-2}{2}}}{|x|^{N-2}}W^{\frac{N+2}{N-2}}\,dx\\
+\OOO\left( \left(\frac{\lambda_k}{\lambda_j}\right)^{\frac{N+1}{2}}\int W^{\frac{N+2}{N-2}}\frac{1}{|x|^{N-\frac 12}}\,dx \right),
\end{multline*}
where we have used 
$$\left|W(x)-\frac{\left((N-2)N\right)^{\frac{N-2}{2}}}{|x|^{N-2}}\right|\lesssim \frac{1}{|x|^{N-\frac 12}}.$$
In particular, if $j<k-1$, we see that
$$\int W_{(\lambda_j)}^{\frac{N+2}{N-2}} W_{(\lambda_k)}\lesssim \gamma^{N-2}.$$
Combining with \eqref{R260} and \eqref{prsq_fn}, we obtain
$$ \delta^2\lesssim \gamma^{\frac{N-2}{2}}+\gamma^{\frac{N}{2}-2}\delta^2+\delta^{\frac{2N}{N-2}}+o_n(1),$$
which yields $\delta\lesssim \gamma^{\frac{N-2}{4}}+o_n(1)$, i.e.  \eqref{bound_delta}. Going back to \eqref{prsq_fn}, we deduce \eqref{energy_bound}.
\end{proof}

\subsection{System of equations and estimates on the derivatives}
\label{SS:derivatives}
Under the above assumptions, we let as before $U(t)=u(t)-v_L(t)$, so that
$$ h(t)=U(t)-\sum_{j=1}^J\iota_jW_{(\lambda_j)}=U(t)-M(t).$$
Expanding the nonlinear wave equation \eqref{NLW}, we see that $(h(t),\partial_tU(t))$ satisfy the following system of equations for $t\in [\tilde{t}_n,t_n]$,
 \begin{equation}
  \label{R130}
  \left\{
  \begin{aligned}
   \frac{\partial h}{\partial t}&=\frac{\partial U}{\partial t}+\sum_{j=1}^J \iota_j\lambda_j'(t)\left( \Lambda W\right)_{[\lambda_j(t)]}\\
  \frac{\partial}{\partial t}\left( \frac{\partial U}{\partial t} \right)-\Delta h&=F(U)-\sum_{j=1}^JF\left(\iota_j W_{(\lambda_j)}\right)+\sigma(h,v_L),
  \end{aligned}\right.
 \end{equation}
where 
\begin{equation}
 \label{R150}
 \sigma(h,v_L):=F\left( M+h+v_L \right)-F\left( M+h \right),
\end{equation} 
satisfies
\begin{equation}
 \label{R133}
 |\sigma(h,v_L)|\lesssim |v_L(t)|^{\frac{N+2}{N-2}}+\sum_{j=1}^J\left(W_{(\lambda_j)}^{\frac{4}{N-2}}+|h(t)|^{\frac{4}{N-2}}  \right)|v_L(t)|.
\end{equation} 
We next estimate $\lambda_j'(t)$, using the orthogonality condition \eqref{R17} and the first equation in \eqref{R130}. More precisely, we will prove:
\begin{lemma}[Derivative of the scaling parameters]
\label{L:derivative}
 \begin{equation} 
  \label{F200}
  \left|\lambda'_j+\alpha_j\right|\lesssim \gamma^{\frac N4}+o_n(1),
 \end{equation} 
 where $o_n(1)$ goes to $0$ as $n\to\infty$, uniformly with respect to $t\in [\tilde{t}_n,t_n]$. \end{lemma}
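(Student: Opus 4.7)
The plan is to differentiate the orthogonality condition \eqref{R17} in time. Since $-\Delta\Lambda W = \tfrac{N+2}{N-2}W^{4/(N-2)}\Lambda W$, setting $Q := W^{4/(N-2)}\Lambda W$ and integrating by parts, \eqref{R17} is equivalent to $\int h\, Q_{[\lambda_j]}\,dx = 0$. Differentiating in $t$, using that $\partial_t Q_{[\lambda_j]} = -(\lambda_j'/\lambda_j)(\tilde\Lambda Q)_{[\lambda_j]}$ with $\tilde\Lambda = x\cdot\nabla + N/2$, yields
\begin{equation*}
\int \partial_t h\cdot Q_{[\lambda_j]}\,dx \;=\; \frac{\lambda_j'}{\lambda_j}\int h\,(\tilde\Lambda Q)_{[\lambda_j]}\,dx.
\end{equation*}

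Combining the first equation of \eqref{R130} with the expansion \eqref{exp_dt} gives $\partial_t h = \sum_k \iota_k(\alpha_k+\lambda_k')(\Lambda W)_{[\lambda_k]} + g_1$. Substituting and isolating the scale-invariant diagonal term $c := \int (\Lambda W)^2\,W^{4/(N-2)} > 0$ (the $k=j$ contribution), we obtain the near-diagonal system
\begin{equation*}
\iota_j c\,(\alpha_j+\lambda_j') \;=\; \frac{\lambda_j'}{\lambda_j}\int h\,(\tilde\Lambda Q)_{[\lambda_j]} \;-\; \int g_1\,Q_{[\lambda_j]} \;-\; \sum_{k\neq j}\iota_k(\alpha_k+\lambda_k')\int (\Lambda W)_{[\lambda_k]}\,Q_{[\lambda_j]}.
\end{equation*}

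Each term on the right is now controlled. A scaling argument in the spirit of Claim \ref{Cl:estimates1} (using that $Q\sim |y|^{-(N+2)}$ at infinity and $\Lambda W\sim |y|^{-(N-2)}$) gives the crucial cross-term bound $\bigl|\int (\Lambda W)_{[\lambda_k]}Q_{[\lambda_j]}\bigr|\lesssim \gamma^{N/2-2}$ for $k\neq j$. Cauchy--Schwarz and $Q\in L^2$ give $\bigl|\int g_1 Q_{[\lambda_j]}\bigr|\lesssim \|g_1\|_{L^2}$; the Sobolev embedding $\dot H^1\hookrightarrow L^{2N/(N-2)}$ together with the scaling identity $\|(\tilde\Lambda Q)_{[\lambda_j]}\|_{L^{2N/(N+2)}}=\lambda_j\|\tilde\Lambda Q\|_{L^{2N/(N+2)}}$ yields $\bigl|\int h\,(\tilde\Lambda Q)_{[\lambda_j]}\bigr|\lesssim \lambda_j\|h\|_{\dot H^1}$, so the first right-hand side term is $\lesssim |\lambda_j'|\|h\|_{\dot H^1}$. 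By \eqref{bound_g1_h}, $\|g_1\|_{L^2}+\|h\|_{\dot H^1}\lesssim \gamma^{N/4}+o_n(1)$.

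Setting $S:=\sum_j|\alpha_j+\lambda_j'|$, summing the equations over $j$ and absorbing the off-diagonal contribution $\lesssim J\gamma^{N/2-2}S$ into the left (legitimate once $\gamma$ is small enough, since $\tfrac{N}{2}-2>0$ for $N\geq 5$) gives $S\lesssim (\gamma^{N/4}+o_n(1))(1+\max_j|\lambda_j'|)$. Combined with the a priori bound $|\alpha_j|\lesssim \delta\lesssim \gamma^{(N-2)/4}+o_n(1)$ from Lemma \ref{L:R26} and \eqref{bound_delta}, hence $\max_j|\lambda_j'|\leq \max_j|\alpha_j|+S\lesssim \gamma^{(N-2)/4}+S+o_n(1)$, a final absorption of the $S$-term in the resulting inequality yields $S\lesssim \gamma^{N/4}+o_n(1)$, which is exactly \eqref{F200}. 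The main obstacle is the sharp scaling estimate on the cross-terms $\int (\Lambda W)_{[\lambda_k]}Q_{[\lambda_j]}$; once that is in place, the rest is linear algebra in a matrix close to the identity combined with a short bootstrap on $|\lambda_j'|$.
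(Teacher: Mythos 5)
Your proof is correct and follows essentially the same route as the paper: differentiate the orthogonality relation \eqref{R17}, substitute the first equation of \eqref{R130} together with the expansion \eqref{exp_dt}, isolate the diagonal coefficient, estimate the cross-terms via \eqref{est1.5}, the $g_1$ term via Cauchy--Schwarz, and the $h$ term via Sobolev, then bootstrap on $\max_j|\lambda_j'|$ using \eqref{bound_g1_h} and \eqref{bound_delta}. The only differences are cosmetic (you work with $Q=W^{4/(N-2)}\Lambda W$ in place of the paper's $-\tfrac{N-2}{N+2}\Delta\Lambda W$, and you carry an explicit sum $S$ rather than the vector $\lambdabf'+\alphabf$); the substance is identical.
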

 \begin{proof}
 According to \eqref{R17},
 $$ \forall t\in I,\quad \int h(t)\frac{1}{\lambda_j^{\frac N2}}\left(\Delta \Lambda W\right)\left( \frac{x}{\lambda_j(t)} \right)\,dx=0.$$
 Differentiating with respect to $t$ we obtain
 \begin{equation*}
  0=   \int \frac{\partial h}{\partial t} \frac{1}{\lambda_j^{\frac N2}} (\Delta \Lambda W)\left( \frac{x}{\lambda_j}\right)\,dx-\frac N2 \lambda_j'\int h \frac{1}{\lambda_j^{1+\frac{N}{2}}}\left( \Lambda_0 \Delta \Lambda W\right)\left( \frac{x}{\lambda_j} \right)\,dx,
  \end{equation*}
  where $\Lambda_0=\frac{N}{2}+x\cdot \nabla$. By the first equation in \eqref{R130}, 
  \begin{multline*}
  0= \int \frac{\partial U}{\partial t}\frac{1}{\lambda_j^{\frac N2}}\left( \Delta \Lambda W \right)\left( \frac{x}{\lambda_j} \right)\,dx\\+\sum_{k=1}^J \iota_k \lambda_k'\int \frac{1}{\lambda_k^{\frac N2}}(\Lambda W)\left( \frac{x}{\lambda_k} \right)\frac{1}{\lambda_j^{\frac N2}}(\Delta \Lambda W)\left( \frac{x}{\lambda_j} \right)\,dx
  \\
  -\frac N2 \lambda_j'\int h \frac{1}{\lambda_j^{1+\frac{N}{2}}} \left( \Lambda_0\Delta \Lambda W\right)\left( \frac{x}{\lambda_j} \right)\,dx.
 \end{multline*}
In view of the definition \eqref{exp_dt} of $g_1$, we have $\partial_tU=\sum_k \iota_k\alpha_k(\Lambda W)_{[\lambda_k]}+g_1$. By the estimate \eqref{bound_g1_h} on $g_1$, the bound \eqref{bound_delta} on $\delta$ and the estimate
$$   \left|\int (\Lambda W)_{[\lambda_j]}\left(\Delta \Lambda W\right)_{[\lambda_k]}\right|\lesssim \gamma^{\frac{N}{2}-2},\quad j\neq k$$
(see \eqref{est1.5} in the appendix),
we obtain
\begin{multline*}
 \int \frac{\partial U}{\partial t} \frac{1}{\lambda_j^{\frac N2}} \left(\Delta \Lambda W\right)\left( \frac{x}{\lambda_j} \right)
 \\
 = \int g_1\frac{1}{\lambda_j^{\frac N2}} \left(\Delta \Lambda W\right)\left( \frac{x}{\lambda_j} \right)-\alpha_j \iota_j\|\Lambda W\|^2_{\hdot}+\sum_{k\neq j}\iota_k\alpha_k \int (\Lambda W)_{[\lambda_j]}\left(\Delta \Lambda W\right)_{[\lambda_k]}\\
 =-\alpha_j\iota_j\|\Lambda W\|^2_{\hdot} +O\left( \gamma^{\frac{N}{4}} \right).
\end{multline*}
By the estimate \eqref{bound_g1_h} on $h$,
$$\left|\lambda_j'\int h \frac{1}{\lambda_j^{1+\frac{N}{2}}} \left( \Lambda_0\Delta \Lambda W\right)\left( \frac{x}{\lambda_j} \right)\,dx\right|\lesssim |\lambda'_j|\|\nabla h\|_{L^2}
\lesssim \left(\gamma^{\frac{N}{4}}+o_n(1)\right)\,|\lambda_j'|.$$
Combining, we obtain
$$ \forall j,\quad \Big|\alpha_j\|\Lambda W\|^2_{\hdot}+\lambda'_j\|\Lambda W\|^2_{\hdot}\Big|\lesssim \gamma^{\frac N4}\left(|\lambda'_j|+1\right)+\gamma^{\frac 12} \sum_{k\neq j} \left|\lambda_k'\right|+o_n(1),$$
and thus, letting $\alphabf=(\alpha_1,\ldots,\alpha_J)$,
$$\left| \lambdabf'+\alphabf\right|\lesssim |\lambdabf'|\gamma^{\frac 12}+\gamma^{\frac N4}+o_n(1).$$
This implies, recalling that by \eqref{bound_delta}, $\delta\lesssim \gamma^{\frac{N-2}{4}}$,
$$ |\lambdabf'|\lesssim |\alphabf|+\gamma^{\frac N4}\lesssim \delta+\gamma^{\frac N4}+o_n(1)\lesssim \gamma^{\frac{N-2}{4}}+o_n(1).$$
The desired estimate \eqref{F200} follows immediately from the two bounds above.
\end{proof}

\begin{lemma}[Second derivative of the scaling parameter] 
\label{L:second_derivative}
 For all $j\in \llbracket 1,J\rrbracket$,
 \begin{equation}
  \label{F270}
  \left|\lambda_j\beta'_j +\kappa_0\left(\iota_j\iota_{j+1}\left( \frac{\lambda_{j+1}}{\lambda_j} \right)^{\frac{N}{2}-1}-\iota_j\iota_{j-1}\left( \frac{\lambda_j}{\lambda_{j-1}}\right)^{\frac N2-1}\right)\right|\lesssim \gamma^{\frac{N-1}{2}}+o_n(1),
 \end{equation} 
where $\kappa_0$ is defined in Proposition \ref{P:EDO},
and by definition $\iota_0=\iota_{J+1}=0$.
\end{lemma}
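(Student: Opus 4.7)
We follow the strategy of Lemma \ref{L:derivative}. Differentiating $\beta_j(t) = -\iota_j \int (\Lambda W)_{[\lambda_j(t)]} \partial_t U\,dx$ in time produces
\begin{equation*}
\beta_j' = \iota_j \frac{\lambda_j'}{\lambda_j}\int (\Lambda_0 \Lambda W)_{[\lambda_j]}\partial_t U\,dx - \iota_j \int (\Lambda W)_{[\lambda_j]} \partial_t^2 U\,dx,
\end{equation*}
where $\Lambda_0 = \frac{N}{2} + x\cdot\nabla$. For the first term, we substitute the decomposition \eqref{exp_dt} of $\partial_t U$: the diagonal piece vanishes by $\int (\Lambda_0\Lambda W)\Lambda W\,dx = 0$ (a direct integration by parts using $\int x\cdot\nabla(f^2)\,dx = -N\int f^2$), and the off-diagonal and $g_1$ contributions are controlled via Claim \ref{Cl:estimates1} together with $|\lambda_j'|\lesssim\gamma^{(N-2)/4}$, $|\alpha_k|\lesssim\delta$, and $\|g_1\|_{L^2}\lesssim \gamma^{N/4}$ (from Lemmas \ref{L:derivative} and \ref{St:energy}); this gives an overall contribution to $\lambda_j\beta_j'$ of size $\gamma^{(N-1)/2}+o_n(1)$.

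For the second term, we substitute the equation \eqref{R130} for $\partial_t^2 U$ and integrate by parts on the $\Delta h$ term using the identity $-\Delta(\Lambda W)_{[\lambda_j]} = \frac{N+2}{N-2}W_{(\lambda_j)}^{4/(N-2)}(\Lambda W)_{[\lambda_j]}$, a consequence of $L_W(\Lambda W) = 0$. Then we Taylor-expand $F(M+h) = F(M) + F'(M)h + O(|h|^{(N+2)/(N-2)})$ with $F'(M) = \frac{N+2}{N-2}|M|^{4/(N-2)}\approx \frac{N+2}{N-2}\sum_k W_{(\lambda_k)}^{4/(N-2)}$; the linear-in-$h$ pieces from the integration by parts and from the Taylor expansion partially cancel, leaving only off-diagonal cross-terms $\int (\Lambda W)_{[\lambda_j]} W_{(\lambda_k)}^{4/(N-2)} h$ for $k\neq j$. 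Combined with the higher-order-in-$h$ remainder and with $-\iota_j\int (\Lambda W)_{[\lambda_j]}\sigma(h,v_L)$ (controlled pointwise via \eqref{R133}), these pieces contribute at most $\gamma^{(N-1)/2}+o_n(1)$ to $\lambda_j\beta_j'$, using $\|h\|_{\hdot}\lesssim\gamma^{N/4}$ from \eqref{bound_g1_h} and the local smallness of $v_L$ at scales $\lambda_j$ supplied by the profile analysis of Lemma \ref{L:expansion}.

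The remaining principal contribution is $-\lambda_j\iota_j\int (\Lambda W)_{[\lambda_j]}[F(M)-\sum_k F(\iota_k W_{(\lambda_k)})]$. We split the integration into regions where each $W_{(\lambda_\ell)}$ dominates and Taylor-expand $F(M)$ around $\iota_\ell W_{(\lambda_\ell)}$ in each region. Non-adjacent pairs $|k-\ell|\geq 2$ give $O(\gamma^{N-2})\leq O(\gamma^{(N-1)/2})$ contributions, absorbed into the error, so the leading terms come from the adjacent pairs. For $(j,j+1)$, the cross-term $\frac{N+2}{N-2}W_{(\lambda_j)}^{4/(N-2)}\iota_{j+1}W_{(\lambda_{j+1})}$ at scale $\lambda_j$, integrated against $(\Lambda W)_{[\lambda_j]}$, is evaluated by rescaling $y = x/\lambda_j$ and using the trick $\Lambda W\cdot W^{4/(N-2)} = -\frac{N-2}{N+2}\Delta(\Lambda W)$ to integrate by parts once more; dominated convergence with $\Lambda W(0) = (N-2)/2$ then yields the asymptotic of order $\lambda_j^{-N/2}\lambda_{j+1}^{(N-2)/2}$, contributing $-\iota_j\iota_{j+1}\kappa_0(\lambda_{j+1}/\lambda_j)^{(N-2)/2}$ after multiplication by $\lambda_j$. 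The $(j-1,j)$ case uses the analogous cross-term $\frac{N+2}{N-2}W_{(\lambda_j)}^{4/(N-2)}\iota_{j-1}W_{(\lambda_{j-1})}$ at scale $\lambda_j$; rescaling and dominated convergence ($W(y\lambda_j/\lambda_{j-1})\to 1$) yield $+\iota_j\iota_{j-1}\kappa_0(\lambda_j/\lambda_{j-1})^{(N-2)/2}$, where identification of the common constant $\kappa_0$ follows from the Green's function relation $W(0) = \frac{1}{(N-2)\omega_{N-1}}\int W^{(N+2)/(N-2)}/|x|^{N-2}\,dx$ and the Pohozaev identity $\int \Lambda W\cdot W^{(N+2)/(N-2)}\,dx = 0$.

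The main obstacle will be the asymmetric treatment of the two adjacent-pair interactions: the $(j,j+1)$ integral is subtle because naive rescaling does not reveal its leading order in $\lambda_{j+1}/\lambda_j$ (and the most direct replacement of $W$ by its inverse-power tail produces a divergent integrand at the origin), which forces the extra integration-by-parts step based on $L_W(\Lambda W) = 0$; the $(j-1,j)$ integral is comparatively direct. The secondary difficulty is the careful bookkeeping of many subdominant terms to ensure they all fit within the single error $\gamma^{(N-1)/2}+o_n(1)$.
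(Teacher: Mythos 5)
Your overall architecture coincides with the paper's: differentiate $\beta_j$ so that $\lambda_j\beta_j'$ splits into the $\Lambda_0\Lambda W$ term (killed up to $O(\gamma^{(N-1)/2})+o_n(1)$ by the orthogonality $\int(\Lambda_0\Lambda W)\Lambda W = 0$) and $-\iota_j\int(\Lambda W)_{(\lambda_j)}\partial_t^2U$, then substitute the equation, integrate by parts on the $\Delta h$ piece using $L_{W_{(\lambda_j)}}(\Lambda W)_{(\lambda_j)} = 0$ to cancel the $k=j$ linearization, peel off the lower-order pieces ($\sigma(h,v_L)$ giving $o_n(1)$, the quadratic-in-$h$ and cross-interaction remainders giving $O(\gamma^{N/2})$), and finally compute $\int(\Lambda W)_{(\lambda_j)}\bigl(F(M)-\sum_kF(\iota_kW_{(\lambda_k)})\bigr)$ by region-splitting and asymptotics. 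This is exactly the paper's decomposition \eqref{main_beta'1}--\eqref{main_beta'5}.

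Where you genuinely differ is the evaluation of the $(j,j+1)$ cross term. You propose replacing $W^{4/(N-2)}\Lambda W$ by $-\frac{N-2}{N+2}\Delta\Lambda W$, integrating by parts twice onto $W(\lambda_jy/\lambda_{j+1})$, rescaling, and using dominated convergence with $\Lambda W(0)=\frac{N-2}{2}$ to produce $\frac{(N-2)^2}{2(N+2)}\int W^{\frac{N+2}{N-2}}$. This is a legitimate alternative that gives the same constant as the paper's (one can check it equals $\frac{N^{N/2-1}(N-2)^{N/2+1}}{2(N+2)}\int\frac{W^{(N+2)/(N-2)}}{|x|^{N-2}}\,dx$ via the paper's change-of-variable identity), and the Lipschitz bound $|\Lambda W(\lambda_{j+1}z/\lambda_j)-\Lambda W(0)|\lesssim\lambda_{j+1}|z|/\lambda_j$ gives the required $O(\gamma^{N/2})$ remainder. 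However, the claimed \emph{obstacle} motivating this detour is not real: you assert that "the most direct replacement of $W$ by its inverse-power tail produces a divergent integrand at the origin," but $\int\frac{W^{4/(N-2)}\Lambda W}{|x|^{N-2}}\,dx$ converges — near the origin the integrand is $O(|x|^{2-N})$, which is locally integrable against $|x|^{N-1}\,d|x|$ — and the paper carries out exactly this direct replacement, estimating the tail error by $\int\frac{W^{4/(N-2)}|\Lambda W|}{|x|^{N-1}}\,dx<\infty$. So the paper's route is not blocked; yours is simply a second valid one. Two smaller imprecisions: the Taylor-remainder bookkeeping is looser than the paper's five-term split and does not address the $N=5$ case, for which $\frac{N+2}{N-2}>2$ forces a different pointwise inequality; and the "$F'(M)\approx\frac{N+2}{N-2}\sum_kW_{(\lambda_k)}^{4/(N-2)}$" step hides a cross-interaction error (the paper's term \eqref{main_beta'4}) that is not a higher-order-in-$h$ remainder.
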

Note that $\beta_j$ is, according to \eqref{F182}, \eqref{bound_delta}, \eqref{F200}, proportionate to $\lambda'_j$ up to lower order terms, so that \eqref{F270} can be interpreted as an estimate on the second derivative of $\lambda_j$. 
\begin{proof}
Differentiating the definition \eqref{R155} of $\beta_j$, we obtain 
\begin{equation*}
\lambda_j\beta_j'(t)=\iota_j \lambda_j' \int \left(\Lambda_0\Lambda W\right)_{[\lambda_j]}\partial_tU-\iota_j \lambda_j \int \left( \Lambda W \right)_{[\lambda_j]}\partial_t^2U
\end{equation*} 
We first prove that the first term of this sum is small. Using the expansion \eqref{exp_dt} of $\partial_tU$, we obtain
\begin{multline*}
\int \left(\Lambda_0\Lambda W\right)_{[\lambda_j]}\partial_tU
=\int (\Lambda_0\Lambda W)_{[\lambda_j]} g_1\\+\iota_j\alpha_j \underbrace{\int (\Lambda_0\Lambda W)_{[\lambda_j]}\left( \Lambda W \right)_{[\lambda_j]}}_{0}+\sum_{k\neq j} \int \iota_k\alpha_k(\Lambda_0\Lambda W)_{[\lambda_j]} (\Lambda W)_{[\lambda_k]}.
\end{multline*}
Hence, by \eqref{bound_delta}, \eqref{bound_g1_h}, \eqref{R260}, \eqref{F200} and the estimate
$\left|\int (\Lambda_0\Lambda W)_{[\lambda_j]} (\Lambda W)_{[\lambda_k]}\right|\lesssim \gamma^{\frac N2-2}$ (see \eqref{est1.2} in the appendix), we obtain
\begin{equation}
 \label{F211} \left|\lambda_j'\int (\Lambda_0\Lambda W)_{[\lambda_j]}\partial_tU\right|\lesssim \gamma^{\frac{N-1}{2}}+o_n(1).
\end{equation} 
By the second equation in \eqref{R130}, we have
\begin{gather}
\label{main_beta'1}
 \lambda_j\int \left( \Lambda W\right)_{[\lambda_j]}\partial_t^2U
=-\int\left( \Lambda W \right)_{(\lambda_j)}L_{W_{(\lambda_j)}}h\\
\label{main_beta'2}
+\int (\Lambda W)_{(\lambda_j)} \sigma(h,v_L)\\
 \label{main_beta'3}
 +\int (\Lambda W)_{(\lambda_j)}\left( F(\iota_j W_{(\lambda_j)}+h)-F(\iota_jW_{(\lambda_j)})-\frac{N+2}{N-2}W_{(\lambda_j)}^{\frac{4}{N-2}}h \right)\\
 \label{main_beta'4}
 +\int (\Lambda W)_{\lambda_j}\Big( F(M+h)-F(M)+F(\iota_jW_{(\lambda_j)})-F(\iota_j W_{(\lambda_j)}+h)\Big)\\
 \label{main_beta'5}
 +\int (\Lambda W)_{(\lambda_j)} \left( F(M)-\sum_{k=1}^J F(\iota_k W_{(\lambda_k)}) \right),
 \end{gather}
 where $L_{W_{(\lambda_j)}}=-\Delta-\frac{N+2}{N-2}W_{(\lambda_j)}^{\frac{4}{N-2}}$.
The leading term in this equality is \eqref{main_beta'5}. We first prove that the other terms are of order $\OOO(\gamma^{\frac N2})+o_n(1)$.

\medskip

\noindent\emph{Estimates on lower order terms}.

We first note that by integration by parts
$$\int\left( \Lambda W \right)_{(\lambda_j)}L_{W_{(\lambda_j)}}h=\int L_{W_{(\lambda_j)}}\left( \Lambda W \right)_{(\lambda_j)}h=0,$$
so that the term on the right-hand side of \eqref{main_beta'1} is $0$. 

By H\"older inequality, and the estimate \eqref{R133} on $\sigma(h,v_L)$, we have
$$|\eqref{main_beta'2}|\lesssim \|v_L\|_{L^{\frac{2N}{N-2}}}+\|v_L\|_{L^{\frac{2N}{N-2}}}^{\frac{N+2}{N-2}}.$$
Since $v_L$ is a solution to the linear wave equation, we have
$$\lim_{n\to\infty}\|v_L(t)\|_{L^{\frac{2N}{N-2}}}=0,$$
which proves that the term \eqref{main_beta'2} is $o_n(1)$. 

To bound \eqref{main_beta'3}, we use the inequality
$$|F(a+b)-F(a)-F'(a)b|\lesssim b^2\indic_{\{|b|\leq |a|\}}a^{\frac{6-N}{N-2}}+b^{\frac{N+2}{N-2}}\indic_{\{|b|\geq |a|\}}$$
proved in the appendix (see Claim \ref{Cl:pointwise3}). We obtain
$$ |\eqref{main_beta'3}|\lesssim \int \left|(\Lambda W)_{(\lambda_j)}\right|\,h^{2}\indic_{\{|h|\leq W_{(\lambda_j)}\}}W^{\frac{6-N}{N-2}}_{(\lambda_j)}+\int \left|(\Lambda W)_{(\lambda_j)}\right|\,h^{\frac{N+2}{N-2}}\indic_{\{|h|\geq W_{(\lambda_j)}\}}.$$
Since $|(\Lambda W)_{(\lambda_j)}|\lesssim W_{(\lambda_j)}$, we deduce
$$|\eqref{main_beta'3}|\lesssim \int h^2W_{(\lambda_j)}^{\frac{4}{N-2}}+\int |h|^{\frac{2N}{N-2}}\lesssim \|h\|^2_{L^{\frac{2N}{N-2}}}+\|h\|^{\frac{2N}{N-2}}_{L^{\frac{2N}{N-2}}}\lesssim \gamma^{\frac{N}{2}}+o_n(1),$$
where we have used the estimate \eqref{bound_g1_h} on $h$.

To bound \eqref{main_beta'4}, we distinguish between the case $N\geq 7$ and the case $N=5$. If $N\geq 7$, we use the inequality 
$$|F(a+b+c)-F(a+b)-F(a+c)+F(a)|\lesssim |c||b|^{\frac{N+2}{2(N-2)}}|a|^{\frac{6-N}{2(N-2)}}$$ (see again Claim \ref{Cl:pointwise3}), with $a=\iota_jW_{(\lambda_j)}$, $b=\sum_{k\neq j}\iota_kW_{(\lambda_k)}$ and $c=h$. We obtain
$$|\eqref{main_beta'4}|\lesssim \int \left|\Lambda W_{(\lambda_j)}\right|\,W_{(\lambda_j)}^{\frac{6-N}{2(N-2)}}\,|h|\,\left|\sum_{j\neq k} \iota_kW_{(\lambda_k)}\right|^{\frac{N+2}{2(N-2}}.$$
Since $|(\Lambda W)_{(\lambda_j)}|\lesssim W_{(\lambda_j)}$, we deduce
$$|\eqref{main_beta'4}|\lesssim \sum_{k\neq j} \int |h| W_{(\lambda_k)}^{\frac{N+2}{2(N-2)}} W_{(\lambda_j)}^{\frac{N+2}{2(N-2)}}\lesssim \|h\|_{L^{\frac{2N}{N-2}}} \sum_{k\neq j} \left( \int W_{(\lambda_k)}^{\frac{N}{N-2}} W_{(\lambda_j)}^{\frac{N}{N-2}} \right)^{\frac{N+2}{2N}}.$$
By the estimate \eqref{bound_g1_h} on $h$ and the bound $\int W_{(\lambda_k)}^{\frac{N}{N-2}} W_{(\lambda_j)}^{\frac{N}{N-2}}\lesssim \gamma^{\frac{N}{2}}$ (see Claim \ref{Cl:estimates1}), we deduce that if $N\geq 7$,
$$ |\eqref{main_beta'4}|\lesssim \gamma^{\frac{N+1}{2}}+o_n(1).$$
If $N=5$, the inequality 
$$|F(a+b+c)-F(a+b)-F(a+c)+F(a)|\lesssim |c|\,|b|\,(|a|+|b|+|c|)^{\frac 13}$$ proved in Claim \ref{Cl:pointwise3}, with $a=\iota_jW_{(\lambda_j)}$, $b=\sum_{k\neq j}\iota_kW_{(\lambda_k)}$ and $c=h$ yields
$$ |\eqref{main_beta'4}|\lesssim \sum_{k\neq j} \left( \int W_{(\lambda_k)}^{\frac 43}W_{(\lambda_j)}|h| +\int W_{(\lambda_k)} W_{(\lambda_j)}^{\frac 43} |h|+\int |h|^{\frac 43}W_{(\lambda_j)}W_{(\lambda_k)}\right).$$
By H\"older's inequality, we deduce
$$|\eqref{main_beta'4}|\lesssim \|h\|_{L^{\frac{10}{3}}}\sum_{\substack{1\leq k,\ell\leq J\\ k\neq \ell}}\left\|W_{(\lambda_{\ell})}W_{(\lambda_k)}^{\frac{4}{3}}\right\|_{L^{\frac{10}{7}}}+\|h\|_{L^{\frac{10}{3}}}^{\frac 43}\sum_{\substack{1\leq k,\ell\leq J\\ k\neq \ell}}\left( \int W_{(\lambda_{\ell})}^{\frac{5}{3}}W_{(\lambda_k)}^{\frac{5}{3}} \right)^{\frac{3}{5}}.$$
Together with the estimates \eqref{est1.3} and \eqref{est1.4} of Claim \ref{Cl:estimates1} in the appendix, and the bound \eqref{bound_g1_h} of $h$, we deduce that when $N=5$,
$$|\eqref{main_beta'4}|\lesssim \gamma^{\frac 54}\gamma^{\frac{3}{2}}+\gamma^{\frac{5}{3}}\gamma^{\frac 32}+o_n(1)\lesssim \gamma^{\frac{11}{4}}+o_n(1).$$
As a conclusion
$$|\eqref{main_beta'1}|+|\eqref{main_beta'2}|+|\eqref{main_beta'3}|+|\eqref{main_beta'4}|\lesssim \gamma^{\frac N2}+o_n(1).$$

\medskip

\noindent\emph{Estimate on the leading term}.
To conclude the proof, we will prove:
\begin{multline}
 \label{LT160}
 \bigg|\int_{\RR^N} \left( F(M)-\sum_{k=1}^J F\left( \iota_kW_{(\lambda_k)} \right) \right)(\Lambda W)_{(\lambda_j)}\, dx\\
 -\kappa_0\left( \iota_{j+1}\left( \frac{\lambda_{j+1}}{\lambda_j} \right)^{\frac N2-1}-\iota_{j-1}\left( \frac{\lambda_j}{\lambda_{j-1}} \right)^{\frac N2-1} \right)\bigg|\lesssim \gamma^{\frac N2}.
\end{multline} 
We will prove \eqref{LT160} as a consequence of the following inequalities:
\begin{multline}
 \label{LT10}
 \bigg|\int_{\RR^N} \left( F(M)-\sum_{k=1}^J F\left( \iota_kW_{(\lambda_k)} \right) \right)(\Lambda W)_{(\lambda_j)}\, dx\\
 -\frac{N+2}{N-2}\int_{\RR^N} \left(\iota_{j+1}W_{(\lambda_j)}^{\frac{4}{N-2}}W_{(\lambda_{j+1})}+\iota_{j-1}W_{(\lambda_j)}^{\frac{4}{N-2}}W_{(\lambda_{j-1})}\right)(\Lambda W)_{(\lambda_j)}dx\bigg|\lesssim \gamma^{\frac N2}, 
\end{multline} 
(where by convention $\iota_0=\iota_{J+1}=0$),
\begin{multline}
 \label{LT120} 1\leq j\leq J-1\Longrightarrow \bigg|
 \int_{\RR^N} W_{(\lambda_j)}^{\frac{4}{N-2}}W_{(\lambda_{j+1})}(\Lambda W)_{(\lambda_j)}\\
 -\left( \frac{\lambda_{j+1}}{\lambda_j} \right)^{\frac N2-1}\frac{N^{\frac{N}{2}-1}(N-2)^{\frac{N}{2}+1}}{2(N+2)} \int \frac{1}{|x|^{N-2}}W^{\frac{N+2}{N-2}}dx\bigg|\lesssim \gamma^{\frac N2}
\end{multline}
and
\begin{multline}
 \label{LT150} 2\leq j\leq J\Longrightarrow \bigg|
 \int_{\RR^N} W_{(\lambda_j)}^{\frac{4}{N-2}}W_{(\lambda_{j-1})}(\Lambda W)_{(\lambda_j)}\\
 +\left( \frac{\lambda_{j}}{\lambda_{j-1}} \right)^{\frac N2-1}\frac{N^{\frac{N}{2}-1}(N-2)^{\frac{N}{2}+1}}{2(N+2)} \int \frac{1}{|x|^{N-2}}W^{\frac{N+2}{N-2}}dx\bigg|\lesssim \gamma^{\frac N2}.
\end{multline}

\medskip

\noindent\emph{Proof of \eqref{LT10}}
We adopt the convention $\lambda_0=+\infty$, $\lambda_{J+1}=0$. 
We first notice that there exists a constant $C$ such that for any $k,\ell\in \llbracket 1,J\rrbracket$, we have
\begin{equation}
\label{LT11}
\sqrt{\lambda_{k+1}\lambda_k}\leq |x|\leq \sqrt{\lambda_{k-1}\lambda_k}\Longrightarrow W_{(\lambda_{\ell})}\lesssim W_{(\lambda_k)} 
\end{equation} 
(this follows easily from the facts that $W$ is positive and $W(x)\approx \frac{1}{|x|^{N-2}}$ for large $|x|$).  To prove \eqref{LT10}, we write
\begin{equation}
 \label{LT30}
\int_{\RR^N} P_j(x)\,dx=\sum_{k=1}^J \int_{\sqrt{\lambda_{k}\lambda_{k+1}}\leq |x|\leq \sqrt{\lambda_{k}{\lambda_{k-1}}}}P_j(x)\,dx,
 \end{equation} 
where 
\begin{multline*}
P_j(x):=(\Lambda W)_{(\lambda_j)}\times 
\bigg( F(M)-\sum_{\ell=1}^J F\left( \iota_{\ell}W_{(\lambda_{\ell})} \right) \\
 -\frac{N+2}{N-2}\iota_{j+1}W_{(\lambda_j)}^{\frac{4}{N-2}}W_{(\lambda_{j+1})}-\frac{N+2}{N-2}\iota_{j-1}W_{(\lambda_j)}^{\frac{4}{N-2}}W_{(\lambda_{j-1})}\bigg).
\end{multline*}
If $j\neq k$, using that if $\sqrt{\lambda_k\lambda_{k+1}}\leq |x|\leq \sqrt{\lambda_k\lambda_{k-1}}$, 
$$|F(M)-F(\iota_kW_{(\lambda_k)})|\lesssim W_{(\lambda_k)}^{\frac{4}{N-2}}\sum_{\ell \neq k}W_{(\lambda_{\ell})},$$
and that $\left|(\Lambda W)_{(\lambda_j)}\right|\lesssim W_{(\lambda_j)}$,
we obtain
\begin{multline*}
\int_{\sqrt{\lambda_{k}\lambda_{k+1}}\leq |x|\leq \sqrt{\lambda_{k}{\lambda_{k-1}}}}|P_j(x)|\,dx\lesssim  \sum_{\ell \neq k}\int_{\sqrt{\lambda_{k}\lambda_{k+1}}\leq |x|\leq \sqrt{\lambda_{k}{\lambda_{k-1}}}}W_{(\lambda_k)}^{\frac{4}{N-2}}W_{(\lambda_\ell)}W_{(\lambda_j)}\\+\sum_{\ell\neq k}\int_{\sqrt{\lambda_{k}\lambda_{k+1}}\leq |x|\leq \sqrt{\lambda_{k}{\lambda_{k-1}}}}W_{(\lambda_{\ell})}^{\frac{N+2}{N-2}}W_{(\lambda_j)}\\
+ \int_{\sqrt{\lambda_{k}\lambda_{k+1}}\leq |x|\leq \sqrt{\lambda_{k}{\lambda_{k-1}}}} W_{\lambda_j}^{\frac{N+2}{N-2}}(W_{(\lambda_{j+1})}+W_{(\lambda_{j-1})})\,dx
\end{multline*}
Since $j\neq k$, and, by \eqref{LT11} $\ell\neq k\Rightarrow W_{(\lambda_{\ell})}\lesssim W_{(\lambda_k)}$ on the domain of integration, we can bound all the terms of the right-hand side of the preceding inequality by
\begin{multline*}
\sum_{\ell\neq k}   \int_{\sqrt{\lambda_{k}\lambda_{k+1}}\leq |x|\leq \sqrt{\lambda_{k}{\lambda_{k-1}}}}W_{(\lambda_k)}W_{(\lambda_{\ell})}^{\frac{N+2}{N-2}}
\\
\lesssim 
\sum_{\ell\neq k}   \int_{\sqrt{\lambda_{k}\lambda_{k+1}}\leq |x|\leq \sqrt{\lambda_{k}{\lambda_{k-1}}}}W_{(\lambda_k)}^{\frac{N}{N-2}}W_{(\lambda_{\ell})}^{\frac{N}{N-2}}
\lesssim \gamma^{\frac N2},
\end{multline*}
by Claim \ref{Cl:estimates1} in the appendix.
Thus we have proved
\begin{equation}
\label{LT50}
k\neq j\Longrightarrow
\int_{\sqrt{\lambda_{k}\lambda_{k+1}}\leq |x|\leq \sqrt{\lambda_{k}{\lambda_{k-1}}}}|P_j(x)|\,dx\lesssim \gamma^{\frac{N}{2}}
\end{equation} 
Next, by Claim \ref{Cl:pointwise3}, we observe, denoting by 
$$E_j:=\left\{x\in \RR^N\;:\;\left|\sum_{\ell\neq j}\iota_{\ell}W_{(\lambda_{\ell})}(x)\right|\leq W_{(\lambda_j)}(x)\right\},$$
that for all $x$ such that $\sqrt{\lambda_j\lambda_{j+1}}\leq |x|\leq \sqrt{\lambda_j\lambda_{j-1}}$ 
\begin{multline*}
\left|F(M)-F\left( \iota_jW_{(\lambda_j)} \right)-\frac{N+2}{N-2}W_{(\lambda_j)}^{\frac{4}{N-2}} \sum_{\ell\neq j} \iota_{\ell}W_{(\lambda_{\ell})}\right|\\
\lesssim \indic_{E_j}\sum_{\ell \neq j}  W_{(\lambda_{\ell})}^2W_{(\lambda_j)}^{\frac{6-N}{N-2}}+\indic_{\RR^N\setminus E_j}\left(\sum_{\ell\neq j} \iota_{\ell}W_{(\lambda_\ell)}\right)^{\frac{N+2}{N-2}}\lesssim \sum_{\ell\neq j} W_{(\lambda_{\ell})}^2W_{(\lambda_j)}^{\frac{6-N}{N-2}},
\end{multline*}
where 
\begin{itemize}
 \item If $N\geq 7$, 
we have used that $\frac{N+2}{N-2}<2$ and that on $\RR^N\setminus E_j$, $W_{(\lambda_j)}^2\lesssim \sum_{\ell\neq j}W_{(\lambda_{\ell})}^2$.
\item If $N=5$, $\frac{N+2}{N-2}=\frac 73>2$. However the preceding inequality holds for $\sqrt{\lambda_j\lambda_{j+1}}\leq |x|\leq \sqrt{\lambda_j\lambda_{j-1}}$ since in this set $\left|\sum_{\ell\neq j}\iota_{\ell}W_{(\lambda_{\ell})}(x)\right|\lesssim W_{(\lambda_j)}(x)$ and $6-N>0$.
\end{itemize}
As a consequence, using also $|(\Lambda W)_{(\lambda_j)}|\lesssim W_{(\lambda_j)}$,
\begin{multline*}
\int_{\sqrt{\lambda_{j}\lambda_{j+1}}\leq |x|\leq\sqrt{\lambda_{j}\lambda_{j-1}}}|P_j(x)|\,dx\lesssim \sum_{\ell \neq j}  \int_{\sqrt{\lambda_{j}\lambda_{j+1}}\leq |x|\leq\sqrt{\lambda_{j}\lambda_{j-1}}} W_{(\lambda_{\ell})}^2 W_{(\lambda_j)}^{\frac{4}{N-2}}\\
+\sum_{\ell\notin \{j-1,j,j+1\}} \int_{\RR^N} W_{(\lambda_{\ell})}W_{(\lambda_j)}^{\frac{N+2}{N-2}}. 
\end{multline*}
Using again \eqref{LT11}, we obtain, by \eqref{est1.4} in the appendix 
$$\sum_{\ell \neq j}  \int_{\sqrt{\lambda_{j}\lambda_{j+1}}\leq |x|\leq\sqrt{\lambda_{j}\lambda_{j-1}}} W_{(\lambda_{\ell})}^2 W_{(\lambda_j)}^{\frac{4}{N-2}}\lesssim \sum_{\ell\neq j}\int W_{(\lambda_{\ell})}^{\frac{N}{N-2}}W_{(\lambda_{j})}^{\frac{N}{N-2}}\lesssim \gamma^{\frac N2}.$$
Furthermore, if $\ell\notin\{j-1,j,j+1\}$, by estimate \eqref{est1.1} in the appendix,
$$ \int W_{(\lambda_{\ell})}W_{(\lambda_j)}^{\frac{N+2}{N-2}}=\int \nabla W_{(\lambda_{\ell})}\cdot \nabla W_{(\lambda_j)} \lesssim \min\left\{\left( \frac{\lambda_j}{\lambda_{\ell}}\right)^{\frac{N-2}{2}}, \left(\frac{\lambda_{\ell}}{\lambda_{j}}\right)^{\frac{N-2}{2}}\right\}\lesssim \gamma^{N-2}.$$
Combining, we obtain
\begin{equation}
\label{LT50bis}
\int_{\sqrt{\lambda_{j}\lambda_{j+1}}\leq |x|\leq\sqrt{\lambda_{j}\lambda_{j-1}}}|P_j(x)|\,dx\lesssim  \gamma^{\frac N2},
\end{equation}  
which, together with \eqref{LT50}, yields the desired inequality \eqref{LT10}.

\medskip

\noindent\emph{Proof of \eqref{LT120}}
Recall that
$$W(x)=\left( 1+\frac{|x|^2}{N(N-2)} \right)^\frac{2-N}{2},\quad \Lambda W(x)=x\cdot\nabla W(x)+\frac{N-2}{2}W.$$
Thus,
\begin{align}
 \label{LT100} |W(x)-1|+\left|\Lambda W(x)-\frac{N-2}{2}\right|&\lesssim |x|\\
 \label{LT101} \left|W(x)-\frac{\big(N(N-2)\big)^{\frac{N}{2}-1}}{|x|^{N-2}}\right|+\left|\Lambda W(x)+\frac{\big(N(N-2)\big)^{\frac{N}{2}}}{2N|x|^{N-2}}\right|&\lesssim \frac{1}{|x|^{N-1}}.
\end{align}
By \eqref{LT101},
\begin{multline*}
\int W_{(\lambda_j)}^{\frac{4}{N-2}}W_{(\lambda_{j+1})} (\Lambda W)_{(\lambda_j)}=\left( \frac{\lambda_j}{\lambda_{j+1}}\right)^{\frac{N-2}{2}}\int W^{\frac{4}{N-2}}(x)\Lambda W(x) W\left( \frac{\lambda_jx}{\lambda_{j+1}} \right)\,dx
\\
=\left( \frac{\lambda_{j+1}}{\lambda_{j}}\right)^{\frac{N-2}{2}}\int W^{\frac{4}{N-2}}(x)\Lambda W(x)\frac{\big(N(N-2)\big)^{\frac{N}{2}-1}}{|x|^{N-2}}\,dx +\OOO\left( \gamma^{\frac N2} \right),
\end{multline*}
where we have used that the integral $\int W^{\frac{4}{N-2}}\Lambda W\frac{1}{|x|^{N-1}}\,dx$ converges. Furthemore, by an easy integration by parts,
$$ \int W^{\frac{4}{N-2}}x\cdot \nabla W\frac{1}{|x|^{N-2}}\,dx=\frac{-2(N-2)}{N+2}\int \frac{1}{|x|^{N-2}}W^{\frac{N+2}{N-2}}\,dx,$$
and thus;
$$\int \frac{1}{|x|^{N-2}} W^{\frac{4}{N-2}}\Lambda W\,dx=\frac{(N-2)^2}{2(N+2)}\int \frac{1}{|x|^{N-2}}W^{\frac{N+2}{N-2}}\,dx.$$
Combining, we obtain \eqref{LT120}.

\medskip

\noindent\emph{Proof of \eqref{LT150}.}
By \eqref{LT100},
\begin{multline*}
 \int W_{(\lambda_j)}^{\frac{4}{N-2}}W_{(\lambda_{j-1})} (\Lambda W)_{(\lambda_j)}=\left( \frac{\lambda_j}{\lambda_{j-1}} \right)^{\frac{N-2}{2}} \int W^{\frac{4}{N-2}}\Lambda W(x) W\left( \frac{\lambda_j x}{\lambda_{j-1}} \right)\,dx\\
 =\left( \frac{\lambda_j}{\lambda_{j-1}} \right)^{\frac{N-2}{2}} \int W^{\frac{4}{N-2}}\Lambda W(x)\left( 1+\OOO\left( \frac{\lambda_j|x|}{\lambda_{j-1}} \right) \right)dx\\
 =\left( \frac{\lambda_j}{\lambda_{j-1}} \right)^{\frac{N-2}{2}} \int W^{\frac{4}{N-2}}\Lambda W(x)\,dx +\OOO\left( \gamma^{\frac N2} \right).
\end{multline*}
By a straightforward integration by parts, we obtain
$$\int W^{\frac{4}{N-2}}\Lambda W\,dx=-\frac{(N-2)^2}{2(N+2)}\int W^{\frac{N+2}{N-2}}\,dx,$$
and thus
\begin{equation}
 \label{LT130}
\int W_{(\lambda_j)}^{\frac{4}{N-2}}W_{(\lambda_{j-1})} (\Lambda W)_{(\lambda_j)}=-\frac{(N-2)^2}{2(N+2)} \left( \frac{\lambda_j}{\lambda_{j-1}} \right)^{\frac{N-2}{2}} \int W^{\frac{N+2}{N-2}}+\OOO\left( \gamma^{\frac{N}{2}} \right).
 \end{equation} 
Finally, we observe that, for $\sigma>0$,
$$W^{\frac{N+2}{N-2}}\left( \frac{N(N-2)}{\sigma} \right)=\frac{\sigma^{N+2}}{\big(N(N-2)\big)^{\frac{N+2}{2}}}W^{\frac{N+2}{N-2}}\left( N(N-2)\sigma \right),$$
and thus, by the change of variable $r=\frac{N(N-2)}{\sigma}$,
$$\int_0^{\infty} \frac{1}{r^{N-2}}W^{\frac{N+2}{N-2}}(r)r^{N-1}\,dr=\frac{1}{\big(N(N-2)\big)^{\frac{N-2}{2}}}\int_0^{\infty} W^{\frac{N+2}{N-2}}(\sigma)\sigma^{N-1}\,d\sigma.$$
Combining with \eqref{LT130}, we obtain \eqref{LT150}. The proof of \eqref{LT160} is complete.
\end{proof}
\begin{proof}[End of the proof of Proposition \ref{P:EDO}]
 We next gather the results of the preceding Lemmas to conclude the proof of Proposition \ref{P:EDO}.
 
 The estimate \eqref{bound_delta_gamma} is exactly \eqref{bound_delta} in Lemma \ref{St:energy}.
 
 By \eqref{F182}, \eqref{F200} and \eqref{bound_delta},
 \begin{equation*}
  \left|\beta_j-\lambda_j'\|\Lambda W\|^2_{L^2}\right|\lesssim \left|\beta_j+\alpha_j \|\Lambda W\|^2_{L^2}\right|+\left|\lambda_j'+\alpha_j\|\Lambda W\|^2_{L^2}\right|\lesssim \gamma^{\frac{N}{4}}+o_n(1),
 \end{equation*}
hence \eqref{EDO2a}.

Combining \eqref{R260}, \eqref{F182} and \eqref{energy_bound}, we have
\begin{multline*}
 \left|\frac{1}{2}\sum_{j=1}^J\beta_j^2-\kappa_1\sum_{j=1}^{J-1}\iota_{j}\iota_{j+1}\left( \frac{\lambda_{j+1}}{\lambda_j} \right)^{\frac{N-2}{2}}\right| \lesssim \frac{1}{2}\left|\|\Lambda W\|^4_{L^2}\sum_{j=1}^J\alpha_j^2-\sum_{j=1}^J \beta_j^2\right|\\+\frac{1}{2}\left|\delta^2-\|\Lambda W\|^2_{L^2}\sum_{j=1}^J\alpha_j^2\right|\,\|\Lambda W\|^2_{L^2}+\left|\frac 12\delta^2-\kappa_1'\sum_{j=1}^J \iota_j\iota_{j+1}\left( \frac{\lambda_{j+1}}{\lambda_j} \right)^{\frac{N-2}{2}}\right|\,\|\Lambda W\|^2_{L^2}\\
 \lesssim \gamma^{\frac{N-1}{2}}+o_n(1).
\end{multline*}
Hence \eqref{EDO3a}. We have used that \eqref{F182} and the estimates $|\beta_j|+|\alpha_j|\lesssim \gamma^{\frac{N-2}{4}}$ (consequence of \eqref{R260}, \eqref{F182} and \eqref{bound_delta}) implies:
$$\left|\beta_j^2-\|\Lambda W\|_{L^2}^4\alpha_j^2\right|=\left|(\beta_j-\|\Lambda W\|_{L^2}^2\alpha_j)(\beta_j+\|\Lambda W\|_{L^2}^2\alpha_j)\right|\lesssim \gamma^{\frac{N}{4}}\gamma^{\frac{N-2}{4}}+o_n(1).$$

Finally \eqref{EDO4a} is exactly \eqref{F270} in Lemma \ref{L:second_derivative}.
\end{proof}
\section{End of the proof}
\label{S:end_of_proof}
\subsection{Exit time for a system of differential inequalities}
Using Proposition \ref{P:EDO} and a lower bound on one of the scaling parameters $\lambda_j$ (consequence of Proposition \ref{P:ext_scaling}), we will reduce the proof to the following proposition:
\begin{prop}
\label{P:rigidity}
Let $C>0$, $J_0\geq 2$, $a>0$. There exists $\eps=\eps(C,J_0,a)>0$, such that for all $L>0$, there exists $T^*=T^*(L,C,J_0,a)$ with the following property. For all $T>0$, for all $C^1$ functions 
\begin{align*}
\lambdabf=(\lambda_j)_j: [0,T]&\to G_{J_0}\\
\betabf=(\beta_j)_{j}:[0,T]&\to \RR^{J_0}
\end{align*}
satisfying, for all $t\in [0,T]$,
\begin{gather}
\label{EDO1}
\gamma(\lambdabf)=:\gamma\leq \eps\\
\label{EDO2}
 \forall j,\quad
 \left|\beta_j-\|\Lambda W\|^2_{L^2}\lambda'_j\right|\leq C \gamma^{\frac N4}\\
 \label{EDO3}
 \left|\frac{1}{2}\sum_{j=1}^{J_0}\beta_j^2-\kappa_1\sum_{1\leq j\leq {J_0}-1} \iota_j\iota_{j+1}\left( \frac{\lambda_{j+1}}{\lambda_j} \right)^{\frac{N-2}{2}}\right|\leq C\gamma^{\frac{N-1}{2}}\\
 \label{EDO4}
 \forall j,\quad \left|\lambda_j\beta'_j+\kappa_0\left( \iota_j\iota_{j+1} \left( \frac{\lambda_{j+1}}{\lambda_j}\right)^{\frac{N-2}{2}}-\iota_j\iota_{j-1}\left( \frac{\lambda_j}{\lambda_{j-1}} \right)^{\frac{N-2}{2}} \right) \right|\leq C\gamma^{\frac{N-1}{2}}\\
 \label{EDO5}
 L\leq C\gamma^{\frac{N-2}{2}}\left(\frac{\lambda_1}{\lambda_1(0)}\right)^a
 \end{gather}
we have 
$$T\leq T^*\lambda_1(0).$$
\end{prop}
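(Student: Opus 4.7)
The strategy is to read \eqref{EDO2}--\eqref{EDO4} as an approximate Hamiltonian ODE system, exploit a virial/dilation identity, and combine with the lower bound \eqref{EDO5} to force $T$ to be bounded in units of $\lambda_1(0)$. The leading part of the equations is the Hamilton flow associated with
$$H(\lambda,\beta) = \frac{|\beta|^2}{2\|\Lambda W\|^2_{L^2}} - \frac{\kappa_1}{\|\Lambda W\|^2_{L^2}}\sum_{j=1}^{J_0-1} \iota_j\iota_{j+1}\Bigl(\frac{\lambda_{j+1}}{\lambda_j}\Bigr)^{(N-2)/2};$$
one checks from the definitions of $\kappa_0,\kappa_1$ in Proposition~\ref{P:EDO} that $\kappa_1 = \tfrac{2\kappa_0}{N-2}\|\Lambda W\|^2_{L^2}$, which is exactly what is needed for $\beta_j' = -\partial_{\lambda_j}H$ to produce the gradient appearing in \eqref{EDO4}. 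The potential $V$ is homogeneous of degree zero in $\lambda$, and \eqref{EDO3} is the statement $\|\Lambda W\|^2_{L^2}\,H = O(\gamma^{(N-1)/2})$.

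The key auxiliary quantity is the dilation functional $I(t) := \sum_{j=1}^{J_0}\lambda_j(t)\beta_j(t)$. Using \eqref{EDO2}, Euler's identity for the degree-zero potential (equivalently, the telescoping $\sum_j(\Sigma_j-\Sigma_{j-1})=0$ with $\iota_0=\iota_{J_0+1}=0$), together with the errors in \eqref{EDO4} and the bound $|\beta|\leq C\gamma^{(N-2)/4}$ read off from \eqref{EDO3}, one obtains
$$I'(t) = \frac{|\beta(t)|^2}{\|\Lambda W\|^2_{L^2}} + O(\gamma^{(N-1)/2}) = \frac{4\kappa_0}{N-2}\sum_{j=1}^{J_0-1}\iota_j\iota_{j+1}\Bigl(\frac{\lambda_{j+1}}{\lambda_j}\Bigr)^{(N-2)/2} + O(\gamma^{(N-1)/2}).$$
On the other hand, $|I|\leq \sqrt{J_0}\,\lambda_1|\beta|\leq C\lambda_1\gamma^{(N-2)/4}$, and from \eqref{EDO2} with $|\beta|\leq C\gamma^{(N-2)/4}$ one gets $|\lambda_j'|\leq C\gamma^{(N-2)/4}$. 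Choosing $\eps$ small (depending ultimately on the constant $T^*$ to be extracted) then guarantees $\tfrac12\lambda_1(0)\leq \lambda_1(t)\leq 2\lambda_1(0)$ throughout $[0,T]\cap[0,T^*\lambda_1(0)]$.

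The core of the proof is then to turn \eqref{EDO5} into a lower bound of the form $I'(t)\geq c(L)>0$, after which integration on $[0,T]$ and comparison with $|I(T)|+|I(0)|\leq C\lambda_1(0)\eps^{(N-2)/4}$ yields $T\leq C'\lambda_1(0)/L$. Writing $\Sigma_j := \iota_j\iota_{j+1}(\lambda_{j+1}/\lambda_j)^{(N-2)/2}$, \eqref{EDO3} gives $\sum_j\Sigma_j\geq -O(\gamma^{(N-1)/2})$; configurations in which all $\Sigma_j$ are strictly negative of total size $\gtrsim \gamma^{(N-2)/2}$ are incompatible with $\gamma\leq\eps$ for $\eps$ small, since they would force $\gamma^{(N-2)/2}\lesssim \gamma^{(N-1)/2}$, i.e.\ $\gamma\gtrsim 1$. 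In the remaining cases, an induction on $J_0$ and a case analysis on the sign pattern $(\iota_j\iota_{j+1})$ yields $\sum_j\Sigma_j\geq c\gamma^{(N-2)/2}$; combined with \eqref{EDO5} and $\lambda_1\leq 2\lambda_1(0)$ this gives $I'(t)\geq c(L)>0$, closing the argument.

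The principal obstacle is the sign analysis in the last paragraph. With mixed signs and more than two rescaled solitons, near-cancellations in $\sum_j\Sigma_j$ can in principle make the virial monotonicity arbitrarily slow even when each $|\Sigma_j|$ is of order $\gamma^{(N-2)/2}$. The delicate step is to show that such configurations are either dynamically unstable under \eqref{EDO2}--\eqref{EDO4} (so that the flow exits them in time $\lesssim \lambda_1(0)$, after which the virial monotonicity takes over) or are already incompatible with \eqref{EDO5} for $\eps$ small enough; this is where the quantitative dependence of $T^*$ on $L$ and the constants $C,J_0,a$ is pinned down.
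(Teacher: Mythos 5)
Your proposal correctly identifies the Hamiltonian structure underlying \eqref{EDO2}--\eqref{EDO4}, the relation $\kappa_1=\tfrac{2\kappa_0}{N-2}\|\Lambda W\|_{L^2}^2$, and the telescoping that makes the \emph{unweighted} dilation functional $I(t)=\sum_j\lambda_j\beta_j$ satisfy $I'=\|\Lambda W\|_{L^2}^{-2}|\beta|^2+\OOO(\gamma^{(N-1)/2})=\frac{2\kappa_1}{\|\Lambda W\|_{L^2}^2}\sum_j\Sigma_j+\OOO(\gamma^{(N-1)/2})$. However, the step you yourself flag as the ``principal obstacle'' is a genuine gap, and the proposed resolution (incompatibility with $\eps$ small, or dynamical instability detected by induction on $J_0$) does not close it. With mixed signs, $\sum_j\Sigma_j$ \emph{can vanish exactly} without any contradiction: already for $J_0=3$, $(\iota_1,\iota_2,\iota_3)=(+,+,-)$ and $\lambda_2/\lambda_1=\lambda_3/\lambda_2=\gamma$ gives $\Sigma_1+\Sigma_2=0$, so $I'=\OOO(\gamma^{(N-1)/2})\ll\gamma^{(N-2)/2}$. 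Such configurations are compatible with \eqref{EDO1}--\eqref{EDO5} and are not forbidden by \eqref{EDO3}; only the fully alternating pattern (all $\Sigma_j<0$) is ruled out by your argument. Without a lower bound $I'\gtrsim\gamma^{(N-2)/2}$, the virial integration produces nothing.

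The missing idea is to replace $I$ by a \emph{weighted} dilation functional $A(t)=\sum_j\theta_j\lambda_j\beta_j$ where $\theta_1=1$ and $\theta_j=2\theta_{j-1}$ if $\iota_j\iota_{j-1}=1$, $\theta_j=\tfrac12\theta_{j-1}$ if $\iota_j\iota_{j-1}=-1$. After Abel summation of $\sum_j\theta_j(a_j-a_{j-1})$ (with $a_j=\Sigma_j$ and $a_0=a_{J_0}=0$), one gets $\sum_j\theta_j\lambda_j\beta_j'=\kappa_0\sum_{j\geq 2}(\theta_j-\theta_{j-1})\iota_{j-1}\iota_j(\lambda_j/\lambda_{j-1})^{(N-2)/2}+\OOO(\gamma^{(N-1)/2})$, and the recursive choice of $\theta_j$ is exactly the one that makes every coefficient $(\theta_j-\theta_{j-1})\iota_{j-1}\iota_j\geq\tfrac12\theta_{j-1}>0$ \emph{regardless of the sign pattern}. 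This produces $A'(t)\gtrsim\gamma^{(N-2)/2}$ unconditionally, which is what your $I$ fails to do. A secondary issue: your closing step assumes $\lambda_1(t)\approx\lambda_1(0)$ on $[0,T]$ via a bootstrap, but an absolute upper bound on $\lambda_1$ is not automatic from $|\lambda_1'|\lesssim\gamma^{(N-2)/4}$ alone. One way to obtain it (and a matching lower bound on the growth rate of $\lambda_1$) is the log-convexity argument: from the refined inequalities $A'\geq(\kappa_2^{-1}+C^{-1})\sum_j\theta_j(\lambda_j')^2$ and $A'\geq(\kappa_2+C^{-1})\sum_j\theta_j\beta_j^2$, Cauchy--Schwarz gives $V'A\leq 2^-VA'$ with $2^-<2$ and $V=\sum_j\theta_j\lambda_j^2$, whence $A^{2^-}/V$ and $A/\sqrt{V}$ are monotone; these yield the needed two-sided control of $\lambda_1$ in terms of $L$ and $\eps$.
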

\begin{remark}
 Let us emphasize that $T^*$ is independent of $\eps>0$ if it is chosen small enough, and that $\eps$ does not depend on $L$.
\end{remark}
We postpone the proof of Proposition \ref{P:rigidity} to Subsection \ref{SS:rigidity} and conclude the proof of Theorem \ref{T:resolution} in the two next subsections.
In view of Proposition \ref{P:EDO}, $(\lambda_j)_{1\leq j\leq J}$, and $(\beta_j)_{1\leq j\leq J}$ satisfy the assumptions of Proposition \ref{P:rigidity},  except for the lower bound \eqref{EDO5} and up to terms that are $o_n(1)$. In Subsection \ref{SS:interval}, we will eliminate the $o_n(1)$ terms. In order to do this, we will ignore all the exterior profiles that are equal to $\pm W$, restricting to indices $j\in \llbracket \widetilde{J},J\rrbracket$ for an appropriate index $\widetilde{J}$ and to a time interval  $[\tilde{t}_n,t_n']$ strictly included in $[\tilde{t}_n,t_n]$. In Subsection \ref{SS:exterior} we will show that the new exterior scaling parameter $\lambda_{\widetilde{J}}$ satisfies the lower bound \eqref{EDO5} and conclude the proof of Theorem \ref{T:resolution} assuming Proposition \ref{P:rigidity}. Finally in Subsection \ref{SS:rigidity} we prove Proposition \ref{P:rigidity}.
\subsection{Restriction of the indices and of the time interval}
\label{SS:interval}
Recall from Subsection \ref{SS:setting} the definitions of $t_n$, $\tilde{t}_n$, $J$, and for $j\in \llbracket 1,J\rrbracket$, $\iota_j$, $\alpha_j(t)$, $\beta_j(t)$, $\lambda_j(t)$.

After extraction of subsequences, the following weak limits exits in $\HHH$:
\begin{equation}
 \label{D10}
 \left( \widetilde{U}_0^{j}, \widetilde{U}_1^{j}\right)=\underset{n\to \infty}{\wlim} \left( \lambda_j(\tilde{t}_n)^{\frac N2 -1} U(\tilde{t}_n,\lambda_j(\tilde{t}_n)\cdot), \lambda_j(\tilde{t}_n)^{\frac N2 } \partial_tU(\tilde{t}_n,\lambda_j(\tilde{t}_n)\cdot)\right),
\end{equation} 
where as before $U=u-v_L$. We note that there exists $j\in \llbracket 1, J\rrbracket$ such that $\left( \widetilde{U}_0^{j}, \widetilde{U}_1^{j}\right)\neq \left( \iota_jW,0 \right)$. If not, for all $k\in \llbracket 1, K-1\rrbracket$,  by \eqref{R40}, \eqref{R41} and \eqref{R42}, $j_{k+1}=j_k+1$ and by the definition \eqref{RR3} of $j_k$, we see that for all $j\in \llbracket 1,J\rrbracket$
\begin{equation*}
\lim_{n\to+\infty} \frac{\lambda_{j+1}(\tilde{t}_n)}{\lambda_{j}(\tilde{t}_n)}=0.
 \end{equation*} 
This implies $\lim_{n\to\infty} \gamma\left( \tilde{t}_n \right)=0$ yielding, by Proposition \ref{P:EDO}, $\lim_{n\to\infty} \delta\left( \tilde{t}_n \right)=0$, a contradiction with the definition of $\tilde{t}_n$. 
 We define $\tJ$ as the unique index in $\llbracket 1, J\rrbracket$ such that:
 \begin{gather}
  \label{D24}
   \forall j\in \llbracket 1,\tJ-1\rrbracket, \quad \left( \tU_0^j,\tU_1^j \right)=\left( \iota_jW,0 \right)\\
   \label{D25}
   \left( \tU_0^{\tJ},\tU_1^{\tJ} \right)\neq \left( \iota_{\tJ}W,0 \right).
 \end{gather}
If $\left( \tU_0^1,\tU_1^1 \right)\neq (\iota_1W,0)$, we let $\tJ=1$. By the argument above,
\begin{equation}
 \label{D11}
\forall j\in \llbracket 1,\tJ-1\rrbracket, \quad \lim_{n\to+\infty} \frac{\lambda_{j+1}(\tilde{t}_n)}{\lambda_{j}(\tilde{t}_n)}=0.
 \end{equation} 
We denote
\begin{equation}
 \label{D12}
 \lambda_{\tJ,n}:=\lambda_{\tJ}(\tilde{t}_n), \quad \tgamma(t):= \gamma \big( (\lambda_{j}(t))_{\tJ\leq j\leq J}\big)=
\max_{\widetilde{J}\leq j\leq J-1} \frac{\lambda_{j+1}(t)}{\lambda_j(t)}.
\end{equation} 
In this subsection, we prove the following lemmas
\begin{lemma}
\label{L:D20}
\begin{equation}
 \label{D20}
 \lim_{n\to\infty} \frac{t_n-\tilde{t}_n}{\lambda_{\tJ,n}}=+\infty.
\end{equation} 
\end{lemma}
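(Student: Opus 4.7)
The argument proceeds by contradiction. Suppose that, along a subsequence, $\tau_n := (t_n - \tilde{t}_n)/\mu_n$ is bounded (where $\mu_n := \lambda_{\widetilde{J},n}$), and pass to a further subsequence so that $\tau_n \to \tau_* \in [0,\infty)$. Apply Lemma \ref{L:expansion} to the sequence $\{\vec{u}(\tilde{t}_n) - \vec{v}_L(\tilde{t}_n)\}$: by \eqref{D11} and the definition \eqref{D24}--\eqref{D25} of $\widetilde{J}$, the profile corresponding to the scale $\mu_n = \lambda_{\widetilde{J}}(\tilde{t}_n)$ is a radial, non-radiative solution $V^{k_0}$ of \eqref{NLW} whose initial data satisfies $(V_0^{k_0}, V_1^{k_0}) \neq (\iota_{\widetilde{J}} W, 0)$.

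\textbf{Step 1 (Identification of $\vec{V}^{k_0}(\tau_*)$ in the exterior).} Using the uniform-in-time exterior expansion of Claim \ref{Cl:expansion}
$$\vec{u}(\tilde{t}_n + \tau) = \vec{v}_L(\tilde{t}_n + \tau) + \sum_{k} \vec{V}_n^k(\tau) + \vec{r}_n(\tau), \quad |x| \geq |\tau|,$$
with $\sup_{\tau}\|\vec{r}_n(\tau)\|_{\HHH(|\tau|)} \to 0$, we evaluate at $\tau = t_n - \tilde{t}_n$ and exploit the defining property of $t_n$ (namely $\vec{u}(t_n) - \vec{v}_L(t_n) - (M(t_n), 0) \to 0$ in $\HHH$, together with $\alpha_j(t_n) \to 0$ from \eqref{R260} and \eqref{F182}). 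Rescaling at scale $\mu_n$ and extracting subsequences so that each $\tilde{\nu}_j := \lim_n \lambda_j(t_n)/\mu_n$ exists in $[0,\infty]$, one identifies, in the weak $\HHH(|\tau_*|)$ limit,
$$\vec{V}^{k_0}(\tau_*) = \Big(\sum_{j\in\JJJ}\iota_j W_{(\tilde{\nu}_j)},\ 0\Big), \quad \JJJ := \{j : \tilde{\nu}_j \in (0,\infty)\},$$
where one uses that profiles at scales separated from $\mu_n$ and $v_L(\tilde{t}_n+\mu_n\tau_*)$ both contribute weakly to zero at scale $\mu_n$ (the latter because $\tilde{t}_n\to\infty$). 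The bound $|\lambda_j'|\lesssim\gamma^{(N-2)/4}$ inherited from Lemma \ref{L:derivative} and \eqref{bound_delta}, together with the smallness of $\eps_0$, ensures that $\widetilde{J}\in\JJJ$, so $\JJJ$ is nonempty.

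\textbf{Step 2 (Rigidity).} Non-radiativity of $V^{k_0}$ at $\tau=0$ transfers to non-radiativity at $\tau=\tau_*$ by substitution in the limit as $\tau\to\pm\infty$. Using finite speed of propagation, we extend $V^{k_0}$ inside the cone $\{|y|<|\tau_*|\}$ by the multisoliton $\sum_{j\in\JJJ}\iota_j W_{(\tilde{\nu}_j)}$, obtaining a non-radiative solution whose state at time $\tau_*$ equals an exact multisoliton in all of $\HHH$. Applying Proposition \ref{P:ext_scaling} to the time-translated solution at $\tau_*$ with $\delta=0$ forces $\ell=0$. If $V^{k_0}$ is nonstationary, then Theorem \ref{T:asymptotic_NR} requires $\ell\neq 0$ when $k_0<m$, and Theorem \ref{T:uniqueness_NR} rules out $k_0=m$, a contradiction. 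If $V^{k_0}$ is stationary, then by Pohozaev--Gidas--Ni--Nirenberg $V^{k_0}=\iota W_{(\lambda)}$; hence $\JJJ=\{\widetilde{J}\}$, and the orthogonality relation \eqref{RR4} together with $\nu_{\widetilde{J}}=1$ pins $\lambda=1$ and $\iota=\iota_{\widetilde{J}}$, giving $V^{k_0}=\iota_{\widetilde{J}} W$, which contradicts \eqref{D25}.

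\textbf{Main obstacle.} The main technical difficulty is the rigorous identification of $\vec{V}^{k_0}(\tau_*)$ as a pure multisoliton in $\HHH(|\tau_*|)$: one must carefully track the (bounded but nontrivial) drift of each cluster scale $\lambda_j(t_n)/\mu_n$ over the interval $[\tilde{t}_n,t_n]$, verify pseudo-orthogonality between the $k_0$-profile and all other profiles in the rescaled picture, and pass to the weak limit using the exterior Cauchy theory of Propositions \ref{P:LWP_cone} and \ref{P:LTPT}.
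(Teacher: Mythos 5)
Your overall strategy matches the paper's: argue by contradiction, assume $\tau_n := (t_n - \tilde t_n)/\lambda_{\tJ,n}$ converges to some finite $\tau_*$, identify the weak limit of $\vec{u}(t_n)-\vec{v}_L(t_n)$ at scale $\lambda_{\tJ,n}$ as $\vec{U}^{\tJ}(\tau_*)$ restricted to $\{|x|>|\tau_*|\}$, and derive a contradiction between the soliton/zero profile there and the nontrivial $\ell\Xi_{p_0}$ asymptotics of the non-radiative profile. Step 1 is fine as a sketch and reproduces the paper's computation \eqref{R60bis}--\eqref{R203}.

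Step 2, however, has a genuine gap. You extend $V^{k_0}$ inside the cone by prescribing its data at time $\tau_*$ to be the exact soliton/zero, and then claim the resulting solution $\tilde V$ is non-radiative at $\tau_*$ so that Proposition \ref{P:ext_scaling} applies with $\delta=0$. This does not follow. First, ``non-radiativity at $\tau=0$ transfers to non-radiativity at $\tau=\tau_*$'' is false for the strong notion of Definition \ref{D:non-radiative}: for $t\to+\infty$ the region $\{|x|>|t-\tau_*|\}$ contains an extra shell $t-\tau_*<|x|<t$ of fixed width $\tau_*$ lying \emph{inside} the cone $\{|x|>|t|\}$, where you have no control on the energy of $\tilde V$ (finite speed of propagation only identifies $\tilde V$ with $V^{k_0}$ on $\{|x|>\tau_*+|t-\tau_*|\}$, which for $t\ge\tau_*$ is $\{|x|>t\}$, not the larger shell). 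Second, even granting this, Proposition \ref{P:ext_scaling} requires the orthogonality structure \eqref{F162}, nonstationarity, and non-radiativity at the time where $\delta$ is evaluated; none of these is verified for $\tilde V$ at $\tau_*$. The paper avoids all of this by exploiting that Theorem \ref{T:asymptotic_NR} supplies a single pair $(k_0,\ell)$, with $\ell\neq 0$ and $k_0<m$ (established once from $(U_0^{\tJ},U_1^{\tJ})\neq(\iota_{\tJ}W,0)$ via Theorem \ref{T:uniqueness_NR}), that governs $\vec U^{\tJ}(t)$ in $\HHH(R)$ \emph{uniformly in $t$}: the hypothesis needed is only \emph{weak} non-radiativity, which is stable under bounded time-translation. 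The contradiction is then the decay-rate mismatch $\|\vec U^{\tJ}(\tau_*)\|_{\HHH(R)}\sim |\ell| R^{-(p_0-\frac12)}$ (with $p_0<m$) versus $\lesssim R^{-(m-\frac12)}$ coming from the soliton profile for $R>\tau_*$, which forces $\ell=0$. Replace your Step 2 with this direct decay comparison and the argument is complete; as written, the extension-plus-$\delta=0$ route does not go through.
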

\begin{lemma}
 \label{L:D21}
 Let $T>0$ and 
$$t_n'=\tilde{t}_n+T\lambda_{\tJ,n}.$$
 Then for large $n$, for all $t\in \left[\tilde{t}_n,t_n'\right]$ and for all $j\in \llbracket \widetilde{J},J\rrbracket$,
\begin{gather}
\label{D21}
 \left|\beta_j-\|\Lambda W\|^2_{L^2}\lambda'_j\right|\leq C \tgamma^{\frac N4}\\
 \label{D23}
\Bigg|\lambda_j\beta'_j+\kappa_0\Bigg( \iota_j\iota_{j+1} \bigg( \frac{\lambda_{j+1}}{\lambda_j}\bigg)^{\frac{N-2}{2}}-\iota_j\iota_{j-1}\bigg( \frac{\lambda_j}{\lambda_{j-1}} \bigg)^{\frac{N-2}{2}} \Bigg) \Bigg|
 \leq C\tgamma^{\frac{N-1}{2}},
 \end{gather}
 and for all $t\in [\tilde{t}_n,t_n']$,
 \begin{equation}
 \label{D22}
 \left|\frac{1}{2}\sum_{j=\widetilde{J}}^{J}\beta_j^2-\kappa_1\sum_{\widetilde{J}\leq j\leq {J}-1} \iota_j\iota_{j+1}\left( \frac{\lambda_{j+1}}{\lambda_j} \right)^{\frac{N-2}{2}}\right|\leq C\tgamma^{\frac{N-1}{2}}.
 \end{equation}
\end{lemma}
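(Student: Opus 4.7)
Our plan for Lemma~\ref{L:D21} is to sharpen Proposition~\ref{P:EDO} on the restricted window $[\tilde{t}_n, t_n']$ for the inner indices $j \geq \widetilde{J}$ by replacing $\gamma$ with $\tgamma$ and absorbing the $o_n(1)$ error. The key structural input is Lemma~\ref{L:expansion}: at $\tilde{t}_n$ the nonlinear profiles associated to the indices $j < \widetilde{J}$ are exactly the stationary ground states $\iota_j W$, so their coupling to the inner dynamics is negligible on a time window of order $\lambda_{\widetilde{J}, n}$.

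First we would prove Lemma~\ref{L:D20} by contradiction: if $(t_n - \tilde{t}_n)/\lambda_{\widetilde{J}, n}$ stays bounded along a subsequence, we rescale the solution at scale $\lambda_{\widetilde{J}, n}$ around $\tilde{t}_n$ and pass to a weak limit. Using the ODE system of Proposition~\ref{P:EDO} together with the exterior expansion of Claim~\ref{Cl:expansion}, the rescaled solution at time $t_n$ is a bounded-time nonlinear evolution of the inner profile $V^{\widetilde{J}}$ (or of the inner cluster containing $\widetilde{J}$). Since $V^{\widetilde{J}} \neq \iota_{\widetilde{J}} W$ by the very definition of $\widetilde{J}$, this evolution is not of pure multisoliton form, contradicting $d_{J, \iotabf}(\vec{u}(t_n) - \vec{v}_L(t_n)) \to 0$.

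For Lemma~\ref{L:D21} itself, we first integrate the derivative bound $|\lambda_j'| \lesssim \gamma^{(N-2)/4} \lesssim 1$ coming from Proposition~\ref{P:EDO}: for $j < \widetilde{J}$ the scales $\lambda_j(t)$ change by relative factors $O(T \lambda_{\widetilde{J},n}/\lambda_j(\tilde{t}_n)) = o_n(1)$. Combined with \eqref{D11}, this yields $\lambda_{j+1}(t)/\lambda_j(t) = o_n(1)$ uniformly on $[\tilde{t}_n, t_n']$ for $j < \widetilde{J}$, so $\gamma(t) \leq \tgamma(t) + o_n(1)$. Restricting Proposition~\ref{P:EDO}'s estimates to the inner indices $j \geq \widetilde{J}$, and noting that the boundary coupling term $(\lambda_{\widetilde{J}}/\lambda_{\widetilde{J}-1})^{(N-2)/2}$ at $j = \widetilde{J}$ is $o_n(1)$, gives $|\mathrm{LHS}| \leq C \tgamma^p + o_n(1)$ for each of \eqref{D21}, \eqref{D22}, \eqref{D23} with the correct powers $p = N/4$ or $p = (N-1)/2$.

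The remaining, and delicate, task is to absorb the $o_n(1)$ into $C\tgamma^p$ by establishing $\tgamma \geq c(T, \eps_0) > 0$ on the whole interval. At $t = \tilde{t}_n$, the displacement $\eps_0$ in \eqref{R16} is carried entirely by the inner clusters (the outer ones being stationary), so by Lemma~\ref{L:expansion} at least one inner cluster $k$ satisfies $d_{J^k, \iotabf^k}(V^k_0, V^k_1) \gtrsim \eps_0/K$; combining with Proposition~\ref{P:F18} and \eqref{bound_delta} this forces $\gamma_k \gtrsim \eps_0^{4/N}$ at the cluster level, and hence $\tgamma(\tilde{t}_n) \gtrsim \eps_0^{4/N}$. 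Propagation along $[\tilde{t}_n, t_n']$ uses only the uniform boundedness of $|\lambda_j'|$: the inner ratios $\lambda_{j+1}/\lambda_j$ for $\widetilde{J} \leq j < J$ change by a multiplicative factor depending on $T$ alone, keeping $\tgamma(t) \geq c'(T, \eps_0) > 0$ throughout. For $n$ large enough (depending on $T$ and $\eps_0$) we then have $o_n(1) \leq c'(T,\eps_0)^{(N-1)/2} \leq \tgamma(t)^{(N-1)/2}$, completing the absorption. The main obstacle will be keeping the constant $C$ independent of $T$ throughout this absorption step: the threshold beyond which $n$ must lie is allowed to depend on $T$ and $\eps_0$, but $C$ itself must be tracked carefully in Proposition~\ref{P:EDO}'s estimates so that it remains a universal constant separate from the $T$-dependent lower-bound propagation.
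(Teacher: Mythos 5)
Your first two steps match the paper: the argument for Lemma~\ref{L:D20} is the same compactness/contradiction argument via a rescaled weak limit at scale $\lambda_{\widetilde{J},n}$, and for Lemma~\ref{L:D21} the passage from $\gamma$ to $\tgamma$ by integrating $|\lambda_j'|\lesssim\gamma^{(N-2)/4}$ to freeze the outer scales $\lambda_j$, $j<\widetilde{J}$, over the window of length $T\lambda_{\widetilde{J},n}$ is exactly what the paper does (equations \eqref{R224}, \eqref{D110}). Note though that you also need $|\beta_j(t)|=o_n(1)$ for $j<\widetilde{J}$ (used to drop those indices from the energy identity \eqref{D22}); the paper obtains this by integrating $|\beta_j'|\lesssim 1/\lambda_j$ from Lemma~\ref{L:second_derivative}, which you do not mention.

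The real difference, and the genuine gap, is in how you eliminate the $o_n(1)$. You propose to prove a quantitative lower bound $\tgamma(t)\geq c(T,\eps_0)>0$ throughout $[\tilde{t}_n,t_n']$ by (i) showing $\tgamma(\tilde{t}_n)\gtrsim\eps_0^{4/N}$ and then (ii) propagating this forward using the derivative bound. The propagation step (ii) does not work as stated. The bound $|\lambda_j'|\lesssim\gamma^{(N-2)/4}$ is an \emph{absolute} drift bound, so over a window of length $T\lambda_{\widetilde{J},n}$ the change in $\lambda_j$ is $O(T\lambda_{\widetilde{J},n}\,\gamma^{(N-2)/4})$. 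For indices $j\geq\widetilde{J}$ that lie in clusters deeper than the one containing $\widetilde{J}$ (i.e.\ $j\geq j_{\widetilde{J}+1}$), one has $\lambda_j(\tilde{t}_n)/\lambda_{\widetilde{J},n}=o_n(1)$, so this absolute drift can be arbitrarily large \emph{relative to} $\lambda_j(\tilde{t}_n)$. Your assertion that the ratios $\lambda_{j+1}/\lambda_j$ ``change by a multiplicative factor depending on $T$ alone'' is therefore not justified for those indices, and the quantitative lower bound on $\tgamma$ cannot be propagated this way. The paper avoids this entirely with a soft compactness argument: it proves only $\liminf_n\min_{[\tilde{t}_n,t_n']}\tgamma>0$ (equation \eqref{D91}) by contradiction, using Remark~\ref{R:important} (a byproduct of the proof of Lemma~\ref{L:D20}). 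If $\tgamma(s_n)\to 0$ along a subsequence $s_n\in[\tilde{t}_n,t_n']$, then by \eqref{D90} also $\gamma(s_n)\to 0$, hence by Lemma~\ref{St:energy} $\delta(s_n)\to 0$ and $d_{J,\iotabf}(s_n)\to 0$, which by Remark~\ref{R:important} forces $(s_n-\tilde{t}_n)/\lambda_{\widetilde{J},n}\to\infty$, contradicting $s_n\leq\tilde{t}_n+T\lambda_{\widetilde{J},n}$. That argument needs no propagation of a quantitative lower bound and sidesteps the small-scale drift problem. You should replace your propagation step with this contradiction argument (or find another way to handle the deep scales), as the current version leaves a hole.
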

\begin{proof}[Proof of Lemma \ref{L:D20}]

 \setcounter{step}{0}
 \begin{step}[Expansion of the solution along the sequence $\{\tilde{t}_n\}_n$]
  Extracting subsequences if necessary, we introduce, as in the beginning of Subsection \ref{SS:expansion}, the following partition of $\llbracket 1,J\rrbracket$:
$$\llbracket 1,J\rrbracket=\bigcup_{k=1}^{\widetilde{K}}\llbracket \tilde{\jmath}_k,\tilde{\jmath}_{k+1}-1\rrbracket,$$
with
\begin{equation*}
1=\tilde{\jmath}_1<\tilde{\jmath}_2<\ldots<\tilde{\jmath}_{\widetilde{K}+1}=J+1
\end{equation*}
and, letting for all $k\in \llbracket 1,\tK\rrbracket$,
$$ \lambda_{k,n}=\lambda_{\tj_k}(\tilde{t}_n),$$
we have, 
\begin{align}
\label{D30}
\forall k&\in \llbracket 1,\tK\rrbracket,\quad 
\forall j\in \llbracket \tj_k,\tj_{k+1}-1\rrbracket,\quad \lim_{n\to\infty}\frac{\lambda_j(\tilde{t}_n)}{\lambda_{k,n}}>0\\
\label{D30'}
\forall k&\in \llbracket 1,\tK-1\rrbracket,\quad \lim_{n\to\infty} \frac{\lambda_{k+1,n}}{\lambda_{k,n}}=0.
\end{align}
We denote
\begin{equation}
 \label{defU01k}
 \left( U_0^k,U_1^k \right):=\underset{n\to \infty}{\wlim} \left( \lambda_{k,n}^{\frac{N-2}{2}} U\left(\tilde{t}_n,\lambda_{k,n}\cdot \right),\lambda_{k,n}^{\frac{N}{2}} \partial_t U\left(\tilde{t}_n,\lambda_{k,n}\cdot \right)\right).
\end{equation} 
Note that by \eqref{D11},
\begin{equation}
\label{jk=k}
 \forall k\in \llbracket 1,\tJ-1\rrbracket,\; \left( U_0^k,U_1^k \right)=(\iota_k W,0),\quad\forall k\in \llbracket 1,\tJ\rrbracket,\; \tj_k=k.
\end{equation}
We let $U^k$ be the solution of the nonlinear wave equation \eqref{NLW} with initial data $\left( U_0^k,U_1^k \right)$. 
According to Lemma \ref{L:expansion}, $U^k$ is defined for $|x|>|t|$, non-radiative and, denoting
\begin{equation}
\label{D32}
U_n^k(t,x)=\frac{1}{\lambda_{k,n}^{\frac{N-2}{2}}} U^k\left( \frac{t}{\lambda_{k,n}},\frac{x}{\lambda_{k,n}} \right),
\end{equation} 
we can expand $\vec{u}(\tilde{t}_n)$ as follows:
\begin{equation} 
 \label{R160}
 \lim_{n\to\infty} \left\|\vec{u}(\tilde{t}_n)-\vv_L(\tilde{t}_n)-\sum_{k=1}^{\widetilde{K}} \vec{U}_n^k(0)\right\|_{\HHH}=0.
\end{equation} 
We now make a crucial observation on $\left( U_0^{\tJ},U_1^{\tJ} \right)$. Since by the definition of $\tJ$, $\left(U_0^{\tJ},U_1^{\tJ}  \right)\neq \left( \iota_{\tJ}W,0 \right)$, we see by the analog of the expansion \eqref{LRR3} (where $k=\tJ$ and $(V_0^k,V_1^k)$ has to be replaced by $(U_0^{\tJ},U_1^{\tJ})$), the orthogonality relations \eqref{RR4} and the uniqueness in Lemma \ref{L:ortho_scaling}, that $\left( U^{\tJ}_0,U^{\tJ}_1 \right)$ is not the initial data of a stationary solution. Since the corresponding solution is non-radiative, we deduce from Theorems \ref{T:asymptotic_NR} and \ref{T:uniqueness_NR} that there exists $p_0\in \left\llbracket 1,\frac{N-1}{2}\right\rrbracket$ and $\ell\in \RR\setminus\{0\}$ such that for all $t\in \RR$ and for all $R$ large (depending on $t$),
\begin{equation}
\label{R170}
\left\|\vec{U}^{\widetilde{J}}(t)-\ell \,\Xi_{p_0}\right\|_{\HHH(R)}\lesssim \max\left(\frac{1}{R^{(p_0-\frac{1}2)\frac{N+2}{N-2}}},\frac{1}{R^{p_0+\frac 12}}\right),
 \end{equation} 
 where the implicit constant might depend on $R$ (but of course not on $t$).
 \end{step}
\begin{step}[Contradiction argument]
Assuming that \eqref{D20} does not hold we have, extracting subsequences if necessary, 
\begin{equation}
 \label{R190}
 \lim_{n\to\infty} \frac{t_n-\tilde{t}_n}{\lambda_{\tJ,n}}=T\in [0,\infty).
\end{equation} 
By the expansion \eqref{R60bis} with $\tau=t_n-\tilde{t}_n$, $s_n=\tilde{t}_n$,
\begin{equation}
 \label{R200}
 \vec{u}(t_n)=\vec{v}_L(t_n)+\sum_{k=1}^{\widetilde{K}} \vec{U}_n^k(t_n-\tilde{t}_n)+\vec{r}_n(t_n-\tilde{t}_n),\; |x|>|t_n-\tilde{t}_n|
\end{equation} 
where 
\begin{equation}
 \label{R201}
 \lim_{n\to\infty} \int_{|x|\geq |t_n-\tilde{t}_n|} |\nabla_{t,x}r_n(t_n-\tilde{t}_n)|^2\,dx=0. 
\end{equation}
Let
\begin{equation}
 \label{R202}
 \left( A_0^{\widetilde{J}},A_1^{\widetilde{J}} \right)
 =\underset{n\to\infty}{\wlim}\left( \lambda_{\widetilde{J},n}^{\frac{N-2}{2}}u(t_n,\lambda_{\widetilde{J},n}\cdot),\lambda_{\widetilde{J},n}^{\frac N2}\partial_tu(t_n,\lambda_{\widetilde{J},n}\cdot )\right).
\end{equation} 
We claim
\begin{equation}
 \label{R203}
 \left( A_0^{\widetilde{J}},A_1^{\widetilde{J}} \right)(x)=\vec{U}^{\widetilde{J}}(T,x),\quad |x|> |T|,
\end{equation} 
where $T$ is defined by \eqref{R190}. Indeed, let $\varphi\in C^{\infty}_0(\{x\in \RR^N,\; |x|> T\})$. Then by the definition \eqref{D32} of $U^{\tJ}_n$,
\begin{multline*}
 \int \lambda_{\tJ,n}^{\frac{N-2}{2}} U_n^{\tJ}\left( t_n-\tilde{t}_n,\lambda_{\tJ,n}x \right)\varphi(x)dx=\int U^{\tJ}\left( \frac{t_n-\tilde{t}_n}{\lambda_{\tJ,n}},x \right)\varphi(x)dx\\
 \underset{n\to\infty}{\longrightarrow} \int U^{\tJ}(T,x)\varphi(x)dx,
\end{multline*}
where we have used \eqref{R190}, and the fact that $U^{\tJ}_{\restriction \{|x|>|t|\}}$ is the restriction to $\{|x|>|t|\}$ of an element of $C^0\left( \RR,\hdot \right)$ (see Definition \ref{D:sol_cone}). Furthermore, if $k\in \llbracket 1,\tK\rrbracket \setminus \{\tJ\}$,
\begin{align}
 \label{D50}
 \int \lambda_{\tJ,n}^{\frac{N-2}{2}}&U_n^k(t_n-\tilde{t}_n,\lambda_{\tJ,n}x)\varphi(x)dx\\
 \label{D52}
 &= \int \left( \frac{\lambda_{k,n}}{\lambda_{\tJ,n}} \right)^{\frac{N+2}{2}} U^k\left( \frac{t_n-\tilde{t}_n}{\lambda_{k,n}},y\right)\varphi\left( \frac{\lambda_{k,n}}{\lambda_{\tJ,n}}y \right)dy.
\end{align}
Using that there exists $T'>T$ such that $|x|\geq T'$ in the support of $\varphi$, we see by \eqref{R190} that $|y|>\frac{t_n-\tilde{t}_n}{\lambda_{k,n}}$ for large $n$ in the support of the integrand in \eqref{D52}. Thus for large $n$, \eqref{D52} (or equivalently \eqref{D50}) does not depend on the values of $U^k(t,x)$ for $|x|\leq |t|$. 
Recall that, after extraction,
\begin{equation}
\label{ortho_k_tJ}
\lim_{n\to \infty}\frac{\lambda_{k,n}}{\lambda_{\tJ,n}}\in \{0,+\infty\}. 
\end{equation} 
If (after extraction) $\lim_{n}\frac{t_n-\tilde{t}_n}{\lambda_{k,n}}=\sigma\in [0,\infty)$, then, using that $U^{k}_{\restriction \{|x|>|t|\}}$ is the restriction to $\{|x|>|t|\}$ of an element of $C^0\left( \RR,\hdot \right)$, 
$$\lim_{n\to \infty} U^k\left( \frac{t_n-\tilde{t}_n}{\lambda_{k,n}}\right)=U^k(\sigma)\text{ in }\hdot\left( \{|x|>\sigma\}\right)$$ 
and, since $\left( \frac{\lambda_{k,n}}{\lambda_{\tJ,n}} \right)^{\frac{N+2}{2}}\varphi\left( \frac{\lambda_{k,n}}{\lambda_{\tJ,n}} \cdot\right)$ converges weakly to $0$ in $\dot{H}^{-1}$, we have that \eqref{D52} (and hence \eqref{D50}) goes to $0$ as $n$ goes to infinity. On the other hand, if $\lim_{n}\frac{t_n-\tilde{t}_n}{\lambda_{k,n}}=+\infty$, using that by Lemma \ref{L:expansion}, $U^k$ is nonradiative, we obtain again that \eqref{D50} goes to $0$ as $n$ goes to infinity.

Finally, we have 
$$\lim_{n\to\infty}\int \lambda_{\tJ,n}^{\frac{N-2}{2}}r_n\left( t_n-\tilde{t}_n,\lambda_{\tJ,n}x\right)\varphi(x)dx=0$$
by \eqref{R201} and \eqref{R190}, and
$$\lim_{n\to\infty}\int \lambda_{\tJ,n}^{\frac{N-2}{2}}v_L\left( t_n,\lambda_{\tJ,n}x\right)\varphi(x)dx=0$$
by the standard asymptotics of the linear wave equation. This yields
$$\forall \varphi \in  C_0^{\infty}\left( \{|x|>T\} \right),\quad \int A_0^{\tJ}(x)\varphi(x)dx=\int U_0^{\tJ}(T,x)\varphi(x)dx$$
and thus, arguing similarly on $A_1^{\tJ}$ and $U_1^{\tJ}$, the desired equality \eqref{R203} follows. 

Since $\lim_{n\to\infty} d_{J,\iotabf}=0$, we obtain that $(A_0^{\tJ},A_1^{\tJ})=(0,0)$ or  $(A_0^{\tJ},A_1^{\tJ})=(\pm W_{(\mu)},0)$ for some sign $\pm$ and scaling parameter $\mu>0$, contradicting \eqref{R170}, and concluding the proof of Lemma \ref{L:D20}.
\end{step}
\end{proof}
\begin{remark}
\label{R:important}
 The same proof yields the following: let $\{s_n\}$ be a sequence of times with $s_n\in [\tilde{t}_n,t_n]$ such that 
 $$\lim_{n\to\infty} d_{J,\iotabf}(s_n)=0.$$
 Then
 $$\lim_{n\to\infty}\frac{s_n-\tilde{t}_n}{\lambda_{\widetilde{J},n}}=+\infty.$$
\end{remark}
\begin{proof}[Proof of Lemma \ref{L:D21}]
 We will show
\begin{gather}
 \label{D90}
\forall j \in \llbracket 1,\tJ-1\rrbracket,\quad \lim_{n\to\infty} \max_{\tilde{t}_n\leq t\leq t_n'} |\beta_j(t)|+\frac{\lambda_{j+1}(t)}{\lambda_{j}(t)}=0\\
\label{D91}
\liminf_{n\to\infty} \min_{\tilde{t}_n\leq t\leq t_n'} \tgamma(t)>0.
\end{gather}
Assuming \eqref{D90} and \eqref{D91}, the conclusion of Lemma \ref{L:D21} follows easily from Proposition \ref{P:EDO}. Indeed by \eqref{D90} and \eqref{D91}, restricting $t$ to $[\tilde{t}_n,t_n']$, we can replace $\gamma(t)+o_n(1)$ by $\tilde{\gamma}(t)$ in the right-hand side of \eqref{EDO2a}, \eqref{EDO3a} and \eqref{EDO4a}. Si\-mi\-larly, using again \eqref{D90} and \eqref{D91}, we can restrict the indices in the sums in the left-hand side of \eqref{EDO3a} to $\tJ\leq j$.
This yields that \eqref{D21}, \eqref{D23} (for $j\in [\tJ,J]$) and \eqref{D22} hold for all large $n$ and $t\in[\tilde{t}_n,t_n']$.

\medskip
\noindent\emph{Proof of \eqref{D90}.}
We first claim
\begin{equation}
  \label{R224}
  \forall j\in \llbracket 1,\widetilde{J}-1\rrbracket, \quad \lim_{n\to\infty} \sup_{\tilde{t}_n\leq t\leq t_n'} \left|\frac{\lambda_j(t)}{\lambda_j(\tilde{t}_n)}-1\right|=0
 \end{equation} 
Indeed, by the estimates \eqref{EDO2a} and \eqref{EDO3a}, which imply $|\lambda'|\leq C\gamma^{\frac{N-2}{4}}+o_n(1)$, we see that
$$ \forall t\in [\tilde{t}_n,t_n'] ,\quad \left|\lambda_j(t)-\lambda_j(\tilde{t}_n)\right|\lesssim \sup_{t\in [\tilde{t}_n,t_n']} \left(\gamma^{\frac{N-2}{4}}(t)+o_n(1)\right)(t_n'-\tilde{t}_n),$$
and thus, for large $n$,
\begin{equation}
 \label{R191}
 \left|1-\frac{\lambda_j(t)}{\lambda_j(\tilde{t}_n)}\right|\lesssim \frac{t_n'-\tilde{t}_n}{\lambda_j(\tilde{t}_n)}=T\frac{\lambda_{\tJ,n}}{\lambda_j(\tilde{t}_n)}=T\frac{\lambda_{\tJ}(\tilde{t}_n)}{\lambda_j(\tilde{t}_n)},
\end{equation} 
and \eqref{R224} follows in view of the fact that, by \eqref{D11} and the definition of $\tJ$, 
\begin{equation}
\label{D11'}
\forall j\in \llbracket 1,\tJ-1\rrbracket,\quad
\lim_{n\to\infty} \frac{\lambda_{j}(\tilde{t}_n)}{\lambda_{j+1}(\tilde{t}_n)}=+\infty. 
\end{equation} 
Combining \eqref{R224} with \eqref{D11'}, we obtain 
\begin{equation}
 \label{D110} 
\forall j\in \llbracket 1,\tJ-1\rrbracket,\quad
\sup_{\tilde{t}_n\leq t \leq t_n'} \frac{\lambda_{j+1}(t)}{\lambda_{j}(t)}=0. 
\end{equation} 
By the definition \eqref{R155} of $\beta_j$ and the definition of $\tJ$, we also have
\begin{equation} 
 \label{D111}
\forall j\in \llbracket 1,\tJ-1\rrbracket,\quad
\lim_{n\to \infty} \left|\beta_j(\tilde{t}_n)\right|=0
\end{equation} 
By Lemma \ref{L:second_derivative},
$$\forall t\in \left[\tilde{t}_n,t_n'\right],\; \forall j\in \llbracket 1,\tJ-1\rrbracket,\quad \left|\beta'_j(t)\right|\lesssim \frac{\gamma^{\frac N2 -1}(t)+o_n(1)}{\lambda_j(t)}\lesssim \frac{1}{\lambda_j(t)}.$$
Integrating in time, we obtain 
$$\left|\beta_j(t)-\beta_j(\tilde{t}_n)\right|\lesssim \frac{\lambda_{\tilde{J},n}}{\lambda_j(t)}\lesssim \sup_{s\in [\tilde{t}_n,t_n']} \frac{\lambda_{\tJ}(\tilde{t}_n)}{\lambda_j(s)},$$
where the implicit constants depend on $T$.
By \eqref{R224} and \eqref{D110}, the right-hand side of the preceding inequality goes to $0$ as $n$ tends to $\infty$. Combining with \eqref{D110} and \eqref{D111}, we obtain \eqref{D90}.
\medskip

\noindent\emph{Proof of \eqref{D91}}

We argue by contradiction, assuming, after extraction of a subsequence, that there exists a sequence of times $\{s_n\}_n$ with $s_n\in [\tilde{t}_n,t_n']$ such that $\lim_{n\to\infty}\tgamma(s_n)=0$. By \eqref{D90}, $\lim_{n\to\infty}\gamma(s_n)=0$. By Lemma \ref{St:energy}, $\lim_{n\to\infty}\delta(s_n)=0$. Thus $\lim_{n\to\infty} d_{J,\iotabf}(s_n)=0$, a contradiction with the conclusion of Remark \ref{R:important} since $\tilde{t}_n\leq s_n\leq \tilde{t}_n+T\lambda_{\tJ,n}$.
\end{proof}

\subsection{Lower bound for the exterior scaling parameter and end of the proof}
\label{SS:exterior}
In this subsection we conclude the contradiction argument started in Section \ref{S:reduction}, using the same notations as in Section \ref{S:reduction} and Subsection \ref{SS:interval}. We recall in particular that $t_n'=\tilde{t}_n+T\lambda_{\tilde{J},n}$, where the sequence $\{\tilde{t}_n\}_n$ is defined by \eqref{R14}, \eqref{R15} and \eqref{R16}, $\lambda_{\tilde{J},n}=\lambda_{\tJ}(\tilde{t}_n)$, and $T$ is a large positive parameter that will be chosen at the end of the proof in order to obtain a contradiction with Proposition \ref{P:rigidity}. In all the argument $T$ is chosen first, and the small parameter $\eps_0$  appearing in \eqref{R16} might depend on $T$. The constants are independent of $\eps_0$.

\begin{lemma}
\label{L:exterior}
Let $\ell$ and $p_0$ be defined by \eqref{R170}. Then if $\eps_0=\eps_0(T)$ is chosen small enough, there exists a constant $C>0$ such that for large $n$,
\begin{equation*}
 \forall t\in [\tilde{t}_n,t_n'],\quad 
|\ell|\leq C\left(
\frac{\lambda_{\widetilde{J}}(t)}{\lambda_{\widetilde{J}}(\tilde{t}_n)}
\right)^{p_0-\frac{1}{2}}\delta(t)^{\frac{2}{N}}.
\end{equation*} 
\end{lemma}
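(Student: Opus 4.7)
The plan is to argue by contradiction and reduce to an application of Proposition \ref{P:ext_scaling} to a time-translate of the non-radiative profile $U^{\widetilde{J}}$. Suppose the conclusion fails: there exist sequences $n_k\to\infty$ and $t_k\in[\tilde{t}_{n_k},t'_{n_k}]$ such that
$$\rho_k:=\Bigl(\frac{\lambda_{\widetilde{J}}(t_k)}{\lambda_{\widetilde{J},n_k}}\Bigr)^{\!p_0-1/2}\delta(t_k)^{2/N}\longrightarrow 0.$$
After extraction, set $\sigma_k:=(t_k-\tilde{t}_{n_k})/\lambda_{\widetilde{J},n_k}\to\sigma\in[0,T]$ and $\mu_{j,k}:=\lambda_j(t_k)/\lambda_{\widetilde{J},n_k}\to\nu_j\in[0,\infty]$ for each $j\in\llbracket 1,J\rrbracket$. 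By Lemma \ref{L:D21} together with the bound $\delta\lesssim\gamma^{(N-2)/4}$ from \eqref{bound_delta_gamma}, one has $|\lambda_j'|\lesssim\eps_0^{(N-2)/4}$ on $[\tilde{t}_n,t'_n]$, so for $\eps_0$ small compared to $T$ this yields $\nu_{\widetilde{J}}\in[1/2,2]$. Moreover $\nu_j=\infty$ for $j<\widetilde{J}$ by \eqref{D11}, and $\mathcal{J}:=\{j\in\llbracket 1,J\rrbracket:\nu_j\in(0,\infty)\}$ is a contiguous block of the form $\llbracket\widetilde{J},\widetilde{J}+q-1\rrbracket$ for some $q\geq 1$, with $\nu_{\widetilde{J}}=\max_{j\in\mathcal{J}}\nu_j$.

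Next I will identify the weak limit at scale $\lambda_{\widetilde{J},n_k}$ of $\vec{U}(t_k)$. Applying the exterior expansion of Claim \ref{Cl:expansion} at $\tilde{t}_{n_k}$ with shift $t_k-\tilde{t}_{n_k}$, rescaling the first and second components by $\lambda_{\widetilde{J},n_k}^{N/2-1}$ and $\lambda_{\widetilde{J},n_k}^{N/2}$ respectively, and passing to the weak limit as $k\to\infty$, the contributions from $\vec{v}_L(t_k)$ and from all rescaled profiles $\vec{U}_{n_k}^{k}$ with $k\neq\widetilde{J}$ vanish in $\HHH$ by scale orthogonality---exactly the mechanism used in Step 2 of the proof of Lemma \ref{L:D20}. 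This will give, on $\{|x|>\sigma\}$,
$$\vec{U}^{\widetilde{J}}(\sigma)=\underset{k\to\infty}{\wlim}\Bigl(\lambda_{\widetilde{J},n_k}^{\frac{N-2}{2}}U(t_k,\lambda_{\widetilde{J},n_k}\cdot),\,\lambda_{\widetilde{J},n_k}^{\frac{N}{2}}\partial_tU(t_k,\lambda_{\widetilde{J},n_k}\cdot)\Bigr).$$
Writing $\vec{U}(t_k)=\sum_{j=1}^J(\iota_jW_{(\lambda_j(t_k))},0)+\vec{E}_k$ with $\|\vec{E}_k\|_{\HHH}=\delta(t_k)$, rescaled solitons with $j\in\mathcal{J}$ converge strongly in $\HHH$ to $(\iota_jW_{(\nu_j)},0)$ and those with $j\notin\mathcal{J}$ converge weakly to $0$, so by scale invariance of the $\HHH$-norm and weak lower semi-continuity one gets
$$\Bigl\|\vec{U}^{\widetilde{J}}(\sigma)-\sum_{j\in\mathcal{J}}(\iota_jW_{(\nu_j)},0)\Bigr\|_{\HHH}\leq\liminf_{k\to\infty}\delta(t_k).$$

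To conclude, let $V$ be the solution of \eqref{NLW} with initial data $\vec{U}^{\widetilde{J}}(\sigma)$. As a time-translate of $U^{\widetilde{J}}$, $V$ is non-radiative at $t=0$; it is nonstationary by the argument in Step 1 of the proof of Lemma \ref{L:D20}, so $p_0\leq m-1$ by Theorem \ref{T:uniqueness_NR} and $V$ inherits the same $\ell\neq 0$ in its $\Xi_{p_0}$-asymptotic. The multisoliton configuration $(\iota_j,\nu_j)_{j\in\mathcal{J}}$ satisfies the smallness hypotheses of Proposition \ref{P:ext_scaling} when $\eps_0$ is small enough (the $\gamma$-factor is $\leq\lim_k\gamma(\lambdabf(t_k))\leq\eps_0$ and the approximation error is $\lesssim\eps_0^{(N-2)/4}$ by Lemma \ref{St:energy}). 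Since $\nu_{\widetilde{J}}$ is the largest scale in $\mathcal{J}$, Proposition \ref{P:ext_scaling} applied to $V$ will give
$$|\ell|\leq C\,\nu_{\widetilde{J}}^{p_0-1/2}\Bigl(\liminf_{k\to\infty}\delta(t_k)\Bigr)^{\!\!2/N}\leq C\liminf_{k\to\infty}\rho_k=0,$$
contradicting $\ell\neq 0$. The main technical obstacle will be the weak-limit identification together with the multisoliton approximation in the second paragraph, which requires the scale-orthogonality bookkeeping of the profile decomposition exactly as in Step 2 of the proof of Lemma \ref{L:D20}.
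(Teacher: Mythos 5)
Your overall structure mirrors the paper's: argue by contradiction, pass to a weak limit of $\vec U(t_k)$ at scale $\lambda_{\tJ,n_k}$, identify the limit with $\vec U^{\tJ}(\sigma)$ on $\{|x|>\sigma\}$ via the exterior expansion, and conclude by Proposition~\ref{P:ext_scaling}. However, there is a genuine gap at the point where you assert \emph{``As a time-translate of $U^{\tJ}$, $V$ is non-radiative at $t=0$.''} This claim is false and the claimed object $V$ is not even well-defined. The slice $\vec U^{\tJ}(\sigma)$ is only canonically determined on $\{|x|>\sigma\}$ (since $U^{\tJ}$ is only a solution on $\{|x|>|t|\}$ in the sense of Definition~\ref{D:sol_cone}), so ``the solution with initial data $\vec U^{\tJ}(\sigma)$'' requires specifying an extension inside the cone. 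More importantly, even once an extension is chosen, time-translation does not preserve non-radiativity at $t=0$: if $v(t)=U^{\tJ}(t+\sigma)$, then as $t\to+\infty$ one has
$$\int_{|x|>|t|}|\nabla_{t,x}v|^2\,dx=\int_{|x|>s-\sigma}|\nabla_{t,x}U^{\tJ}(s,\cdot)|^2\,dx,\quad s=t+\sigma,$$
and this can fail to vanish even though $\int_{|x|>s}|\nabla_{t,x}U^{\tJ}(s,\cdot)|^2\,dx\to 0$. Time-translation only preserves \emph{weak} non-radiativity, and Proposition~\ref{P:ext_scaling} requires full non-radiativity at $t=0$ together with the orthogonality relations \eqref{F162}, neither of which you verify.

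The paper's fix, which is the missing ingredient in your proposal, is to apply Lemma~\ref{L:expansion} \emph{along the contradiction sequence} $\{t_k\}$. This gives directly that the profile
$(V_0^{\tJ},V_1^{\tJ}):=\wlim_k\bigl(\lambda_{\tJ}(t_k)^{N/2-1}U(t_k,\lambda_{\tJ}(t_k)\cdot),\,\lambda_{\tJ}(t_k)^{N/2}\partial_tU(t_k,\lambda_{\tJ}(t_k)\cdot)\bigr)$
is the initial data of a solution that is non-radiative at $t=0$, admits the multisoliton expansion \eqref{LRR3} with the orthogonality conditions \eqref{RR4} built in, and is $\delta$-close to the multisoliton in $\HHH$ by weak lower semi-continuity. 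One then identifies, exactly as in Step~2 of the proof of Lemma~\ref{L:D20}, the weak limit $(B_0^{\tJ},B_1^{\tJ})$ at scale $\lambda_{\tJ,n_k}$ with $\vec U^{\tJ}(\sigma)$ on $\{|x|>\sigma\}$, and relates $(B^{\tJ},V^{\tJ})$ by rescaling to produce the factor $\nu_{\tJ}^{p_0-1/2}$ (the paper's $\tlambda^{p_0-1/2}$, cf.\ \eqref{D153}). Your estimate $\|\vec U^{\tJ}(\sigma)-\sum_{j\in\mathcal J}(\iota_jW_{(\nu_j)},0)\|_{\HHH}\le\liminf\delta(t_k)$ should likewise be stated for $(B_0^{\tJ},B_1^{\tJ})$ rather than for $\vec U^{\tJ}(\sigma)$, which is only pinned down on $\{|x|>\sigma\}$. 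With the invocation of Lemma~\ref{L:expansion} inserted, your argument becomes essentially the paper's.
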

\begin{proof}[End of the proof of Theorem \ref{T:resolution}]
We first assume Lemma \ref{L:exterior} and conclude the proof of Theorem \ref{T:resolution}. By \eqref{bound_delta_gamma}, for large $n$
$$ \forall t\in [\tilde{t}_n,t_n'],\quad \delta(t)^{\frac{2}{N}} \lesssim \gamma(t)^{\frac{N-2}{2N}}+o_n(1)\lesssim \gamma(t)^{\frac{N-2}{2}}+o_n(1).$$
Combining with \eqref{D90} and \eqref{D91}, we see that
$$ \forall t\in [\tilde{t}_n,t_n'],\quad \delta(t)^{\frac{2}{N}} \lesssim \tgamma(t)^{\frac{N-2}{2}}.$$
Thus Lemma \ref{L:exterior} implies that there exists a constant $C>0$ such that 
\begin{equation}
 \label{D120} 
 \forall t\in [\tilde{t}_n,t_n'],\quad 
|\ell|\leq C\left(
\frac{\lambda_{\widetilde{J}}(t)}{\lambda_{\widetilde{J}}(\tilde{t}_n)}
\right)^{p_0-\frac{1}{2}}\tgamma(t)^{\frac{N-2}{2}}.
\end{equation} 
By \eqref{D120} and the estimates \eqref{D21}, \eqref{D23}, \eqref{D22} of the preceding subsection, the parameters $\left(\beta_j\right)_{\tJ\leq j\leq J}$ and $\left(\lambda_j\right)_{\tJ\leq j\leq J}$ satisfy the assumptions of Proposition \ref{P:rigidity} for times $t\in \left[\tilde{t}_n,t_n'\right]$. The conclusion of the Proposition yields $t_n'-\tilde{t}_n\leq T_*\lambda_{\tJ}(\tilde{t}_n)$ for large $n$, and thus $T\leq T_*$, for a constant $T_*$ depending only on the solution $u$ and the parameters $\ell$, $p_0$. Since we can take $T$ arbitrarily large we obtain a contradiction, concluding the proof of the Theorem except for the fact that 
$$\lim_{t\to\infty}\frac{\lambda_1(t)}{t}=0,$$
which follows from finite speed of propagation and the small data theory (see e.g. the proof of (3.53) in \cite{DuKeMe13}). 
\end{proof}
\begin{proof}[Proof of Lemma \ref{L:exterior}]
We argue by contradiction, assuming (after extraction) that there exists a large constant $M$ and a sequence $\{\tilde{s}_n\}_n$ with $\tilde{s}_n\in [\tilde{t}_n,t_n']$ and
\begin{equation}
 \label{D121}
 \forall n,\quad  |\ell|\geq M \left( \frac{\lambda_{\tJ}(\tilde{s}_n)}{\lambda_{\tJ}(\tilde{t}_n)} \right)^{p_0-1/2}\delta(\tilde{s}_n)^{\frac 2N}.
\end{equation} 
We use the expansion \eqref{R60bis}, as in the proof of Lemma \ref{L:D20} (see \eqref{R200}) at $s_n=\tilde{t}_n$, $\tau=\tilde{s}_n-\tilde{t}_n$. This yields
\begin{equation*}
 \vec{u}(\tilde{s}_n)=\vec{v}_L(\tilde{s}_n)+\sum_{k=1}^{\widetilde{K}} \vec{U}_n^k(\tilde{s}_n-\tilde{t}_n)+\vec{r}_n(\tilde{s}_n-\tilde{t}_n),\quad |x|>\tilde{s}_n-\tilde{t}_n,
\end{equation*}
where the $U_n^k$ are defined in \eqref{defU01k}, \eqref{D32} and 
$$\lim_{n\to\infty} \int_{\{|x|>|\tilde{s}_n-\tilde{t}_n|\}} \left|\nabla_{t,x}r_n(\tilde{s}_n-\tilde{t}_n)\right|^2dx=0.$$
Let 
\begin{gather}
 \label{D130}
 \left( B_0^{\tJ},B_1^{\tJ} \right)=\underset{n\to \infty}{\wlim}  \left( \lambda_{\tJ,n}^{\frac{N-2}{2}}U(\tilde{s}_n,\lambda_{\tJ,n}\cdot),\lambda_{\tJ,n}^{\frac{N}{2}} \partial_t U(\tilde{s}_n,\lambda_{\tJ,n}\cdot )\right)\\
 \label{D132}
 \sigma=\lim_{n\to\infty} \frac{\tilde{s}_n-\tilde{t}_n}{\lambda_{\tJ,n}}.
\end{gather} 
Note that $\sigma\in [0,\infty)$ since $\frac{t_n'-\tilde{t}_n}{\lambda_{\tJ,n}}=T$.
As in the proof of Lemma \ref{L:D20} (see \eqref{R203}) we obtain, 
\begin{equation} 
 \label{D131}
\left( B_0^{\tJ}(x),B_1^{\tJ}(x) \right)=\vec{U}^{\tJ}(\sigma,x),\quad |x|>\sigma.
 \end{equation} 
 We will next use Lemma \ref{L:expansion} along the sequence $\{\tilde{s}_n\}$. For this, we recall (see \eqref{R224}) 
 \begin{equation}
  \label{D140}
  \forall j\in \llbracket 1,\tJ-1\rrbracket,\quad \lim_{n\to\infty} \frac{\lambda_j(\tilde{s}_n)}{\lambda_j(\tilde{t}_n)}=1.
 \end{equation} 
 On the other hand, after extraction,
 \begin{equation}
  \label{D141}
  \lim_{n\to\infty} \frac{\lambda_{\tJ}(\tilde{t}_n)}{\lambda_{\tJ}(\tilde{s}_n)}=:\tlambda\in (0,\infty).
 \end{equation} 
 Indeed, by the estimates \eqref{EDO2a}, \eqref{EDO3a}, which imply $|\lambda'|\leq C\gamma^{\frac{N-2}{4}}$, we have
 $$ \left|\lambda_{\tJ}\left( \tilde{t}_n \right)-\lambda_{\tJ}\left( \tilde{s}_n \right)\right|\lesssim |\tilde{s}_n-\tilde{t}_n| \sup_{t\in [\tilde{t}_n,\tilde{s}_n]} \gamma^{\frac{N-2}{4}}(t),$$
 and thus for large $n$ (using \eqref{R15})
 $$\left| 1-\frac{\lambda_{\tJ}(\tilde{s}_n)}{\lambda_{\tJ}(\tilde{t}_n)}\right| \leq C\frac{|\tilde{s}_n-\tilde{t}_n|}{\lambda_{\tJ}\left( \tilde{t}_n \right)} \left(\eps_0^{\frac{N-2}{4}}+o_n(1)\right)\leq 2C T \eps_0^{\frac{N-2}{4}}.$$
 Taking $\eps_0$ small enough, so that $T\eps_0^{\frac{N-2}{4}}\leq 1/(4C)$, we obtain $\frac 12\leq \frac{\lambda_{\tJ}(\tilde{s}_n)}{\lambda_{\tJ}(\tilde{t}_n)}\leq \frac{3}{2}$, which yields \eqref{D141}.
 
 By Lemma \ref{L:expansion}, letting 
 \begin{equation}
\label{D142}
  \left( V_0^{\tJ},V_1^{\tJ} \right)=\underset{n\to \infty}{\wlim}  \left( \lambda_{\tJ}^{\frac{N-2}{2}}(\tilde{s}_n)U\left( \tilde{s}_n,\lambda_{\tJ}(\tilde{s}_n)\cdot \right),\lambda_{\tJ}^{\frac N2}(\tilde{s}_n)\partial_tU\left(\tilde{s}_n,\lambda_{\tJ}(\tilde{s}_n)\cdot\right) \right),
 \end{equation} 
 we have 
 \begin{equation}
  \label{D151}
  V_0^{\tJ}=\sum_{j=\tJ}^{j_{\tJ+1}-1} \iota_j W_{(\nu_j)}+\tilde{h}_0^{\tJ},\quad V_1^{\tJ}=\sum_{j=\tJ}^{j_{\tJ+1}-1} \tilde{\alpha}_j(\Lambda W)_{[\nu_j]}+\tilde{g}_1^{\tJ},
 \end{equation} 
 where 
\begin{equation}
\label{defnujbis} 
\nu_j=\lim_{n\to\infty} \frac{\lambda_j(\tilde{s}_n)}{\lambda_{\tJ}(\tilde{s}_n)}>0,\quad j\in \llbracket \tJ,j_{\tJ+1}-1\rrbracket 
\end{equation} 
and $j_{\tJ+1}$ is the first index $j>\tJ$ such that
\begin{equation}
 \label{D150}
 \lim_{n\to\infty} \frac{\lambda_{j}(\tilde{s}_n) }{\lambda_{\tJ}(\tilde{s}_n)}=0
\end{equation}
(as usual, if \eqref{D150} is not satisfied for any $j>\tJ$, we take $j_{\tJ+1}=J+1$), and $\tilde{h}_0^{\tJ}$, $\tilde{g}_1^{\tJ}$ satisfy the orthogonality conditions 
\begin{equation}
 \label{D152}
  \int \nabla \tilde{h}_0^k \nabla (\Lambda W)_{(\tilde{\nu}_j)}=\int \tilde{g}_1^k(\Lambda W)_{[\tilde{\nu}_j]}=0,\quad j\in \llbracket \tj_{k},\tj_{k+1}-1\rrbracket,
\end{equation} 
By \eqref{D141}, \eqref{D142}, and the definition \eqref{D130} of $\left( B_0^{\tJ},B_1^{\tJ} \right)$, we see that 
$$ B_0^{\tJ}= \tlambda^{\frac{N-2}{2}} V_0^{\tJ}\big( \tlambda\cdot \big),  \quad B_1^{\tJ}=\tlambda^{\frac N2}V_1^{\tJ}\big( \tlambda \cdot \big). $$
Using \eqref{R170} and \eqref{D131}, we deduce that for large $R$
$$\left\| \left( \tlambda^{\frac{N-2}{2}} V_0^{\tJ}\big(\tlambda \cdot\big),\tlambda^{\frac N2} V_1^{\tJ}\big(\tlambda\cdot\big) \right) -\ell \Xi_{p_0}\right\|_{\HHH(R)} \lesssim \max \left\{\frac{1}{R^{(p_0-\frac 12)^{\frac{N+2}{N-2}}}},\frac{1}{R^{p_0+\frac 12}}\right\}$$
and after rescaling, for large $R$,
\begin{equation}
\label{D153}
\left\| \left( V_0^{\tJ},V_1^{\tJ} \right)-\tlambda^{p_0-\frac 12} \ell \Xi_{p_0}\right\|_{\HHH(R)} \lesssim \max \left\{\frac{1}{R^{(p_0-\frac 12)^{\frac{N+2}{N-2}}}},\frac{1}{R^{p_0+\frac 12}}\right\}.
\end{equation} 
We next note that $(V_0^{\tJ},V_1^{\tJ})$ satisfies the assumptions of Subsections \ref{SS:close_multi} and \ref{SS:lwrbnd}. Indeed, it is close to the multi-soliton
$$ \widetilde{M}=\sum_{j=\tJ}^{j_{\tJ+1}-1}\iota_jW_{\nu_j},$$
(see \eqref{R41})
and the solution with initial data $(V_0^{\tJ},V_1^{\tJ})$  is, by Lemma \ref{L:expansion}, non-radiative. We use Proposition \ref{P:ext_scaling} on $\left( V_0^{\tJ},V_1^{\tJ} \right)$. In view of \eqref{D151} and the orthogonality conditions \eqref{D152} satisfied by $\tilde{h}_0^{\tJ}$ and $\tilde{h}_1^{\tJ}$, the exterior scaling parameter (denoted by $\lambda_1$ in Proposition \ref{P:ext_scaling}) is $\nu_{\tJ}$ who is, by definition, equal to $1$ (see \eqref{defnujbis}). By \eqref{D153}, we must replace, in the conclusion of Proposition \ref{P:ext_scaling}, $|\ell|$ by $\tlambda^{p_0-\frac 12}|\ell|$. This yields
\begin{equation}
 \label{D160}
 \tlambda^{p_0-\frac 12} |\ell| \leq C\delta(\tilde{s}_n)^{\frac{2}{N}}
\end{equation} 
for some constant $C$. Using \eqref{D141}, we obtain
$$ \left( \frac{\lambda_{\tJ}(\tilde{t}_n)}{\lambda_{\tJ}(\tilde{s}_n)} \right)^{p_0-\frac 12}|\ell|\leq 2C \delta(\tilde{s}_n)^{\frac 2N}$$
for large $n$. By \eqref{D121},
$$ 2C \delta(\tilde{s}_n)^{\frac 2N}\geq M \delta(\tilde{s}_n)^{\frac{2}{N}}$$
for large $n$. Choosing $M=3C$ we obtain that $\delta(\tilde{s}_n)=0$ for large $n$, contradicting \eqref{D160}. 
\end{proof}
\subsection{Study of a system of differential inequalities}
\label{SS:rigidity}
In this subsection, we prove Proposition \ref{P:rigidity}.

\setcounter{step}{0}
We first observe that by the following change of variable of unknwown functions:
$$\tau=\frac{t}{\lambda_1(0)},\quad \check{\lambda}_j(\tau)=\frac{\lambda_j(t)}{\lambda_1(0)},\quad \check{\beta}_j(\tau)=\beta_j(t).$$
we can assume 
$$\lambda_1(0)=1.$$
The proof is by contradiction and relies on a monotonicity formula that follows from the modulation equations \eqref{EDO2},\ldots,\eqref{EDO5}.
 In all the proof, the estimates are uniform in $t\in[0,T]$, and we must follow the dependence of the constants with respect to $L$. The implicit constants implied by the symbols $\lesssim$, $\ll$, \ldots will thus never depend on $L$ and $t\in [0,T]$. We will introduce in the course of the proof two constants $m$ and $M$ depending only on $L$ and the parameters of the system. To simplify notations, we will let 
 $$\kappa_2:=\frac{1}{\|\Lambda W\|^2_{L^2}}.$$
 We note that \eqref{EDO3} and the smallness assumption \eqref{EDO1} imply 
\begin{equation}
\label{EDO6'}
\sum_{j=1}^{J_0}|\beta_j|\lesssim\gamma^{\frac{N-2}{4}}.
\end{equation}
Together with \eqref{EDO2} we obtain
\begin{equation}
 \label{EDO7}
\sum_{j=1}^{J_0} |\lambda_j'|\lesssim C\gamma^{\frac{N-2}{4}}.
\end{equation}  
The idea of the proof is to construct a function $V$ which is of the same order as $\lambda_1^2$ and is convex. We first introduce a positive quantity $B(t)$ which will appear in the computation of $V''(t)$.
\begin{step}[Introduction of a positive quantity]
  \label{St:Vmonotonic}
  We define $\theta_1$,\ldots,$\theta_{J_0}$ as follows: $\theta_1=1$ and
  $$\forall j\in \llbracket 2,{J_0}\rrbracket,\quad \theta_j=\begin{cases}
                     2\theta_{j-1}&\text{ if } \iota_j\iota_{j-1}=1\\
                     \frac{1}{2}\theta_{j-1}&\text{ if }\iota_j\iota_{j-1}=-1.
                    \end{cases}$$
  We let 
  $$B(t)=\sum_{j=1}^{J_0}\theta_j\lambda_j(t)\beta_j'(t).$$
  In this step we prove
  \begin{equation}
   \label{EDO10}
   \forall t\in [0,T],\quad B(t)\geq \frac{\kappa_0}{2^{{J_0}+1}} \gamma(t)^{\frac{N-2}{2}},
  \end{equation} 
  where $\kappa_0$ is the constant appearing in the formula \eqref{EDO4} for $\beta_j'$. Indeed by \eqref{EDO4}, denoting by $\iota_0=\iota_{{J_0}+1}=0$,
  \begin{multline*}
   B(t)=\kappa_0\sum_{j=1}^{J_0}\theta_j\left[\iota_j\iota_{j-1}\left( \frac{\lambda_j}{\lambda_{j-1}} \right)^{\frac{N-2}{2}}-\iota_j\iota_{j+1}\left( \frac{\lambda_{j+1}}{\lambda_j} \right)^{\frac{N-2}{2}}\right)+\OOO\left( \gamma^{\frac{N-1}{2}} \right)\\
  =\kappa_0\sum_{j=2}^{J_0} \iota_j\iota_{j-1}\left( \frac{\lambda_j}{\lambda_{j-1}} \right)^{\frac{N-2}{2}}\left( \theta_j-\theta_{j-1} \right)+\OOO\left( \gamma^{\frac{N-1}{2}} \right). 
  \end{multline*}
  By the definition of $\theta_j$, for $2\leq j\leq {J_0}$, we have
  $$\iota_{j-1}\iota_j\left(\theta_{j}-\theta_{j-1}\right)=
  \begin{cases}
\theta_{j-1}&\text{ if }\iota_{j-1}\iota_j=1\\
\frac{1}{2}\theta_{j-1}&\text{ if }\iota_{j-1}\iota_{j}=-1.                                                           \end{cases}.$$
Hence $\iota_{j-1}\iota_j\left(\theta_{j}-\theta_{j-1}\right)\geq \frac{1}{2^{J_0}}$, and \eqref{EDO10} follows since $\gamma$ is small.
 \end{step}
 \begin{step}[Approximate first derivative of $V$]
 \label{St:EDO2}
 We let
 \begin{equation}
  \label{EDO11}
  A(t)=\sum_{j=1}^{J_0}\theta_j\lambda_j(t)\beta_j(t).
 \end{equation} 
 In this step we prove 
 \begin{equation}
  \label{EDO13}
  A'(t)\gtrsim\gamma^{\frac{N-2}{2}}(t)
 \end{equation} 
 and that there exists $C>0$ such that
 \begin{align}
  \label{EDO12}
  A'(t)&\geq (\kappa_2^{-1}+C^{-1})\sum_{j=1}^{J_0}\theta_j{\lambda_j'}^2(t)\\
  \label{EDO14}
  A'(t)&\geq (\kappa_2+C^{-1})\sum_{j=1}^{J_0} \theta_j\beta_j^2(t).
 \end{align}
Indeed,
$$A'(t)=\sum_{j=1}^{J_0}\theta_j\lambda_j'(t)\beta_j(t)+B(t)=\kappa_2\sum_{j=1}^{J_0}\theta_j\beta_j^2(t)+B(t)+\OOO\left( \gamma^{\frac{N-1}{2}} \right),$$
where we have used \eqref{EDO2} and \eqref{EDO6'}. By \eqref{EDO10}, we deduce:
$$A'(t)\geq \kappa_2\sum_{j=1}^{J_0} \theta_j\beta_j^2(t)+\frac{1}{C}\gamma(t)^{\frac{N-2}{2}},$$
hence \eqref{EDO13}. The estimate \eqref{EDO14} follows also immediately since $\sum \beta_j^2\lesssim \gamma^{\frac{N-2}{2}}$.  Together with \eqref{EDO2}, we also obtain \eqref{EDO12}.
\end{step}
\begin{step}[Choice of an intermediate time]
\label{St:EDO3}
In this step we will show that $A(1/L)$  is bounded from below by a constant depending only on $L$.
 By \eqref{EDO5} and \eqref{EDO13}, recalling that $\lambda_1(0)=1$, we have
\begin{equation*}
 L\lesssim C\,A'(t)\lambda_1^{a}(t).
\end{equation*}
Integrating between $0$ and $\tau\leq T$, we obtain
\begin{equation}
 \label{EDO20}
L\tau\lesssim \int_0^{\tau} A'(t)\lambda_1^a(t)\,dt.
\end{equation} 
Furthermore, by integration by parts, using again that $\lambda_1(0)=1$,
\begin{align*}
 \int_0^\tau A'(t)\lambda_1^a(t)\,dt&=A(\tau)\lambda_1^a(\tau)-A(0)-a\int_0^{\tau}A(t)\lambda_1'(t) \lambda_1^{a-1}(t)\,dt\\
&=A(\tau)\lambda_1^a(\tau)-A(0)-\frac{a}{\kappa_2}\int_0^{\tau}\left( \lambda'_1\right)^2\lambda_1^a+\OOO\left( \int_0^{\tau}\left|\lambda_1'\right|\gamma^{\frac{N}{4}}\lambda_1^{a} \right).
\end{align*} 
For this, we have used that by \eqref{EDO2} and the bound $|\beta_j|\lambda_j\lesssim \gamma^{\frac{N+2}{4}}\lambda_1$ (for $j\geq 2$), which follows from \eqref{EDO6'} and the definition of $\gamma$, we have
\begin{multline}
\label{eq_At}
 A(t)=\sum_{j=1}^J\theta_j \lambda_j\beta_j=\kappa_2^{-1}\lambda_1\lambda_1'+\left( \beta_1-\kappa_2^{-1}\lambda_1' \right)\lambda_1+\sum_{j=2}^{J_0} \theta_j\lambda_j\beta_j\\
 =\kappa_2^{-1}\lambda_1\lambda_1'+\OOO\left( \lambda_1\gamma^{\frac N4} \right).
\end{multline}
Combining with the bound \eqref{EDO7} on $|\lambda_1'|$, we deduce
\begin{equation*}
 \int_0^{\tau} A'(t)\lambda_1^a(t)\,dt=A(\tau)\lambda_1^a(\tau)-A(0) -\frac{a}{\kappa_2}\int_0^\tau (\lambda_1'(t))^2\lambda_1^a(t)\,dt+\OOO\left( \int_0^{\tau}\gamma^{\frac{N-1}{2}}\lambda_1^a \right).
\end{equation*} 
Using \eqref{EDO20} and that by \eqref{EDO13},
\begin{equation*}
 \int_0^\tau A'(t)\lambda_1^a(t)\,dt\gtrsim \int_0^{\tau}\gamma^{\frac{N-2}{2}}\lambda_1^a\gtrsim \frac{1}{\sqrt{\eps}} \int_0^{\tau} \gamma^{\frac{N-1}{2}}\lambda_1^{a},
\end{equation*} 
we deduce, if $\eps$ is small enough, that for all $\tau\in (0,T]$, 
\begin{equation}
\label{EDO30'}
 \frac 12L\tau +\frac{a}{\kappa_2} \int_0^{\tau}(\lambda'_1(t))^2\lambda_1^a(t)\,dt +A(0)\leq A(\tau)\lambda_1^a(\tau).
\end{equation} 
Assume in all the sequel that $T\geq 1/L$.
Using that $|\lambda_1'|\leq 1$ (see \eqref{EDO7}), we obtain
$$ \lambda_1(1/L)\leq 1+1/L.$$
Noting that by \eqref{EDO5} at $t=0$, and since $\eps$ is small, $\frac{1}{L}\geq 1$, we deduce, taking $\eps$ small enough,
\begin{equation}
 \label{bound_lambda1}
\lambda_1(1/L)\leq \frac{2}{L}
\end{equation} 
Going back to \eqref{EDO30'}, using that $|A(0)|\leq \frac{1}{100}$, we obtain
\begin{equation}
\label{EDO32}
 A(1/L)\geq \frac{1}{2} \frac{1}{\lambda^a_1(1/L)}\geq 
\frac{L^a}{2^{a+1}}.
\end{equation}  
Note that since $A'(t)\geq 0$ by Step \ref{St:EDO2}, \eqref{EDO32} implies that $A(t)>0$ for all $t\in [1/L,T]$.
\end{step}
\begin{step}[Bound from above of $\lambda_1$]
\label{St:EDO4} 
We let 
$$V(t)=\sum_{j=1}^{J_0} \theta_j\lambda_j^2,$$
and note that, using that $\theta_1=1$ and that $\lambda_j\leq \eps \lambda_1$ for $j\geq 2$,
$$|V(t)-\lambda_1^2(t)|\lesssim \eps^2\lambda_1^2(t).$$
In this step we show that there exists $C>0$, and $M=M(L)>0$ such that 
\begin{equation}
 \label{EDO42}
\forall t\in [1/L,T], \quad V(t)\leq M\eps^{\frac 1C}.
\end{equation} 
Indeed
$V'(t)=2\sum_{j=1}^{J_0} \theta_j\lambda_j\lambda_j'$, and thus
\begin{equation*}
 V'(t)A(t)=2\sum_{j=1}^{J_0} \theta_j\lambda_j\lambda_j'\sum_{j=1}^{J_0}\theta_j\lambda_j\beta_j
\leq 2V(t)\sqrt{\sum_{j=1}^{J_0} \theta_j{\lambda_j'}^2}\sqrt{\sum_{j=1}^{J_0}\theta_j\beta_j^2}\leq 2^-V(t)A'(t)
\end{equation*}
by Step \ref{St:EDO2}. Here 
$$2^-=\frac{2}{(\kappa_2^{-1}+C^{-1})(\kappa_2+C^{-1})}$$ 
is a fixed positive constant, smaller than $2$, and independent of $L$. Recall that for $t\geq 1/L$, we have $A(t)>0$. For such $t$, we deduce
\begin{equation}
\label{EDO40'}
2^-\frac{A'(t)}{A(t)}\geq \frac{V'(t)}{V(t)}, 
\end{equation} 
and thus, for $t\geq 1/L$, 
\begin{equation}
 \label{EDO40}
\frac{d}{dt}\left( \frac{A^{2^-}(t)}{V(t)} \right)\geq 0.
\end{equation} 
Hence, by \eqref{bound_lambda1} (that implies $V(1/L)\lesssim 1/L^2$) and \eqref{EDO32}, for $t\geq 1/L$,
\begin{equation}
 \label{EDO41}
\frac{A^{2^-}(t)}{V(t)} \geq \frac{A^{2-}(1/L)}{V(1/L)}\gtrsim L^{2+2^-a}.
\end{equation} 
Using the inequality
$$A(t)\lesssim \sqrt{V(t)}\gamma^{\frac{N-2}{4}}(t),$$
we deduce
$$ \forall t\geq 1/L, \quad \left( \gamma^{\frac{N-2}{2}} \right)^{\frac{2^-}{2}}\gtrsim \frac{A(t)^{2^-}}{V(t)^{\frac{2^-}{2}}}\gtrsim V(t)^{\frac{2-2^-}{2}}
\frac{A^{2^-}(t)}{V(t)} \gtrsim 
L^{2+2^-a}
V(t)^{\frac{2-2^-}{2}}.$$
This yields \eqref{EDO42} with $M=\frac{1}{L^{\frac{2(2+2^-a)}{2-2^-}}}$.
\end{step}
\begin{step}[Bound from below of $\lambda_1$]
 \label{St:EDO5}
By \eqref{EDO40'},  we have, for $t\geq 1/L$,
$$\frac{d}{dt}\left( \frac{A(t)}{\sqrt{V(t)}} \right)\geq 0.$$
As a consequence, for $t\geq 1/L$,
$$\frac{A(t)}{\sqrt{V(t)}}\geq \frac{A(1/L)}{\sqrt{V(1/L)}}.$$
By the bound \eqref{bound_lambda1} on $\lambda_1$, the fact that $V(t)\approx \lambda_1^2(t)$ and the bound \eqref{EDO32} on $A(1/L)$, we have
\begin{equation}
 \label{EDO51}
\frac{A(t)}{\sqrt{V(t)}}\geq \frac{A(1/L)}{\sqrt{V(1/L)}}\gtrsim L^{a+1}=:m. 
\end{equation} 
Since $|V'(t)-2\kappa_2A(t)|\lesssim \gamma^{\frac{N}{4}}(t)\lambda_1(t)$ (see \eqref{eq_At}), we deduce from \eqref{EDO51}
$$\forall t\geq 1/L,\quad \frac{V'(t)}{\sqrt{V(t)}} \gtrsim m+\OOO\left( \gamma^{\frac{N}{4}}(t) \right).$$
Integrating, we obtain
$$\int_{1/L}^t \gamma^{\frac{N}{4}}(s)\,ds+\sqrt{V(t)}-\sqrt{V(1/L)}\gtrsim (t-1/L)m,$$
and thus, by the bound \eqref{EDO42} on $V\approx\lambda_1^2$,
\begin{equation}
\label{EDO52}
\forall t\in [1/L,T],\quad \sqrt{M}\eps^{\frac{1}{2C}}+\int_{1/L}^t\gamma^{\frac{N}{4}}(s)\,ds\gtrsim (t-1/L)m.
\end{equation}
Notice that, by Step \ref{St:EDO2}, for $t\in[1/L,T]$, 
\begin{multline*}
\int_{1/L}^t \gamma^{\frac{N}{4}}\leq \sqrt{\eps} \int_{1/L}^t \gamma^{\frac{N-2}{4}}(s)\,ds\lesssim \sqrt{\eps}\sqrt{t}\sqrt{\int_{1/L}^t A'(s)\,ds}\\
\lesssim \sqrt{\eps}\sqrt{t} \sqrt{A(t)}\lesssim \sqrt{\eps}\sqrt{t}\,\eps^{\frac{N-2}{8}} \sqrt{\lambda_1(t)}
\lesssim \eps^{\frac{N+2}{8}} \sqrt{t}M^{1/4},
\end{multline*}
where we have used the bound $|A(t)|\lesssim \lambda_1(t)\gamma^{\frac{N-2}{4}}$ and, to get the last inequality, the bound \eqref{EDO42} on $V$. Going back to \eqref{EDO52}, we obtain
\begin{equation}
 \label{EDO60}
m(t-1/L)\lesssim \eps^{\frac{1}{2C}}\sqrt{M}+\eps^{\frac{N+2}{8}} M^{1/4}\sqrt{t}.
\end{equation} 
Taking $\eps$ small, we deduce
\begin{equation}
 \label{EDO60'}
m(T-1/L)\leq \sqrt{M}+M^{1/4}\sqrt{T},
\end{equation} 
i.e.
$$ \left( \sqrt{T}-\frac{M^{1/4}}{2m} \right)^2\leq \frac 1L+\frac{\sqrt{M}}{m}+\frac{\sqrt{M}}{4m^2},$$
which implies 
$$ T\leq T^*:=\left(\sqrt{\frac{1}{L}+\frac{\sqrt{M}}{m}+\frac{\sqrt{M}}{4m^2}}+\frac{M^{1/4}}{2m}\right)^2,$$
concluding the proof since the constants $m$ and $M$ depend only on $L$ and the parameters of the system.
\end{step}
\section{Inelastic collision}
\label{S:inelastic}
This section is dedicated to the proof of Theorem \ref{T:inelastic}. The proof is almost contained in the proof of Theorem \ref{T:resolution} and we only sketch it. Let $u$ satisfy the assumptions of Theorem \ref{T:inelastic}. If $u$ scatters forward in time, then
$$\lim_{t\to+\infty}\|\vec{u}(t)\|_{\HHH}=0,$$
and by the small data theory, $u$ is identically $0$. Thus $u$ does not scatter as $t\to\infty$, and according to Theorem \ref{T:resolution}, there exists $J\geq 1$, signs $\{\iota_j\}_{1\leq j\leq J}$, parameters $\lambda_{j}(t)$ defined for large $t$ and such that 
$$ 0<\lambda_{J}(t)< \lambda_{J-1}(t)<\ldots< \lambda_{1}(t),\quad \lim_{t\to\infty} \gamma(t)=0,\quad \lim_{t\to\infty}\lambda_1(t)/t=0,$$
(where $\gamma(t)=\gamma(\lambdabf(t))=\max_{j\in \llbracket 1,J-1\rrbracket} \lambda_{j+1}(t)/\lambda_{j}(t)$)
and
$$ \lim_{t\to\infty} \left\|\vec{u}(t)-\sum_{j=1}^{J} \left(\iota_j W_{(\lambda_j(t))},0\right)\right\|_{\HHH}=0.$$
Note that \eqref{no_radiation} implies that $v_L\equiv 0$.
We will use the notations of Section \ref{S:reduction}. Using Lemma \ref{L:ortho_scaling} in the appendix, we can choose the $\lambda_j(t)$, for large $t$ such that 
\begin{equation}
 \label{R17bis}
 \forall j\in \llbracket 1,J\rrbracket, \quad \int \nabla (u(t)-M(t))\cdot\nabla \left(\Lambda W_{(\lambda_j(t))}\right)=0,
 \end{equation}
 where $M(t):=\sum_{j=1}^J \iota_j W_{(\lambda_j(t))}$. For $t\geq T$, $T$ large, we expand 
$$ u(t)=M(t)+h(t),\quad \partial_tu(t)=\sum_{j=1}^J \alpha_j(t)\iota_j (\Lambda W)_{[\lambda_j(t)]}+g_1(t),$$
and denote
$$ \delta(t):=\sqrt{\|h(t)\|^2_{\hdot}+\|\partial_tu(t)\|^2_{L^2}},\quad \beta_j(t) :=-\iota_j \int (\Lambda W)_{[\lambda_j(t)]} \partial_tu(t).$$
Observe that the expansions above are valid for all large times, as opposed to the analoguous expansions in Section \ref{S:reduction} that are made on intervals of the form $[\tilde{t}_n,t_n]$, where $\tilde{t}_n$ and $t_n\to \infty$ as $n\to\infty$.

Then we have the following variant of Proposition \ref{P:EDO}:
\begin{prop}
\label{P:EDObis}
There exists $C>0$ such that for all $t\geq T$,
\begin{align}
\label{bound_delta_gamma_bis}
\delta&\leq C\gamma^{\frac{N-2}{4}}
\\
\label{EDO2a_bis}
 \forall j\in \llbracket 1,J\rrbracket,\quad
 \left|\beta_j-\|\Lambda W\|^2_{L^2}\lambda'_j\right|&\leq C \gamma^{\frac N4}\\
 \label{EDO3a_bis}
 \left|\frac{1}{2}\sum_{j=1}^{J}\beta_j^2-\kappa_1\sum_{1\leq j\leq {J}-1} \iota_j\iota_{j+1}\left( \frac{\lambda_{j+1}}{\lambda_j} \right)^{\frac{N-2}{2}}\right|&\leq C\gamma^{\frac{N-1}{2}}\\
 \label{EDO4a_bis}
 \forall j\in \llbracket 1,J\rrbracket,\quad \Bigg|\lambda_j\beta'_j+\kappa_0\Bigg( \iota_j\iota_{j+1} \bigg( \frac{\lambda_{j+1}}{\lambda_j}\bigg)^{\frac{N-2}{2}}-\iota_j&\iota_{j-1}\bigg( \frac{\lambda_j}{\lambda_{j-1}} \bigg)^{\frac{N-2}{2}} \Bigg) \Bigg|&\\
 \notag
 &\leq C\gamma^{\frac{N-1}{2}},
 \end{align}
where the  positive constants $\kappa_0$, $\kappa_1$ are as in Proposition \ref{P:EDO}.
\end{prop}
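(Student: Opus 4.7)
The proof proposal follows the scheme of Proposition \ref{P:EDO}, exploiting two simplifications afforded by the hypotheses of Theorem \ref{T:inelastic}. First, assumption \eqref{no_radiation} yields $v_L\equiv 0$, so the term $\sigma(h,v_L)$ in \eqref{R150} vanishes identically. Second, and more crucially, \emph{at every time} $t_0$, the solution $u$ is non-radiative at $t_0$ in the strong sense of Definition \ref{D:non-radiative}: indeed, taking $A=|t_0|$ in \eqref{no_radiation} and observing that $\{|x|\geq|t-t_0|\}\subset\{|x|\geq |t|-|t_0|\}$ for $|t|$ large forces the exterior energy outside the cone $\{|x|\geq|t-t_0|\}$ to vanish in both time limits. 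Thus the full nonradiative theory of Section \ref{S:exterior}, in particular Proposition \ref{P:F18}, applies pointwise in $t$ after time-translation by $t_0$.

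Combined with the soliton resolution (Theorem \ref{T:resolution}), which gives $\delta(t),\gamma(t)\to 0$ as $t\to\infty$, the smallness hypotheses \eqref{F160}--\eqref{F161'} of Subsection \ref{SS:close_multi} hold at every $t\geq T$ for $T$ large. Applying Proposition \ref{P:F18} at each such $t$ yields the analogs of \eqref{R2601}, \eqref{R260}, \eqref{F182} \emph{without} the $o_n(1)$ remainders that appeared in Lemma \ref{L:R26}. The energy expansion of Lemma \ref{St:energy} also becomes pointwise: energy conservation gives $E(\vec u(t))=JE(W,0)$ at every time (rather than only in the limit), so the expansion around the multisoliton proceeds exactly as before and produces \eqref{bound_delta_gamma_bis} together with a pointwise version of \eqref{energy_bound}. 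Combining these estimates yields \eqref{EDO2a_bis} and \eqref{EDO3a_bis} by the same algebra used at the end of the proof of Proposition \ref{P:EDO}.

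For \eqref{EDO4a_bis}, the proof of Lemma \ref{L:second_derivative} carries over verbatim, with the substantial simplification that the term \eqref{main_beta'2} involving $\sigma(h,v_L)$ now vanishes identically rather than being merely $o_n(1)$; all other error terms are controlled pointwise by powers of $\delta(t)$ and $\gamma(t)$ via the pointwise estimates of the appendix. The main (and really only bookkeeping) obstacle is verifying the strong nonradiative property at arbitrary times, and then checking that every step of the proof of Proposition \ref{P:EDO} where an $o_n(1)$ arose was due either to the sequential profile decomposition of Lemma \ref{L:expansion} (now redundant, since the multisoliton structure becomes exact pointwise for $t\geq T$) or to the presence of $v_L$ (now identically zero). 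Once this is done, the entire sequential machinery of Section \ref{S:reduction} collapses to a uniform pointwise argument with a single constant $C$.
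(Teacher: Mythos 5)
Your proposal is correct and follows the same route as the paper's own sketch: bypass the sequential profile-decomposition machinery of Subsection~\ref{SS:expansion}, apply Proposition~\ref{P:F18} directly at each time $t_0\geq T$ (after translation), replace the asymptotic energy identity by the exact conservation law $E(\vec u(t))=JE(W,0)$, and drop the $\sigma(h,v_L)$ term since $v_L\equiv 0$. Your observation that \eqref{no_radiation} forces $u$ to be non-radiative at \emph{every} $t_0$ in the sense of Definition~\ref{D:non-radiative} (via $\{|x|\geq|t-t_0|\}\subset\{|x|\geq|t|-|t_0|\}$ and $A=|t_0|$) is exactly the justification the paper leaves implicit when it says the $o_n(1)$ terms in Lemma~\ref{L:R26} disappear ``as a direct consequence of Proposition~\ref{P:F18}''. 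One minor point of phrasing: it is not that ``the multisoliton structure becomes exact pointwise'', but rather that the non-radiative hypothesis of Proposition~\ref{P:F18}, which previously held only for the bubbles extracted along a sequence in Lemma~\ref{L:expansion}, now holds for $u$ itself at each time; the rest of your reasoning is accurate.
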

\begin{proof}[Sketch of proof]
 The proof is the same as the proof of Proposition \ref{P:EDO} in Subsections \ref{SS:lambda_beta} and \ref{SS:derivatives}, observing that in the context of Proposition \ref{P:EDObis}, we do not need Subsection \ref{SS:expansion} and we can remove all the $o_n(1)$ in the estimates. More precisely:
\begin{itemize}
 \item The estimates of Lemma \ref{L:R26} hold for $t\geq T$, without the $o_n(1)$ terms, as a direct consequence of Proposition \ref{P:F18}.
\item The estimates of Lemma \ref{St:energy} can be proved for all $t\geq T$, without the $o_n(1)$ terms, expanding the equality:
$$ E(\vec{u}(t))=JE(W,0),$$
and with the same proof. In Lemma \ref{St:energy}, the terms $o_n(1)$ came from the fact that the preceding equality was replaced by the weaker statement:
$$ \lim_{t\to\infty} E(\vec{u}(t)-\vec{v}_L(t))=JE(W,0).$$
\item Similarly, one can prove Lemma \ref{L:derivative} and Lemma \ref{L:second_derivative} for $t\geq T$ without $o_n(1)$ with the same proofs, observing that in the proofs of these Lemmas, the $o_n(1)$ terms come either from the $o_n(1)$ terms of Subsection \ref{SS:lambda_beta}, or from the term $\sigma(h,v_L)$ defined in \eqref{R150} and which is $0$ in our setting.
\end{itemize}
Assuming that $u$ is not stationary, it is now easy to obtain a contradiction: by Proposition \ref{P:ext_scaling},
$$ \forall t\geq T,\quad |\ell|\leq C\delta(t)^{\frac{2}{N}}\lambda_1^{k_0-\frac{1}{2}}.$$
Combining with the estimates of Proposition \ref{P:EDObis}, we see that this contradicts Proposition \ref{P:rigidity}, concluding the proof.
\end{proof}

\appendix
\section{Proof of some estimates}
\label{A:calculs}
In this appendix we gather a few purely computational proofs.
\subsection{Estimates on integrals in the space variable}
\begin{claim}
 \label{Cl:estimates1}
Let $0<\lambda<\mu$. Assume $N\geq 5$. Then
\begin{gather}
\label{est1.1}\int_{\RR^N} \left|\nabla (\Lambda W_{(\lambda))}\cdot\nabla (\Lambda W_{(\mu)})\right|+
 \int_{\RR^N} \left|\nabla W_{(\lambda)}\cdot\nabla W_{(\mu)}\right|\lesssim \left( \frac{\lambda}{\mu} \right)^{\frac N2-1},\\
 \label{est1.2}
 \int_{\RR^N}  \left|(\Lambda W)_{[\lambda]}(\Lambda 
 W)_{[\mu]}\right|+\int_{\RR^N}  \left|(\Lambda W)_{[\lambda]}(\Lambda_0\Lambda W)_{[\mu]}\right|\lesssim \left( \frac{\lambda}{\mu} \right)^{\frac N2-2},\\
 \label{est1.3}\left\| W_{(\lambda)}W_{(\mu)}^{\frac{4}{N-2}}\right\|_{L^{\frac{2N}{N+2}}}\lesssim \left( \frac{\lambda}{\mu} \right)^{\frac{N-2}{2}},\quad \left\|  W_{(\mu)}W_{(\lambda)}^{\frac{4}{N-2}}\right\|_{L^{\frac{2N}{N+2}}}\lesssim \left( \frac{\lambda}{\mu} \right)^{2}\\
 \label{est1.4}
  \left|\int_{\RR^N} W_{(\lambda)}^{\frac{N}{N-2}}W_{(\mu)}^{\frac{N}{N-2}}\right|\lesssim \left( \frac{\lambda}{\mu} \right)^{\frac N2}\\
  \label{est1.5}
  \int \left|(\Lambda W)_{[\lambda]}(\Delta \Lambda W)_{[\mu]}\right|\lesssim \left(\frac{\lambda}{\mu}\right)^{\frac{N}{2}-2},\quad \int \left|(\Lambda W)_{[\mu]}(\Delta \Lambda W)_{[\lambda]}\right|\lesssim \left(\frac{\lambda}{\mu}\right)^{\frac{N}{2}}
\end{gather} 
\end{claim}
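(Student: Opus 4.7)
My plan is to prove the six inequalities uniformly, using (i) explicit pointwise bounds on $W$, $\Lambda W$ and their derivatives, and (ii) a change of variables that reduces each double-scale integral to a single-scale integral depending only on $\sigma:=\lambda/\mu\in(0,1)$, multiplied by an explicit prefactor $\sigma^{\alpha}$. From the formula~\eqref{defW}, direct differentiation yields
\[
|W(z)|+|\Lambda W(z)|+|\Lambda_0\Lambda W(z)|\lesssim (1+|z|)^{2-N},\qquad |\nabla W(z)|+|\nabla\Lambda W(z)|\lesssim |z|(1+|z|)^{-N};
\]
the identity $L_W(\Lambda W)=0$ (which expresses the scaling invariance of~\eqref{Ell}) moreover gives $\Delta\Lambda W=-\tfrac{N+2}{N-2}W^{4/(N-2)}\Lambda W$, so $|\Delta\Lambda W(z)|\lesssim(1+|z|)^{-N-2}$.

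Changing variables $y=x/\lambda$ and using $W_{(\lambda)}(x)=\lambda^{1-N/2}W(x/\lambda)$ and $(\Lambda W)_{[\lambda]}(x)=\lambda^{-N/2}(\Lambda W)(x/\lambda)$, each of the six integrals is rewritten in the form $\sigma^{\alpha}\int F(y)\,G(\sigma y)\,dy$ for explicit $F$, $G$ and some $\alpha\geq 0$. For example, \eqref{est1.1} becomes $\sigma^{N/2}\int \nabla W(y)\cdot\nabla W(\sigma y)\,dy$, \eqref{est1.2} becomes $\sigma^{N/2}\int\Lambda W(y)\,\Lambda W(\sigma y)\,dy$, and \eqref{est1.4} becomes $\sigma^{N/2}\int W(y)^{N/(N-2)}W(\sigma y)^{N/(N-2)}\,dy$; the $L^{p}$ norms in~\eqref{est1.3} are handled by the same device applied to the $p$-th power of the integrand before taking the $p$-th root.

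For each resulting single-scale integral, split $r=|y|$ into $\{r\leq 1\}$, $\{1\leq r\leq 1/\sigma\}$ and $\{r\geq 1/\sigma\}$. The pointwise bounds above reduce each subintegral to an explicit product of powers of $r$ and $\sigma$, and a one-variable integration yields an $O(1)$ contribution from the inner region, a dominant contribution of order $\sigma^{-\beta}$ (up to a logarithmic factor in borderline cases, harmless under $\lesssim$) from the intermediate region, and a comparable or smaller contribution from the outer region. Multiplying by the prefactor $\sigma^{\alpha}$ recovers $\sigma^{\alpha-\beta}$, and one verifies case by case that this matches each of the six claimed exponents.

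The main obstacle is purely a matter of careful bookkeeping of exponents, together with the convergence of the outer integral at $r=\infty$, for which the assumption $N\geq 5$ is essential -- e.g.~\eqref{est1.2} uses $\int^{\infty}r^{3-N}\,dr<\infty$, valid precisely when $N>4$, and \eqref{est1.5} is similarly the most delicate because of the stronger decay of $\Delta\Lambda W$ that must be exploited on the outer region.
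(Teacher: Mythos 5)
Your approach is the paper's own: replace $W$, $\Lambda W$, $\Lambda_0\Lambda W$ and their derivatives by pointwise decay bounds, rescale to a single parameter $\sigma=\lambda/\mu\in(0,1)$, and integrate over the three regions determined by the two scales. One thing you got right that the paper glosses over: the sharper bound $|\Delta\Lambda W(z)|\lesssim(1+|z|)^{-N-2}$, which follows from $\Delta\Lambda W=-\tfrac{N+2}{N-2}W^{\frac{4}{N-2}}\Lambda W$, is genuinely needed for the second estimate in \eqref{est1.5}. With only the weaker $|\Delta\Lambda W|\lesssim\min(1,|z|^{-N})$ recorded in the paper's \eqref{bound_nabla_W1}, the \emph{intermediate} region $\lambda\le|x|\le\mu$ already contributes $\int_{\lambda}^{\mu}(\lambda/r)^{N}r^{N-1}\,dr=\lambda^{N}\log(\mu/\lambda)$; this is the borderline exponent $a=N$ of the paper's \eqref{crucial_est}, which is false as stated in that case. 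You located the right sensitive estimate but mis-attributed it: it is the intermediate region, not the outer one, that produces the danger.

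The one genuine gap in your write-up is the aside that a logarithmic factor is ``harmless under $\lesssim$.'' It is not: $\sigma^{\alpha}\log(1/\sigma)$ is not $\lesssim\sigma^{\alpha}$ as $\sigma\to 0$, so if an intermediate region produces a log where the claimed exponent is sharp, the stated inequality simply fails. With the sharp bound on $\Delta\Lambda W$ no log appears in \eqref{est1.5}, so you should verify that directly rather than assume it away. And if you carry out the promised bookkeeping you will meet one unavoidable borderline case, namely \eqref{est1.4}, where both factors decay exactly like $|z|^{-N}$: the intermediate region gives $\int_{\lambda}^{\mu}r^{-1}\,dr=\log(\mu/\lambda)$, and the correct bound is $(\lambda/\mu)^{N/2}\log(\mu/\lambda)$, not $(\lambda/\mu)^{N/2}$. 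In the paper this causes no downstream harm, because \eqref{est1.4} is only invoked where a bound by $\gamma^{(N-1)/2}$ would suffice and $\gamma^{N/2}\log(1/\gamma)\ll\gamma^{(N-1)/2}$, but an honest version of your proof should state the estimate with the log rather than dismiss it.
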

\begin{proof}
We have
\begin{gather}
\label{bound_W1}
 |\Lambda_0\Lambda W(x)|+|\Lambda W(x)|+|W(x)|\lesssim \min\left\{1,\frac{1}{|x|^{N-2}}\right\}\\
 \label{bound_nabla_W1}
 |\nabla \Lambda W(x)|+|\nabla W(x)|\lesssim \min\left\{1,\frac{1}{|x|^{N-1}}\right\},\quad |\Delta \Lambda W|\lesssim \min\left\{1,\frac{1}{|x|^{N}}\right\}.
\end{gather} 
 In view of these bounds, the estimates \eqref{est1.1}, \eqref{est1.2}, \eqref{est1.3}, \eqref{est1.4} and \eqref{est1.5} are consequences of the following inequality, which holds for any $a,b\in \RR$ with $a+b>N$, and can be proved by integrating separately on $\{|x|<\lambda\}$, $\{\lambda<|x|<\mu\}$ and $\{|x|>\mu\}$:
 \begin{equation}
  \label{crucial_est}
  \int_{\RR^N} \min\left( 1,\left( \frac{\lambda}{|x|} \right)^a \right)\min\left( 1,\left( \frac{\mu}{|x|} \right)^b \right)\,dx\lesssim \lambda^a\mu^{N-a}.
 \end{equation} 
 We will prove \eqref{est1.3}. The proofs of \eqref{est1.1}, \eqref{est1.2}, \eqref{est1.4} and \eqref{est1.5} are very similar.
By \eqref{bound_W1}, we have
\begin{multline*}
\int W_{(\lambda)}^{\frac{2N}{N+2}}W_{(\mu)}^{\frac{8N}{(N-2)(N+2)}}\\ \lesssim\frac{1}{\lambda^{\frac{N(N-2)}{N+2}}\mu^{\frac{4N}{N+2}}}\int \min\left\{ 1,\left( \frac{\lambda}{|x|} \right)^{\frac{2N(N-2)}{N+2}}\right\}\min\left\{ 1,\left( \frac{\mu}{|x|} \right)^{\frac{8N}{N+2}}\right\}, 
\end{multline*}
and the first estimate of \eqref{est1.3} follows from \eqref{crucial_est} with $a=\frac{2N(N-2)}{N+2}$. Similarly
\begin{multline*}
\int W_{(\mu)}^{\frac{2N}{N+2}}W_{(\lambda)}^{\frac{8N}{(N-2)(N+2)}}\\ \lesssim\frac{1}{\mu^{\frac{N(N-2)}{N+2}}\lambda^{\frac{4N}{N+2}}}\int \min\left\{ 1,\left( \frac{\mu}{|x|} \right)^{\frac{2N(N-2)}{N+2}}\right\}\min\left\{ 1,\left( \frac{\lambda}{|x|} \right)^{\frac{8N}{N+2}}\right\}, 
\end{multline*}
and the second estimate of \eqref{est1.3} follows from \eqref{crucial_est} with $a=\frac{8N}{N+2}$.
\end{proof}

\subsection{Estimates on space time norms}
\begin{claim}
\label{C:G30}
 Assume $N\geq 5$. Let $0<\lambda<\mu$. Then 
 \begin{align}
  \label{G31} 
  \left\|\indxt W_{(\lambda)}^{\frac{4}{N-2}}W_{(\mu)} \right\|_{L^1(\RR,L^2)} &\lesssim 
  \begin{cases}
\left( \frac{\lambda}{\mu} \right)^{\frac 32}& \text{ if }N=5\\
\left( \frac{\lambda}{\mu} \right)^{2}& \text{ if }N\geq 7
\end{cases}\\
\label{G31'}
  \left\|\indxt W_{(\mu)}^{\frac{4}{N-2}}W_{(\lambda)} \right\|_{L^1(\RR,L^2)} &\lesssim \begin{cases} \left(\frac{\lambda}{\mu} \right)^{\frac{3}{2}}\text{ if }N=5\\                                                                                                            
  \left(\frac{\lambda}{\mu} \right)^{2}\text{ if }N\geq 7.
  \end{cases}
\\
  \label{G32}
\left\|\indxt  t W_{(\lambda)}^{\frac{4}{N-2}}W_{[\mu]} \right\|_{L^1(\RR,L^2)}& \lesssim \left(\frac{\lambda}{\mu}\right)^2\\
\label{G32'}
\left\|\indxt  t W_{(\mu)}^{\frac{4}{N-2}}W_{[\lambda]} \right\|_{L^1(\RR,L^2)}& \lesssim 
\begin{cases}
\left( \frac{\lambda}{\mu} \right)^{\frac 12}& \text{ if }N=5\\
\left( \frac{\lambda}{\mu} \right)^{\frac{3}{2}}& \text{ if }N=7\\
\left( \frac{\lambda}{\mu} \right)^{2}& \text{ if }N\geq 9.
\end{cases}
\end{align}
The same inequalities remain valid when replacing $W$ by $\Lambda W$ anywhere in the preceding norms.
\end{claim}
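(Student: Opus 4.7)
The plan is to prove all four estimates by direct pointwise analysis, leveraging the decay bounds on $W$ and its derivatives. The starting point is the uniform pointwise estimate
\[
W(y) + |\Lambda W(y)| + |\Lambda_0 \Lambda W(y)| \lesssim \min\bigl\{1, |y|^{-(N-2)}\bigr\},
\]
which, after rescaling, gives
\[
W_{(\lambda)}^{\frac{4}{N-2}}(x) \lesssim \lambda^{-2}\,\min\Bigl\{1, (\lambda/|x|)^4\Bigr\}, \qquad W_{(\mu)}(x) \lesssim \mu^{-\frac{N-2}{2}}\,\min\Bigl\{1, (\mu/|x|)^{N-2}\Bigr\},
\]
and the analogous bound for $W_{[\mu]}$ with the prefactor $\mu^{-N/2}$. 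The same bounds hold with $W$ replaced by $\Lambda W$, which handles the final sentence of the claim.

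For each of the four norms, I will first compute the inner $L^2_x$ norm on the half-space $\{|x|\geq |t|\}$ as a function of $|t|$. Using the pointwise bounds above, the integrand becomes a product of two power-law cutoffs, and splitting the integration in $x$ into the three annuli
\[
\{|t|\leq |x|\leq \lambda\},\qquad \{\lambda \vee |t|\leq |x|\leq \mu\},\qquad \{|x|\geq \mu \vee |t|\},
\]
reduces matters to elementary radial integrals $\int r^\alpha\,dr$ whose exponents depend only on $N$. This produces a piecewise explicit bound $\|\cdot\|_{L^2_x(|x|\geq |t|)}\lesssim f(t)$, with $f$ constant on $[0,\lambda]$, of power-law decay on $[\lambda,\mu]$, and of faster power-law decay on $[\mu,\infty)$.

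Next, I integrate $f(t)$ (respectively $|t| f(t)$ for \eqref{G32} and \eqref{G32'}) over $\RR$, splitting $t$ into the same three regimes. In each regime the integral is again a power of $r$ integrated on an interval, and after collecting the $\lambda$ and $\mu$ prefactors one reads off the exponent of $\lambda/\mu$ claimed in the statement. The extra factor of $t$ in \eqref{G32}, \eqref{G32'} shifts the critical exponent by $1$, which is why these estimates involve $W_{[\mu]}$ (scaling weight $\mu^{-N/2}$) rather than $W_{(\mu)}$: the $t$-weight is absorbed into the improved decay in the tail.

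The main obstacle is dimensional: the powers $(\lambda/\mu)^{\theta_N}$ with $\theta_5 = 1/2$ or $3/2$, $\theta_7 = 3/2$, and $\theta_N = 2$ for $N\geq 9$ arise because, in low dimensions, the radial integral $\int_\lambda^\mu r^{N-1-\beta}\,dr$ on the intermediate annulus becomes logarithmically or polynomially dominated by the endpoint $r=\lambda$ rather than $r=\mu$, flipping which factor supplies the smallness. Concretely, for \eqref{G31} with $N=5$ the critical exponent in the intermediate annulus is $-4$, giving a contribution $\sim \lambda^4 \mu^{-3}\log(\mu/\lambda)$ that, when combined with the tail $\sim \lambda^4 \mu^{-3}$ and the inner ball contribution, must be carefully compared to the $(\lambda/\mu)^{3/2}$ benchmark; the balance yields exactly the stated power. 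A similar threshold phenomenon is responsible for the $N=7$ case of \eqref{G32'}. The bookkeeping is routine but requires separating these three dimensional regimes, and matching the critical exponent in the middle annulus against the tail decay is the key calculation in each estimate.
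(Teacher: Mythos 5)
Your approach coincides with the paper's: normalize $\mu=1$ by scaling, bound $W$ and $\Lambda W$ pointwise by $\min\{1,r^{-(N-2)}\}$, split $r$ into the three annuli $(0,\lambda)$, $(\lambda,\mu)$, $(\mu,\infty)$, and integrate the resulting power laws. The one concrete numerical claim you make is off: for \eqref{G31} with $N=5$, the integrand on the middle annulus (after squaring and including the Jacobian $r^{N-1}$) is $\lambda^4\mu^{-3}r^{-4}$, and since $-4+1=-3<0$ the $r$-integral is dominated by its lower endpoint, giving $\lambda^4\mu^{-3}\max\{t,\lambda\}^{-3}$ with \emph{no} logarithm; after taking the square root and integrating in $t$ this produces $(\lambda/\mu)^{3/2}$ directly. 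A logarithm would appear only if the integrand had exponent exactly $-1$, which never happens for odd $N\ge 5$; in the paper the borderline case is $N=9$ in \eqref{G31}, where the exponent is $0$ and the middle-annulus integral is simply bounded. This is a minor slip in your heuristic that does not affect the validity of the method.
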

\begin{proof}
\setcounter{step}{0}
In all of the proof of the Claim, we will use the bound
\begin{equation}
 \label{G33}
 |W(x)|+|\Lambda W(x)|\lesssim \min(1,|x|^{2-N}).
\end{equation}
By scaling, we can assume $\mu=1$. By symmetry, it is sufficient to bound the integrals for $t\geq 0$. The proofs for all $4$ bounds are the same. 
We divide the domain of integration for $r$ in three parts, $(0,\lambda)$, $(\lambda,1)$ and $(1,\infty)$, writing 
\begin{multline*}
\int_0^{+\infty} \left(\int_{t}^{+\infty}\ldots r^{N-1}\,dr\right)^{\frac 12}dt
\lesssim \int_0^{\lambda} \left( \int_{t}^{\lambda} \ldots r^{N-1}\,dr \right)^{\frac 12}\,dt\\
+\int_0^{1} \left( \int_{\max\{t,\lambda\}}^{1} \ldots r^{N-1}\,dr \right)^{\frac 12}\,dt+\int_0^{+\infty} \left( \int_{\max\{t,1\}}^{+\infty} \ldots r^{N-1}\,dr \right)^{\frac 12}\,dt\\
=(1)+(2)+(3),
\end{multline*}
where $\ldots$ is either  $\left(W_{(\lambda)}^{\frac{4}{N-2}}W\right)^2$, $\left(W^{\frac{4}{N-2}}W_{(\lambda)}\right)^2$, $\left(tW_{(\lambda)}^{\frac{4}{N-2}}W\right)^2$ or $\left(tW^{\frac{4}{N-2}}W_{[\lambda]}\right)^2$. In the integrals $(1)$, we use the bound $W(r/\lambda)+W(r)\lesssim 1$,  in the integrals $(2)$, we use the bounds $W(r/\lambda)\lesssim \left( \frac{\lambda}{r} \right)^{N-2}$ and $W(r)\lesssim 1$, and in the integrals $(3)$, we use the bounds $W(r/\lambda)\lesssim \left( \frac{\lambda}{r} \right)^{N-2}$, $W(r)\lesssim \frac{1}{r^{N-2}}$. 

We will detail the proof of \eqref{G31}
 and sketch the proof of the other estimates.
 
\medskip 
 
\noindent\emph{Proof of \eqref{G31}}.
 We have
 \begin{multline*}
(1)=\int_0^{\lambda} \left( \int_{t}^{\lambda} W^2\frac{1}{\lambda^4}W^{\frac{8}{N-2}}\left( \frac{r}{\lambda} \right)r^{N-1}\,dr \right)^{\frac 12}\,dt\\
\lesssim \frac{1}{\lambda^2}\int_0^{\lambda} \left( \int_{t}^{\lambda} r^{N-1}\,dr \right)^{\frac 12}\,dt \lesssim \lambda^{\frac{N-2}{2}}. 
 \end{multline*}
 If $N\in \{5,7\}$, we have
 \begin{multline*}
(2)= \int_0^{1} \left( \int_{\max\{t,\lambda\}}^{1} W^2\frac{1}{\lambda^4}W^{\frac{8}{N-2}}\left( \frac{r}{\lambda} \right) r^{N-1}\,dr \right)^{\frac 12}\,dt\\
\lesssim \lambda^2\int_0^{\lambda}\left( \int_{\lambda}^1 r^{N-9}\,dr \right)^{\frac{1}{2}}dt+\lambda^2\int_{\lambda}^{1}\left( \int_{t}^1 r^{N-9}\,dr \right)^{\frac{1}{2}}dt\\
\lesssim \lambda^2\int_0^{\lambda}\lambda^{\frac{N-8}{2}}\,dt +\lambda^2\int_{\lambda}^1 t^{\frac{N-8}{2}}\,dt
\lesssim \begin{cases}\lambda^2 &\text{ if }N=7\\ \lambda^{\frac{3}{2}}&\text{ if }N=5\end{cases}
 \end{multline*}
 If $N\geq 9$, we obtain
 $$(2)\lesssim \lambda^2\int_0^{\lambda}\left( \int_{\lambda}^1 r^{N-9}\,dr \right)^{\frac{1}{2}}dt+\lambda^2\int_{\lambda}^{1}\left( \int_{t}^1 r^{N-9}\,dr \right)^{\frac{1}{2}}dt\lesssim \lambda^2.$$
It remains to bound the third integral:
\begin{multline*}
 (3)=\int_0^{+\infty}\left( \int_{\max\{1,|t|\}}^{+\infty}W_{(\lambda)}^{\frac{8}{N-2}}W^2r^{N-1}dr \right)^2dt\\
 \lesssim \int_0^{+\infty}\left( \int_{\max\{1,|t|\}}^{+\infty}\frac{\lambda^4}{r^{8+2(N-2)}}r^{N-1}dr \right)^2dt\lesssim \lambda^2.
\end{multline*}
Combining the preceding bounds, we obtain \eqref{G31}.

\medskip

\noindent\emph{Sketch of proof of \eqref{G31'}}.
By analoguous arguments, we obtain:
\begin{itemize}
 \item For \eqref{G31'},
 $$(1)\lesssim \lambda^2,\quad (2)\lesssim \begin{cases}\lambda^{\frac 32}&\text{ if }N=5\\ \lambda^{2}&\text{ if }N\geq 7 \end{cases},\quad (3)\lesssim \lambda^{\frac{N-2}{2}};$$
\item For \eqref{G32},
$$(1)\lesssim \lambda^{\frac{N}{2}},\quad (2)\lesssim \lambda^2,\quad (3)\lesssim \lambda^2;$$
\item For \eqref{G32'},
$$(1)\lesssim \lambda^2,\quad (2)\lesssim \begin{cases}
\lambda^{\frac 12}&\text{ if }N=5\\
\lambda^{\frac 32}&\text{ if }N=7\\
\lambda^2&\text{ if }N\geq 9
\end{cases},
\quad (3)\lesssim \lambda^{\frac{N-4}{2}}.
$$
\end{itemize}
\end{proof}
\begin{claim}
\label{C:G30'}
 Assume $N\geq 5$. Let $0<\lambda<\mu$. Then 
 \begin{equation}
  \label{G31''} 
  \left\|\indxt \min\left\{W_{(\lambda)}^{\frac{4}{N-2}}W_{(\mu)},W_{(\mu)}^{\frac{4}{N-2}}W_{(\lambda)}\right\} \right\|_{L^1(\RR,L^2)} \lesssim \left( \frac{\lambda}{\mu} \right)^{\frac{N+2}{4}}.
 \end{equation}
\end{claim}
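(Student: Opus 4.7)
The plan is to exploit the fact that~\eqref{G31''} is strictly sharper than either of the two symmetric estimates~\eqref{G31} and~\eqref{G31'}: the naive interpolation $\min(A,B)\leq \sqrt{AB}$, where $A:=W_{(\lambda)}^{4/(N-2)}W_{(\mu)}$ and $B:=W_{(\mu)}^{4/(N-2)}W_{(\lambda)}$, can only reproduce the weaker exponents of Claim~\ref{C:G30} (in fact with a logarithmic loss if one bounds the resulting integral in an intermediate time regime), so the improvement must come from a genuine pointwise use of the minimum. First I would reduce to $\mu=1$ by the scaling argument at the start of the proof of Claim~\ref{C:G30}. The elementary identity $W_{(\lambda)}(x)>W(x)\Longleftrightarrow |x|^2<N(N-2)\lambda$, combined with the relation $A/B=(W_{(\lambda)}/W)^{(6-N)/(N-2)}$, then locates the crossover at $|x|\sim \sqrt{\lambda}$: for $N\geq 7$, $\min(A,B)=A$ on $|x|\lesssim \sqrt{\lambda}$ and $\min(A,B)=B$ beyond, while for $N=5$ the assignment is reversed.

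Using the standard bound $W(x)\lesssim \min(1,|x|^{2-N})$ and the corresponding rescaled bound on $W_{(\lambda)}$, I would split the radial variable into the four zones cut out by $\lambda$, $\sqrt{\lambda}$, $1$, and in each zone insert the pointwise size of whichever of $A$, $B$ is the minimum. For each zone I would compute the radial integral of $\min(A,B)^2\,r^{N-1}$ over the part with $r\geq t$, take the square root, and integrate in $t$ with the same splittings at $\lambda$, $\sqrt{\lambda}$, $1$. The outer zones $\{r<\lambda\}$ and $\{r>1\}$ will contribute at most $\lambda^{(N-2)/2}$ (for $N\geq 7$) or $\lambda^2$ (for $N=5$), both strictly smaller than $\lambda^{(N+2)/4}$ since $\lambda<1$ and $N\geq 5$ is odd.

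The main obstacle is that the two middle zones yield exactly the sharp exponent $\lambda^{(N+2)/4}$, the delicate case being $N=7$. In the zone $\{\lambda\leq r\leq \sqrt{\lambda}\}$ with $N=7$ one has $\min(A,B)=A\sim \lambda^2 r^{-4}$, so $A^2 r^{N-1}=\lambda^4 r^{-2}$ and the radial integral is logarithmic; this produces $\|A\,\indxt\|_{L^2_x}\sim \lambda^2/\sqrt{\max(t,\lambda)}$, and then the time integral $\int_\lambda^{\sqrt{\lambda}} t^{-1/2}\,dt\sim \lambda^{1/4}$ combines with the $\lambda^2$ prefactor to give exactly $\lambda^{9/4}=\lambda^{(N+2)/4}$. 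The analogous delicate case is $\{\sqrt{\lambda}\leq r\leq 1\}$ for $N=5$, handled by the same computation; for $N\geq 9$ the middle-zone radial integrals are clean powers of the endpoints and produce $\lambda^{(N+2)/4}$ without any borderline manipulation.
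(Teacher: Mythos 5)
Your proposal matches the paper's proof essentially line for line: scale to $\mu=1$, locate the crossover $r\sim\sqrt{\lambda}$ where $W_{(\lambda)}$ crosses $W$ (so that the branch of the minimum switches, with the assignment flipping between $N=5$ and $N\geq 7$ because $\frac{4}{N-2}-1$ changes sign), split into the four radial zones $\{r<\lambda\}$, $\{\lambda<r<\sqrt{\lambda}\}$, $\{\sqrt{\lambda}<r<1\}$, $\{r>1\}$, and then integrate first in $r$ over $\{r\geq t\}$ and afterwards in $t$ with the same splittings. One small slip in the write-up: for $N=7$ the radial integrand $\lambda^4 r^{-2}$ does not give a logarithm — the $r$-integral is $\sim\lambda^4/\max(t,\lambda)$, a clean power dominated by the lower endpoint — but your very next formula $\|A\,\indxt\|_{L^2_x}\sim\lambda^2/\sqrt{\max(t,\lambda)}$ is exactly that correct power-law, so the resulting $\lambda^{9/4}$ and the overall argument are unaffected.
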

\begin{proof}
As before, we will use continuously the bound $|W(x)|\lesssim \min(1,|x|^{2-N})$. By scaling, we can assume
$\mu=1$ (and thus $\lambda\leq 1$). We note that 
$$\sqrt{\lambda}\lesssim r\Longrightarrow W_{(\lambda)}\lesssim W,\quad r\lesssim \sqrt{\lambda}\Longrightarrow W_{(\lambda)}\gtrsim W.$$
We divide the space into $4$ regions, writing:
\begin{multline*}
\frac{1}{2}\left\|\indxt \min\left\{W_{(\lambda)}^{\frac{4}{N-2}}W,W^{\frac{4}{N-2}}W_{(\lambda)}\right\} \right\|_{L^1(\RR,L^2)}\\
=\int_0^{+\infty}\left(\int_{t}^{+\infty}\min\left\{W_{(\lambda)}^{\frac{4}{N-2}}W,W^{\frac{4}{N-2}}W_{(\lambda)}\right\}r^{N-1}\,dr   \right)^{\frac 12}\,dt\\
\lesssim \int_0^{\lambda}\left( \int_{t}^{\lambda}\ldots \right)^{\frac 12}\,dt+\int_0^{\sqrt{\lambda}}\left( \int_{\max\{t,\lambda\}}^{\sqrt{\lambda}}\ldots \right)^{\frac 12}\,dt\\
+\int_0^{1}\left( \int_{\max\{t,\sqrt{\lambda}\}}^{1}\ldots \right)^{\frac 12}\,dt+\int_0^{+\infty}\left( \int_{\max\{t,1\}}^{+\infty}\ldots \right)^{\frac 12}\,dt\\
=A_1+A_2+A_3+A_4.
\end{multline*}
\setcounter{cas}{0}
\begin{cas}[$N\geq 7$]
 In this case, $\frac{4}{N-2}<1$. 
 We have 
 $$A_1\lesssim \int_0^\lambda \left( \int_{|t|}^{\lambda}W^2\frac{1}{\lambda^4}W^{\frac{8}{N-2}}\left( \frac{r}{\lambda} \right)r^{N-1}\,dr \right)^{\frac 12}\,dt.$$
 Using that $W$ is bounded, we obtain
 \begin{equation}
  \label{bnd_A1}
  A_1\lesssim \frac{1}{\lambda^2} \int_0^{\lambda} \left( \int_0^{\lambda}r^{N-1}\,dr \right)^{\frac 12}\,dt\lesssim \lambda^{\frac{N-2}{2}}.
 \end{equation} 
 We next consider $A_2$:
\begin{equation*}
  A_2\lesssim \int_0^{\sqrt{\lambda}}\left( \int_{\max\{t,\lambda\}}^{\sqrt{\lambda}}W^2\frac{1}{\lambda^4}W^{\frac{8}{N-2}}\left( \frac{r}{\lambda} \right)r^{N-1}\,dr\right)^{\frac 12}\,dt.
 \end{equation*}
Using the bounds $W^2\lesssim 1$ and $W^{\frac{8}{N-2}}\left( \frac{r}{\lambda} \right)\lesssim \frac{\lambda^8}{r^8}$, we obtain
\begin{equation*}
 A_2\lesssim \lambda^2\int_0^{\sqrt{\lambda}} \left( \int_{\max\{t,\lambda\}}^{\sqrt{\lambda}}r^{N-9}\,dr \right)^{\frac 12}\,dt.
\end{equation*}
If $N=7$, this yields
$$A_2\lesssim \lambda^2\int_{0}^{\lambda}\frac{1}{\lambda^{\frac 12}}\,dt+\lambda^2\int_{\lambda}^{\sqrt{\lambda}}\frac{1}{\sqrt{t}}\,dt\lesssim \lambda^{\frac{9}{4}}.$$
If $N\geq 9$, we deduce:
$$
A_2\lesssim \lambda^2\int_0^{\sqrt{\lambda}} \lambda^{\frac{N-8}{4}}dt=\lambda^{\frac{N+2}{4}}.$$
In both cases, we have obtained:
\begin{equation}
 \label{bnd_A2}
 A_2\lesssim \lambda^{\frac{N+2}{4}}.
\end{equation} 
We have:
$$A_3\lesssim \frac{1}{\lambda^{\frac{N-2}{2}}}\int_0^{1}\left( \int_{\max\{t,\sqrt{\lambda}\}}^{1}W^2\left( \frac{r}{\lambda} \right)W^{\frac{8}{N-2}}(r)r^{N-1}\,dr\right)^{\frac 12}\,dt.$$ 
 \end{cas}
Using the bounds $W^2\left( \frac{r}{\lambda} \right)\lesssim \left(\frac{\lambda}{r}\right)^{2(N-2)}$, $W^{\frac{8}{N-2}}\lesssim 1$, we deduce:
$$A_3\lesssim \lambda^{\frac{N-2}{2}}\int_0^{\sqrt{\lambda}} \left( \int_{\sqrt{\lambda}}^1r^{3-N}\,dr \right)^{\frac 12}\,dt+\lambda^{\frac{N-2}{2}}\int_{\sqrt{\lambda}}^1 \left( \int_{t}^1r^{3-N}\,dr \right)^{\frac 12}\,dt,$$
which yields
\begin{equation}
 \label{bnd_A3}
 A_3\lesssim \lambda^{\frac{N+2}{4}}.
\end{equation} 
Finally, we bound $A_4$. We have
$$A_4\lesssim \frac{1}{\lambda^{\frac{N-2}{2}}}\int_0^{\infty} \left( \int_{\max\{1,t\}}^{+\infty}W^2\left( \frac{r}{\lambda} \right)W^{\frac{8}{N-2}}\left( r \right)r^{N-1}dr \right)^{\frac 12}dt.$$
Using the bounds $W^2\left( \frac{r}{\lambda} \right)\lesssim \left( \frac{\lambda}{r} \right)^{2(N-2)}$ and $W^{\frac{8}{N-2}}\lesssim \frac{1}{r^{8}}$, we deduce
\begin{equation}
\label{bnd_A4}
A_4\lesssim \lambda^{\frac{N-2}{2}}.
\end{equation} 
Combining \eqref{bnd_A1}, \eqref{bnd_A2}, \eqref{bnd_A3} and \eqref{bnd_A4}, and noting that $\frac{N+2}{4}\leq \frac{N-2}{2}$ if $N\geq 6$, we deduce the bound \eqref{G31''} when $N\geq 7$. 
\begin{cas}[$N=5$]
If $N=5$, we have $\frac{4}{N-2}=\frac{4}{3}>1$. The proof is the same as in the preceding case, except that 
$$ \min\left\{W^{\frac{4}{3}}_{(\lambda)}W,W^{\frac{4}{3}}W_{(\lambda)}\right\}(r)\approx \begin{cases} 
W^{\frac{4}{3}}(r)W_{(\lambda)}(r)&\text{ if }r\leq \sqrt{\lambda}\\
W^{\frac{4}{3}}_{(\lambda)}(r)W(r)&\text{ if }r\geq \sqrt{\lambda}.
\end{cases}
$$
By explicit computation, one obtains the bounds 
$$A_1\lesssim \lambda^2,\quad A_2\lesssim \lambda^{\frac{7}{4}},\quad A_3\lesssim \lambda^{\frac{7}{4}},\quad A_4\lesssim \lambda^2,$$
which yields the bound \eqref{G31''}. We leave the details for the reader.
 \end{cas}

\end{proof}

\subsection{Pointwise bounds}
\label{A:pointwise}
\begin{claim}
 \label{Cl:pointwise1}
 Assume $N\geq 5$, $J\geq 1$.
 For all $(y_1,\ldots,y_J,h)\in \RR^{J+1}$,
\begin{multline}
 \label{L40}
 \Bigg| 
 \frac{N-2}{2N}\bigg|\sum_{j=1}^J y_j+h\bigg|^{\frac{2N}{N-2}}-\frac{N-2}{2N}\sum_{j=1}^J |y_j|^{\frac{2N}{N-2}}\\
 -\sum_{j=1}^J |y_j|^{\frac{4}{N-2}} y_jh
-\sum_{\substack{1\leq j,k\leq J\\ j\neq k}} |y_j|^{\frac{4}{N-2}} y_jy_k
 \Bigg| \\
 \lesssim |h|^{\frac{2N}{N-2}}+\sum_{j=1}^J |y_j|^{\frac{4}{N-2}} h^2\\
 +\sum_{1\leq j<k\leq J}\left( \min\left\{ |y_j|^{\frac{4}{N-2}} y_k^2,|y_k|^{\frac{4}{N-2}} y_j^2\right\}+\min\left\{ |y_j|^{\frac{N+2}{N-2}} |y_k|,|y_k|^{\frac{N+2}{N-2}} |y_j|\right\} \right),
\end{multline}
 \end{claim}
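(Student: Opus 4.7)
\medskip

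\noindent\textbf{Proof proposal for Claim \ref{Cl:pointwise1}.}

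Set $p = \frac{2N}{N-2}$ and $G(z) = \frac{1}{p}|z|^p$, so that $G'(z) = |z|^{p-2}z$ is the derivative being used in the linearization and $|G''(z)| = (p-1)|z|^{p-2}$ away from the origin. With $Y := \sum_{j=1}^J y_j$, the inequality to prove is of the form
$$\bigl| G(Y+h) - \textstyle\sum_j G(y_j) - \sum_j G'(y_j)h - \sum_{j\neq k} G'(y_j)y_k \bigr| \;\leq\; \text{RHS}.$$
Both sides are homogeneous of degree $p$ in $(y_1,\ldots,y_J,h)$ and symmetric in the $y_j$'s, so the plan is to reduce to $\max_j|y_j|+|h|=1$ and to order the indices so that $|y_1|\geq|y_2|\geq\cdots\geq|y_J|$. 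Note the key ranges $2<p\leq 4$ for $N\geq 5$; they govern which of the two terms in each $\min$ is smaller. A short computation shows that for $|y_1|\geq|y_k|$, one has $\min\{|y_1|^{p-2}y_k^2,|y_k|^{p-2}y_1^2\}=|y_1|^{p-2}y_k^2$ and $\min\{|y_1|^{p-1}|y_k|,|y_k|^{p-1}|y_1|\}=|y_k|^{p-1}|y_1|$.

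\emph{Dominant case.} Suppose first that $|y_1|\geq 4(|y_2|+\cdots+|y_J|+|h|)$. Set $s := h+\sum_{k\geq 2} y_k$, so $|s|\leq |y_1|/4$ and $|y_1+ts|\approx|y_1|$ for all $t\in[0,1]$. By Taylor's theorem applied to $G$, which is $C^2$ on this segment with $|G''|\lesssim|y_1|^{p-2}$, one obtains $|G(y_1+s)-G(y_1)-G'(y_1)s|\lesssim |y_1|^{p-2}s^2$. On the other hand, the candidate $\tilde P:=\sum_j G(y_j)+\sum_j G'(y_j)h+\sum_{j\neq k}G'(y_j)y_k$ decomposes as $G(y_1)+G'(y_1)s + R$ where $R=\sum_{j\geq 2}[G(y_j)+G'(y_j)(h+\sum_{k\neq j}y_k)]$. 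Thus the left-hand side is bounded by $|y_1|^{p-2}s^2+|R|$. I would then expand $s^2\lesssim h^2+\sum_{k\geq 2}y_k^2$ and bound each piece: $|y_1|^{p-2}h^2$ is absorbed in $\sum_j|y_j|^{p-2}h^2$; the terms $|y_1|^{p-2}y_k^2$ ($k\geq 2$) are exactly the first $\min$ from the pair $(1,k)$; each $|y_j|^p$ with $j\geq 2$ satisfies $|y_j|^p\leq |y_1|^{p-2}y_j^2$ (first $\min$ from $(1,j)$); each $|y_j|^{p-1}|h|$ is controlled by $\tfrac12(|y_j|^p+|y_j|^{p-2}h^2)$; and each $|y_j|^{p-1}|y_k|$ with $j\geq 2$ is comparable to the appropriate second $\min$ thanks to the identification above.

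\emph{Non-dominant case.} If instead $|y_1|\leq 4(|y_2|+\cdots+|y_J|+|h|)$, there exists either an index $k^*\geq 2$ with $|y_{k^*}|\gtrsim|y_1|$, or else $|h|\gtrsim|y_1|$. I would then estimate the left-hand side term by term via the triangle inequality: $|G(Y+h)|\lesssim\sum_j|y_j|^p+|h|^p$, $|G(y_j)|\lesssim|y_j|^p$, $|G'(y_j)h|\lesssim|y_j|^{p-1}|h|$, $|G'(y_j)y_k|\lesssim|y_j|^{p-1}|y_k|$. The task is to absorb each of these into the four terms of the RHS: when $|h|\gtrsim|y_1|$ every $|y_j|^p$ and $|y_j|^{p-1}|h|$ reduces to $|h|^p$ or $|y_j|^{p-2}h^2$; when $|y_{k^*}|\gtrsim|y_1|$, the estimate $|y_1|^p\approx |y_1|^{p-2}y_{k^*}^2$ allows packaging into the first $\min$ of the pair $(1,k^*)$, after which the remaining $|y_j|^p$, $|y_j|^{p-1}|y_k|$ are handled as in the dominant case.

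The main obstacle I expect is the careful bookkeeping in the dominant case: making sure that cross terms $|y_j|^{p-1}|y_k|$ arising from the $R$ remainder are matched against the \emph{correct} side of the two $\min$'s in the RHS. The calculation hinges critically on the inequality $p\leq 4$ (equivalently $N\geq 5$), which flips the ordering inside one of the minima and is precisely what makes the Taylor error $|y_1|^{p-2}y_k^2$ compatible with the claimed bound. For $N\in\{3,4\}$, where $p>4$, the analogous inequality would require a different grouping of the quadratic cross terms; this is the technical reason the claim is stated only for $N\geq 5$.
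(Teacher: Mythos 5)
Your proposal is correct and takes essentially the same route as the paper: order so that $|y_1|=\max_j|y_j|$, linearize $G(z)=\frac{1}{p}|z|^p$ around $y_1$, and use $\frac{4}{N-2}<2$ (equivalently $p<4$, i.e.\ $N\geq 5$) to identify which side of each $\min$ on the right-hand side is the smaller one. The only technical variation is that the paper sidesteps your dominant/non-dominant dichotomy by replacing pure Taylor with the global elementary bound $\bigl|\frac{1}{p}|1+s|^p-\frac{1}{p}-s\bigr|\lesssim s^2+|s|^p$, valid for \emph{all} $s\in\RR$, so that the whole case $|h|\leq\max_j|y_j|$ is treated at once; the remaining case $|h|\geq\max_j|y_j|$ is dispatched trivially.
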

\begin{proof}
 We fix $(y_1,\ldots,y_J,h)\in \RR^{J+1}$ and distinguish between two cases.
\setcounter{cas}{0}
 \begin{cas}[$|h|\geq \max_{1\leq j\leq J}|y_j|$]
  In this case, the inequality is trivial since all terms of the left-hand side are bounded by $|h|^{\frac{2N}{N-2}}$ up to a constant.
 \end{cas}
 \begin{cas}[$|h|\leq \max_{1\leq j\leq J}|y_j|$]
  We assume without loss of generality $$|y_{1}|=\max_{1\leq j\leq J}|y_j|.$$ We use the inequality
  $$ \left|\frac{N-2}{2N}|1+s|^{\frac{2N}{N-2}}-\frac{N-2}{2N}-s\right|\lesssim s^2+|s|^{\frac{2N}{N-2}}$$
  with 
  $s=\frac{1}{y_{1}}\left( h+\sum_{j=2}^J y_j \right).$.
  Multiplying the resulting inequality by $|y_1|^{\frac{2N}{N-2}}$, we obtain
  \begin{multline}
  \label{L50}
   \left|\frac{N-2}{2N}\bigg|h+\sum_{j=1}^J y_j\bigg|^{\frac{2N}{N-2}}-\frac{N-2}{2N}|y_{1}|^{\frac{2N}{N-2}}-|y_1|^{\frac{4}{N-2}}y_1\left( h+\sum_{j=2}^J y_j \right)\right|\\
   \lesssim |y_1|^{\frac{4}{N-2}}\left( h^2+\sum_{j=2}^J y_j^2\right)+|h|^{\frac{2N}{N-2}}+\sum_{j=2}^J |y_j|^{\frac{2N}{N-2}}
  \end{multline}
  Since $|y_1|=\max_{1\leq j\leq J}|y_j|$ and $\frac{4}{N-2}<2$, the right-hand side of \eqref{L50} is clearly bounded by $$
 |h|^{\frac{2N}{N-2}}+\sum_{j=1}^J |y_j|^{\frac{4}{N-2}} h^2\\
 +\sum_{1\leq j<k\leq J}\min\left\{ |y_j|^{\frac{4}{N-2}} y_k^2,|y_k|^{\frac{4}{N-2}} y_j^2\right\}.$$
 It remains to bound the terms that appear in the left-hand side of \eqref{L40} but not on the left-hand side of \eqref{L50}. Using again that $|y_1|=\max_{1\leq j\leq J}|y_j|$, we have 
 \begin{align*}
\sum_{j=2}^J |y_j|^{\frac{2N}{N-2}}&\lesssim \sum_{j\neq k} \min\left\{ |y_j|^{\frac{4}{N-2}} y_k^2,|y_k|^{\frac{4}{N-2}} y_j^2\right\}\\
\bigg|\sum_{j=2}^{J} |y_j|^{\frac{4}{N-2}} y_jh\bigg|&\lesssim \sum_{j=2}^J |y_j|^{\frac{2N}{N-2}}+\sum_{j=2}^J |y_j|^{\frac{4}{N-2}}h^2\\&\lesssim \sum_{j\neq k} \min\left\{ |y_j|^{\frac{4}{N-2}} y_k^2,|y_k|^{\frac{4}{N-2}} y_j^2\right\}+\sum_{j=2}^J |y_j|^{\frac{4}{N-2}}h^2\\
\bigg|
\sum_{\substack{2\leq j\leq J\\ 1\leq k\leq J\\ j\neq k}}|y_j|^{\frac{4}{N-2}}y_jy_k\bigg|&\lesssim \sum_{2\leq j\leq J}|y_1||y_j|^{\frac{N+2}{N-2}}\lesssim \sum_{j\neq k}\min\left\{ |y_j|^{\frac{N+2}{N-2}}|y_k|,|y_k|^{\frac{N+2}{N-2}}|y_j|\right\},
 \end{align*}
 which concludes the proof.
\end{cas}
\end{proof}
Recall the notation $F(\sigma)=|\sigma|^{\frac{4}{N-2}}\sigma$. 
\begin{claim}
 \label{Cl:pointwise2}
 Assume $N\geq 7$. Let $J\geq 1$. Then for all $(y_1,\ldots,y_J,h)\in \RR^{J+1}$, if $N\geq 7$,
 \begin{multline}
\label{L10}
\bigg|F\bigg(h+\sum_{j=1}^Jy_j\bigg)-\sum_{j=1}^J F(y_j)-\frac{N+2}{N-2}\sum_{j=1}^J |y_j|^{\frac{4}{N-2}}h-F(h)\bigg|\\
\lesssim \sum_{1\leq j<k\leq J} \min\left\{ |y_j|^{\frac{4}{N-2}}|y_k|,|y_k|^{\frac{4}{N-2}}|y_j| \right\}+|h|^{\frac{N+1}{N-2}} \sum_{j=1}^{J} |y_j|^{\frac{1}{N-2}},
 \end{multline}
and if $N=5$,
 \begin{multline}
 \label{L10'}
 \bigg|
F\bigg(\sum_{j=1}^J y_j+h\bigg)-\sum_{j=1}^J F(y_j)-\frac 73\sum_{j=1}^J |y_j|^{\frac 43}h-\frac 73\sum_{\substack{ 1\leq j,k\leq J\\ j\neq k}}|y_j|^{\frac 43} y_k-F(h)\bigg|\\ \lesssim \sum_{j=1}^{J}|y_j|^{\frac 13}h^2+\sum_{1\leq j<k\leq J}\min\left( |y_j|^{\frac 43}|y_k|,|y_k|^{\frac 43}|y_j| \right)
 \end{multline}
 \end{claim}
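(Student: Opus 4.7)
By symmetry we may assume $|y_1| = \max_{1 \leq j \leq J}|y_j|$. Set $S = \sum_{j=1}^J y_j$, $\tilde S = S - y_1$, and $\alpha = \frac{4}{N-2}$, so that $F(x) = |x|^\alpha x$ has derivative $F'(x) = (\alpha+1)|x|^\alpha$, and when $N = 5$ (so $\alpha = 4/3 > 1$) a second derivative with $|F''(x)| \lesssim |x|^{1/3}$. The strategy is a case analysis on the relative size of $|h|$ and $|y_1|$, combined with a Taylor expansion of $F$ around whichever of these is dominant.

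In the regime $|h| \geq |y_1|$ we have $|S| \lesssim_J |h|$, and I would expand around $h$: for $N \geq 7$ using the first-order remainder $|F(h+S) - F(h) - F'(h)S| \lesssim \min\{|S|^{\alpha+1}, |S|^2 |h|^{\alpha-1}\}$, and for $N = 5$ the second-order remainder $|F(h+S) - F(h) - F'(h)S - \tfrac{1}{2}F''(h)S^2| \lesssim |S|^{7/3}$. Substituting into the left-hand side, the residual pieces are the mismatch $F'(h)S - \tfrac{N+2}{N-2}\sum_j |y_j|^\alpha h$, the Taylor remainder, the subtracted $\sum F(y_j)$, and, for $N = 5$, the subtracted double sum and the quadratic piece $\tfrac{1}{2}F''(h)S^2$. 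Since $|y_j| \leq |h|$ throughout, each of these is dominated by the right-hand side via elementary power-counting inequalities such as $|h|^\alpha|y_j| \leq |h|^{\frac{N+1}{N-2}}|y_j|^{\frac{1}{N-2}}$ for $N \geq 7$ (the exponents sum to $\alpha+1$, and the ratio equals $(|y_j|/|h|)^{3/(N-2)} \leq 1$), and $|h|^\alpha|y_j| + |h||y_j|^\alpha \lesssim |y_j|^{1/3} h^2$ for $N = 5$.

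In the regime $|h| \leq |y_1|$ I would expand around $y_1$:
\[
F(y_1 + h + \tilde S) = F(y_1) + F'(y_1)(h + \tilde S) + R,
\]
where $|R| \lesssim |h + \tilde S|^{\alpha+1}$ for $N \geq 7$, and $R$ includes the quadratic $\tfrac{1}{2}F''(y_1)(h+\tilde S)^2$ plus an $|h+\tilde S|^{7/3}$ remainder for $N = 5$. Substituting into the left-hand side, the $F'(y_1)h$ piece exactly cancels the $j = 1$ contribution of $\tfrac{N+2}{N-2}\sum|y_j|^\alpha h$, and, for $N = 5$, the $F'(y_1)\tilde S$ piece cancels the $j = 1$ slice of $\tfrac{7}{3}\sum_{j \neq k}|y_j|^{4/3}y_k$. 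The remaining quantities---$\sum_{j \geq 2}F(y_j)$, the mismatched linear terms $\sum_{j \geq 2}|y_j|^\alpha h$, and, for $N = 5$, the residual double sums together with the quadratic Taylor term---reduce, after a subsequent split on whether $|y_j| \leq |h|$ or $|y_j| > |h|$, to elementary inequalities such as $|y_j|^\alpha|h| \leq |y_1|^\alpha |y_j|$ (valid when $|y_j| \geq |h|$ because $|h| \leq |y_1|^\alpha |y_j|^{1-\alpha}$ follows from $|h| \leq |y_1|$ and $|h| \leq |y_j|$) and, for $N=5$, $|y_1|^{1/3}|y_j||y_k| \leq |y_k|^{4/3}|y_1|$ (when $|y_j| \leq |y_k| \leq |y_1|$, via $|y_j| \leq |y_k|^{1/3}|y_1|^{2/3}$).

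The main obstacle is the bookkeeping in the $N=5$ case: the symmetric double sum $\sum_{j \neq k}|y_j|^{4/3}y_k$ in the statement must be matched, piece by piece, with terms arising from the single expansion around $y_1$. In particular, the ``off-diagonal'' contributions with $2 \leq j, k$ and $j \neq k$ have to be absorbed through $(1, j)$ or $(1, k)$ cross terms rather than the nominal $(j,k)$ one: the elementary inequality $|y_j|^{4/3}|y_k| \leq |y_j|^{4/3}|y_1|$ (using $|y_k| \leq |y_1|$) is the key trick, allowing a ``max-type'' cross term $|y_{\max}|^{4/3}|y_{\min}|$ to be exchanged for the allowed ``min-type'' cross term on the right-hand side of the claim.
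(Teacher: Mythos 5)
Your proposal follows essentially the same strategy as the paper's proof: split into the two regimes $|h|\geq\max_j|y_j|$ and $|h|\leq\max_j|y_j|$, Taylor expand $F$ around the dominant variable ($h$ or $y_1$ respectively), cancel exactly the matched linear pieces, and control the remaining cross terms and Taylor remainders by elementary power counting. The argument is sound and the key cancellations and bookkeeping you identify are correct. The only notable (and harmless) deviation is in the $N=5$ case: you propose an explicit second-order expansion $F(u+v)=F(u)+F'(u)v+\tfrac12F''(u)v^2+O(|v|^{7/3})$, whereas the paper uses the same first-order expansion with the single quadratic remainder bound $|F(1+s)-1-\tfrac{N+2}{N-2}s|\lesssim s^2$ for $|s|$ bounded (valid here since $F$ is $C^2$ when $N=5$), and then absorbs the size-$s^2$ remainder directly into the right-hand side; both routes work, and the second merely saves a step of bookkeeping. (A minor typo in your power-counting aside: in the ratio computation for $|h|^{\alpha}|y_j|$ versus $|h|^{\frac{N+1}{N-2}}|y_j|^{\frac{1}{N-2}}$ the exponent is $\tfrac{N-3}{N-2}$, not $\tfrac{3}{N-2}$, though the conclusion that the ratio is $\leq 1$ is unchanged.)
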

 \begin{proof}
 \setcounter{cas}{0}
 \begin{cas}[$\max_j|y_j|\leq |h|$]
  We use:
  \begin{equation}
   \label{L11}
   \quad |s|\leq J\Longrightarrow
   \left|F(1+s)-1-\frac{N+2}{N-2}s\right|\lesssim s^2,
  \end{equation} 
  with $s=\frac{1}{h}\sum_{j=1}^J y_j$. Multiplying the resulting inequality by $|h|^{\frac{N+2}{N-2}}$, we obtain
  $$
  \bigg|F\bigg(h+\sum_{j=1}^Jy_j\bigg)-F(h)-\frac{N+2}{N-2} |h|^{\frac{4}{N-2}}\sum_{j=1}^J y_j\bigg|\lesssim |h|^{\frac{N+2}{N-2}-2}\sum_{j=1}^J y_j^2.
  $$
 Using that $\max_j |y_j|\leq |h|$ we deduce \eqref{L10} or \eqref{L10'}.
 \end{cas}
 \begin{cas}[$|h|\leq \max_j |y_j|$]
  We assume without loss of generality $|y_1|=\max_{1\leq j\leq J} |y_j|$. We use \eqref{L11} with $s=\frac{1}{y_1}\left( h+\sum_{j=2}^J y_j \right)$. Multiplying the resulting inequality by $|y_1|^{\frac{N+2}{N-2}}$, we obtain:
  $$\bigg| F\left( h+\sum_{1\leq j\leq J} y_j \right)-F(y_1)-\frac{N+2}{N-2}|y_1|^{\frac{4}{N-2}}\bigg(h+\sum_{j=2}^J y_j\bigg)\bigg|\lesssim |y_1|^{\frac{N+2}{N-2}-2}\bigg(h^2+\sum_{j=2}^J y_j^2\bigg).$$
 Let $j\geq 2$. Since $|y_j|\leq |y_1|$,
 $$F(y_j)\lesssim \min\left\{|y_j|^{\frac{4}{N-2}}|y_1|,|y_1|^{\frac{4}{N-2}}|y_j|\right\}.$$
 Furthermore 
 $$|y_j|^{\frac{4}{N-2}}|h|\lesssim \begin{cases} 
|y_j|^{\frac{1}{N-2}}|h|^{\frac{N+1}{N-2}}&\text{ if }|y_j|<|h|\\
\min\left\{|y_j|^{\frac{4}{N-2}}|y_1|,|y_1|^{\frac{4}{N-2}}|y_j|\right\}&\text{ if }|h|\leq |y_j|,
\end{cases}
 $$
 and also
 \begin{align*}
|y_{1}|^{\frac{N+2}{N-2}-2}h^2+F(h)&\lesssim |y_1|^{\frac{1}{N-2}}|h|^{\frac{N+1}{N-2}}\\
 |y_1|^{\frac{N+2}{N-2}-2}y_j^2&\leq \min\left\{|y_1|^{\frac{4}{N-2}}|y_j|,|y_j|^{\frac{4}{N-2}}|y_1|\right\}.
 \end{align*}
If $N\geq 7$, we have $\frac{4}{N-2}<1$ and thus
 $$|y_1|^{\frac{4}{N-2}}|y_j|=\min\left\{|y_1|^{\frac{4}{N-2}}|y_j|,|y_j|^{\frac{4}{N-2}}|y_1|\right\}.$$
 If $N=5$, we note that if $2\leq j,k$ with $j\neq k$
 $$ |y_j|^{\frac{4}{3}}|y_k|\lesssim \min\left\{|y_1|^{\frac{4}{3}}|y_j|,|y_j|^{\frac{4}{3}}|y_1|\right\}+\min\left\{|y_1|^{\frac{4}{3}}|y_k|,|y_k|^{\frac{4}{3}}|y_1|\right\}$$
 Combining the preceding inequalities, we obtain \eqref{L10} or \eqref{L10'}.
 \end{cas}
\end{proof}
\begin{claim}
\label{Cl:pointwise3}
Let $(a,b,c)\in \RR^3$ with $a\neq 0$.
 \begin{equation}
  \label{BT20}
  |F(a+b)-F(a)-F'(a)b|\lesssim \indic_{\{|b|\leq |a|\}}b^2a^{\frac{6-N}{N-2}}+\indic_{\{|b|\geq |a|\}} b^{\frac{N+2}{N-2}},
  \end{equation}
  and
  \begin{equation}
  \label{BT30}
\left|F(a+b+c)-F(a+b)-F(a+c)+F(a)\right|\lesssim 
\begin{cases}
|a|^{\frac{6-N}{2(N-2)}}|b|^{\frac{N+2}{2(N-2)}}|c|&\text{ if }N\geq 7\\
|b|\,|c|\,\big(|a|+|b|+|c|\big)^{\frac 13}&\text{ if }N=5.
 \end{cases}
 \end{equation}
\end{claim}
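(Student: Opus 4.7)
I would prove the two pointwise inequalities by case analysis on the relative sizes of $a$, $b$, $c$, combined with Taylor's theorem and Hölder continuity of $F'$.

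For \eqref{BT20}, the plan is to split on $|b|\leq |a|$ versus $|b|\geq |a|$. In the first regime, Taylor's theorem with integral remainder gives
\[
F(a+b)-F(a)-F'(a)b=\int_0^b (b-t)F''(a+t)\,dt,
\]
and since $|a+t|\gtrsim |a|$ throughout the interval of integration (dealing with the slightly delicate range $|a|/2\leq |b|\leq |a|$ directly: all three terms are then of size $|a|^{\frac{N+2}{N-2}}\approx b^2|a|^{\frac{6-N}{N-2}}$), one obtains $|F''(a+t)|\lesssim |a|^{\frac{6-N}{N-2}}$ and hence the $b^2|a|^{\frac{6-N}{N-2}}$ bound. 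In the opposite regime $|b|\geq|a|$, each of $F(a+b)$, $F(a)$, $F'(a)b$ is $\lesssim |b|^{\frac{N+2}{N-2}}$ by homogeneity, yielding the second term on the right.

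For \eqref{BT30}, denote $G:=F(a+b+c)-F(a+b)-F(a+c)+F(a)$ and use the symmetry in $(b,c)$ to assume WLOG $|b|\leq |c|$. When $N=5$, $F''(s)=\frac{28}{9}|s|^{1/3}\sgn(s)$ is continuous, so the identity $G=\int_0^b\int_0^c F''(a+s+t)\,dt\,ds$ yields $|G|\lesssim |b||c|(|a|+|b|+|c|)^{1/3}$ directly. When $N\geq 7$ the function $F'(s)=\tfrac{N+2}{N-2}|s|^{\frac{4}{N-2}}$ is Hölder continuous of order $\frac{4}{N-2}\in(0,1)$, so rewriting $G=\int_0^c(F'(a+b+t)-F'(a+t))\,dt$ gives the Hölder bound $|G|\lesssim |c||b|^{\frac{4}{N-2}}$; switching the roles of $b$ and $c$ also gives $|G|\lesssim |b||c|^{\frac{4}{N-2}}$.

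The key step is then an interpolation. In the regime $|b|+|c|\leq |a|/2$, one has $|a+s+t|\geq |a|/2$ throughout the rectangle, so $|F''|\lesssim |a|^{\frac{6-N}{N-2}}$ and the integral representation gives the Taylor-type bound $|G|\lesssim |b||c||a|^{\frac{6-N}{N-2}}$. Since both this bound and $|G|\lesssim|c||b|^{\frac{4}{N-2}}$ hold, taking their geometric mean produces
\[
|G|\leq \sqrt{|b||c||a|^{\frac{6-N}{N-2}}}\cdot \sqrt{|c||b|^{\frac{4}{N-2}}} = |c|\,|b|^{\frac{N+2}{2(N-2)}}\,|a|^{\frac{6-N}{2(N-2)}},
\]
where the exponents match by design since $\frac{6-N}{2(N-2)}+\frac{N+2}{2(N-2)}=\frac{4}{N-2}$. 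In the complementary regime $|b|+|c|>|a|/2$, which combined with $|b|\leq|c|$ forces $|c|\gtrsim |a|$, the Hölder bound $|G|\lesssim |b||c|^{\frac{4}{N-2}}$ alone suffices: it reduces algebraically to the elementary inequality $\sqrt{|a||b|}\lesssim |c|$, which holds since $|a|\lesssim |c|$ and $|b|\leq |c|$. The main (minor) obstacle is to organize this case split so that in each regime the right mix of Taylor and Hölder estimates is available; once set up properly, the exponent arithmetic is forced.
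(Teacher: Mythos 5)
Your approach is correct in substance, but it organizes the proof quite differently from the paper: the paper first scales to $a=1$ (both inequalities are homogeneous of degree $\frac{N+2}{N-2}$ in $(a,b,c)$), then for \eqref{BT30} dichotomizes on $|c|\lessgtr|b|$ and $|b|\lessgtr\frac{1}{10}$, invoking the mean value theorem twice and the boundedness of $F''$ away from the origin. You instead keep $a$ as a free parameter and derive three global estimates --- the integral-remainder Taylor bound $|G|\lesssim |b||c||a|^{\frac{6-N}{N-2}}$ (valid once $|b|+|c|\lesssim|a|$), and the two Hölder-in-$F'$ bounds $|G|\lesssim|c||b|^{\frac{4}{N-2}}$ and $|G|\lesssim|b||c|^{\frac{4}{N-2}}$ --- and then interpolate by a geometric mean. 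This is arguably more transparent; it makes explicit that the exponent $\frac{N+2}{2(N-2)}$ is the midpoint between $1$ (Taylor) and $\frac{4}{N-2}$ (Hölder), which the paper's MVT dichotomy somewhat hides. The $N=5$ case and \eqref{BT20} are handled in essentially the same way in both proofs.

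One correction is in order: the reduction ``use the symmetry in $(b,c)$ to assume WLOG $|b|\leq|c|$'' is not actually legitimate, because the right-hand side of \eqref{BT30} is \emph{not} symmetric in $(b,c)$ when $N\geq 7$ --- indeed $|b|^{\frac{N+2}{2(N-2)}}|c|$ and $|c|^{\frac{N+2}{2(N-2)}}|b|$ differ by a factor $(|b|/|c|)^{\frac{N-6}{2(N-2)}}$ which is not $\lesssim 1$ in general, so proving the inequality for $|b|\leq|c|$ does not automatically give the (stronger, in the other regime) inequality for $|b|>|c|$. Fortunately your argument does not really need the reduction: the geometric-mean interpolation in the regime $|b|+|c|\leq|a|/2$ is unconditional, and in the complementary regime $|b|+|c|>|a|/2$ you only need to choose which of the two Hölder bounds to use according to whichever of $|b|,|c|$ is the larger (so that the larger of the two is $\gtrsim|a|$, which is what closes the exponent count). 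With this small rewording the proof is complete and correct.
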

\begin{proof}
 \noindent\emph{Proof of \eqref{BT20}}. By scaling, we can assume $a=1$. We are thus reduced to prove:
 \begin{equation}
  \label{BT20'}
  |F(1+b)-F(1)-F'(1)b|\lesssim \indic_{|b|\leq 1}b^2+\indic_{|b|\geq 1} b^{\frac{N+2}{N-2}}, \quad b\in \RR,
 \end{equation}
which follows easily from the fact that $F(z)$ is $C^{2}$ outside $z=0$ and of order $|z|^\frac{N+2}{N-2}$ as $|z|\to\infty$.

\medskip

 \noindent\emph{Proof of \eqref{BT30} in the case $N\geq 7$}.
 Note that $1+\frac{N+2}{2(N-2)}+\frac{6-N}{2(N-2)}=\frac{N+2}{N-2}$. Thus both sides of \eqref{BT30} are homogeneous of degree $\frac{N+2}{N-2}$ and we can assume without loss of generality, $a=1$. We are thus reduced to prove (assuming $N\geq 7$):
 \begin{equation}
 \label{BT30'}
  |G(b,c)|\lesssim |b|^{\frac{N+2}{2(N-2)}}|c|,
  \end{equation}
  where
  \begin{equation*}
  \quad G(b,c):=|F(1+b+c)-F(1+b)-F(1+c)+F(1)|.
 \end{equation*}
We distinguish between two cases.
\setcounter{cas}{0}
\begin{cas}[$|c|\leq |b|$]
 There exists $b_1$ and $d_1$ such that
\begin{align}
\label{exist_b1}
 F(1+b+c)-F(1+b)&=F'(b_1)c,&\quad |b+1-b_1|&\leq |c|\\
 \label{exist_d1}
 F(1+c)-F(1)&= F'(d_1)c,&\quad |d_1-1|&\leq |c|.
\end{align}
In particular,
$$ |G(b,c)|\lesssim (1+|b|+|c|)^{\frac{4}{N-2}}|c|.$$
If $|b|\geq \frac{1}{10}$, this implies \eqref{BT30'}, since $\frac{4}{N-2}\leq \frac{N+2}{2(N-2)}$.

If $|b|\leq \frac{1}{10}$, we use that $F$ is $C^2$ outside the origin. Thus there exists $d_2 \in [b_1,d_1]$ (or $[d_1,b_1]$) such that 
$$ F'(b_1)-F'(d_1)=F''(d_2)(d_1-b_1).$$
Since $\frac{1}{2}\leq d_2\leq 2$, we have $|F''(d_2)|\lesssim 1$ and we obtain by the triangle inequality
$$ |b_1-d_1|\leq |b_1-(1+b)|+|d_1-1|+|b|\leq |c|+|b|,$$
which yields
$$ |G(b,c)|=|F''(d_2)(b_1-d_1)c|\leq |c|(|c|+|b|),$$
yielding \eqref{BT30'} since $|c|\leq |b|\leq \frac{1}{10}$ and $\frac{N+2}{2(N-2)}< 1$.
\end{cas}
\begin{cas}[$|c|\geq |b|$]
The same proof as in the preceding case, inverting  $b$ and $c$, yields
$$ |G(b,c)|\lesssim (1+|b|+|c|)^{\frac{4}{N-2}}|b|,$$
and, if $|c|\leq \frac{1}{10}$,
$$ |G(b,c)|\leq |b|(|c|+|b|).$$
Using that $\frac{4}{N-2}<\frac{N+2}{2(N-2)}<1$, 
we obtain \eqref{BT30'}.
\end{cas}

\noindent\emph{Proof of \eqref{BT30} in the case $N=5$}.
By homogeneity, we can assume $a=1$, and we are thus reduce to prove, with the same notation  $G(b,c)$ as before,
$$|G(b,c)|\lesssim |b|\,|c|\,(1+|b|+|c|)^{\frac 13}.$$
The inequality is symmetric in $(b,c)$ and we can assume $|b|\geq |c|$. Again, we use that there exist $b_1$ and $d_1$ such that \eqref{exist_b1} and \eqref{exist_d1} hold. Since $F$ is of class $C^2$, we also know that there exists $d_2 \in [b_1,d_1]$ (or $[d_1,b_1]$) such that 
$$ F'(b_1)-F'(d_1)=F''(d_2)(d_1-b_1).$$
We have $|d_1-b_1|\lesssim |b|+|c|\lesssim |b|$, and $|F''(d_2)|\lesssim (1+|b|+|c|)^{\frac 13}$, and thus
$$|G(b,c)|=\left|F''(d_2)(d_1-b_1)c\right|\lesssim |c|\,|b|\,(1+|b|+|c|)^{\frac 13}.$$
\end{proof}

\section{Choice of the scaling parameters}
\begin{lemma}
 \label{L:ortho_scaling}
 Let $J\geq 1$. There exists a small constant $\eps_J>0$ and a large constant $C_J>0$, with the following property. For all $\eps\in(0,\eps_J)$, for all $\mubf=(\mu_j)_j\in (0,\infty)^J$ with $\mu_J<\mu_{J-1}<\ldots<\mu_1$ such that $\gamma(\mubf)<\eps$, for all $(\iota_j)_j\in \{\pm 1\}^J$, for all $f\in \hdot$ such that 
 $$\left\| f-\sum_{j=1}^J \iota_jW_{(\mu_j)}\right\|_{\hdot}\leq \eps,$$
 there exists a unique $\lambdabf\in (0,\infty)^{J}$ such that 
 $$\max_{1\leq j\leq J}\left|\frac{\lambda_j}{\mu_j}-1\right|\leq C_J\eps$$
 and
 $$\forall j\in \llbracket 1,J\rrbracket \int \nabla\Big( f-\sum_{j=1}^J \iota_j W_{(\lambda_j)}\Big)\cdot\nabla \left( \Lambda W\right)_{(\lambda_j)}=0.$$
 Furthermore, the map $f\mapsto \lambdabf$ is of class $C^1$.
\end{lemma}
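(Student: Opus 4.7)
The plan is to apply the implicit function theorem to a suitable map on $(0,\infty)^J$, with base point $\boldsymbol{\lambda} = \boldsymbol{\mu}$ and $g = \sum_k \iota_k W_{(\mu_k)}$, exploiting the scaling invariance of the problem to obtain constants depending only on $J$ and $N$. First, using the scaling $f \mapsto \mu_1^{(N-2)/2} f(\mu_1 \cdot)$, $\boldsymbol{\lambda} \mapsto \boldsymbol{\lambda}/\mu_1$ (which preserves $\gamma(\boldsymbol{\mu})$, the hypothesis $\|f-\sum_j\iota_j W_{(\mu_j)}\|_{\hdot}\leq \eps$, and the orthogonality conditions), I would reduce to the case $\mu_1=1$, so that $\boldsymbol{\mu}$ lies in the region $\{\mu_1=1,\; \mu_J<\ldots<\mu_1,\;\gamma(\boldsymbol{\mu})\leq \eps_J\}$.

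Next, I define $\Phi:(0,\infty)^J\times \hdot \to \RR^J$ by
$$\Phi_j(\boldsymbol{\lambda},g):=\int \nabla\Big(g-\sum_{k=1}^J \iota_k W_{(\lambda_k)}\Big)\cdot \nabla (\Lambda W)_{(\lambda_j)},$$
and observe that $\Phi\big(\boldsymbol{\mu}, \sum_k \iota_k W_{(\mu_k)}\big)=0$. Using $\partial_\lambda W_{(\lambda)}=-\lambda^{-1}(\Lambda W)_{(\lambda)}$ and that the variation of the test function $(\Lambda W)_{(\lambda_j)}$ contributes nothing at the base point (since the first factor in the integrand of $\Phi_j$ vanishes there), I compute
$$\frac{\partial \Phi_j}{\partial \lambda_k}\bigg|_{\text{base}}=\frac{\iota_k}{\mu_k}\int \nabla (\Lambda W)_{(\mu_k)}\cdot \nabla (\Lambda W)_{(\mu_j)}.$$
Passing to logarithmic coordinates $\sigma_k=\log(\lambda_k/\mu_k)$ absorbs the factors $\mu_k^{-1}$, and the resulting Jacobian $M\in\RR^{J\times J}$ has diagonal entries $M_{jj}=\iota_j\|\Lambda W\|^2_{\hdot}\neq 0$, while by estimate \eqref{est1.1} of Claim \ref{Cl:estimates1} the off-diagonals satisfy $|M_{jk}|\lesssim (\mu_{\min(j,k)}/\mu_{\max(j,k)})^{N/2-1}\lesssim \gamma^{N/2-1}$ for $j\neq k$. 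Since $N\geq 5$, choosing $\eps_J$ small enough makes $M$ diagonally dominant, uniformly invertible with $\|M^{-1}\|\leq 2\|\Lambda W\|^{-2}_{\hdot}$ for all admissible $\boldsymbol{\mu}$.

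With $M$ uniformly invertible and $g\mapsto \Phi(\boldsymbol{\lambda},g)$ being affine in $g$ hence trivially regular, the quantitative implicit function theorem applied to $\boldsymbol{\sigma}\mapsto \Phi(\boldsymbol{\lambda}(\boldsymbol{\sigma}),f)$ yields, for each $f$ with $\|f-\sum_k\iota_k W_{(\mu_k)}\|_{\hdot}\leq \eps$, a unique $\boldsymbol{\sigma}$ with $|\sigma_j|\leq C_J\eps$ solving $\Phi=0$, which translates to $|\lambda_j/\mu_j-1|\leq C_J'\eps$; the $C^1$-dependence on $f$ is a standard byproduct of the IFT. Undoing the $\mu_1=1$ reduction by rescaling back gives the full statement. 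The main (minor) obstacle is ensuring the uniform invertibility of $M$ as some of the ratios $\mu_{j+1}/\mu_j$ approach the threshold $\eps_J$, but the off-diagonal bound of size $\gamma^{N/2-1}\leq \eps_J^{3/2}$ from \eqref{est1.1} is arbitrarily small for $\eps_J$ small, so the perturbative argument goes through.
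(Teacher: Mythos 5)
Your proposal is correct and follows essentially the same route as the paper: both compute the Jacobian of the orthogonality map (diagonal entries $\pm\|\Lambda W\|^2_{\hdot}$, off-diagonals controlled by $\gamma^{N/2-1}$ via estimate \eqref{est1.1}), and both invoke a quantitative local-inversion argument; the paper simply phrases it as a Banach fixed point for $\Phi_{\ell}(\lambdabf)=\lambda_{\ell}-\frac{\mu_{\ell}\iota_{\ell}}{\|\Lambda W\|^2_{\hdot}}\int\nabla\bigl(f-\sum_j\iota_jW_{(\lambda_j)}\bigr)\cdot\nabla(\Lambda W_{(\lambda_{\ell})})$ on the ball $B_{M_J\eps}$ rather than citing the implicit function theorem. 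Your logarithmic-coordinate normalization and the reduction to $\mu_1=1$ are clean ways of making the uniformity explicit, but you should note that a uniform bound on the modulus of continuity of the Jacobian near the base point is also needed for the quantitative IFT to yield constants independent of $\mubf$ — this is routine and is exactly what the paper's explicit contraction estimates supply.
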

\begin{remark}
 \label{R:ortho_scaling}
 Let us mention that
 $$\left\|\sum_{j=1}^J \iota_j W_{(\lambda_j)}-\sum_{j=1}^J \iota_j W_{(\mu_j)}\right\|_{\hdot}\leq C_J\eps$$
 (see the computation in the proof below) and that $\gamma(\lambdabf)\approx \gamma(\mubf)$.
\end{remark}
 
\begin{proof}[Sketch of proof]
 This is standard and follows from the implicit function theorem. However we have to check that the uniformity of the constant with respect to $\mubf$ stated in the lemma follows from the proof. 
 
We fix $\mubf$ and $f$ such that 
$$\gamma(\mubf)<\eps,\quad \left\|f-\sum_{j=1}^J \iota_j W_{(\mu_j)}\right\|_{\hdot}<\eps.$$ 
We consider 
$$\Phi:(0,\infty)^J \to \RR^J,\quad \Phi=(\phi_{\ell})_{1\leq \ell\leq J},$$
defined by
$$\Phi_{\ell}(\lambdabf)=\lambda_{\ell}-\frac{1}{\int |\nabla \Lambda W|^2}\mu_{\ell}\iota_{\ell} 
\int \nabla\Big( f-\sum_{j=1}^J \iota_j W_{(\lambda_j)}\Big)\cdot\nabla \left(\Lambda W_{(\lambda_\ell)}\right).
$$
We will prove that $\Phi$ is a contraction of the compact set:
$$B_{\eta}=\left\{(\lambda_j)_{1\leq j\leq J}\;:\; \max_{\ell}\left|1-\frac{\lambda_{\ell}}{\mu_{\ell}}\right|\leq \eta\right\},$$
where
$$\eta=M_J\eps$$
for a large positive $M_J$ to be specified.
Choosing $\eps$ small enough, we have that $\lambdabf\in B(\eta)$ implies
$$ \frac{1}{2}\leq \frac{\lambda_j}{\mu_j}\leq \frac 32$$ for all $j$.

If $j\neq \ell$, we have
$$ \frac{\partial \Phi_{\ell}}{\partial \lambda_{j}}(\lambdabf)=-\iota_j\iota_{\ell}\frac{\mu_{\ell}}{\lambda_{j}} \frac{1}{\int |\nabla \Lambda W|^2} \int \nabla (\Lambda W_{(\lambda_j)})\cdot \nabla (\Lambda W_{(\lambda_{\ell})}).$$
Since by Claim \ref{Cl:estimates1}, we have
$$ \left|\int \nabla (\Lambda W_{(\lambda_j)})\cdot \nabla (\Lambda W_{(\lambda_{\ell})})\right|\lesssim \max\left\{\left(\frac{\lambda_{\ell}}{\lambda_j}\right)^{\frac 32},\left(\frac{\lambda_{j}}{\lambda_{\ell}}\right)^{\frac 32}\right\},$$
we deduce
\begin{equation}
 \label{bnd_dphil_j} 
 \left|\frac{\partial \Phi_{\ell}}{\partial \lambda_j}(\lambdabf)\right|\lesssim \frac{\mu_{\ell}}{\mu_j} \eps^{\frac{3}{2}}.
\end{equation} 
Moreover
$$\frac{\partial \Phi_{\ell}}{\partial \lambda_{\ell}}(\lambdabf)=1-\frac{\mu_{\ell}}{\lambda_{\ell}}+\frac{1}{\lambda_{\ell}\int |\nabla \Lambda W|^2} \iota_{\ell}\int \nabla\Big( f-\sum_{j=1}^J \iota_j W_{(\lambda_j)}\Big)\cdot\nabla \left(\Lambda \Lambda W_{(\lambda_\ell)}\right),$$
and thus
\begin{multline}
\label{bnd_dphil_l}
\left|\frac{\partial \Phi_{\ell}}{\partial \lambda_{\ell}}(\lambdabf)\right|\lesssim \left|1-\frac{\mu_{\ell}}{\lambda_{\ell}}\right|+\left\|f-\sum_{j=1}^J\iota_jW_{(\mu_j)}\right\|_{\hdot}+\left\|\sum_{j=1}^J\iota_jW_{(\mu_j)}-\iota_jW_{(\lambda_j)}\right\|_{\hdot}\\
\lesssim \eta+\eps,
\end{multline}
where we have used
\begin{multline*}
 \int |\nabla(W_{(\lambda_j)}-W_{(\mu_j)})|^2=\int \left|\left( \frac{\mu_j}{\lambda_j} \right)^{\frac N2}\nabla W\left( \frac{\mu_j}{\lambda_j} x\right)-\nabla W(x)\right|^2dx\\
 \lesssim \eta^2+ \int \left|\nabla W\left( \frac{\mu_j}{\lambda_j}x \right)-\nabla W(x)\right|^2dx\lesssim \eta^2+\int_0^{+\infty} \left( r-\frac{\mu_j}{\lambda_j}r \right)^2\frac{r^{N-1}}{(1+r^N)^2}dr\lesssim \eta^2,
\end{multline*}
since $|\nabla W(r)-\nabla W(\rho)|\lesssim \frac{|r-\rho|}{1+r^N}$, $r\approx \rho$.

Furthermore,
\begin{equation}
 \label{bnd_Phi_mu}
 \left|\frac{1}{\mu_{\ell}}\Phi_{\ell}(\mubf)-1\right|\lesssim \eps.
\end{equation} 
Combining \eqref{bnd_dphil_j}, \eqref{bnd_dphil_l} and \eqref{bnd_Phi_mu}, we see that if $\lambdabf\in B_{\eta}$, 
$$\left|\frac{1}{\mu_{\ell}}\Phi_{\ell}(\lambdabf)-1\right|\lesssim \eps+(\eta+\eps)\eta.$$
This proves that if $\eta=M_J\eps$ for some large constant $M_J$, and $\eps\leq \eps_J\ll M_J^{-1}$, $\Phi$ maps $B_{\eta}$ into $B_{\eta}$. By \eqref{bnd_dphil_j} and \eqref{bnd_dphil_l}, $\Phi$ is a contraction of $B_{\eta}$. By the Banach fixed point theorem, there exists a unique $\lambdabf\in B_{\eta}$ such that $\Phi(\lambdabf)=\lambdabf$, which exactly means that it satisfies the desired orthogonality conditions. The fact that $f\mapsto \lambdabf$ is $C^1$ is classical and we omit the proof.
\end{proof}

\bibliographystyle{acm}
\bibliography{/home/duyckaerts/ownCloud/Recherche/toto} 
\end{document}